\definecolor{codegreen}{rgb}{0,0.6,0}
\definecolor{codegray}{rgb}{0.5,0.5,0.5}
\definecolor{codepurple}{rgb}{0.58,0,0.82}
\definecolor{backcolor}{rgb}{0.95,0.95,0.92}
\lstdefinestyle{mystyle}{
backgroundcolor=\color{backcolor},   
commentstyle=\color{codegreen},
keywordstyle=\color{magenta},
numberstyle=\tiny\color{codegray},
stringstyle=\color{codepurple},
basicstyle=\ttfamily\footnotesize,
breakatwhitespace=false,         
breaklines=true,                 
captionpos=b,                    
keepspaces=true,                 
numbers=left,                    
numbersep=5pt,                  
showspaces=false,                
showstringspaces=false,
showtabs=false,                  
tabsize=2,
otherkeywords={defined,elif,\# }
}
\newtheorem{define}{Definition}
\newtheorem{lemma}{Lemma}
\newtheorem{prop}{Proposition}
\newtheorem{rem}{Remark}
\newtheorem{theorem}{Theorem}
\newtheorem{corollary}{Corollary}
\newtheorem{assumption}{Assumption}
\newcommand{\ME}{\mbox{\tiny \sc ME}}
\newcommand{\iipt}{\hspace{2pt}}
\newcommand{\vipt}{\hspace{6pt}}
\newcommand{\lo}{\mbox{\tiny\sc L}}
\newcommand{\hi}{\mbox{\tiny\sc H}}
\newcommand{\LaxF}{\mbox{\tiny\sc LF}}
\newcommand{\Mone}{\bcM^{(1)}}
\newcommand{\Mtwo}{\bcM^{(2)}}
\newcommand{\Done}{\cD^{(1)}}
\newcommand{\Dtwo}{\cD^{(2)}}
\newcommand{\Ione}{\cI^{(1)}}
\newcommand{\Itwo}{\cI^{(2)}}
\newcommand{\Kone}{\mathsf{k}^{(1)}}
\newcommand{\Ktwo}{\mathsf{k}^{(2)}}
\newcommand{\vy}{(y^{k} v_{k})}
\newcommand{\ny}{(y^{k} \hat{n}_{k})}
\newcommand{\old}{{(\ast)}}
\newcommand{\quand}{{\quad\text{and}\quad}}
\DeclareMathAlphabet\mathbc{OMS}{cmsy}{b}{n}
\DeclareMathOperator*{\argmin}{arg\,min}
\newcommand{\thornado}{{\scshape thornado}}
\newcommand{\flashx}{{\scshape Flash-X}}
\begin{document}

\begin{frontmatter}

\title{DG-IMEX Method for a Two-Moment Model for \\ Radiation Transport in the $\mathcal{O}(v/c)$ Limit \tnoteref{support}\tnoteref{copyright}}
\tnotetext[support]{
Research at Oak Ridge National Laboratory is supported under contract DE-AC05-00OR22725 from the U.S. Department of Energy to UT-Battelle, LLC.
This research was supported by the Exascale Computing Project (17-SC-20-SC), a collaborative effort of the U.S. Department of Energy Office of Science and the National Nuclear Security Administration.
This work was supported by the U.S. Department of Energy, Office of Science, Office of Advanced Scientific Computing Research via the Scientific Discovery through Advanced Computing (SciDAC) program.
This research used resources of the Oak Ridge Leadership Computing Facility at the Oak Ridge National Laboratory, which is supported by the Office of Science of the U.S. Department of Energy under Contract No. DE-AC05-00OR22725.
This research was supported, in part, by the National Science Foundation’s Gravitational Physics Program under grants NSF PHY 1806692 and 2110177.}
\tnotetext[copyright]{
This manuscript has been authored by UT-Battelle, LLC under Contract No. DE-AC05-00OR22725 with the U.S. Department of Energy. The United States Government retains and the publisher, by accepting the article for publication, acknowledges that the United States Government retains a non-exclusive, paid-up, irrevocable, world-wide license to publish or reproduce the published form of this manuscript, or allow others to do so, for United States Government purposes. The Department of Energy will provide public access to these results of federally sponsored research in accordance with the DOE Public Access Plan(http://energy.gov/downloads/doe-public-access-plan).}

%% Group authors per affiliation:

\author[ornl]{M. Paul Laiu}
\ead{laiump@ornl.gov}

\author[ornl,utk-phys]{Eirik Endeve\corref{cor}}
\ead{endevee@ornl.gov}

\author[ornlnccs]{J. Austin Harris}
\ead{harrisja@ornl.gov}

\author[utk-phys]{Zachary Elledge}
\ead{zelledge@vols.utk.edu}

\author[utk-phys]{Anthony Mezzacappa}
\ead{mezz@utk.edu}

\cortext[cor]{Corresponding author. Tel.:+1 865 576 6349; fax:+1 865 241 0381}

\address[ornl]{Multiscale Methods and Dynamics Group, Oak Ridge National Laboratory, Oak Ridge, TN 37831 USA }

\address[utk-phys]{Department of Physics and Astronomy, University of Tennessee Knoxville, TN 37996-1200}

\address[ornlnccs]{Advanced Computing for Nuclear, Particles, and Astrophysics Group, Oak Ridge National Laboratory, Oak Ridge, TN 37831 USA }

\begin{abstract}
We consider particle systems described by moments of a phase-space density and propose a realizability-preserving numerical method to evolve a spectral two-moment model for particles interacting with a background fluid moving with nonrelativistic velocities. The system of nonlinear moment equations, with special relativistic corrections to $\mathcal{O}(v/c)$, expresses a balance between phase-space advection and collisions and includes velocity-dependent terms that account for spatial advection, Doppler shift, and angular aberration. This model is closely related to the one promoted by Lowrie et al. (2001; JQSRT, 69, 291-304) and similar to models currently used to study transport phenomena in large-scale simulations of astrophysical environments. The method is designed to preserve moment realizability, which guarantees that the moments correspond to a nonnegative phase-space density. The realizability-preserving scheme consists of the following key components: (i) a strong stability-preserving implicit-explicit (IMEX) time-integration method; (ii) a discontinuous Galerkin (DG) phase-space discretization with carefully constructed numerical fluxes; (iii) a realizability-preserving implicit collision update; and (iv) a realizability-enforcing limiter. In time integration, nonlinearity of the moment model necessitates solution of nonlinear equations, which we formulate as fixed-point problems and solve with tailored iterative solvers that preserve moment realizability with guaranteed convergence. We also analyze the simultaneous Eulerian-frame number and energy conservation properties of the semi-discrete DG scheme and propose an "energy limiter" that promotes Eulerian-frame energy conservation. Through numerical experiments, we demonstrate the accuracy and robustness of this DG-IMEX method and investigate its Eulerian-frame energy conservation properties.
\end{abstract}

\begin{keyword}
Boltzmann equation, 
Radiation transport, 
Hyperbolic conservation laws, 
Discontinuous Galerkin, 
Implicit-Explicit
\end{keyword}

\end{frontmatter}

\section{Introduction}
\label{sec:intro}

In this paper, we design and analyze a numerical method for solving a system of moment equations that model transport of neutral particles (e.g., photons, neutrons, or neutrinos) interacting with a background fluid moving with nonrelativistic velocities --- i.e., flows in which the ratio of the background flow velocity to the speed of light, $v/c$, is sufficiently small such that special relativistic corrections of order $(v/c)^{2}$ and higher can be neglected.  
Similar $\cO(v/c)$ models have been used to study transport phenomena in astrophysical environments \cite{mihalasMihalas_1999}, including neutrino transport in core-collapse supernovae (e.g., \cite{ramppJanka_2002,just_etal_2015,skinner_etal_2019,bruenn_etal_2020,mezzacappa_etal_2020}) and binary neutron star mergers (e.g., \cite{just_etal_2015b,foucart_2023}).  
The numerical method is based on the discontinuous Galerkin (DG) phase-space discretization and an implicit-explicit (IMEX) method for time integration, and we pay particular attention to the preservation of certain physical bounds by the fully discrete scheme.  
The bound-preserving property is achieved by carefully considering the phase-space and temporal discretizations, as well as the formulation of associated iterative nonlinear solvers.  

Neutral particle transport in physical systems where the particle mean-free path may be similar to, or exceed, other characteristic length scales demands a kinetic description based on the distribution function $f(\vect{p},\vect{x},t)$, which is a phase-space density providing, at time $t$, the number of particles in an infinitesimal phase-space volume $d\vect{x}d\vect{p}$ centered around phase-space coordinates $\{\vect{p},\vect{x}\}$.  
Here, $\vect{p}$ and $\vect{x}$ are momentum- and position-space coordinates, respectively.  
The evolution of $f$ is governed by a kinetic equation that expresses a balance between phase-space advection and collisions (e.g., interparticle collisions and/or collisions with a background); see, e.g., \cite{chapmanCowling_1970,mihalasMihalas_1999} for detailed expositions.  
In this paper, as a simplification, we consider the situation where particles described by a kinetic distribution function interact with an external background whose properties are prescribed and unaffected by $f$.  

The design of numerical methods to model transport of particles interacting with a moving fluid is complicated, in part, by the necessity to choose coordinates for discretization of momentum space.  
While relativistic kinetic theory provides the framework to freely specify momentum-space coordinates, the two most obvious reference frame choices, the Eulerian and comoving frames, come with distinct computational challenges (e.g., \cite{castor_1972,buchler_1979,buchler_1983,mihalasMihalas_1999}).  
On the one hand, choosing momentum-space coordinates associated with an Eulerian observer eases the discretization of the phase-space advection problem at the expense of complicating the particle-fluid interaction kinematics and, for moment models, the closure procedure.  
On the other hand, choosing momentum-space coordinates associated with the comoving frame (or comoving observer) --- defined as the sequence of inertial frames whose velocity instantaneously coincides with the fluid velocity \cite{buchler_1983,mihalasMihalas_1999} --- simplifies the description of particle-fluid interaction kinematics but at the expense of increased complexity in solving the phase-space advection problem numerically.  
Moreover, when particles equilibrate with the fluid, the distribution function becomes isotropic in the comoving frame, which simplifies the closure procedure for moment-based methods \cite{buchler_1983}.  
We also mention the mixed-frame approach (e.g., \cite{mihalasKlein_1982}), where the distribution function depends on Eulerian-frame momentum coordinates.  
Then, to evaluate comoving-frame emissivities and opacities at Eulerian-frame momentum coordinates, appropriate transformation laws and expansions to $\cO(v/c)$ are applied (see Section~7.2 in \cite{mihalasMihalas_1999}).  
The mixed-frame approach attempts to combine the best of both coordinate choices but has difficulties with certain collision operators and does not generalize to the relativistic case.  
Nagakura et al. \cite{nagakura_etal_2014} combine both coordinate choices in a relativistic framework, using a discrete ordinates method, which requires mapping of numerical data between momentum space coordinate systems.  
This approach has yet to be applied to moment models.  

Our primary goal is to model neutrino transport in large-scale core-collapse supernova simulations, which require the inclusion of a wide range of neutrino--matter interactions --- with various kinematic forms (e.g., \cite{burrows_etal_2006,janka_etal_2007,janka_2012,mezzacappa_etal_2020,fischer_etal_2023}) --- which tend to dominate the overall computational cost.  
Therefore, we opt for relative simplicity in the collision term, adopt momentum-space coordinates associated with the comoving frame, and focus our effort here on the discretization of the phase-space advection problem.  

Because of the high computational cost associated with solving kinetic equations numerically in full dimensionality with sufficient phase-space resolution, dimension-reduction techniques are frequently employed.  
One commonly used method is to define and solve for a sequence of moments, instead of $f$ directly.  
Specifically, we employ spherical-polar momentum-space coordinates $(\varepsilon,\vartheta,\varphi)$ and integrate the distribution function against angular basis functions (depending on momentum-space angles $\omega=(\vartheta,\varphi)$) to obtain spectral, angular moments (depending on particle energy $\varepsilon$, and $\vect{x}$ and $t$) representing number densities, number fluxes, etc.  
The hierarchy of moment equations is obtained by taking corresponding moments of the kinetic equation.  
In this study, we consider a so-called two-moment model, where we solve for the zeroth (scalar) and first (vector) moments.  
The resulting system of moment equations, accurate to $\cO(v/c)$, describes the evolution of the moments due to advection in phase-space (the left-hand side) and collisions with the background fluid (the right-hand side).  
Due to the choice of comoving-frame momentum coordinates, the left-hand side contains velocity-dependent terms that account for spatial advection, Doppler shift, and angular aberration.  
Moreover, the moment equations contain higher-order moments (rank-two and rank-three tensors) that must be expressed in terms of the lower-order moments to close the system of equations.  
Specifically, we consider an approximate, algebraic moment closure originating from the maximum-entropy closure proposed by Minerbo \cite{minerbo_1978} (see also \cite{cernohorskyBludman_1994,just_etal_2015}).  
Related two-moment models have recently been used to model neutrino transport in core-collapse supernova simulations (e.g., \cite{just_etal_2015,skinner_etal_2019}).  

In this paper, we consider a number-conservative two-moment model obtained by taking the flat spacetime, $\cO(v/c)$ limit of general-relativistic moment models, e.g., from \cite{shibata_etal_2011,cardall_etal_2013a,mezzacappa_etal_2020}.  
We refer to the model as number-conservative because, in the absence of collisions, the zeroth moment equation is conservative for the correct $\cO(v/c)$ Eulerian-frame number density.  
The model is closely related to the two-moment model promoted by Lowrie et al. \cite{lowrie_etal_2001}: With the assumption of one-dimensional, planar geometry, we obtain their equations by multiplying our equations with the particle energy $\varepsilon$.  
This two-moment model supports wave speeds that are bounded by the speed of light.  
It is also consistent, to $\cO(v/c)$, with conservation laws for Eulerian-frame energy and momentum.  
\emph{Key to this consistency is retention of certain $\cO(v/c)$ terms in the time derivative of the moment equations}, which are often omitted (e.g., \cite{vaytet_etal_2011,just_etal_2015,skinner_etal_2019}).  
However, retention of these terms increases the computational complexity of the algorithm because the evolved moments become nonlinear functions of the primitive (comoving-frame) moments needed to evaluate closure relations, which then introduces nonlinear, iterative solves that contribute to increased computational costs.  

We use the DG method \cite{cockburnShu_2001} to discretize the moment equations.  
The choice of comoving-frame momentum coordinates results in advection-type terms along the energy dimension and four-dimensional divergence operators in the left-hand side of the moment equations.  
We use the DG method to discretize all four phase-space dimensions.  
DG methods have advantages for modeling particle transport because of their ability to capture the asymptotic diffusion limit with coarse meshes \cite{larsenMorel_1989,adams_2001,guermondKanschat_2010} without modification of numerical fluxes (as in, e.g., \cite{audit_etal_2002}), and we leverage this property here.  
Moreover, their variational formulation and flexibility with respect to test functions make them suitable for designing methods that conserve particle number and total energy \emph{simultaneously} (e.g., \cite{ayuso_etal_2011,cheng_etal_2013b}), which can be more difficult to achieve with, e.g., finite-difference or finite-volume methods.  
We use IMEX time stepping \cite{ascher_etal_1997,pareschiRusso_2005} to integrate the ordinary differential equations resulting from the semi-discretization of the moment equations by the DG method.  
Following our prior works \cite{chu_etal_2019,laiu_etal_2021}, we integrate the phase-space advection problem explicitly and the collision term implicitly.  
However, different from our prior works, due to the additional $\cO(v/c)$ terms in the time derivatives of the moment equations, the implicit part is nonlinear, even for the simplified collision term we consider here, and requires an iterative solution procedure, which we formulate in this paper.  

Given appropriate initial and boundary conditions, the solution to moment models with maximum-entropy closure is known to be \emph{realizable}; i.e., the moment solution is consistent with a kinetic distribution $f$ that satisfies required physical bounds \cite{levermore_1996,alldredge2019regularized}.  
For particle systems obeying Bose--Einstein or Maxwell--Boltzmann statistics, $f$ is nonnegative, whereas for particle systems obeying Fermi--Dirac statistics, $f\in[0,1]$.
These bounds translate into constraints on the associated moments, and moments satisfying these constraints are referred to as ``realizable'' moments.  
Although moment realizability is preserved by continuous moment models, solving moment models numerically can result in unrealizable moments, which leads to ill-posedness of the closure procedure and can give unphysical results when coupling moment models to other physical models, such as fluid models.
Therefore, maintaining moment realizability has been a key challenge in the design of numerical schemes for solving moment equations and has been explored in existing work from different perspectives, including development of realizability-preserving spatio-temporal discretizations \cite{olbrant2012realizability,hauck2011high,chu_etal_2019}, design of realizability-enforcing limiters \cite{chu_etal_2019}, and relaxation of the realizability constraints via regularization \cite{alldredge2019regularized}.
While these existing approaches provide some essential components to construct a realizability-preserving scheme for the $\cO(v/c)$ two-moment model considered in this work, they focus on models without any relativistic corrections and do not fully address the challenges of preserving moment realizability when relativistic corrections are included.

The realizability-preserving numerical scheme proposed in this paper consists of the following key components. 
First, for time integration, we adopt a strong stability-preserving (SSP) IMEX method, which treats the advection terms explicitly and the collision term implicitly.  
This choice avoids excessive time-step restrictions in the highly collisional regime and gives explicit stage updates that can be expressed as a convex combination of multiple forward Euler steps, which is necessary for preserving realizability.  
Second, the DG method is equipped with tailored numerical fluxes, which, together with the SSP IMEX time integration method, maintains nonnegative cell-averaged number densities in the explicit update under a time-step restriction that takes the form of a hyperbolic-type Courant--Friedrichs--Lewy (CFL) condition.
Third, the realizability-enforcing limiter proposed in \cite{chu_etal_2019} is used to recover pointwise realizable moments after each stage of the IMEX method.
As discussed above, the moment closure procedure requires an iterative solver for nonlinear equations that convert evolved (conserved) moments to the primitive moments.  
To preserve realizability in this conversion process, we formulate the nonlinear equation as a fixed-point problem and apply an iterative solver analogous to the modified Richardson iteration (e.g., \cite{richardson1911ix,saad2003iterative}) to ensure realizability in each iteration. 
We prove the global convergence property of this iterative solver in the $\cO(v/c)$ regime. 
The convergence analysis is applicable to the maximum-entropy closure as well as its algebraic approximation. 
Finally, the nonlinear systems arising from the implicit step of the IMEX method can also be formulated as a fixed-point problem and solved in a similar fashion. 
The realizability-preserving and convergence analyses both carry through with minor modifications. 
With these components in hand, we prove that the proposed DG-IMEX scheme for solving the $\cO(v/c)$ two-moment model indeed preserves moment realizability.

The two-moment model we consider is number conservative and, in the continuum limit, consistent to $\cO(v/c)$ with phase-space conservation laws for Eulerian-frame energy and momentum.  
Because the Eulerian-frame energy is not a primary evolved quantity of the model, but is instead obtained from a nontrivial combination of the evolved quantities, similar consistency with this conservation law is not guaranteed at the discrete level.  
In the context of finite-difference methods, Liebend{\"o}rfer et al. \cite{liebendorfer_etal_2004} proposed a consistent discretization by carefully matching specific numerical flux terms in the finite-difference representation of the general-relativistic Boltzmann equation (see also \cite{muller_etal_2010} for an approach in the case of moment models).  
For the semi-discrete DG scheme proposed here, the numerical fluxes are tailored to maintain moment realizability, which limits the flexibility of following this procedure.  
However, the flexibility provided by the approximation spaces of the DG method can be helpful in this respect.  
For example, by testing with the particle energy $\varepsilon$, which is represented exactly by the DG approximation space with linear functions in the energy dimension, we obtain the two-moment model promoted in \cite{lowrie_etal_2001}.  
We further analyze the \emph{simultaneous} Eulerian-frame number and energy conservation properties of the semi-discrete DG scheme, and point out that our DG approximation of the background velocity, which is allowed to be discontinuous, can impact the ability to achieve consistency with Eulerian-frame energy conservation to $\cO(v/c)$.  
Moreover, we design an ``energy limiter'' that corrects for Eulerian-frame energy conservation violations introduced by the realizability-enforcing limiter mentioned above.  
Through numerical experiments, we observe that Eulerian-frame energy conservation violations grow as $(v/c)^{2}$, indicating the desired consistency for an $\cO(v/c)$ method.  

The paper is organized as follows.  
The mathematical formulation of the two-moment model is presented in Section~\ref{sec:model}, while the closure procedure and wave propagation speeds supported by the resulting moment model are presented and discussed in Section~\ref{sec:closure}.  
Section~\ref{sec:discretization} provides an overview of the numerical method, including the DG phase-space discretization, IMEX time discretization, and iterative solvers for the nonlinear systems arising from the conserved-to-primitive conversion problem and time-implicit evaluation of the collision term.  
Section~\ref{sec:realizability_preservation}, where the realizability-preserving property of the method is established, contains the main technical results of the paper.  
The simultaneous conservation of Eulerian-frame number and energy of the DG method is discussed in Section~\ref{sec:conservation}, where the energy limiter that corrects for Eulerian-frame energy conservation violations introduced by the realizability-enforcing limiter is also presented.  
The algorithms have been implemented in the toolkit for high-order neutrino radiation-hydrodynamics (\thornado\footnote{\href{https://github.com/endeve/thornado}{www.github.com/endeve/thornado}}) and have been ported to utilize graphics processing units (GPUs).  
Our GPU programming model and implementation strategy is briefly discussed in Section~\ref{sec:gpu}.  
Results from numerical experiments demonstrating the robustness and accuracy of our method are presented in Section~\ref{sec:numericalResults}, where we also present GPU and multi-core performance results and highlight the relative computational cost of algorithmic components.  
Some technical proofs are given in \ref{sec:appendix}.  

For the remainder of this paper we employ units in which the speed of light is unity ($c=1$).  
\section{Mathematical Model}
\label{sec:model}

We consider a kinetic model where we solve for angular moments of the distribution function $f\colon (\omega,\varepsilon,\vect{x},t)\in\bbS^{2}\times\bbR^{+}\times\bbR^{3}\times\bbR^{+}\to\bbR^{+}$, which gives the number of particles propagating in the direction $\omega\in\bbS^{2}:=\{\,\omega=(\vartheta,\varphi)~|~\vartheta\in[0,\pi], \varphi\in[0,2\pi)\,\}$, with energy $\varepsilon\in\bbR^{+}$, at position $\vect{x}\in\bbR^{3}$ and time $t\in\bbR^{+}$.  
We define angular moments of $f$ as
\begin{equation}
  \big\{\,\mathcal{D},\,\mathcal{I}^{i},\,\mathcal{K}^{ij},\,\mathcal{Q}^{ijk}\,\big\}(\varepsilon,\vect{x},t)
  =\f{1}{4\pi}\int_{\bbS^{2}}f(\omega,\varepsilon,\vect{x},t)\,\big\{\,1,\,\ell^{i},\,\ell^{i}\ell^{j},\,\ell^{i}\ell^{j}\ell^{k}\,\big\}\,d\omega,
  \label{eq:angularMoments}
\end{equation}
where $\ell^{i}(\omega)$ is the $i$th component of a unit vector parallel to the particle three-momentum $\vect{p}=\varepsilon\,\vect{\ell}$, and $d\omega=\sin\vartheta\,d\vartheta\,d\varphi$.  
We take $\vect{p}=\big(p^{1},p^{2},p^{3}\big)^{\intercal}$ to be the particle three-momentum, and $\varepsilon$ and $\omega$ the particle energy and direction in a spherical-polar momentum-space coordinate system associated with an observer instantaneously moving with the fluid three-velocity $\vect{v}$ (the comoving observer).  
This choice of momentum-space coordinates is commonly used to model particles interacting with a moving material, as it simplifies the particle--material interaction (collision) terms (see, e.g., \cite{buchler_1979,mihalasMihalas_1999}).  
For simplicity, we will assume that the components of the three-velocity $v^{i}$ are given functions of position $\vect{x}$, \emph{independent} of time $t$.  
In Eq.~\eqref{eq:angularMoments}, $\mathcal{D}$ and $\mathcal{I}^{i}$ are the comoving-frame, spectral particle density and flux density components, respectively.  

Moment models that incorporate moving fluid effects are derived in the framework of relativistic kinetic theory \cite{lindquist_1966}, and the moment model considered here is obtained from the general relativistic two-moment model from \cite{cardall_etal_2013a}.  
Specifically, we consider the number-conservative two-moment model presented in Section~4.7.3 in \cite{mezzacappa_etal_2020}, after taking the limit of flat spacetime, specializing to Cartesian spatial coordinates, and retaining velocity-dependent terms to $\mathcal{O}(v)$.  
In this limit, the zeroth-moment equation is given by
\begin{align}
  \pd{}{t}\big(\,\mathcal{D}+v^{i}\,\mathcal{I}_{i}\,\big)
  +\pd{}{i}\big(\,\mathcal{I}^{i}+v^{i}\,\mathcal{D}\,\big)
  -\f{1}{\varepsilon^{2}}\pd{}{\varepsilon}\big(\,\varepsilon^{3}\,\mathcal{K}^{i}_{\hspace{2pt}k}\,\pd{v^{k}}{i}\,\big)
  =\chi\,\big(\,\mathcal{D}_{0}-\mathcal{D}\,\big),
  \label{eq:spectralNumberEquation}
\end{align}
where $\pd{}{t}=\partial/\partial t$, $\pd{}{i}=\partial/\partial x^{i}$, and $\pd{}{\varepsilon}=\partial/\partial\varepsilon$.  
We use the Einstein summation convention, where repeated latin indices run from $1$ to $3$.  
In flat spacetime, assuming Cartesian spatial coordinates, we can raise and lower indices on vectors and tensors with the Kronecker tensor; e.g., $\mathcal{I}_{i}=\delta_{ij}\mathcal{I}^{j}$.  
On the right-hand side of Eq.~\eqref{eq:spectralNumberEquation}, $\chi\ge0$ is the absorption opacity, and $\mathcal{D}_{0}$ is the zeroth moment of an equilibrium distribution $f_{0}$.  
The corresponding first-moment equation is given by
\begin{align}
  &\pd{}{t}\big(\,\mathcal{I}_{j}+v^{i}\,\mathcal{K}_{ij}\,\big)
  +\pd{}{i}\big(\,\mathcal{K}^{i}_{\hspace{2pt}j}+v^{i}\,\mathcal{I}_{j}\,\big)
  -\f{1}{\varepsilon^{2}}\pd{}{\varepsilon}\big(\,\varepsilon^{3}\,\mathcal{Q}^{i}_{\hspace{2pt}kj}\,\pd{v^{k}}{i}\,\big) \nonumber \\
  &\hspace{12pt}
  +\mathcal{I}^{i}\,\pd{v_{j}}{i} - \mathcal{Q}^{i}_{\hspace{2pt}kj}\,\pd{v^{k}}{i}
  =-\kappa\,\mathcal{I}_{j},
  \label{eq:spectralNumberFluxEquation}
\end{align}
where $\kappa=\chi+\sigma$ is the sum of the absorption opacity and the opacity due to elastic and isotropic scattering ($\sigma\ge0$).  

The two-moment model given by Eqs.~\eqref{eq:spectralNumberEquation} and \eqref{eq:spectralNumberFluxEquation} correspond to the moment equations for number transport given by Just et al.~\cite{just_etal_2015}; their Equations (9a) and (9b).  
(See also Eq.~(125) in \cite{endeve_etal_2012} for the number-density equation.)  
The velocity-dependent terms in the spatial and energy derivatives in Eqs.~\eqref{eq:spectralNumberEquation} and \eqref{eq:spectralNumberFluxEquation} account for spatial advection and Doppler shift between adjacent comoving observers, respectively, while the fourth and fifth terms on the left-hand side of Eq.~\eqref{eq:spectralNumberFluxEquation} account for angular aberration between adjacent comoving observers (e.g., \cite{liebendorfer_etal_2004}).  
We point out that the velocity-dependent terms inside the time derivatives in Eqs.~\eqref{eq:spectralNumberEquation} and \eqref{eq:spectralNumberFluxEquation} were dropped in \cite{just_etal_2015}.  
By retaining these terms, Eq.~\eqref{eq:spectralNumberEquation} evolves the $\mathcal{O}(v)$ Eulerian-frame number density, and, as emphasized by Lowrie et al.~\cite{lowrie_etal_2001}, wave speeds remain bounded by the speed of light and the model is consistent with the correct $\mathcal{O}(v)$ Eulerian-frame energy and momentum equations.  
To elaborate on the latter, we define the ``conserved" moments that are evolved in Eqs.~\eqref{eq:spectralNumberEquation} and \eqref{eq:spectralNumberFluxEquation} as
\begin{equation}
	\mathcal{N} 
	:= \mathcal{D}+v^{i}\,\mathcal{I}_{i} \quand
	\mathcal{G}_{j} 
	:= \mathcal{I}_{j}+v^{i}\,\mathcal{K}_{ij},
	\label{eq:eachconservedMoments}
\end{equation}
respectively.  
Here, $\mathcal{N}$ is the correct $\mathcal{O}(v)$ Eulerian-frame number density, and, in the absence of sources on the right-hand side, Eq.~\eqref{eq:spectralNumberEquation} is a phase-space conservation law.  
The Eulerian-frame energy and momentum densities are related to $\mathcal{N}$ and $\mathcal{G}_{j}$ by 
\begin{equation}
	\mathcal{E} = \varepsilon\, ( \mathcal{N} +  v^{i} \,\mathcal{G}_{i} )
	= \varepsilon \,(\mathcal{D} + \, 2 v^{i} \,\mathcal{I}_{i}) +\mathcal{O}(v^2)
	\label{eq:conservedEnergy}
\end{equation}
and
\begin{equation}
	\mathcal{P}_{j} = \varepsilon \,( \mathcal{G}_{j} +  v_{j} \,\mathcal{N})
	=\varepsilon \,(\mathcal{I}_{j}+v^{i}\,\mathcal{K}_{ij} +  v_{j} \,\mathcal{D}) +\mathcal{O}(v^2),
	\label{eq:conservedMomentum}
\end{equation} 
respectively. 
The following proposition gives the energy and momentum conservation properties of the two-moment model in Eqs.~\eqref{eq:spectralNumberEquation}--\eqref{eq:spectralNumberFluxEquation}.  

\begin{prop}\label{prop:EnergyandMomentumConservation}
	The two-moment model given by Eqs.~\eqref{eq:spectralNumberEquation}--\eqref{eq:spectralNumberFluxEquation} is, up to $\mathcal{O}(v)$, consistent with phase-space conservation laws for the energy density $\mathcal{E}$ and momentum density $\mathcal{P}_{j}$.
\end{prop}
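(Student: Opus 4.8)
The plan is to derive the two claimed conservation laws directly from Eqs.~\eqref{eq:spectralNumberEquation}--\eqref{eq:spectralNumberFluxEquation} by forming suitable $\varepsilon$- and $v$-weighted linear combinations of the two moment equations and discarding all terms of order $\mathcal{O}(v^2)$. Since the components $v^i$ are time-independent, the definitions \eqref{eq:conservedEnergy}--\eqref{eq:conservedMomentum} give $\partial_t\mathcal{E}=\varepsilon\,\partial_t\mathcal{N}+\varepsilon v^i\,\partial_t\mathcal{G}_i$ and $\partial_t\mathcal{P}_j=\varepsilon\,\partial_t\mathcal{G}_j+\varepsilon v_j\,\partial_t\mathcal{N}$. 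Hence for energy I would combine $\varepsilon$ times \eqref{eq:spectralNumberEquation} with $\varepsilon v^j$ times \eqref{eq:spectralNumberFluxEquation} summed over its free index $j$; for momentum I would instead combine $\varepsilon$ times \eqref{eq:spectralNumberFluxEquation} with $\varepsilon v_j$ times \eqref{eq:spectralNumberEquation}. In both cases the goal is to show that, modulo collision terms and $\mathcal{O}(v^2)$, the right-hand side organizes into a spatial divergence $\partial_i(\cdot)$ plus an energy-space divergence of the form $\tfrac{1}{\varepsilon^2}\partial_\varepsilon(\varepsilon^2\,\cdot\,)$.

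For the energy law, the key simplification is that in the $\varepsilon v^j$-weighted copy of \eqref{eq:spectralNumberFluxEquation} the energy-derivative term and the two aberration terms each already carry a factor $\partial_i v^k$, so multiplying by $v^j$ renders them $\mathcal{O}(v^2)$ and they drop; only the leading spatial flux $\varepsilon v^j\,\partial_i\mathcal{K}^{i}_{j}$ and the collision term survive. Because $v$ depends on $\vect{x}$, rewriting this flux in conservative form via the product rule leaves a remainder $+\varepsilon\,\mathcal{K}^{i}_{j}\,\partial_i v^j$. The $\varepsilon$-weighted energy-derivative term from \eqref{eq:spectralNumberEquation} can be recast using the identity $\tfrac{1}{\varepsilon}\partial_\varepsilon(\varepsilon^3 g)=\tfrac{1}{\varepsilon^2}\partial_\varepsilon(\varepsilon^4 g)-\varepsilon g$ (with $g=\mathcal{K}^{i}_{k}\,\partial_i v^k$) into a genuine energy-space divergence minus a remainder $\varepsilon\,\mathcal{K}^{i}_{k}\,\partial_i v^k$. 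These two remainders are identical up to the dummy index and cancel, leaving a clean phase-space conservation law for $\mathcal{E}$.

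For the momentum law the structure is the mirror image: now the aberration terms $\mathcal{I}^{i}\,\partial_i v_j-\mathcal{Q}^{i}_{kj}\,\partial_i v^k$ in \eqref{eq:spectralNumberFluxEquation} are themselves $\mathcal{O}(v)$ and must be kept, whereas the $v_j$-weighted energy-derivative term inherited from \eqref{eq:spectralNumberEquation} becomes $\mathcal{O}(v^2)$ and drops. Converting $\varepsilon v_j\,\partial_i\mathcal{I}^{i}$ to conservative form produces a remainder $+\varepsilon\,\mathcal{I}^{i}\,\partial_i v_j$ that cancels the first aberration term, while recasting the $\varepsilon$-weighted energy-derivative term $\tfrac{1}{\varepsilon}\partial_\varepsilon(\varepsilon^3\mathcal{Q}^{i}_{kj}\,\partial_i v^k)$ with the same identity leaves a remainder $-\varepsilon\,\mathcal{Q}^{i}_{kj}\,\partial_i v^k$ that cancels the second aberration term. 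What remains is a spatial divergence of $\varepsilon(\mathcal{K}^{i}_{j}+v^i\mathcal{I}_j+v_j\mathcal{I}^{i})$, an energy-space divergence carrying $\varepsilon^2\mathcal{Q}^{i}_{kj}\,\partial_i v^k$, and the collision terms, i.e., a phase-space conservation law for $\mathcal{P}_j$ to $\mathcal{O}(v)$.

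The main obstacle is bookkeeping rather than any single hard estimate: one must track carefully which terms are genuinely $\mathcal{O}(v)$ and which are $\mathcal{O}(v^2)$, and then recognize that the non-divergence remainders generated by the product rule (because $v$ is $\vect{x}$-dependent) and by the energy-derivative identity are not merely small but cancel \emph{exactly} at $\mathcal{O}(v)$. In particular, the aberration terms in \eqref{eq:spectralNumberFluxEquation}, which superficially appear to obstruct a conservative form, are precisely what is needed to close the momentum balance; their omission would break consistency with Eulerian-frame momentum conservation. Verifying these two cancellations is the crux of the proof.
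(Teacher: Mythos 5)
Your proposal is correct and follows essentially the same route as the paper, which simply states that the energy and momentum laws follow "by multiplying Eqs.~\eqref{eq:spectralNumberEquation} and \eqref{eq:spectralNumberFluxEquation} with appropriate factors and summing" with all $\mathcal{O}(v^2)$ terms dropped; your weights ($\varepsilon$ and $\varepsilon v^j$ for energy, $\varepsilon$ and $\varepsilon v_j$ for momentum) are exactly those "appropriate factors," and the two cancellations you identify (the product-rule remainders against the $\varepsilon$-identity remainder for energy, and against the aberration terms for momentum) correctly reproduce Eqs.~\eqref{eq:energyEquationEulerian}--\eqref{eq:momentumEquationEulerian}, including the fluxes $\mathcal{P}^i$ and $\mathcal{S}^{i}_{\hspace{2pt}j}$ and the source terms. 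Your write-up supplies the bookkeeping the paper leaves implicit.
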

\begin{proof}
	By multiplying Eqs.~\eqref{eq:spectralNumberEquation} and \eqref{eq:spectralNumberFluxEquation} with appropriate factors and summing up the resulting equations, the evolution equations for the energy and momentum densities can be derived, respectively, as
	\begin{align}
		\pd{\mathcal{E}}{t} + \pd{}{i}\,{\mathcal{P}}^{i}
		-\f{1}{\varepsilon^{2}}\pd{}{\varepsilon}
		\big(\,\varepsilon^{4}\,{\mathcal{K}}_{\hspace{2pt}k}^{i}\,\pd{v^{k}}{i}\,\big)
		=\varepsilon\,\chi\,\big(\,\mathcal{D}_{0}-\mathcal{D}\,\big) - 
		\,\varepsilon\,\kappa\,v^{j}\,\mathcal{I}_{j}
		\label{eq:energyEquationEulerian}
	\end{align}
	and
	\begin{align}
		\pd{{\mathcal{P}}_{j}}{t}
		+\pd{}{i}\,\mathcal{S}_{\hspace{2pt}j}^{i}
		-\f{1}{\varepsilon^{2}}\pd{}{\varepsilon}
		\big(\,\varepsilon^{4}\,
		{\mathcal{Q}}_{\hspace{2pt}kj}^{i}\,\pd{v^{k}}{i}\,\big)
		=  - \varepsilon\,\kappa\,\mathcal{I}_{j} +  \varepsilon\,v_{j}\,\chi\,\big(\,\mathcal{D}_{0}-\mathcal{D}\,\big).
		\label{eq:momentumEquationEulerian}
	\end{align}
	Here, all $\mathcal{O}(v^2)$ terms are dropped, and the momentum flux density is denoted as $\mathcal{S}^{ij}:= \varepsilon\,(\mathcal{K}^{ij}+\mathcal{I}^{i}\,v^{j} + v^{i}\,\mathcal{I}^{j}$). 
	In the absence of sources on the right-hand side, Eqs.~\eqref{eq:energyEquationEulerian} and \eqref{eq:momentumEquationEulerian} become phase-space conservation laws for $\mathcal{E}$ and $\mathcal{P}_{j}$, respectively.  
\end{proof}

To close the two-moment model \eqref{eq:spectralNumberEquation}--\eqref{eq:spectralNumberFluxEquation}, the higher-order moments $\mathcal{K}^{ij}$ and $\mathcal{Q}^{ijk}$ must be specified.  
We will use an algebraic closure, which we discuss in more detail in Section~\ref{sec:closure}.  
To this end, we write the second-order moments as
\begin{equation}
  \mathcal{K}^{ij} = \mathsf{k}^{ij}\,\mathcal{D},
\end{equation}
where the symmetric variable Eddington tensor components are given by (e.g., \cite{levermore_1984})
\begin{equation}
  \mathsf{k}^{ij} = \f{1}{2}\,\Big[\,(1-\psi)\,\delta^{ij}+(3\psi-1)\,\hat{\mathsf{n}}^{i}\,\hat{\mathsf{n}}^{j}\,\Big],
  \label{eq:VariableEddingtonTensor}
\end{equation}
where $\hat{\mathsf{n}}^{i}=\mathcal{I}^{i}/\mathcal{I}$ and $\mathcal{I}=\sqrt{\mathcal{I}_{i}\mathcal{I}^{i}}$.  
The expression given by Eq.~\eqref{eq:VariableEddingtonTensor} satisfies the trace condition $\mathsf{k}^{i}_{\hspace{4pt}i}=\delta_{ij}\mathsf{k}^{ij}=1$ (cf. Eq.~\eqref{eq:angularMoments}), and the Eddington factor can be obtained from
\begin{equation}
   \psi = \hat{\mathsf{n}}_{i}\,\hat{\mathsf{n}}_{j}\,\mathsf{k}^{ij} = \f{\int_{\bbS^{2}}f\,(\hat{\mathsf{n}}_{i}\ell^{i})^{2}\,d\omega}{\int_{\bbS^{2}}f\,d\omega}.  
\end{equation}

Similarly, the third-order moments can be written as
\begin{equation}
  \mathcal{Q}^{ijk} = \mathsf{q}^{ijk}\,\mathcal{D},
\end{equation}
where we define the symmetric ``heat-flux'' tensor (e.g., \cite{just_etal_2015}),
\begin{equation}
  \mathsf{q}^{ijk} 
  = \f{1}{2}\,
  \Big[\,
    (h-\zeta)\,\Big(\,\hat{\mathsf{n}}^{i}\,\delta^{jk}+\hat{\mathsf{n}}^{j}\,\delta^{ik}
    +\hat{\mathsf{n}}^{k}\,\delta^{ij}\,\Big)+(5\zeta-3h)\,\hat{\mathsf{n}}^{i}\,\hat{\mathsf{n}}^{j}\,\hat{\mathsf{n}}^{k}
  \,\Big],
  \label{eq:heatfluxTensor}
\end{equation}
where $h=\mathcal{I}/\mathcal{D}$ is the flux factor.  
The expression in Eq.~\eqref{eq:heatfluxTensor} satisfies the trace condition $\delta_{jk}\,\mathsf{q}^{ijk}=\mathsf{q}^{ij}_{\vipt j}=\mathcal{I}^{i}/\mathcal{D}$, and the ``heat-flux'' factor can be obtained from 
\begin{equation}
  \zeta = \hat{\mathsf{n}}_{i}\,\hat{\mathsf{n}}_{j}\,\hat{\mathsf{n}}_{k}\,\mathsf{q}^{ijk} = \f{\int_{\bbS^{2}}f\,(\hat{\mathsf{n}}_{i}\ell^{i})^{3}\,d\omega}{\int_{\bbS^{2}}f\,d\omega}.  
\end{equation}
Eqs~\eqref{eq:spectralNumberEquation} and \eqref{eq:spectralNumberFluxEquation} are closed by specifying the Eddington and heat-flux factors in terms of the ``primitive'' moments $\vect{\mathcal{M}}=\big(\,\mathcal{D},\,\vect{\mathcal{I}}\,\big)^{\intercal}$; i.e., $\psi=\psi(\vect{\mathcal{M}})$ and $\zeta=\zeta(\vect{\mathcal{M}})$.  

Assuming a closure for the higher-order tensors, we define the vector of evolved moments,
\begin{equation}
  \vect{\mathcal{U}}(\vect{\mathcal{M}},\vect{v})
  =\left[\begin{array}{c}
  \mathcal{N} \\
  \mathcal{G}_{j}
  \end{array}\right]
  =\left[\begin{array}{c}
  \mathcal{D}+v^{i}\,\mathcal{I}_{i} \\
  \mathcal{I}_{j}+v^{i}\,\mathcal{K}_{ij}
  \end{array}\right],
  \label{eq:conservedMoments}
\end{equation}
the phase-space fluxes,
\begin{equation}
  \vect{\mathcal{F}}^{i}(\vect{\mathcal{U}},\vect{v})
  =\left[\begin{array}{c}
  \mathcal{I}^{i}+v^{i}\,\mathcal{D} \\
  \mathcal{K}^{i}_{\iipt j}+v^{i}\,\mathcal{I}_{j}
  \end{array}\right]
  \quad\text{and}\quad
  \vect{\mathcal{F}}^{\varepsilon}(\vect{\mathcal{U}},\vect{v})
  =-\left[\begin{array}{c}
  \mathcal{K}^{i}_{\iipt k} \\
  \mathcal{Q}^{i}_{\iipt kj}
  \end{array}\right]\,\pd{v^{k}}{i},
  \label{eq:phaseSpaceFluxes}
\end{equation}
and the sources,
\begin{equation}
  \vect{\mathcal{S}}(\vect{\mathcal{U}},\vect{v})
  =\left[\begin{array}{c}
  0 \\
  \mathcal{Q}^{i}_{\hspace{2pt}kj}\,\pd{v^{k}}{i} - \mathcal{I}^{i}\,\pd{v_{j}}{i}
  \end{array}\right]
  \quad\text{and}\quad
  \vect{\mathcal{C}}(\vect{\mathcal{U}})
  =\left[\begin{array}{c}
  \chi\,\big(\,\mathcal{D}_{0}-\mathcal{D}\,\big) \\
  -\kappa\,\mathcal{I}_{j}
  \end{array}\right],
  \label{eq:sources}
\end{equation}
so we can write the two-moment model in the compact form,
\begin{equation}
  \pd{\vect{\mathcal{U}}}{t}
  +\pderiv{}{x^{i}}\Big(\vect{\mathcal{F}}^{i}(\vect{\mathcal{U}},\vect{v})\Big)
  +\f{1}{\varepsilon^{2}}\pderiv{}{\varepsilon}\Big(\varepsilon^{3}\,\vect{\mathcal{F}}^{\varepsilon}(\vect{\mathcal{U}},\vect{v})\Big)
  =\vect{\mathcal{S}}(\vect{\mathcal{U}},\vect{v}) + \vect{\mathcal{C}}(\vect{\mathcal{U}}).
  \label{eq:twoMomentModelCompact}
\end{equation}
Note that the collision term $\vect{\mathcal{C}}$ does not depend explicitly on the three-velocity $\vect{v}$.  
This is a consequence of choosing comoving-frame, momentum-space coordinates.  

The moment closure is defined in terms of the primitive moments $\vect{\mathcal{M}}$, while we will evolve the ``conserved'' moments $\vect{\mathcal{U}}=\big(\,\mathcal{N},\mathcal{G}_{j}\,\big)^{\intercal}$.  
The relation between the conserved and primitive moments can be written as
\begin{equation}
  \vect{\mathcal{U}}
  =\vect{\mathcal{L}}(\vect{\mathcal{M}},\vect{v})\,\vect{\mathcal{M}},
  \label{eq:ConservedToPrimitive}
\end{equation}
where
\begin{equation}
  \vect{\mathcal{L}}(\vect{\mathcal{M}},\vect{v})
  =\left[\begin{array}{cccc}
  1 & v^{1} & v^{2} & v^{3} \\
  v^{k}\,\mathsf{k}_{k1}(\vect{\mathcal{M}}) & 1 & 0 & 0 \\
  v^{k}\,\mathsf{k}_{k2}(\vect{\mathcal{M}}) & 0 & 1 & 0 \\
  v^{k}\,\mathsf{k}_{k3}(\vect{\mathcal{M}}) & 0 & 0 & 1
  \end{array}\right].  
  \label{eq:MatrixFormL}
\end{equation}
When solving Eq.~\eqref{eq:twoMomentModelCompact} numerically, it is necessary to convert between primitive and conserved moments.  
Computing the conserved moments from the primitive moments is straightforward, but obtaining the primitive moments from the conserved moments is nontrivial because, for a given nontrivial velocity $\vect{v}$, there is no closed-form expression for $\vect{\mathcal{M}}$ in terms of $\vect{\mathcal{U}}$, due to the nonlinear dependence $\mathsf{k}_{ij}(\vect{\mathcal{M}})$.  
Thus, the primitive moments must be obtained through an iterative procedure, which we discuss in more detail later, where we will pay particular attention to maintaining physically-realizable moments throughout the iteration process.  
One is faced with a similar problem, e.g., when solving the relativistic Euler and magnetohydrodynamics equations (e.g., \cite{noble_etal_2006}).  
\section{Moment Closure}
\label{sec:closure}

We use the maximum-entropy closure of Minerbo \cite{minerbo_1978} to close the two-moment model.  
We let the admissible set of kinetic distribution functions be
\begin{equation}
  \mathfrak{R} := \left\{\, f ~|~ f\ge 0 \quad\text{and}\quad \f{1}{4\pi}\int_{\mathbb{S}^{2}}f\,d\omega > 0 \,\right\},
\end{equation}
which is then used to define moment realizability as below.%
\footnote{The admissible set $\mathfrak{R}$ and the realizable set $\mathcal{R}$ in this work are appropriate for particle systems obeying Bose--Einstein or Maxwell--Boltzmann statistics. The extension of this work to systems obeying Fermi--Dirac statistics, where $f$ is also bounded from above, is non-trivial and deferred to future work.}
\begin{define}
  The moments $\vect{\mathcal{M}}=(\mathcal{D},\vect{\mathcal{I}})^{\intercal}$ are realizable if they can be obtained from a distribution function $f(\omega)\in\mathfrak{R}$.  
  The set of all realizable moments $\mathcal{R}$ is
  \begin{equation}
    \mathcal{R} := \big\{\,\vect{\mathcal{M}}=(\mathcal{D},\vect{\mathcal{I}})^{\intercal} ~|~ \mathcal{D} > 0 ~\text{and}~ \gamma(\vect{\mathcal{M}})=\mathcal{D}-\mathcal{I}\ge0\,\big\},
    \label{eq:realizableSet}
  \end{equation}
  where the function $\gamma(\vect{\mathcal{M}})$ is concave.  
\end{define}
 
The Minerbo closure is based on the maximum-entropy principle, assuming an entropy functional of the form $s[f] = f\,\ln f - f$.  
The functional form of the distribution maximizing this entropy functional is, in this case, the Maxwell--Boltzmann distribution,
\begin{equation}
  f_{\ME}(\omega) = \exp\big(\alpha+\beta\,(\hat{\mathsf{n}}_{i}\ell^{i})\big),
  \label{eq:fME}
\end{equation}
where $\alpha$ and $\beta$ are determined from the constraints,
\begin{equation}
  \mathcal{D}=\f{1}{4\pi}\int_{\mathbb{S}^{2}}f_{\ME}(\omega)\,d\omega
  \quad\text{and}\quad
  \hat{\mathsf{n}}_{i}\,\mathcal{I}^{i}=\mathcal{I}=\f{1}{4\pi}\int_{\mathbb{S}^{2}}f_{\ME}(\omega)\,(\hat{\mathsf{n}}_{i}\ell^{i})\,d\omega.  
  \label{eq:closureConstraints}
\end{equation}
(Note that $f_{\ME}\in\mathfrak{R}$.)  
Letting $\hat{\mathsf{n}}_{i}\ell^{i}=\mu$, we can write $f_{\ME}$ as a function of $\mu$ and perform a change of variable to write the integrals in Eq.~\eqref{eq:closureConstraints} in terms of $\mu$, which allows us to evaluate the constraints in Eq.~\eqref{eq:closureConstraints} analytically (cf. \cite{minerbo_1978}) and leads to
\begin{equation}
  \mathcal{D}
  =e^{\alpha}\,\sinh(\beta)/\beta
  \quad\text{and}\quad
  \mathcal{I}
  =e^{\alpha}\,\big(\,\beta\,\cosh(\beta)-\sinh(\beta)\,\big)/\beta^{2}.
\end{equation}
The flux factor can then be written solely as a function of $\beta$; i.e., $h=\coth(\beta)-1/\beta=: L(\beta)$, where $L(\beta)$ is the Langevin function.  
Thus, for a given $h$, we can obtain $\beta(h)=L^{-1}(h)$.  
Note that $L(\beta)\in(-1,1)$, so that solutions for $\beta$ only exist for $h<1$ (i.e., for $\vect{\mathcal{M}}$ in the interior of $\mathcal{R}$).  
Using the maximum-entropy distribution in Eq.~\eqref{eq:fME}, direct calculations give, for $h \in[0,1)$,
\begin{equation}
  \psi(h)
  = 1 - \f{2\,h}{\beta(h)}
  \quad\text{and}\quad
  \zeta(h)
  = \coth(\beta(h))-3\psi(h)/\beta(h)  \label{eq:psiZetaMinerbo}.  
\end{equation}
When $h=1$ (i.e., when $\vect{\mathcal{M}}$ is on the boundary of $\mathcal{R}$), it is known \cite{fialkow1991recursiveness} that, for the two-moment case considered here, the underlying kinetic distribution is a weighted Dirac delta function.  
In this case, the moment closure is given by the associated Eddington and heat-flux factors $\psi(1) = \zeta(1) = 1$.
Instead of inverting the Langevin function for $\beta$, the Eddington and heat-flux factors, $\psi$ and $\zeta$, can be accurately approximated by polynomials in $h$.  
For $\psi$, the following polynomial approximation leads to a relative approximation error, $\delta\psi:=(\psi - \psi_{\mathsf{a}}) / \psi$, within $1\%$ \cite{cernohorskyBludman_1994}:
\begin{equation}
  \psi_{\mathsf{a}}(h) = \f{1}{3} + \f{2}{15}\,\big(\,3\,h^{2} - h^{3} + 3\,h^{4}\,\big).
  \label{eq:psiApproximate}
\end{equation}
For $\zeta$, the following approximation, given by \cite{just_etal_2015},
\begin{equation}
  \zeta_{\mathsf{a}}(h) = h\,\big(\,45 + 10\,h - 12\,h^{2} - 12\,h^{3} + 38\,h^{4} - 12\,h^{5} + 18\,h^{6}\,\big) / 75,
  \label{eq:zetaApproximate}
\end{equation}
has a relative approximation error, $\delta\zeta := (\zeta - \zeta_{\mathsf{a}}) / \zeta$, lower than $3\%$ .  
In Figure~\ref{fig:eddingtonFactors}, we plot the Eddington factor, $\psi$, the heat-flux factor, $\zeta$, and their polynomial approximations, $\psi_{\mathsf{a}}$ and $\zeta_{\mathsf{a}}$, and report the relative approximation error versus the flux factor, $h$.
\begin{figure}[h]
  \centering
  \begin{tabular}{cc}
    \includegraphics[width=0.5\textwidth]{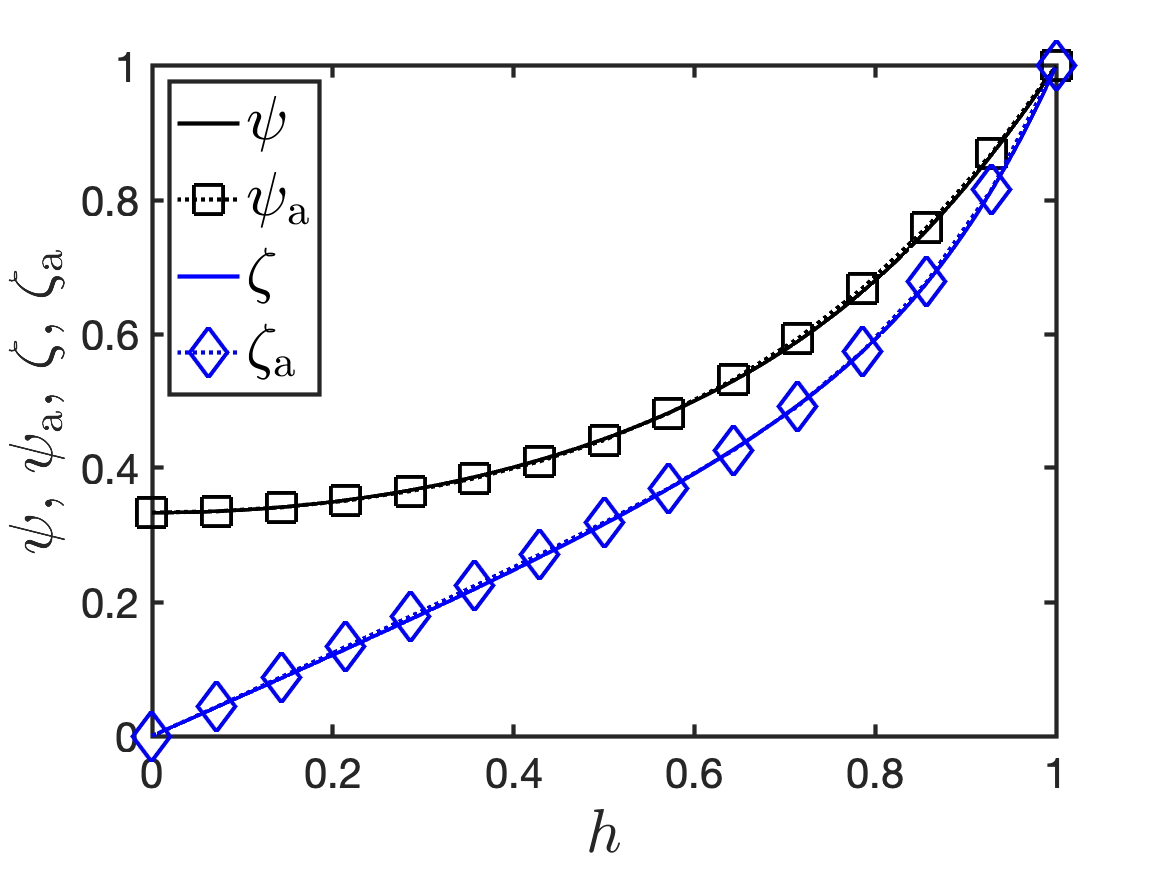} &
    \includegraphics[width=0.5\textwidth]{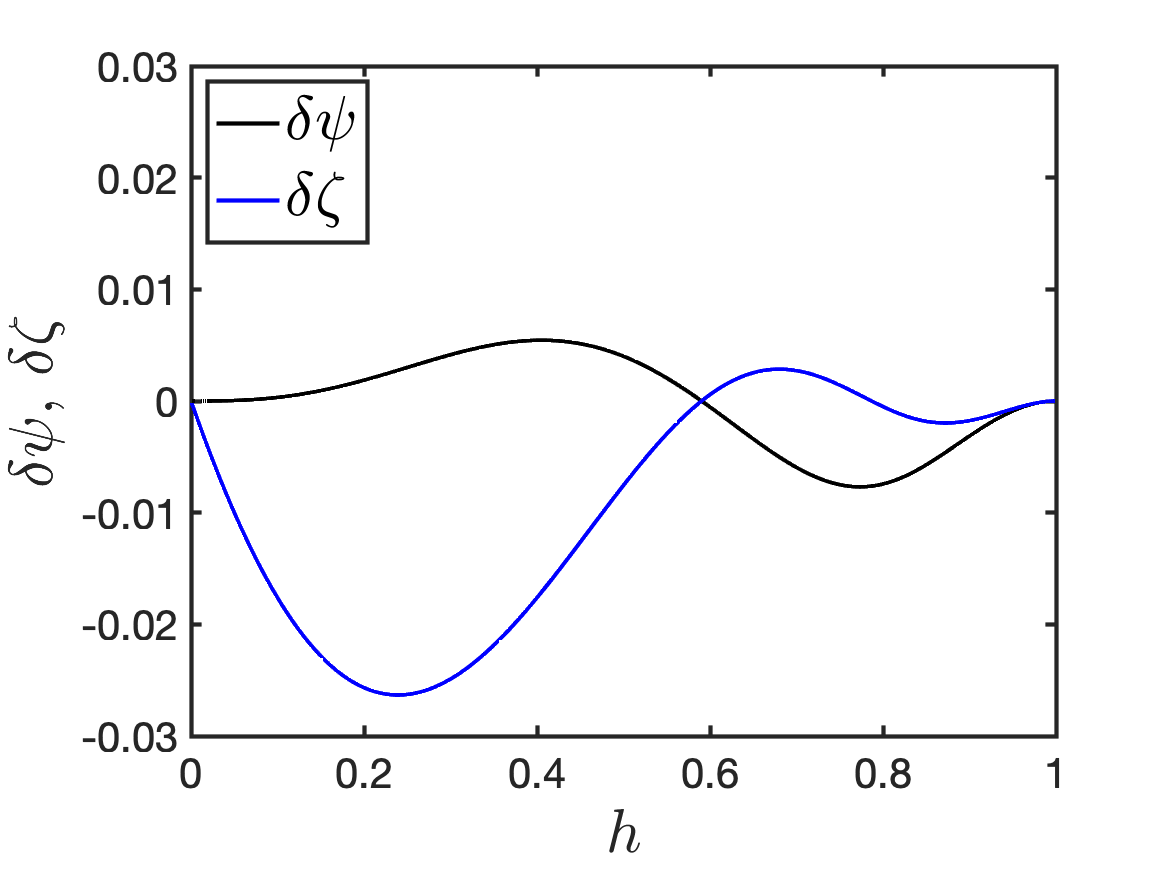}
  \end{tabular}
   \caption{The left plot shows the values of the Eddington factor, $\psi$, the heat-flux factor, $\zeta$, and their polynomial approximations, $\psi_{\mathsf{a}}$ and $\zeta_{\mathsf{a}}$, versus the flux factor, $h$. The right plot illustrates the relative errors, $\delta\psi=(\psi - \psi_{\mathsf{a}}) / \psi$ and $\delta\zeta = (\zeta - \zeta_{\mathsf{a}}) / \zeta$, versus $h$.}
  \label{fig:eddingtonFactors}
\end{figure}
It can be seen from Figure~\ref{fig:eddingtonFactors} that $\psi_{\mathsf{a}}$ and $\zeta_{\mathsf{a}}$ are quite accurate polynomial approximations to the Eddington and heat-flux factors. 
Thus, the approximate closure is used in the numerical tests for the two-moment model reported in Section~\ref{sec:numericalResults}, in which the two-moment model is closed by plugging the algebraic expressions given in Eqs.~\eqref{eq:psiApproximate} and \eqref{eq:zetaApproximate} into the Eddington and heat-flux tensors in Eqs.~\eqref{eq:VariableEddingtonTensor} and \eqref{eq:heatfluxTensor}, respectively.  

Next we explore the wave propagation speeds of the moment system in Eq.~\eqref{eq:twoMomentModelCompact} with the approximate Minerbo closure introduced above.  
To calculate the wave speed, we compute the maximum magnitude of the eigenvalues of the spatial-flux Jacobians with respect to the conserved moments, $(\partial_{\vect{\mathcal{U}}}\vect{\mathcal{F}}^{i})$, $i=1,2,3$.
Specifically, we compute the spatial-flux Jacobian by
\begin{equation}
	\Big(\pderiv{\vect{\mathcal{F}}^{i}}{\vect{\mathcal{U}}}\Big)
	=\Big(\pderiv{\vect{\mathcal{F}}^{i}}{\vect{\mathcal{M}}}\Big)\Big(\pderiv{\vect{\mathcal{U}}}{\vect{\mathcal{M}}}\Big)^{-1},
\end{equation}
where
\begin{equation}
	\Big(\pderiv{\vect{\mathcal{U}}}{\vect{\mathcal{M}}}\Big)_{ij}
	=\left[\begin{array}{cc}
		1 & v^{j} \\
		v^{k}\Big[\Big(\pderiv{\mathsf{k}_{ik}}{\mathcal{D}}\Big)\,\mathcal{D}+\mathsf{k}_{ik}\Big] &
		\delta_{ij} + v^{k}\Big(\pderiv{\mathsf{k}_{ik}}{\mathcal{I}^{j}}\Big)\,\mathcal{D}
	\end{array}\right]
\end{equation}
and 
\begin{equation}
	\Big(\pderiv{\vect{\mathcal{F}}^{i}}{\vect{\mathcal{M}}}\Big)
	=\left[\begin{array}{cccc}
		v^{i} & \delta^{i1} & \delta^{i2} & \delta^{i3} \\
		\Big(\pderiv{\mathsf{k}^{i}_{\iipt 1}}{\mathcal{D}}\Big)\,\mathcal{D}+\mathsf{k}^{i}_{\iipt 1} & 
		\Big(\pderiv{\mathsf{k}^{i}_{\iipt 1}}{\mathcal{I}^{1}}\Big)\,\mathcal{D}+v^{i} & 
		\Big(\pderiv{\mathsf{k}^{i}_{\iipt 1}}{\mathcal{I}^{2}}\Big)\,\mathcal{D} & 
		\Big(\pderiv{\mathsf{k}^{i}_{\iipt 1}}{\mathcal{I}^{3}}\Big)\,\mathcal{D} \\
		\Big(\pderiv{\mathsf{k}^{i}_{\iipt 2}}{\mathcal{D}}\Big)\,\mathcal{D}+\mathsf{k}^{i}_{\iipt 2} &
		\Big(\pderiv{\mathsf{k}^{i}_{\iipt 2}}{\mathcal{I}^{1}}\Big)\,\mathcal{D} &
		\Big(\pderiv{\mathsf{k}^{i}_{\iipt 2}}{\mathcal{I}^{2}}\Big)\,\mathcal{D}+v^{i} &
		\Big(\pderiv{\mathsf{k}^{i}_{\iipt 2}}{\mathcal{I}^{3}}\Big)\,\mathcal{D} \\
		\Big(\pderiv{\mathsf{k}^{i}_{\iipt 3}}{\mathcal{D}}\Big)\,\mathcal{D}+\mathsf{k}^{i}_{\iipt 3} &
		\Big(\pderiv{\mathsf{k}^{i}_{\iipt 3}}{\mathcal{I}^{1}}\Big)\,\mathcal{D} &
		\Big(\pderiv{\mathsf{k}^{i}_{\iipt 3}}{\mathcal{I}^{2}}\Big)\,\mathcal{D} &
		\Big(\pderiv{\mathsf{k}^{i}_{\iipt 3}}{\mathcal{I}^{3}}\Big)\,\mathcal{D}+v^{i}
	\end{array}\right]
\end{equation}
follow from the definitions given in Eqs.~\eqref{eq:conservedMoments} and \eqref{eq:phaseSpaceFluxes}, respectively.
With this expression, we are able to demonstrate the following proposition, which states that the maximum wave speed is bounded above by the speed of light in a one-dimensional setting.
\begin{prop}\label{prop:waveSpeed}
	Suppose $\vect{v}=(v,0,0)$ and $\bcI=(\cI,0,0)$, with $|v|\leq 1$, $|\cI|\leq \cD$, and $\cD>0$. Let $\lambda_{\max}:=\max\big(|\lambda(\partial_{\vect{\mathcal{U}}}\vect{\mathcal{F}}^{1})|\big)$ denote the maximum magnitude of the spatial-flux Jacobian eigenvalues. Then $\lambda_{\max}\leq1$.
\end{prop}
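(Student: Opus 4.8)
The plan is to exploit the one-dimensional symmetry to block-diagonalize the generalized eigenvalue problem $\det\big(\partial_{\vect{\mathcal{M}}}\vect{\mathcal{F}}^{1}-\lambda\,\partial_{\vect{\mathcal{M}}}\vect{\mathcal{U}}\big)=0$ (equivalent to the eigenproblem for $\partial_{\vect{\mathcal{U}}}\vect{\mathcal{F}}^{1}$ without inverting $\partial_{\vect{\mathcal{M}}}\vect{\mathcal{U}}$), and then to reduce every eigenvalue to a relativistic velocity addition of a ``rest-frame'' speed with $v$. Since $\vect{\mathcal{I}}=(\mathcal{I},0,0)$, we have $\hat{\mathsf{n}}=(1,0,0)$, so the Eddington tensor is diagonal, $\mathsf{k}=\mathrm{diag}\big(\psi,\tfrac{1-\psi}{2},\tfrac{1-\psi}{2}\big)$. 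Under the reflection $x^{2}\mapsto-x^{2}$ the base state is invariant while $\mathcal{I}^{2}$, $\mathcal{G}_{2}$, and $\mathcal{F}^{1}_{\mathcal{G}_{2}}$ are odd and all other components even; hence at $\mathcal{I}^{2}=\mathcal{I}^{3}=0$ both Jacobians $\partial_{\vect{\mathcal{M}}}\vect{\mathcal{U}}$ and $\partial_{\vect{\mathcal{M}}}\vect{\mathcal{F}}^{1}$ carry no even--odd coupling. The same argument with $x^{3}\mapsto-x^{3}$ decouples $\mathcal{G}_{3}$, so the problem splits into a longitudinal $2\times2$ block in $(\mathcal{D},\mathcal{I}^{1})\mapsto(\mathcal{N},\mathcal{G}_{1})$ and two identical transverse scalars. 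I would note that we may take $\mathcal{I}\ge0$ without loss of generality, since $x^{1}\mapsto-x^{1}$ together with $v\mapsto-v$ preserves $|\lambda|$ and the hypotheses, and I would record the elementary lemma that, for $|v|\le1$ and $|z|\le1$, the map $z\mapsto(z+v)/(1+zv)$ stays in $[-1,1]$ because $(1+zv)^{2}-(z+v)^{2}=(1-z^{2})(1-v^{2})\ge0$.

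Next I would compute the transverse eigenvalue. Writing $h=\mathcal{I}/\mathcal{D}$ and differentiating $\mathsf{k}^{12}=\tfrac12(3\psi-1)\hat{\mathsf{n}}^{1}\hat{\mathsf{n}}^{2}$, one finds $\mathcal{D}\,\partial\mathsf{k}^{12}/\partial\mathcal{I}^{2}=a:=(3\psi-1)/(2h)$ at the base point (the $\psi'$ contribution drops because $\hat{\mathsf{n}}^{1}\hat{\mathsf{n}}^{2}=0$ and $\partial h/\partial\mathcal{I}^{2}=0$ there). From $\mathcal{F}^{1}_{\mathcal{G}_{2}}=\mathsf{k}^{12}\,\mathcal{D}+v\,\mathcal{I}^{2}$ and $\mathcal{G}_{2}=\mathcal{I}^{2}+v\,\mathsf{k}_{12}\,\mathcal{D}$ the scalar block yields
\begin{equation}
  \lambda_{\perp}=\frac{\partial_{\mathcal{I}^{2}}\mathcal{F}^{1}_{\mathcal{G}_{2}}}{\partial_{\mathcal{I}^{2}}\mathcal{G}_{2}}=\frac{a+v}{1+a\,v}.
\end{equation}
By the lemma it then suffices to show $|a|\le1$, i.e. $|3\psi-1|\le2h$. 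The lower bound is immediate since $\psi\ge\tfrac13$; the upper bound $\psi\le(1+2h)/3$ I would verify directly for the approximate closure $\psi_{\mathsf{a}}$ of Eq.~\eqref{eq:psiApproximate}, where $\psi_{\mathsf{a}}-(1+2h)/3=\tfrac{2h}{15}\,(3h^{3}-h^{2}+3h-5)$ is nonpositive on $[0,1]$ because the cubic factor is increasing and vanishes at $h=1$.

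For the longitudinal block, using $\mathsf{k}_{11}=\psi(h)$ I would assemble $\partial_{\vect{\mathcal{M}}}\vect{\mathcal{U}}$ and $\partial_{\vect{\mathcal{M}}}\vect{\mathcal{F}}^{1}$ in the variables $(\mathcal{D},\mathcal{I}^{1})$ and expand the characteristic determinant to
\begin{equation}
  (v-\lambda)^{2}+\psi'\,(v-\lambda)(1-\lambda v)-(\psi-h\psi')\,(1-\lambda v)^{2}=0.
\end{equation}
The crucial observation is that dividing by $(1-\lambda v)^{2}$ and substituting $\mu=(v-\lambda)/(1-\lambda v)$ collapses this to
\begin{equation}
  \mu^{2}+\psi'\,\mu-(\psi-h\psi')=0,
\end{equation}
which is exactly the characteristic equation of the \emph{non-moving} ($v=0$) two-moment system; inverting the substitution gives $\lambda=(v-\mu)/(1-\mu v)$, again of velocity-addition form. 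By the lemma, $|\lambda|\le1$ follows once $|\mu|\le1$ for both roots. I would establish this by the standard conditions for a quadratic with unit leading coefficient to have both roots in $[-1,1]$: nonnegativity of the discriminant $\psi'^{2}+4(\psi-h\psi')$ (which is also the hyperbolicity condition), nonnegativity of $p(1)=(1-\psi)+(1+h)\psi'$ and $p(-1)=(1-\psi)-(1-h)\psi'$, and the vertex bound $0\le\psi'\le2$ --- each an explicit polynomial inequality in $h\in[0,1]$ for $\psi_{\mathsf{a}}$.

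Assembling $|\lambda_{1}|,|\lambda_{2}|,|\lambda_{\perp}|\le1$ gives $\lambda_{\max}\le1$. I expect the main obstacle to be the closure-dependent polynomial inequalities --- $|3\psi-1|\le2h$ for the transverse mode and the root-location conditions for the longitudinal mode --- which are explicit but tedious for $\psi_{\mathsf{a}}$, together with careful treatment of the degenerate limits $h\to0$ (where $\hat{\mathsf{n}}$ is undefined but $a\to0$ and $\mu\to\pm1/\sqrt{3}$) and $h\to1$ (where equality $|\lambda|=1$ is attained, and where the substitution's validity $1-\lambda v\neq0$ must be checked by continuity). I would also confirm invertibility of $\partial_{\vect{\mathcal{M}}}\vect{\mathcal{U}}$ so that the reduction to the generalized eigenvalue problem is legitimate.
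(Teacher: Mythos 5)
Your proposal is correct, and it takes a genuinely different --- and substantially more complete --- route than the paper. The paper's proof merely assembles the $2\times2$ flux Jacobian in Eq.~\eqref{eq:fluxJacobian}, declares the eigenvalue analysis ``straightforward but tedious,'' and verifies the bound numerically in Figure~\ref{fig:WaveSpeed}; you instead give a structural argument. Your key move --- dividing the generalized characteristic polynomial $(v-\lambda)^{2}+\psi'(v-\lambda)(1-\lambda v)-(\psi-h\psi')(1-\lambda v)^{2}=0$ by $(1-\lambda v)^{2}$ and substituting $\mu=(v-\lambda)/(1-\lambda v)$ --- correctly collapses the moving-frame eigenproblem onto the $v=0$ one (I checked that your quadratic in $\lambda$ reproduces the trace and determinant of the paper's Eq.~\eqref{eq:fluxJacobian}), so the whole velocity dependence is absorbed into the relativistic addition lemma and only closure-dependent inequalities in $h$ remain. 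Your transverse analysis also repairs an imprecision in the paper: the statement that ``the entries associated with the $x^{2}$ and $x^{3}$ axes are all zeros'' overlooks the nonzero diagonal entries $\partial_{\mathcal{G}_{2}}\mathcal{F}^{1}_{\mathcal{G}_{2}}=(a+v)/(1+av)$ with $a=(3\psi-1)/(2h)=\phi_{2}/2$ in the notation of \ref{sec:polynomial_bounds}, so for the proposition as stated (about the full Jacobian) your bound $|3\psi_{\mathsf{a}}-1|\le 2h$ is actually needed, and your factorization $\psi_{\mathsf{a}}-(1+2h)/3=\tfrac{2h}{15}(3h^{3}-h^{2}+3h-5)$ establishes it. The two longitudinal root-location conditions you did not fully write out also check out: $p(-1)\ge0$ reduces to $9h^{4}-14h^{3}+6h^{2}-6h+5=(h-1)^{2}(9h^{2}+4h+5)\ge0$, and $0\le\psi_{\mathsf{a}}'\le2$ is immediate from $\psi_{\mathsf{a}}'=\tfrac{2h}{15}(12h^{2}-3h+6)$. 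The one item you flag but do not discharge is nonnegativity of the discriminant $(\psi_{\mathsf{a}}')^{2}+4(\psi_{\mathsf{a}}-h\psi_{\mathsf{a}}')$ (which vanishes at $h=1$); that is a genuine remaining polynomial verification of the same flavor as the bounds in Lemma~\ref{lemma:polynomial_bounds}, which the paper itself only verifies graphically. In short: the paper's approach buys brevity at the cost of rigor, while yours yields an actual proof plus the physically illuminating fact that every wave speed is the relativistic composition of a rest-frame speed with $v$.
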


\begin{proof}
	In this setting, the spatial-flux Jacobian reduces to a 2-by-2 matrix, because the entries associated with the $x^2$ and $x^3$ axes are all zeros. 
	In addition, the only nonzero component of the Eddington tensor is $\mathsf{k}_{11}$, which takes the values of the (approximate) Eddington factor $\psi_{\mathsf{a}}$. Thus, the partial derivatives $\pderiv{\mathsf{k}_{11}}{\mathcal{D}}$ and $\pderiv{\mathsf{k}^{1}_{\iipt 1}}{\mathcal{I}^{1}}$ become $\pderiv{\psi_{\mathsf{a}}}{\mathcal{D}}$ and $\pderiv{\psi_{\mathsf{a}}}{\mathcal{I}^{1}}$, respectively.
	Evaluating these partial derivatives using the chain rule then leads to
	\begin{equation}\label{eq:fluxJacobian}
	\Big(\pderiv{\vect{\mathcal{F}}^{1}}{\vect{\mathcal{U}}}\Big) = 
	\frac{1}{	1  - v^2 \psi_{\mathsf{a}} + v(1 + v h)\psi_{\mathsf{a}}^\prime}
	\left[\begin{array}{cc}
	v  - v \psi_{\mathsf{a}} + v(v +  h)\psi_{\mathsf{a}}^\prime	 &  1 - v^2\\
	(1 - v^2) (\psi_{\mathsf{a}} - h \psi_{\mathsf{a}}^{\prime})	 & v  - v \psi_{\mathsf{a}} + (1 + v h)\psi_{\mathsf{a}}^\prime
	\end{array}\right],
	\end{equation}
where $\psi_{\mathsf{a}}^\prime$ denotes the derivative of the approximate Eddington factor, $\psi_{\mathsf{a}}$, in Eq.~\eqref{eq:psiApproximate} with respect to the flux factor, $h$.
To prove the claim, we need to show that the eigenvalues of $(\partial_{\vect{\mathcal{U}}}\vect{\mathcal{F}}^{1})$ are in $[-1,1]$.
Since $\psi_{\mathsf{a}}$ and $\psi_{\mathsf{a}}^\prime$ are both one-dimensional polynomials in $h$, the proof of the claim is straightforward but tedious. Here we omit the detailed analysis and show in Figure~\ref{fig:WaveSpeed} the computed values of $\lambda_{\max}$ for $v\in[0,1]$ and $h\in[0,1]$, which illustrates that $\lambda_{\max}$ is bounded from above by one.
\end{proof}

\begin{figure}[h]
	\centering
	\subfloat[One-dimensional case]{\label{fig:WaveSpeed} \includegraphics[width=0.45\textwidth]{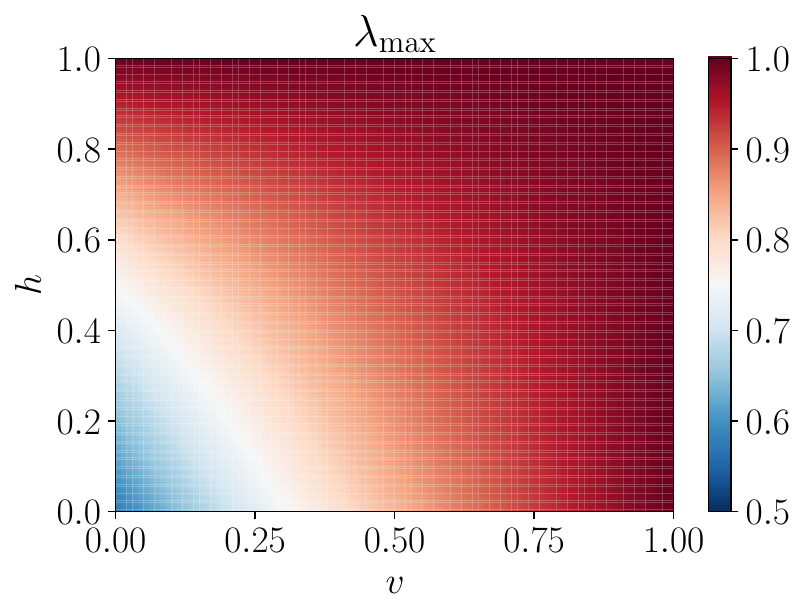}}\,\,
	\subfloat[Three-dimensional case]{\label{fig:WaveSpeedViolation}\includegraphics[width=0.475\textwidth]{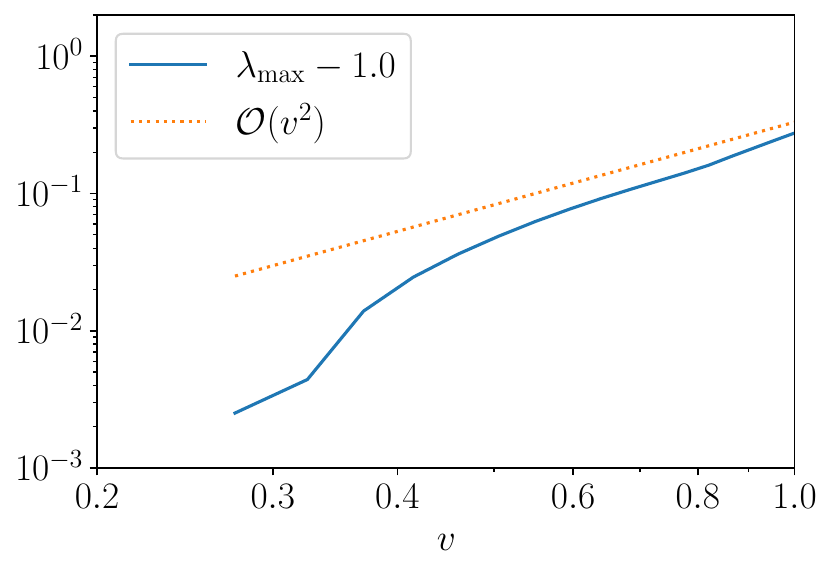}}
	\caption{Figures on both panels show the value of $\lambda_{\max}$, the maximum magnitude of the spatial-flux Jacobian eigenvalues in various configurations. Figure~\ref{fig:WaveSpeed} plots the computed values of $\lambda_{\max} = \max(\lambda(\partial_{\vect{\mathcal{U}}}\vect{\mathcal{F}}^{1}))$ at $v\in[0,1]$ and $h\in[0,1]$ in a simplified one-dimensional case considered in Proposition~\ref{prop:waveSpeed}. The result verifies the claim $\lambda_{\max}\leq1$ in Proposition~\ref{prop:waveSpeed}. Figure~\ref{fig:WaveSpeedViolation} shows that, in the three-dimensional case, the maximum wave speed of the two-moment model Eq.~\eqref{eq:twoMomentModelCompact} scales as $1+\mathcal{O}(v^2)$.
	Here, the maximum wave speed is given by $\displaystyle\lambda_{\max} := \max_{i=1,2,3}\max_{\bcU\in\cR}(\lambda_{\max}^{i})$, where $\lambda_{\max}^{i} = \max(\lambda(\partial_{\vect{\mathcal{U}}}\vect{\mathcal{F}}^{i}))$.}
\end{figure}

This result is an extension of the wave speed analysis in \cite[Section~6.2]{lowrie_etal_2001}, in which it is assumed that the Eddington factor is independent of the flux factor; i.e., that $\psi_{\mathsf{a}}^{\prime}=0$.

\begin{rem}
	In the three-dimensional case, the magnitude of the eigenvalues of the spatial-flux Jacobian are bounded above by $1+\mathcal{O}(v^2)$, which we provide verification of in Figure~\ref{fig:WaveSpeedViolation}. 
	Although the upper bound can exceed unity, which implies that the wave speed of the two-moment model in Eq.~\eqref{eq:twoMomentModelCompact} can become unphysical, it shows that including the velocity-dependent term in the time derivatives in Eqs.~\eqref{eq:spectralNumberEquation} and \eqref{eq:spectralNumberFluxEquation} improves the maximum wave speed estimation from $1+\mathcal{O}(v)$ (see, e.g., discussions in \cite{lowrie_etal_2001}) to $1+\mathcal{O}(v^2)$.  
	Note that in the design of the numerical flux discussed in Section~\ref{sec:dgMethod}, we use unity as the estimate for the maximum wave speed, which appears to be valid in the regimes for which the $\cO(v)$ model is applicable.  
	In particular, unphysical wave speeds are not observed for $v\leq0.25$, as shown in Figure~\ref{fig:WaveSpeedViolation}, for which we do not yet have a theoretical explanation.
\end{rem}

\section{Numerical Scheme}
\label{sec:discretization}
\subsection{Discontinuous Galerkin Phase-Space Discretization}
\label{sec:dgMethod}

We use the DG method to discretize Eq.~\eqref{eq:twoMomentModelCompact} in phase-space.  
To this end we divide the phase-space domain $D=D_{\varepsilon}\times D_{\vect{x}}$ into a disjoint union $\mathcal{T}$ of open elements $\vect{K}=K_{\varepsilon}\times\vect{K}_{\vect{x}}$, so that $D=\cup_{\vect{K}\in\mathcal{T}}\vect{K}$.  
Here, $D_{\varepsilon}$ is the energy domain and $D_{\vect{x}}$ is the $d_{\vect{x}}$-dimensional spatial domain, and
\begin{equation}
  \vect{K}_{\vect{x}} = \big\{\,\vect{x} \colon x^{i} \in K_{\vect{x}}^{i} := (x_{\lo}^{i},x_{\hi}^{i}) ~|~ i=1,\ldots,d_{\vect{x}}\,\big\}
  \quad\text{and}\quad
  K_{\varepsilon} := (\varepsilon_{\lo},\varepsilon_{\hi}),
\end{equation}
where $x_{\lo}^{i}$ ($x_{\hi}^{i}$) is the low (high) boundary of the spatial element in the $i$th spatial dimension, and $\varepsilon_{\lo}$ ($\varepsilon_{\hi}$) is the low (high) boundary of the energy element.  
We also define $\tau(\varepsilon)=\varepsilon^{2}$ and denote the volume of a phase-space element by
\begin{equation}
  |\vect{K}| = \int_{\vect{K}}\tau\,d\varepsilon\,d\vect{x}, \quad\text{where}\quad
  d\vect{x} = \prod_{i=1}^{d_{\vect{x}}}dx^{i}.  
\end{equation}
The length of an element in the $i$th dimension is $|K_{\vect{x}}^{i}|=x_{\hi}^{i}-x_{\lo}^{i}$, and $|K_{\varepsilon}|=\varepsilon_{\hi}-\varepsilon_{\lo}$.  
We also define the phase-space surface element $\tilde{\vect{K}}^{i}=(\times_{j\ne i}K_{\vect{x}}^{j})\times K_{\varepsilon}$ and the spatial coordinates orthogonal to the $i$th spatial dimension $\tilde{\vect{x}}^{i}$, so that as a set $\vect{x}=\{\,x^{i},\tilde{\vect{x}}^{i}\,\}$.  
Finally, we let $\vect{z}=(\varepsilon,\vect{x})$ denote the phase-space coordinate, and define $d\vect{z}=d\varepsilon d\vect{x}$, $d\tilde{\vect{z}}^{i}=d\varepsilon d\tilde{\vect{x}}^{i}$, and let, again as a set, $\tilde{\vect{z}}^{i}=\{\varepsilon,\tilde{\vect{x}}^{i}\}$.  

On each element $\vect{K}$, we let the approximation space for the DG method be
\begin{equation}
  \mathbb{V}_{h}^{k}(\vect{K})
  =\big\{\,\varphi_{h} \colon \varphi_{h}|_{\vect{K}}\in\mathbb{Q}^{k}(\vect{K}), \forall \vect{K}\in\mathcal{T}\,\big\},
  \label{eq:approximationSpace}
\end{equation}
where $\mathbb{Q}^{k}(\vect{K})$ is the phase-space tensor product of one-dimensional polynomials of maximal degree $k$.  
We will denote the approximation space on spatial elements as $\mathbb{V}_{h}^{k}(\vect{K}_{\vect{x}})$, which is defined as in Eq.~\eqref{eq:approximationSpace}, where $\mathbb{Q}^{k}(\vect{K}_{\vect{x}})$ is the spatial tensor product of one-dimensional polynomials of maximal degree $k$.  
We will use $\mathbb{V}_{h}^{k}(\vect{K}_{\vect{x}})$ to approximate the fluid three-velocity $\vect{v}=(v^{1},v^{2},v^{3})$, which will be assumed to be a given function of $\vect{x}$.  

The semi-discrete DG problem is then to find $\vect{\mathcal{U}}_{h}\in\mathbb{V}_{h}^{k}(\vect{K})$, which approximates $\vect{\mathcal{U}}$ in Eq.~\eqref{eq:twoMomentModelCompact}, such that
\begin{equation}
  \big(\,\pd{\vect{\mathcal{U}}_{h}}{t},\varphi_{h}\,\big)_{\vect{K}}
  =\vect{\mathcal{B}}_{h}\big(\,\vect{\mathcal{U}}_{h},\vect{v}_{h},\varphi_{h}\,\big)_{\vect{K}}
  + \big(\,\vect{\mathcal{C}}(\vect{\mathcal{U}}_{h}),\varphi_{h}\,\big)_{\vect{K}},
  \label{eq:dgSemiDiscrete}
\end{equation}
for all test functions $\varphi_{h}\in\mathbb{V}_{h}^{k}(\vect{K})$, $\vect{v}_{h}\in\mathbb{V}_{h}^{k}(\vect{K}_{\vect{x}})$, and all $\vect{K}\in\mathcal{T}$.  
In Eq.~\eqref{eq:dgSemiDiscrete}, we have defined the inner product
\begin{equation}
  \big(\,a_{h},b_{h}\,\big)_{\vect{K}}
  =\int_{\vect{K}}a_{h}\,b_{h}\,\tau\,d\vect{z},
  \quad a_{h},b_{h}\in\mathbb{V}_{h}^{k}(\vect{K})
\end{equation}
and the phase-space advection operator
\begin{equation}
  \vect{\mathcal{B}}_{h}\big(\,\vect{\mathcal{U}}_{h},\vect{v}_{h},\varphi_{h}\,\big)_{\vect{K}}
  =\vect{\mathcal{B}}_{h}^{\vect{x}}\big(\,\vect{\mathcal{U}}_{h},\vect{v}_{h},\varphi_{h}\,\big)_{\vect{K}}
  +\vect{\mathcal{B}}_{h}^{\varepsilon}\big(\,\vect{\mathcal{U}}_{h},\vect{v}_{h},\varphi_{h}\,\big)_{\vect{K}}
  +\big(\,\vect{\mathcal{S}}(\vect{\mathcal{U}}_{h},\vect{v}_{h}),\varphi_{h}\,\big)_{\vect{K}},
  \label{eq:bilinearFormAdvection}
\end{equation}
where the contribution from position space fluxes is
\begin{align}
  \vect{\mathcal{B}}_{h}^{\vect{x}}\big(\,\vect{\mathcal{U}}_{h},\vect{v}_{h},\varphi_{h}\,\big)_{\vect{K}}
  &=-\sum_{i=1}^{d_{\vect{x}}}\int_{\tilde{\vect{K}}^{i}}
  \Big[\,
    \widehat{\vect{\mathcal{F}}^{i}}\big(\vect{\mathcal{U}}_{h},\vect{v}_{h}\big)\,\varphi_{h}|_{x_{\hi}^{i}}
    -\widehat{\vect{\mathcal{F}}^{i}}\big(\vect{\mathcal{U}}_{h},\vect{v}_{h}\big)\,\varphi_{h}|_{x_{\lo}^{i}}
  \,\Big]\,\tau\,d\tilde{\vect{z}}^{i} \nonumber \\
  &\hspace{12pt}
  +\sum_{i=1}^{d_{\vect{x}}}\big(\,\vect{\mathcal{F}}^{i}(\vect{\mathcal{U}}_{h},\vect{v}_{h}),\pd{\varphi_{h}}{i}\,\big)_{\vect{K}}
  \label{eq:bilinearFormAdvectionPosition}
\end{align}
and the contribution from energy space fluxes is
\begin{align}
  \vect{\mathcal{B}}_{h}^{\varepsilon}\big(\,\vect{\mathcal{U}}_{h},\vect{v}_{h},\varphi_{h}\,\big)_{\vect{K}}
  &=-\int_{\vect{K}_{\vect{x}}}
  \Big[\,
    \varepsilon^{3}\,\widehat{\vect{\mathcal{F}}^{\varepsilon}}\big(\vect{\mathcal{U}}_{h},\vect{v}_{h}\big)\,\varphi_{h}|_{\varepsilon_{\hi}}
    -\varepsilon^{3}\,\widehat{\vect{\mathcal{F}}^{\varepsilon}}\big(\vect{\mathcal{U}}_{h},\vect{v}_{h}\big)\,\varphi_{h}|_{\varepsilon_{\lo}}
  \,\Big]\,d\vect{x} \nonumber \\
  &\hspace{12pt}
  +\big(\,\varepsilon\,\vect{\mathcal{F}}^{\varepsilon}(\vect{\mathcal{U}}_{h},\vect{v}_{h}),\pd{\varphi_{h}}{\varepsilon}\,\big)_{\vect{K}}.  
  \label{eq:bilinearFormAdvectionEnergy}
\end{align}

In Eq.~\eqref{eq:bilinearFormAdvectionPosition}, $\widehat{\vect{\mathcal{F}}^{i}}\big(\vect{\mathcal{U}}_{h},\vect{v}_{h}\big)$ is a numerical flux approximating the flux on the surface $\tilde{\vect{K}}^{i}$, which is evaluated using the global Lax--Friedrichs (LF) flux
\begin{equation}
  \widehat{\vect{\mathcal{F}}^{i}}\big(\vect{\mathcal{U}}_{h},\vect{v}_{h}\big)|_{x^{i}}
  =\mathscr{F}_{\LaxF}^{i}\big(\vect{\mathcal{U}}_{h}(x^{i,-},\tilde{\vect{z}}^{i}),\vect{\mathcal{U}}_{h}(x^{i,+},\tilde{\vect{z}}^{i}),\hat{\vect{v}}(x^{i},\tilde{\vect{x}}^{i})\big),
  \label{eq:numericalFluxPosition}
\end{equation}
where $x^{i,\mp}=\lim_{\delta\to0^{+}}x^{i}\mp\delta$ and where we write the global LF flux function as
\begin{equation}
  \mathscr{F}_{\LaxF}^{i}\big(\vect{\mathcal{U}}_{a},\vect{\mathcal{U}}_{b},\hat{\vect{v}}\big)
  =\f{1}{2}\,
  \big(\,
    \vect{\mathcal{F}}^{i}(\vect{\mathcal{U}}_{a},\hat{\vect{v}})+\vect{\mathcal{F}}^{i}(\vect{\mathcal{U}}_{b},\hat{\vect{v}})
    -\alpha^{i}\,(\,\vect{\mathcal{U}}_{b}[\hat{\vect{v}}^{i}]-\vect{\mathcal{U}}_{a}[\hat{\vect{v}}^{i}]\,)
  \,\big),
  \label{eq:globalLF}
\end{equation}
where $\alpha^{i}$ is the largest (absolute) eigenvalue of the flux Jacobian $\partial\vect{\mathcal{F}}^{i}/\partial\vect{\mathcal{U}}$ over the entire domain, for which we simply set $\alpha^{i}=1$.%
\footnote{With this choice, at the expense of potentially increased numerical dissipation when the flux factor is small (see Figure~\ref{fig:WaveSpeed}), computation of flux Jacobian eigenvalues are avoided, and the realizability analysis is simplified.}  
The components of the fluid three-velocity at the element interface is computed as the average
\begin{equation}
  \hat{\vect{v}}(x^{i},\tilde{\vect{x}}^{i})
  =\f{1}{2}\big(\,\vect{v}_{h}(x^{i,-},\tilde{\vect{x}}^{i})+\vect{v}_{h}(x^{i,+},\tilde{\vect{x}}^{i})\,\big).  
  \label{eq:faceVelocity}
\end{equation}
Note that the three-velocity components can be discontinuous across element interfaces.  
\begin{rem}\label{rem:LF_flux}
  In the flux function in Eq.~\eqref{eq:globalLF}, we have defined the dissipative term to be proportional to $(\,\vect{\mathcal{U}}_{b}[\hat{\vect{v}}^{i}]-\vect{\mathcal{U}}_{a}[\hat{\vect{v}}^{i}]\,)$, where $\hat{\vect{v}}^{i}=\big(\,\delta^{i1}\,\hat{v}^{1},\,\delta^{i2}\,\hat{v}^{2},\,\delta^{i3}\,\hat{v}^{3}\,\big)^{\intercal}$, as opposed to the standard LF flux where the dissipative term is proportional to $(\,\vect{\mathcal{U}}_{b}[\hat{\vect{v}}]-\vect{\mathcal{U}}_{a}[\hat{\vect{v}}]\,)$.  
  We have found this to be necessary in order to improve the realizability-preserving property of the scheme in the multi-dimensional setting (see Section~\ref{sec:realizability_preservation}).  
\end{rem}

In order to compute the energy space fluxes $\vect{\mathcal{F}}^{\varepsilon}(\vect{\mathcal{U}}_{h},\vect{v}_{h})$ and the sources $\vect{\mathcal{S}}(\vect{\mathcal{U}}_{h},\vect{v}_{h})$, we need to approximate spatial derivatives of the three-velocity components within elements.  
We denote the derivative of the $i$th velocity component with respect to $x^{j}$ by $(\pd{v^{i}}{j})_{h}\in\mathbb{V}_{h}^{k}(\vect{K}_{\vect{x}})$, and compute this by demanding that
\begin{equation}
  \int_{\vect{K}_{\vect{x}}}(\pd{v^{i}}{j})_{h}\,\varphi_{h}\,d\vect{x}
  =\int_{\tilde{\vect{K}}_{\vect{x}}^{j}}\Big[\,\hat{v}^{i}\,\varphi_{h}|_{x_{\hi}^{j}}-\hat{v}^{i}\,\varphi_{h}|_{x_{\lo}^{j}}\,\Big]\,d\tilde{\vect{x}}^{j}
  -\int_{\vect{K}_{\vect{x}}}v_{h}^{i}\,\pd{\varphi_{h}}{j}\,d\vect{x}
  \label{eq:velocityDerivatives}
\end{equation}
holds for all $\varphi_{h}\in\mathbb{V}_{h}^{k}(\vect{K}_{\vect{x}})$ and all $\vect{K}_{\vect{x}}$, and where $\hat{v}^{i}(x^{j},\tilde{\vect{x}}^{j})$ is computed as in Eq.~\eqref{eq:faceVelocity}.  

The energy space flux $\widehat{\vect{\mathcal{F}}^{\varepsilon}}\big(\vect{\mathcal{U}}_{h},\vect{v}_{h}\big)$ in Eq.~\eqref{eq:bilinearFormAdvectionEnergy} is also computed using an LF-type flux
\begin{equation}
  \widehat{\vect{\mathcal{F}}^{\varepsilon}}\big(\vect{\mathcal{U}}_{h},\vect{v}_{h}\big)|_{\varepsilon}
  =\mathscr{F}_{\LaxF}^{\varepsilon}\big(\vect{\mathcal{U}}_{h}(\varepsilon^{-},\vect{x}),\vect{\mathcal{U}}_{h}(\varepsilon^{+},\vect{x}),\vect{v}_{h}(\vect{x})\big),
  \label{eq:numericalFluxEnergy}
\end{equation}
where $\varepsilon^{\mp}=\lim_{\delta\to0^{+}}\varepsilon\mp\delta$, and we take the LF flux function to be given by
\begin{equation}
  \mathscr{F}_{\LaxF}^{\varepsilon}\big(\vect{\mathcal{U}}_{a},\vect{\mathcal{U}}_{b},\vect{v}_h\big)
  =\f{1}{2}\,
  \big(\,
    \vect{\mathcal{F}}^{\varepsilon}(\vect{\mathcal{U}}_{a},\vect{v}_h)+\vect{\mathcal{F}}^{\varepsilon}(\vect{\mathcal{U}}_{b},\vect{v}_h)
    -\alpha^{\varepsilon}\,(\,\vect{\mathcal{M}}_{b}-\vect{\mathcal{M}}_{a}\,)
  \,\big),
  \label{eq:globalLFenergy}
\end{equation}
where $\alpha^{\varepsilon}$ is an estimate of the largest absolute eigenvalue of the flux Jacobian $\partial\vect{\mathcal{F}}^{\varepsilon}/\partial\vect{\mathcal{U}}$.  
To estimate $\alpha^{\varepsilon}$ we consider the quadratic form
\begin{equation}
  Q(\vect{v}_{h}) =(-\pd{v^{i}}{j})_{h}\,\ell_{i}\,\ell^{j}= \vect{\ell}^{\intercal}A(\vect{v}_{h})\,\vect{\ell},
  \quad\text{where}\quad A_{ij}(\vect{v}_{h})=-\f{1}{2}\big(\,(\pd{v^{j}}{i})_{h}+(\pd{v^{i}}{j})_{h}\big).
  \label{eq:quadraticForm}
\end{equation}
It can be shown that $|Q(\vect{v}_{h})|\le\lambda_{A}$, where $\lambda_{A}$ is the largest absolute eigenvalue of the matrix $A$.  
(Since $A$ is symmetric, the eigenvalues are real.)
Hence, we set $\alpha^{\varepsilon}=\lambda_{A}$.  

\begin{rem}
  In the energy space flux function in Eq.~\eqref{eq:globalLFenergy}, the numerical dissipation term is given in terms of the primitive moments $\vect{\mathcal{M}}$ rather than the conserved moments $\vect{\mathcal{U}}$. 
  This choice is motivated by the realizability analysis in Section~\ref{sec:realizabilityEnergy}.
\end{rem}

\begin{rem}
  For simplicity we assume that the absorption and scattering opacity ($\chi$ and $\sigma$, respectively), appearing in the second term on the right-hand side of Eq.~\eqref{eq:dgSemiDiscrete}, are constant within each phase-space element $\vect{K}$.  
\end{rem}

In this work, we consider the nodal DG scheme (see, e.g., \cite{hesthavenWarburton_2008} for an overview), which writes $\vect{\mathcal{U}}_{h}\in\mathbb{V}_{h}^{k}(\vect{K})$ as an expansion of tensor products of one-dimensional Lagrange polynomials of degrees up to $k$ in each element. 
As in \cite{laiu_etal_2021}, we use the $(k+1)$-point Legendre--Gauss (LG) quadrature points (see, e.g., \cite{abramowitzStegun_1988}) as the interpolation points for the Lagrange polynomials. 
Following the standard practice (i.e., for Ritz--Galerkin), we choose the test functions $\varphi_h$ to be identical to the trial functions, which are the tensor products of Lagrange polynomials used in the expansion of $\vect{\mathcal{U}}_{h}$, and evaluate the inner products $(\cdot,\cdot)_{\vect{K}}$ using the $(k+1)$-point LG quadrature rule.
In the remainder of this paper, we denote the sets of the $(k+1)$-point LG quadrature points in an element $\vect{K}$ on $K_{\varepsilon}$ and $K_{\vect{x}}^{i}$ by $S_\varepsilon^{\vect{K}}:=\{\varepsilon_1,\dots,\varepsilon_{k+1}\}$ and $S_i^{\vect{K}}:=\{x^{i}_1,\dots,x^{i}_{k+1}\}$, respectively. Then the set of local DG nodes in element $\vect{K}$ is denoted as
\begin{equation}\label{eq:localNodes}
	\textstyle
	S^{\vect{K}}_{\otimes} := S_{\varepsilon}^{\vect{K}} \otimes \big( \bigotimes_{i=1}^{d_{\vect{x}}} S_i^{\vect{K}} \big) \:.
\end{equation}
With this notation, the semidiscretized Eq.~\eqref{eq:dgSemiDiscrete} can then be written as 
\begin{equation}\label{eq:nodalSemiDiscrete}
	\pd{\vect{\mathcal{U}}_{\vect{k}}}{t}
	=\vect{\mathsf{B}}\big(\,\vect{\mathcal{U}}_{h},\vect{v}_{h}\,\big)_{\vect{k}}
	+ \vect{\mathsf{C}}(\vect{\mathcal{U}}_{\vect{k}})\:,\quad \forall \vect{K}\in\mathcal{T}\:,
\end{equation}
where $\vect{\mathsf{B}}$ and $\vect{\mathsf{C}}$ denote the advection and collision operators acting on the collection of nodal values $\vect{\mathcal{U}}_{\vect{k}}(t) := \{ \vect{\mathcal{U}}_{h}(\varepsilon, \vect{x}, t)\colon (\varepsilon, \vect{x})\in S^{\vect{K}}_{\otimes} \}$.
Here the subscript ${\vect{k}}$ implies evaluations at points in $S^{\vect{K}}_{\otimes}$.
This nodal representation will become useful in the following sections. 
To simplify the notations therein, we will introduce a few auxiliary point sets in phase-space, which become useful in the realizability analysis in Sections~\ref{sec:realizabilitySpatial} and \ref{sec:realizabilityEnergy}.
In element $\vect{K}$, let $\widehat{S}_{\varepsilon}^{\vect{K}}:=\{\hat{\varepsilon}_1,\dots,\hat{\varepsilon}_{\hat{k}}\}$ and $\widehat{S}_{i}^{\vect{K}}:=\{\hat{x}^{i}_1,\dots,\hat{x}^{i}_{\hat{k}}\}$ denote the sets of quadrature points given by the $\hat{k}$-point Legendre--Gauss--Lobatto (LGL) quadrature rule (see, e.g., \cite{abramowitzStegun_1988}) on $K_{\varepsilon}$ and $K_{\vect{x}}^{i}$, respectively. Here $\hat{k}\geq\frac{k+5}{2}$ is chosen so that the quadrature integrates polynomials up to degree $k+2$ exactly, which is required in the analysis.
In element $\vect{K}$, we define the auxiliary sets $\widehat{S}^{\vect{K}}_{\varepsilon,\otimes}$ and $\widehat{S}^{\vect{K}}_{i,\otimes}$, $i=1,\dots,d_{\vect{x}}$, as
\begin{equation}\label{eq:AuxSets}
	\textstyle
	\widehat{S}^{\vect{K}}_{\varepsilon,\otimes} := \widehat{S}_{\varepsilon}^{\vect{K}} \otimes \big(\bigotimes_{i=1}^{d_{\vect{x}}} S^{\vect{K}}_{i} \big) \quand
	\widehat{S}^{\vect{K}}_{i,\otimes} :=  S_{\varepsilon}^{\vect{K}} \otimes \big(\bigotimes_{j=1,j\neq i}^{d_{\vect{x}}} S^{\vect{K}}_{j} \big) \otimes \widehat{S}^{\vect{K}}_{i},
\end{equation}
respectively. We denote the union of these auxiliary sets in element $\vect{K}$ as
\begin{equation}\label{eq:AuxSetUnion}
	\textstyle
	\widehat{S}^{\vect{K}}_{\otimes} :=
	\widehat{S}^{\vect{K}}_{\varepsilon,\otimes} \cup \big( \bigcup_{i=1}^{d_{\vect{x}}} \widehat{S}^{\vect{K}}_{i,\otimes} \big)
\end{equation}
and further denote the union of the auxiliary sets and the local DG nodes as
\begin{equation}\label{eq:AllSetUnion}
	\textstyle
	\widetilde{S}^{\vect{K}}_{\otimes} :=
	S^{\vect{K}}_{\otimes} \cup \widehat{S}^{\vect{K}}_{\otimes}\:.
\end{equation}
An illustration of the local point sets $S^{\vect{K}}$, $\widehat{S}^{\vect{K}}_{1,\otimes}$, and $\widehat{S}^{\vect{K}}_{\varepsilon,\otimes}$ is given in Figure~\ref{fig:localNodes}, in which the case $d_{\vect{x}}=1$ and $\vect{x}=x^1$ is considered. 
Therefore $\widehat{S}^{\vect{K}}_{i,\otimes}$ is simply $\widehat{S}^{\vect{K}}_{1,\otimes}:={S}^{\vect{K}}_{\varepsilon} \otimes \widehat{S}^{\vect{K}}_{1}$, as defined in Eq.~\eqref{eq:AuxSets}.
\begin{figure}[h]
	\includegraphics[width=\textwidth]{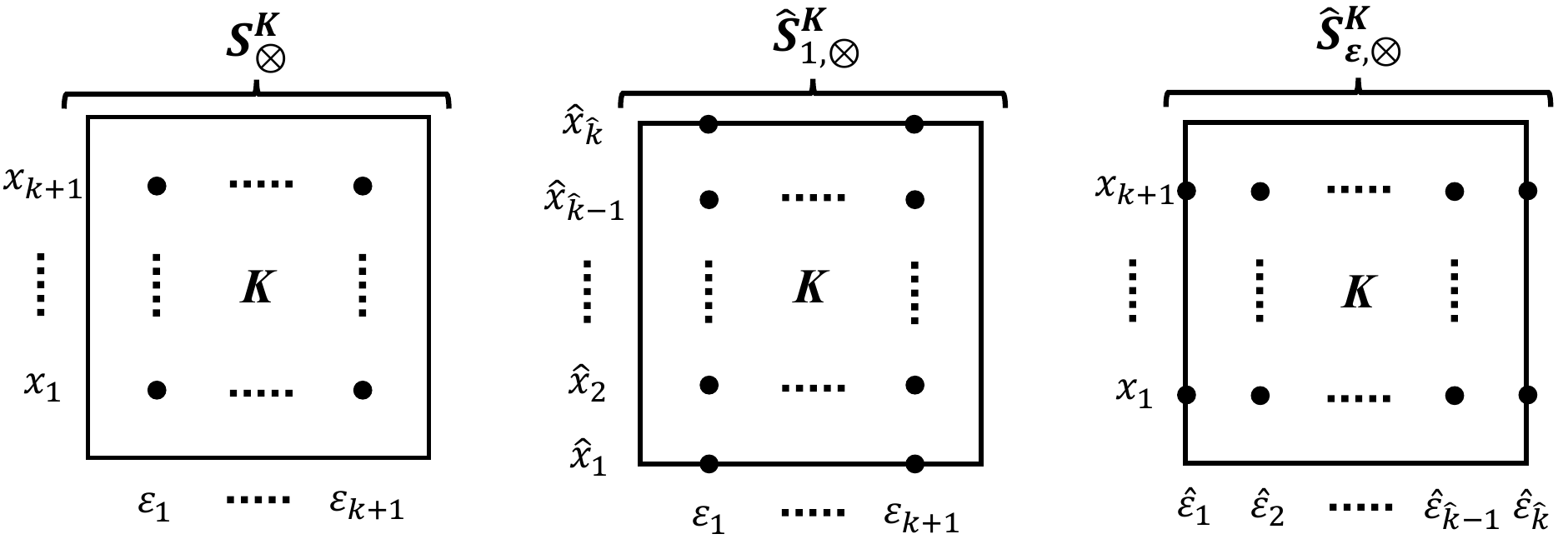}
	\caption{Illustration of the collection of DG nodes $S^{\vect{K}}$ and the auxiliary phase-space point sets $\widehat{S}^{\vect{K}}_{1,\otimes}$ and $\widehat{S}^{\vect{K}}_{\varepsilon,,\otimes}$ in an element $\vect{K}$ in a computational domain $\mathbb{R} \times \mathbb{R}^{+}$. These sets are defined in Eqs.~\eqref{eq:localNodes} and \eqref{eq:AuxSets}, respectively. In this figure, $\widehat{S}^{\vect{K}}_{i,\otimes}$ reduces to $\widehat{S}^{\vect{K}}_{1,\otimes}$ since here $\vect{x}=x^{1}$ is considered.}
	\label{fig:localNodes}
\end{figure}

\subsection{Time Integration}
\label{sec:timeIntegration}

We use IMEX methods to evolve the semi-discrete two-moment model in Eq.~\eqref{eq:dgSemiDiscrete} forward in time, where the phase-space advection term is treated explicitly and the collision term is treated implicitly.  
The general $s$-stage IMEX scheme can then be written as \cite{ascher_etal_1997,pareschiRusso_2005}
\begin{align}
  \big(\,\vect{\mathcal{U}}_{h}^{(i)},\varphi_{h}\,\big)_{\vect{K}}
  &=\big(\,\vect{\mathcal{U}}_{h}^{n},\varphi_{h}\,\big)_{\vect{K}} \nonumber \\
  &\hspace{12pt}\label{eq:s_IMEX_1}
  +\dt\sum_{j=1}^{i-1}\tilde{\alpha}_{ij}\,\vect{\mathcal{B}}_{h}\big(\,\vect{\mathcal{U}}_{h}^{(j)},\vect{v}_{h},\varphi_{h}\,\big)_{\vect{K}}
  +\dt\sum_{j=1}^{i}\alpha_{ij}\,\big(\,\vect{\mathcal{C}}(\vect{\mathcal{U}}_{h}^{(j)}),\varphi_{h}\,\big)_{\vect{K}}, \\
  \big(\,\vect{\mathcal{U}}_{h}^{n+1},\varphi_{h}\,\big)_{\vect{K}}
  &=\big(\,\vect{\mathcal{U}}_{h}^{n},\varphi_{h}\,\big)_{\vect{K}} \nonumber \\
  &\hspace{12pt}\label{eq:s_IMEX_2}
  +\dt\sum_{i=1}^{s}\tilde{w}_{i}\,\vect{\mathcal{B}}_{h}\big(\,\vect{\mathcal{U}}_{h}^{(i)},\vect{v}_{h},\varphi_{h}\,\big)_{\vect{K}}
  +\dt\sum_{i=1}^{s}w_{i}\,\big(\,\vect{\mathcal{C}}(\vect{\mathcal{U}}_{h}^{(i)}),\varphi_{h}\,\big)_{\vect{K}},
\end{align}
for $i=1,\ldots,s$, all $\vect{K}\in\mathcal{T}$, and all $\varphi_{h}\in\mathbb{V}_{h}^{k}(\vect{K})$.  
Here the coefficients $\tilde{\alpha}_{ij}$, $\alpha_{ij}$, $\tilde{w}_{i}$, and $w_{i}$ are required to satisfy certain order conditions for achieving the desired accuracy of the IMEX scheme.  
In addition, to preserve realizability of the evolved moments, each stage in the IMEX update needs to be formulated as convex combinations of realizable terms, which results in additional restrictions on the choices of coefficients. 
We refer the readers to \cite[Section~6]{chu_etal_2019} for details on the order and convex-invariant conditions on the coefficients in the IMEX scheme.

\subsection{Iterative Solvers for Nonlinear Systems}
\label{sec:iterative_solvers}

In this section, we introduce the iterative solvers for the nonlinear systems that occur in the evolution of the IMEX scheme in Eqs.~\eqref{eq:s_IMEX_1}--\eqref{eq:s_IMEX_2}.
In Section~\ref{sec:moment_conversion}, we present the iterative solver for the conversion of conserved moments $\bcU$ to primitive moments $\bcM$. This moment conversion is required to evaluate the closures for the higher-order moments $\mathcal{K}^{ij}$ and  $\mathcal{Q}^{ijk}$ at each stage of the IMEX scheme, since these closures are defined in terms of the primitive moments as discussed in Section~\ref{sec:closure}.
In Section~\ref{sec:collision_solver}, we discuss the solver for the nonlinear equations arising from the implicit update in the IMEX scheme. Even though the simplified collision term $\bcC(\bcU)$ in Eq.~\eqref{eq:sources} appears to be linear in terms of the primitive moments, the implicit system is still nonlinear because the IMEX scheme evolves the conserved moments. This nonlinear system formulation is also extendable to handle systems with collision terms that include more comprehensive physics; e.g., neutrino–electron scattering and thermal pair processes, as considered in \cite{laiu_etal_2021}. 

Under the nodal DG framework (see Eq.~\eqref{eq:nodalSemiDiscrete}), each of these nonlinear systems can be formulated locally at each node in the phase-space element because there is no coupling between nodes in either the moment conversion or the collision solve. Therefore, the nonlinear systems considered in this section are written in terms of the nodal moments at a given phase-space node $\vect{z}\in S^{\vect{K}}_{\otimes}$, $\forall \vect{K}\in\cT$, where $S^{\vect{K}}_{\otimes}$ is the set of DG nodes in element $\vect{K}$, as defined in Eq.~\eqref{eq:localNodes}.
For convenience, we drop the subscript from the nodal representation in this section, and note that, such nonlinear systems must be solved at each $\vect{z}\in S^{\vect{K}}_{\otimes}$ and in each $\vect{K}\in\cT$ to perform the moment conversion from $\vect{\mathcal{U}}$ to $\vect{\mathcal{M}}$ or the implicit steps in the IMEX scheme.

\subsubsection{Moment Conversion Solver}
\label{sec:moment_conversion}

For a given conserved moment $\bcU\in\cR$, finding a corresponding primitive moment $\bcM\in\cR$ that satisfies Eq.~\eqref{eq:ConservedToPrimitive} requires solving a nonlinear system. 
A naive approach is to formulate Eq.~\eqref{eq:ConservedToPrimitive} as a fixed-point problem
\begin{equation}\label{eq:fixed_pt}
	\bcM = \left(\begin{array}{c} \cD  \\ \cI_{j} \end{array} \right)	
	= \left(\begin{array}{c}
		- v^{i}\cI_{i} +  \cN\\
		- v^{i}\mathsf{k}_{ij} \cD +  \cG_{j} 
	\end{array}\right):=\tilde{\bcH}_{\bcU}(\bcM) .
\end{equation}
However, when standard fixed-point iteration, i.e., Picard iteration (see, e.g., \cite[Section~I.8]{hairer1993solving}), is applied to solve Eq.~\eqref{eq:fixed_pt}, this formulation does not guarantee that the resulting moments are realizable at each iteration, which, in turn, may result in failures to convergence on problems in this form. 
To address these issues, we adopt the idea from Richardson iteration, see, e.g., \cite{richardson1911ix} and \cite[Section~13.2.1]{saad2003iterative}, for solving linear systems and reformulate the fixed-point problem in Eq.~\eqref{eq:fixed_pt} as
\begin{equation}\label{eq:richardson_fixed_pt}
	\bcM = \left(\begin{array}{c} \cD  \\ \cI_{j} \end{array} \right)	
	= \left(\begin{array}{c} \cD \\ \cI_j  \end{array} \right)	 
	- \lambda \left(\begin{array}{c}
		\cD + v^{i}\cI_{i} -  \cN\\
		\cI_{j} + v^{i}\mathsf{k}_{ij} \cD -  \cG_{j} 
	\end{array}\right):=\bcH_{\bcU}(\bcM) ,
\end{equation}
where $\lambda\in(0,1]$ is a constant. Here we choose $\lambda = (1+v)^{-1}$, where $v := |\vect{v}| = \sqrt{v_{i}v^{i}}\,$, to guarantee the realizability-preserving and convergence properties of the Picard iteration method; i.e., 
\begin{equation}\label{eq:Picard}
	\bcM^{[k+1]} = \bcH_{\bcU}(\bcM^{[k]}) .
\end{equation}
The realizability-preserving and convergence properties of Eq.~\eqref{eq:Picard} are stated and proved in Section~\ref{sec:momentConversionRealizability}.

\subsubsection{Collision Solver}
\label{sec:collision_solver}

The implicit steps in Eq.~\eqref{eq:s_IMEX_1} require solving nonlinear systems to find the updated conserved moments. Similar to the implicit systems considered in \cite{laiu_etal_2021}, these systems take the form 
\begin{equation}\label{eq:ImplicitSystem}
	\bcU = \bcU^{\old} + \dt_{\bcC} \, \bcC(\bcU)\:,
\end{equation}
where $\bcU^{\old}$ denotes the known conserved moments from the explicit steps, $\bcU$ denotes the unknown updated conserved moments driven by the implicit collision term $\bcC(\bcU)$ defined in Eq.~\eqref{eq:sources}, and $\dt_{\bcC}$ denotes the effective time step size for the implicit system.
Since the sources are expressed in terms of primitive moments, we solve Eq.~\eqref{eq:ImplicitSystem} as a nonlinear fixed-point problem on the unknown primitive moments and use the primitive moment solution to compute the collision term $\bcC(\bcU)$, which is then used to update the conserved moments $\bcU$ in Eq.~\eqref{eq:ImplicitSystem}.
As in the moment conversion case discussed in Section~\ref{sec:moment_conversion}, we apply the idea from Richardson iteration to Eq.~\eqref{eq:ImplicitSystem} and formulate it as a fixed-point problem in terms of the primitive moments; i.e., 
\begin{equation}\label{eq:collision_fixed_point0}
	\bcM = \left(\begin{array}{c} \cD \\ \cI_j \end{array}\right) 
	= \left(\begin{array}{c} \cD \\ \cI_j \end{array}\right)  - \lambda \,\left(\begin{array}{c} \cD + v^{i}\cI_{i} - \cN^{\old} - \dt_{\bcC} \chi\,\big(\,\cD_{0}-\cD\,\big) \\ 
		\cI_j  + v^{i}\mathsf{k}_{ij} \cD - \cG_j^{\old} + \dt_{\bcC} \kappa\,\cI_{j}   \end{array}\right) 
	=: \tilde{\bcQ}(\bcM),
\end{equation}
where $\cN^{\old}$ and $\bcG^{\old}$ denote the number density and number flux components of the given conserved moment $\bcU^{\old}$, respectively, and the constant $\lambda\in(0,1]$.
Although, this formulation is consistent with the one considered in Section~\ref{sec:moment_conversion} when there are no collisions ($\chi = \kappa = 0$), it cannot guarantee that the realizability of moments is preserved when collisions are taken into account.
To address this issue, we follow the approach taken in \cite{laiu_etal_2021} and reformulate the fixed-point problem as
\begin{equation}\label{eq:collision_fixed_point1}
	\bcM = \left(\begin{array}{c} \cD \\ \cI_j \end{array}\right) 
	=  \Lambda\left(\begin{array}{c} (1-\lambda) \cD  +\lambda (- v^{i}\cI_{i} + \cN^{\old} + \dt_{\bcC} \chi\,\cD_{0})\\ 
		(1-\lambda) \cI_j   + \lambda (- v^{i}\mathsf{k}_{ij} \cD+\cG_j^{\old})  \end{array}\right) 
	=: \bcQ(\bcM)\:,
\end{equation}
where $\Lambda:=\textup{diag}(\mu_{\chi},\mu_{\kappa})$ with $\mu_{\chi} = (1+\lambda \, \dt_{\bcC} \, \chi)^{-1}$ and $\mu_{\kappa} = (1+ \lambda \, \dt_{\bcC} \, \kappa)^{-1}$.
Applying Picard iteration to this fixed-point problem then leads to the iterative scheme
\begin{equation}\label{eq:collision_Picard}
	\bcM^{[k+1]} = \bcQ(\bcM^{[k]})\:.
\end{equation}
In Section~\ref{sec:collision}, we prove the realizability-preserving and convergence properties of this iterative solver with $\lambda = (1+v)^{-1}$.

\section{Realizability-Preserving Property}
\label{sec:realizability_preservation}

In this section, we show that, by imposing a proper time-step restriction and a realizability-enforcing limiter, the DG scheme with IMEX time integration given in Section~\ref{sec:discretization} preserves the realizability of both conserved and primitive moments. 
To this end, we focus on the analysis of a forward-backward Euler method for its simplicity. The theoretical results can be extended to more general IMEX methods that are strong stability-preserving (SSP) with the size of time steps dependent only on the explicit part, such as the IMEX scheme implemented in the numerical tests reported in Section~\ref{sec:numericalResults}.
Specifically, we analyze the realizability-preserving property of the following numerical scheme.
\begin{subequations}
\begin{align}
	\big(\,\widehat{\vect{\mathcal{U}}}_{h}^{n+\sfrac{1}{2}},\varphi_{h}\,\big)_{\vect{K}}
	&=\big(\,{\vect{\mathcal{U}}}_{h}^{n},\varphi_{h}\,\big)_{\vect{K}} + \dt\,\vect{\mathcal{B}}_{h}\big(\,{\vect{\mathcal{U}}}_{h}^{n},\vect{v}_{h},\varphi_{h}\,\big)_{\vect{K}},
	\label{eq:imexForwardEuler}\\
	{\vect{\mathcal{U}}}_{h}^{n+\sfrac{1}{2}} 
	&=\, \texttt{RealizabilityLimiter}\,(\,\widehat{\vect{\mathcal{U}}}_{h}^{n+\sfrac{1}{2}}\,),
  \label{eq:realizabilityLimiter}\\
	\big(\,\widehat{\vect{\mathcal{U}}}_{h}^{n+1},\varphi_{h}\,\big)_{\vect{K}}
	&=\big(\,{\vect{\mathcal{U}}}_{h}^{n+\sfrac{1}{2}},\varphi_{h}\,\big)_{\vect{K}} + \dt\,\big(\,\vect{\mathcal{C}}(\widehat{\vect{\mathcal{U}}}_{h}^{n+1}),\varphi_{h}\,\big)_{\vect{K}},
	\label{eq:imexBackwardEuler}\\
	{\vect{\mathcal{U}}}_{h}^{n+1} 
	&=\, \texttt{RealizabilityLimiter}\,(\,\widehat{\vect{\mathcal{U}}}_{h}^{n+1}\,).
	\label{eq:realizabilityLimiter2}
\end{align}
\end{subequations}
Here, $\texttt{RealizabilityLimiter}()$ denotes the realizability-enforcing limiter proposed in \cite{chu_etal_2019}, the details of which is given in Section~\ref{sec:realizabilityLimiter} for completeness.

Loosely speaking, the realizability-preserving property of the scheme \eqref{eq:imexForwardEuler}--\eqref{eq:realizabilityLimiter2} requires that, if the current moments $\vect{\mathcal{U}}_{h}^{n}$ are realizable, then the updated moments $\vect{\mathcal{U}}_{h}^{n+1}$ remain realizable. 
In the following paragraphs, we summarize the realizability-preserving properties proved in this section, where more detailed realizability results and conditions are described using the sets of phase-space points defined in Section~\ref{sec:dgMethod}.

A key assumption in the realizability analysis is the exact closure assumption.
\begin{assumption}[Exact closures]\label{assum:exact_closure}
	The moment closures for closing the higher order moments $\mathcal{K}^{ij}$ and $\mathcal{Q}^{ijk}$ in Eq.~\eqref{eq:imexForwardEuler} are exact, i.e., given lower order primitive moments $(\mathcal{D},\,\mathcal{I}^{i})$, the moments $(\mathcal{K}^{ij},\,\mathcal{Q}^{ijk})$ are computed such that $(\mathcal{D},\,\mathcal{I}^{i},\,\mathcal{K}^{ij},\,\mathcal{Q}^{ijk})$ satisfy Eq.~\eqref{eq:angularMoments} for some nonnegative distribution $f$.
\end{assumption}
We note that Assumption~\ref{assum:exact_closure} holds when the exact Minerbo closure is used, i.e., when the Eddington and heat-flux factors are given in Eq.~\eqref{eq:psiZetaMinerbo} (as opposed to the approximation given in Eqs.~\eqref{eq:psiApproximate}--\eqref{eq:zetaApproximate}). 
Evaluating (either the exact or approximate) Eddington factor and heat-flux factor uses the flux factor $h\,(=\mathcal{I}/\mathcal{D})$ of the primitive moments $\vect{\mathcal{M}}=\big(\,\mathcal{D},\,\vect{\mathcal{I}}\,\big)^{\intercal}$.
Since the numerical scheme Eqs.~\eqref{eq:imexForwardEuler}--\eqref{eq:realizabilityLimiter2} evolves the conserved moments $\vect{\mathcal{U}}$, evaluating moment closures requires the conversion between conserved and primitive moments.
In other words, given $\vect{\mathcal{U}}$ (or $\vect{\mathcal{M}}$), the solver needs to compute the associated $\vect{\mathcal{M}}$ (or $\vect{\mathcal{U}}$) that satisfies Eq.~\eqref{eq:ConservedToPrimitive}.

Under Assumption~\ref{assum:exact_closure}, we state the main theoretical result of the realizability-preserving analysis for the scheme Eqs.~\eqref{eq:imexForwardEuler}--\eqref{eq:realizabilityLimiter2} in Theorem~\ref{thm:realizability}, where this scheme is shown to preserve realizability of moments $\vect{\mathcal{U}}_{h}$ on the point set $\widetilde{S}^{\vect{K}}_{\otimes}$ defined in Eq.~\eqref{eq:AllSetUnion}, for all elements $\vect{K}\in\cT$. 

\begin{theorem}[Realizability preservation]\label{thm:realizability}
	Suppose (i) Assumption~\ref{assum:exact_closure} holds, (ii) $v_h := |\vect{v}_h|<1$ for all $\vect{K}\in\cT$,  and (iii) the time step $\dt$ in Eq.~\eqref{eq:imexForwardEuler} satisfies the hyperbolic-type time-step restriction
	\begin{equation}\label{eq:timestepRestriction}
		\begin{alignedat}{2}
		\dt \leq \min\big\{&\dt_{\vect{x}}^{\min}\, ,  \,\dt_{\varepsilon}^{\min}\big\}\:,\text{\,with\,}\\
		\dt_{\vect{x}}^{\min}:= \min_{\vect{K}\in\cT}\, \min_i (1-v_h)\, {C^i} |K_{\vect{x}}^i|&\quand
		\dt_{\varepsilon}^{\min}:=\min_{\vect{K}\in\cT}\,(1-v_h)\,{C^\varepsilon} |K_{\varepsilon}|/{\varepsilon_{\hi}}
		\end{alignedat}
	\end{equation}
	where $C^i$ and $C^\varepsilon$, which are independent of the size of elements in the discretization, are given in Eqs.~\eqref{eq:CFLSpatial} and \eqref{eq:CFLEnergy}, respectively.
	Then the scheme \eqref{eq:imexForwardEuler}--\eqref{eq:realizabilityLimiter2} is realizability-preserving, i.e., $\vect{\mathcal{U}}_{h}^{n+1}\in\cR$ on $\widetilde{S}^{\vect{K}}_{\otimes}$, $\forall \vect{K}\in\cT$, provided that $\vect{\mathcal{U}}_{h}^{n}\in\cR$ on $\widetilde{S}^{\vect{K}}_{\otimes}$, $\forall \vect{K}\in\cT$. 
\end{theorem}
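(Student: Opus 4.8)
The plan is to decompose the full update \eqref{eq:imexForwardEuler}--\eqref{eq:realizabilityLimiter2} into its constituent operations and establish realizability preservation for each, exploiting the SSP structure of the IMEX scheme to reduce everything to the analysis of a single forward-Euler explicit step, a single backward-Euler implicit step, and the limiter. First I would observe that, because the scheme is strong stability-preserving, the explicit advection update \eqref{eq:imexForwardEuler} can be written as a convex combination of forward-Euler-type sub-updates, so by convexity of the realizable set $\cR$ (which follows from concavity of $\gamma$ in Eq.~\eqref{eq:realizableSet}) it suffices to show that a single forward-Euler step keeps the relevant \emph{cell-averaged} and nodal moments in $\cR$. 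The natural strategy is the Zhang--Shu framework: rewrite the cell average $\widehat{\vect{\mathcal{U}}}_h^{n+\sfrac{1}{2}}$ produced by \eqref{eq:imexForwardEuler} as a convex combination of (a) the nodal values at the interior DG nodes $S^{\vect{K}}_{\otimes}$ and (b) boundary contributions expressible through the Lax--Friedrichs fluxes \eqref{eq:globalLF} and \eqref{eq:globalLFenergy}. The time-step restriction \eqref{eq:timestepRestriction} is exactly the condition under which all coefficients in this convex combination are nonnegative, and the factors $C^i$, $C^\varepsilon$ arise from the LGL quadrature weights on the auxiliary sets $\widehat{S}^{\vect{K}}_{\otimes}$.

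Next I would treat each building block of the convex decomposition. For the Lax--Friedrichs flux terms, the key lemma is that the single-interface update $\tfrac{1}{2}(\vect{\mathcal{U}}_a + \vect{\mathcal{U}}_b) \mp \tfrac{1}{2}\alpha^i(\vect{\mathcal{U}}_b[\hat{\vect v}^i]-\vect{\mathcal{U}}_a[\hat{\vect v}^i])$, with $\alpha^i=1$, maps a pair of realizable states to a realizable state; this is where the modified dissipation term flagged in Remark~\ref{rem:LF_flux} and the hypothesis $v_h<1$ do their work, since realizability of the flux contribution reduces to verifying $\mathcal{D}\pm(\text{flux component})\ge 0$ bounds that the light-speed velocity bound guarantees. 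For the energy-flux term I would use that the dissipation in \eqref{eq:globalLFenergy} is written in \emph{primitive} moments and that $\alpha^\varepsilon=\lambda_A$ controls $|Q(\vect{v}_h)|$, so the energy-derivative contribution also admits a realizable convex representation. The interior nodal values are realizable by hypothesis, and the limiter step \eqref{eq:realizabilityLimiter} restores pointwise realizability on $\widetilde{S}^{\vect{K}}_{\otimes}$ without disturbing the (already realizable) cell average, by construction of the limiter from \cite{chu_etal_2019}.

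For the implicit collision stage \eqref{eq:imexBackwardEuler}, I would invoke the realizability-preserving and convergence properties of the fixed-point solver \eqref{eq:collision_Picard} proved in Section~\ref{sec:collision}: given a realizable $\vect{\mathcal{U}}_h^{n+\sfrac{1}{2}}$ at each node, the iteration $\bcM^{[k+1]}=\bcQ(\bcM^{[k]})$ with $\lambda=(1+v)^{-1}$ converges to a realizable primitive moment, from which the backward-Euler-updated conserved moment inherits realizability through the relabeling $\Lambda=\mathrm{diag}(\mu_\chi,\mu_\kappa)$ with $\mu_\chi,\mu_\kappa\in(0,1]$. A second application of the limiter \eqref{eq:realizabilityLimiter2} then delivers $\vect{\mathcal{U}}_h^{n+1}\in\cR$ on the full point set $\widetilde{S}^{\vect{K}}_{\otimes}$. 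Throughout, Assumption~\ref{assum:exact_closure} is what lets me pass freely between the moment pair $(\mathcal{D},\vect{\mathcal{I}})$ and the higher-order closures $\mathcal{K}^{ij},\mathcal{Q}^{ijk}$ while staying consistent with an underlying nonnegative $f$, which is essential both in the flux bounds and in the conserved-to-primitive conversion.

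\textbf{The main obstacle} I anticipate is constructing the explicit-step convex decomposition in the genuinely multidimensional, four-dimensional phase-space setting with the coupled position- and energy-advection operators and a discontinuous, $\vect{x}$-dependent velocity field. Unlike the scalar or single-direction case, the cell average mixes spatial-face fluxes across all $d_{\vect{x}}$ directions with the energy-derivative flux, and one must choose compatible LGL auxiliary quadratures (the sets $\widehat{S}^{\vect{K}}_{i,\otimes}$ and $\widehat{S}^{\vect{K}}_{\varepsilon,\otimes}$ defined in Eq.~\eqref{eq:AuxSets}, with $\hat k$ large enough to integrate degree $k+2$ exactly) so that \emph{every} node appearing in the decomposition carries a realizable state and the residual weight on the interior nodes stays nonnegative. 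Getting the combined CFL bound \eqref{eq:timestepRestriction} to emerge with the correct $(1-v_h)$ factor — rather than a cruder product of separate one-dimensional restrictions — requires care in how the velocity-dependent flux terms ($v^i\mathcal{D}$, $v^i\mathcal{I}_j$, and the aberration/Doppler terms) are absorbed into the Lax--Friedrichs dissipation, and this coupling is precisely the novelty over the $\cO(1)$ analyses of \cite{chu_etal_2019,laiu_etal_2021}.
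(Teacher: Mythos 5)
Your overall architecture matches the paper's: decompose the step into explicit advection, limiter, implicit collision, limiter; handle the explicit cell average by a convex (Zhang--Shu-type) decomposition over LGL points and Lax--Friedrichs interface states; and invoke the realizability-preserving fixed-point solvers for the conserved-to-primitive conversion and the implicit collision update. The flux analysis you sketch (the interface lemmas of $\Theta_{\pm}$ type, the role of $\alpha^{i}=1$ and the modified dissipation of Remark~\ref{rem:LF_flux}, the primitive-moment dissipation in the energy flux with $\alpha^{\varepsilon}=\lambda_{A}$) is exactly what the paper does in Sections~\ref{sec:realizabilitySpatial} and \ref{sec:realizabilityEnergy}.

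There is, however, a genuine gap: your decomposition of the explicit operator omits the aberration source term $\vect{\mathcal{S}}$ in Eq.~\eqref{eq:sources}, which is part of $\vect{\mathcal{B}}_{h}$ and appears as the third term $\gamma^{\mathcal{S}}\,\widehat{\vect{\mathcal{U}}}_{\vect{K}}^{n+\sfrac{1}{2},\mathcal{S}}$ in the convex splitting \eqref{eq:fullCellAverageUpdateSplit}. For this term your claim that the explicit step keeps the cell-averaged moments in $\cR$ is false in the general multidimensional setting: the paper shows in Eq.~\eqref{eq:continuumRealizability} that the continuous aberration flow moves moments tangentially along the boundary of $\cR$, so an explicit discretization of it can exit $\cR$ regardless of the time step; a realizability-preserving restriction for this term exists only in one-dimensional planar geometry (Eq.~\eqref{eq:sourceTimeStepRestriction}), which is not assumed in the theorem. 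The paper's resolution --- missing from your proposal --- is that Proposition~\ref{prop:realizabilityExplicit} guarantees only positivity of the cell-averaged \emph{number density} $\widehat{\cN}_{\vect{K}}^{n+\sfrac{1}{2}}$ (the source term does not act on $\cN$), and the limiter of Algorithm~\ref{algo:realizabilityLimiter} is designed to need only this weaker input: when $\widehat{\vect{\mathcal{U}}}_{\vect{K}}\notin\cR$ it falls back to replacing the nodal densities by the cell average and rescaling the fluxes, rather than limiting toward a realizable cell average. Your step ``the limiter restores pointwise realizability without disturbing the (already realizable) cell average'' therefore presumes a hypothesis you cannot establish, and as written your plan would stall at the source-term contribution.
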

Theorem~\ref{thm:realizability} is a direct consequence of the following Propositions~\ref{prop:realizabilityExplicit},  \ref{prop:realizabilityLimiter}, \ref{prop:realizabilityMomentConversion}, and \ref{prop:realizabilityImplicit}, which provide the realizability-preserving properties of the explicit update (Eq.~\eqref{eq:imexForwardEuler}), the realizability-enforcing limiter (Eq.~\eqref{eq:realizabilityLimiter}), the moment conversion (Eq.~\eqref{eq:ConservedToPrimitive}), and the implicit update (Eq.~\eqref{eq:imexBackwardEuler}), respectively.
In these propositions, the notion of cell-averaged moments will come in handy.
Given $\vect{\mathcal{U}}_{h}^{n}$, the cell-averaged moments $\vect{\mathcal{U}}_{\vect{K}}:=(\cN_{\vect{K}},\vect{\mathcal{G}}_{\vect{K}})$ are defined as
\begin{equation}
	\vect{\mathcal{U}}_{\vect{K}} = \big(\,\vect{\mathcal{U}}_{h}\,\big)_{\vect{K}}/|\vect{K}|.  
	\label{eq:cellAverage}
\end{equation}

\begin{prop}[Explicit advection update]\label{prop:realizabilityExplicit}
	Suppose (i) Assumption~\ref{assum:exact_closure} holds, (ii) $v_h <1$ for all $\vect{K}\in\cT$, and (iii) $\dt$ in Eq.~\eqref{eq:imexForwardEuler} satisfies the restriction \eqref{eq:timestepRestriction}.
	Let $\widehat{\vect{\mathcal{U}}}_{\vect{K}}^{n+\sfrac{1}{2}}:=(\widehat{\cN}_{\vect{K}}^{n+\sfrac{1}{2}},\widehat{\vect{\mathcal{G}}}_{\vect{K}}^{n+\sfrac{1}{2}})$ denote the element average of the moment $\widehat{\vect{\mathcal{U}}}_{h}^{n+\sfrac{1}{2}}$ (as defined in Eq.~\eqref{eq:cellAverage}) updated by Eq.~\eqref{eq:imexForwardEuler} from $\vect{\mathcal{U}}_{h}^{n}$.
	Then, it is guaranteed that, $\forall \vect{K}\in\cT$, $\widehat{\cN}_{\vect{K}}^{n+\sfrac{1}{2}}>0$, provided
	$\vect{\mathcal{U}}_{h}^{n}\in\cR$ on $S^{\vect{K}}_{\otimes}$, $\forall \vect{K}\in\cT$.
	Further, when a reduced one-dimensional planar geometry%
	\footnote{An example of this one-dimensional geometry is the reduced case of the full three-dimensional geometry when the fluxes in two of the three spatial dimensions are assumed to be zero. See Section~\ref{sec:realizabilitySource} for further discussions.} 
	is considered, it is guaranteed that, $\forall \vect{K}\in\cT$, $\widehat{\vect{\mathcal{U}}}_{\vect{K}}^{n+\sfrac{1}{2}}\in\cR$ when an additional time-step restriction \eqref{eq:sourceTimeStepRestriction} is satisfied.
\end{prop}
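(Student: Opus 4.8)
The plan is to follow the Zhang--Shu strategy for bound-preserving DG schemes, adapted to the realizable set $\cR$ and to the velocity-dependent fluxes of the present model. First I would take the constant test function $\varphi_h=1$ in the forward-Euler update \eqref{eq:imexForwardEuler}. Because $\partial_i\varphi_h=\partial_\varepsilon\varphi_h=0$, every volume integral in $\vect{\mathcal{B}}_h^{\vect{x}}$ and $\vect{\mathcal{B}}_h^{\varepsilon}$ drops out, and the cell-averaged update collapses to $|\vect{K}|\,\widehat{\vect{\mathcal{U}}}_{\vect{K}}^{n+\sfrac{1}{2}}=|\vect{K}|\,\vect{\mathcal{U}}_{\vect{K}}^{n}+\dt\,[\text{surface fluxes}]+\dt\,(\vect{\mathcal{S}},1)_{\vect{K}}$, where for the number-density component the source contribution vanishes by Eq.~\eqref{eq:sources}. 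I would then evaluate the element average $\vect{\mathcal{U}}_{\vect{K}}^{n}$ and the face integrals with the $\hat{k}$-point LGL quadrature on the auxiliary point sets $\widehat{S}^{\vect{K}}_{\otimes}$ of Eq.~\eqref{eq:AuxSetUnion}: this rule has strictly positive weights, is exact for the degree-$(k+2)$ integrands at hand, and---crucially---places nodes on the element faces, which is what enables the decomposition that follows.

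The core step is to rewrite $\widehat{\cN}_{\vect{K}}^{n+\sfrac{1}{2}}$ as a convex combination. Along each dimension the LGL representation splits $\cN_{\vect{K}}^{n}$ into interior-node contributions and the two face-node contributions. The interior terms are point values of $\cN$ on $S^{\vect{K}}_{\otimes}$, which are positive because $\vect{\mathcal{U}}_h^n\in\cR$ there gives $\cN=\cD+v^i\cI_i\ge(1-v_h)\,\cD>0$. Each face value combines with the Lax--Friedrichs numerical flux \eqref{eq:globalLF} into a one-dimensional finite-volume update built from $\tfrac12\big(\vect{\mathcal{U}}\mp\vect{\mathcal{F}}^{i}\big)$-type states; with the choice $\alpha^i=1$, which dominates the maximal wave speed by Proposition~\ref{prop:waveSpeed}, and with the per-direction dissipation $\vect{\mathcal{U}}_b[\hat{\vect{v}}^i]-\vect{\mathcal{U}}_a[\hat{\vect{v}}^i]$ of Remark~\ref{rem:LF_flux}, these states again carry positive number density. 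Collecting the $d_{\vect{x}}$ spatial directions and the energy direction---where $\alpha^\varepsilon=\lambda_A$ bounds the energy flux and contributes the $C^\varepsilon$ and $\varepsilon_{\hi}$ factors---I would check that the resulting weights are nonnegative and sum to one exactly when $\dt$ obeys \eqref{eq:timestepRestriction}. The factor $(1-v_h)$ appears because the velocity terms both enlarge the effective signal speed carried by $\vect{\mathcal{F}}^{i}$ and shrink the margin between $\cN$ and $\cD$. Positivity of a convex combination of positive quantities then delivers $\widehat{\cN}_{\vect{K}}^{n+\sfrac{1}{2}}>0$.

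For the reduced one-dimensional planar case I would additionally establish the concavity constraint $\gamma(\widehat{\vect{\mathcal{U}}}_{\vect{K}}^{n+\sfrac{1}{2}})\ge0$. Since $\gamma$ is concave (Eq.~\eqref{eq:realizableSet}) and $\widehat{\vect{\mathcal{U}}}_{\vect{K}}^{n+\sfrac{1}{2}}$ is the convex combination just constructed, Jensen's inequality reduces the claim to showing that each constituent state lies in $\cR$; for the Lax--Friedrichs face states this invokes Assumption~\ref{assum:exact_closure}, under which the relevant half-flux combinations are moments of a nonnegative distribution. The genuinely new ingredient in 1D is the first-moment source $\vect{\mathcal{S}}$ (the angular-aberration terms), whose forward-Euler contribution can drive the state out of $\cR$; I would incorporate it as an additional convex sub-step and control its magnitude through the extra restriction \eqref{eq:sourceTimeStepRestriction}.

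The main obstacle is the face-state argument in the presence of velocity. Unlike the zero-velocity model of \cite{chu_etal_2019}, the combinations $\vect{\mathcal{U}}\mp\vect{\mathcal{F}}^{i}$ here mix the evolved moments $(\cN,\mathcal{G}_j)$ with velocity-weighted primitive moments, so their realizability is not immediate; the key is that the tailored dissipation $\vect{\mathcal{U}}[\hat{\vect{v}}^i]$ of Remark~\ref{rem:LF_flux} is engineered precisely so that each face state remains the image of a nonnegative $f$. Making this rigorous---while simultaneously tracking the $(1-v_h)$ dependence and the $\varepsilon_{\hi}$ scaling inherited from the $\varepsilon^3$-weighted energy flux---is the technically delicate heart of the proof.
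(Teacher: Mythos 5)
Your plan is essentially the paper's own proof: constant test function to isolate the cell average, a three-way convex split into spatial, energy, and source contributions, LGL quadrature with face nodes to decompose each flux term into interior point values plus Lax--Friedrichs face states shown realizable as moments of nonnegative distributions (not via the wave-speed bound of Proposition~\ref{prop:waveSpeed}, which the paper does not invoke here), and the extra restriction \eqref{eq:sourceTimeStepRestriction} for the first-moment source in planar geometry. No substantive differences.
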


\begin{prop}[Realizability-enforcing limiter]\label{prop:realizabilityLimiter}
	Suppose $\widehat{\cN}_{\vect{K}}>0$ on element $\vect{K}$, applying the realizability-enforcing limiter given in Algorithm~\ref{algo:realizabilityLimiter} (see Section~\ref{sec:realizabilityLimiter}) to the moments $\widehat{\vect{\mathcal{U}}}_{h}$ on $\vect{K}$ leads to realizable moments $\vect{\mathcal{U}}_{h}\in\cR$ on $S^{\vect{K}}_{\otimes} \cup \widehat{S}^{\vect{K}}_{\otimes}$ in $\vect{K}$.
\end{prop}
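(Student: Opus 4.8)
The plan is to reproduce the classical Zhang--Shu scaling (damping) argument, exploiting two structural facts. First, the limiter of \cite{chu_etal_2019} replaces the DG polynomial on $\vect{K}$ by a convex combination of itself and its cell average, so it is natural to work entirely with the geometry of the realizable set. I would therefore begin by recording that $\cR$ is convex: since $\gamma(\vect{\mathcal{M}})=\cD-\cI$ is concave, its superlevel set $\{\gamma\ge0\}$ is convex, and intersecting with the open half-space $\{\cN>0\}$ preserves convexity (the same algebraic form, with $\cD,\cI$ replaced by $\cN,|\vect{\mathcal{G}}|$, applies to the conserved moments). Second, I would note that damping toward the cell average is \emph{average-preserving}, because the element average of $\widehat{\vect{\mathcal{U}}}_h-\widehat{\vect{\mathcal{U}}}_{\vect{K}}$ vanishes; this makes the procedure conservative and keeps the damping target fixed throughout.

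Next I would analyze the two substeps of the limiter in turn. In the density substep the zeroth component is damped toward its average, $\widehat{\cN}_h\mapsto\widehat{\cN}_{\vect{K}}+\theta_1(\widehat{\cN}_h-\widehat{\cN}_{\vect{K}})$ with $\theta_1\in[0,1]$. Because the hypothesis gives $\widehat{\cN}_{\vect{K}}>0$ and the node set $S^{\vect{K}}_{\otimes}\cup\widehat{S}^{\vect{K}}_{\otimes}$ is finite, there is a largest $\theta_1$ for which the damped density is strictly positive at every node; at worst $\theta_1=0$ returns the positive constant $\widehat{\cN}_{\vect{K}}$, so a valid $\theta_1$ always exists. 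In the flux substep the full vector is damped toward its cell average to enforce $\gamma(\vect{\mathcal{U}}_h)=\cN_h-|\vect{\mathcal{G}}_h|\ge0$: for each fixed node $\vect{z}$ the damped value traces the segment from $\widehat{\vect{\mathcal{U}}}_{\vect{K}}$ (parameter $0$) to the post-density-substep value (parameter $1$); by convexity of $\cR$ this segment stays in $\cR$ up to a maximal parameter, and taking the minimum over the finitely many nodes yields a single $\theta_2\in[0,1]$ placing $\vect{\mathcal{U}}_h$ in $\cR$ simultaneously at every node of $S^{\vect{K}}_{\otimes}\cup\widehat{S}^{\vect{K}}_{\otimes}$.

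The step I expect to be the crux is guaranteeing realizability of the cell average itself, i.e.\ $\gamma(\widehat{\vect{\mathcal{U}}}_{\vect{K}})\ge0$. The convexity argument for the flux substep damps toward $\widehat{\vect{\mathcal{U}}}_{\vect{K}}$ and hence requires that point to lie in $\cR$, whereas the stated hypothesis supplies only $\widehat{\cN}_{\vect{K}}>0$ --- exactly what the density substep needs, but not the flux constraint on the average. I would close this gap by invoking the cell-average realizability coming from the explicit update: in the reduced one-dimensional planar setting Proposition~\ref{prop:realizabilityExplicit} already delivers $\widehat{\vect{\mathcal{U}}}_{\vect{K}}\in\cR$, so the damping target is realizable and the finite-minimization over nodes goes through verbatim. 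The more general multidimensional case is precisely where Proposition~\ref{prop:realizabilityExplicit} guarantees only $\widehat{\cN}_{\vect{K}}>0$, so the argument must be supplemented by the detailed structure of Algorithm~\ref{algo:realizabilityLimiter} to secure the flux constraint on the average; once $\widehat{\vect{\mathcal{U}}}_{\vect{K}}\in\cR$ is in hand, the conclusion follows from the routine node-wise minimization above, with conservation guaranteed because each damping step fixes the cell average.
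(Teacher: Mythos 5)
You correctly reproduce the Zhang--Shu scaling argument for the branch of Algorithm~\ref{algo:realizabilityLimiter} in which the cell average is realizable: convexity of $\cR$, a finite minimization over the nodes of $S^{\vect{K}}_{\otimes}\cup\widehat{S}^{\vect{K}}_{\otimes}$, and the average-preserving property of damping toward $\widehat{\vect{\mathcal{U}}}_{\vect{K}}$. You also correctly identify the crux --- the damping argument needs $\widehat{\vect{\mathcal{U}}}_{\vect{K}}\in\cR$, while the hypothesis supplies only $\widehat{\cN}_{\vect{K}}>0$. But your proposed resolution of that crux is wrong, and this is a genuine gap. You try to \emph{establish} $\widehat{\vect{\mathcal{U}}}_{\vect{K}}\in\cR$, by appealing to Proposition~\ref{prop:realizabilityExplicit} in the planar case and by hoping the ``detailed structure of Algorithm~\ref{algo:realizabilityLimiter}'' somehow secures the flux constraint on the average in the multidimensional case. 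It does not: no part of the limiter makes the incoming cell average realizable, and in the general multidimensional setting the explicit update genuinely cannot guarantee $|\widehat{\vect{\mathcal{G}}}_{\vect{K}}|\le\widehat{\cN}_{\vect{K}}$ (that is precisely why Proposition~\ref{prop:realizabilityExplicit} asserts only positivity of $\widehat{\cN}_{\vect{K}}$ outside planar geometry).

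The missing idea is the \texttt{Else} branch of Algorithm~\ref{algo:realizabilityLimiter}. When $\widehat{\vect{\mathcal{U}}}_{\vect{K}}\notin\cR$, the limiter abandons the damping construction entirely and instead overwrites the nodal values with $\cN_{h}=\widehat{\cN}_{\vect{K}}$ and $\vect{\mathcal{G}}_{h}=(1-\delta)\,\widehat{\cN}_{\vect{K}}\,\widehat{\vect{\mathcal{G}}}_{h}/|\widehat{\vect{\mathcal{G}}}_{h}|$, which is realizable by inspection at every node: $\cN_{h}=\widehat{\cN}_{\vect{K}}>0$ and $|\vect{\mathcal{G}}_{h}|=(1-\delta)\widehat{\cN}_{\vect{K}}<\cN_{h}$. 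The full proof of the proposition is therefore a two-case argument --- your convexity argument when $\widehat{\vect{\mathcal{U}}}_{\vect{K}}\in\cR$, and the explicit fallback construction when it is not --- and it is only because of the second case that the hypothesis can be as weak as $\widehat{\cN}_{\vect{K}}>0$. A secondary, minor issue: the choice $\theta^{\cN}_{\vect{K}}=\widehat{\cN}_{\vect{K}}/(\widehat{\cN}_{\vect{K}}-\min_{\vect{z}}\widehat{\cN}_{h}(\vect{z}))$ drives the limited density to exactly zero at the worst node, whereas $\cR$ requires strict positivity; your claim of a ``largest $\theta_{1}$ for which the damped density is strictly positive at every node'' glosses over the same point, so this is not a defect relative to the paper, but worth being aware of.
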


\begin{prop}[Moment conversion]\label{prop:realizabilityMomentConversion}
	Suppose that Assumption~\ref{assum:exact_closure} holds and that $v_h<1$ on all $\vect{K}\in\cT$, then
	the conversion between conserved and primitive moments following the relation in Eq.~\eqref{eq:ConservedToPrimitive} preserves realizability, i.e., for a pair of conserved and primitive moments $(\vect{\mathcal{U}},\vect{\mathcal{M}})$ satisfying Eq.~\eqref{eq:ConservedToPrimitive}, $\vect{\mathcal{U}}\in\cR$ if and only if $\vect{\mathcal{M}}\in\cR$.
	Further, given $\vect{\mathcal{U}}\in\cR$, the iterative solver \eqref{eq:Picard} in Section~\ref{sec:moment_conversion} converges to the unique $\vect{\mathcal{M}}\in\cR$ that satisfies Eq.~\eqref{eq:ConservedToPrimitive}.
\end{prop}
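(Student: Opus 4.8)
The plan is to prove the biconditional and the convergence statement separately, reducing every realizability argument to exhibiting an explicitly nonnegative angular distribution whose moments are the quantities in question. First I would establish the forward implication $\bcM\in\cR\Rightarrow\bcU\in\cR$. By Assumption~\ref{assum:exact_closure} I represent $\bcM=(\cD,\vect{\mathcal{I}})$ through a nonnegative $f$ as in Eq.~\eqref{eq:angularMoments}, so that the components of $\bcU$ in Eq.~\eqref{eq:ConservedToPrimitive} become $\cN=\tfrac{1}{4\pi}\int_{\mathbb{S}^{2}} f\,(1+v_i\ell^i)\,d\omega$ and $\mathcal{G}_j=\tfrac{1}{4\pi}\int_{\mathbb{S}^{2}} f\,(1+v_i\ell^i)\,\ell_j\,d\omega$, where $v:=|\vect{v}|=v_h$ at the node. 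Writing $g:=f\,(1+v_i\ell^i)$, the hypothesis $v<1$ gives $1+v_i\ell^i\ge 1-v>0$, hence $g\ge0$; thus $\bcU$ are the zeroth and first angular moments of $g$, which forces $\cN\ge(1-v)\,\cD>0$ and $|\vect{\mathcal{G}}|\le\cN$ since $|\vect{\ell}|=1$. By Eq.~\eqref{eq:realizableSet} this is precisely $\bcU\in\cR$.

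The same idea, applied to a single step of the iteration, yields the realizability-preserving property that drives the convergence claim. Writing one step of Eq.~\eqref{eq:richardson_fixed_pt} as $\bcM^{+}=\bcM-\lambda\big(\vect{\mathcal{L}}(\bcM,\vect{v})\bcM-\bcU\big)$ and representing $\bcM$ through $f$ and $\bcU\in\cR$ through a nonnegative $g_{\bcU}$, a direct calculation shows that $\cD^{+}$ and $\mathcal{I}^{+}_j$ are the zeroth and first angular moments of
\[
  G:=f\,\big(1-\lambda(1+v_i\ell^i)\big)+\lambda\,g_{\bcU}.
\]
The choice $\lambda=(1+v)^{-1}$ is exactly what makes $1-\lambda(1+v_i\ell^i)\ge 1-\lambda(1+v)=0$, so $G\ge0$ and moreover $\cD^{+}=\tfrac{1}{4\pi}\int_{\mathbb{S}^{2}} G\,d\omega\ge\lambda\,\cN>0$; as before $|\vect{\ell}|=1$ gives $|\vect{\mathcal{I}}^{+}|\le\cD^{+}$, so $\bcM^{+}\in\cR$. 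Hence the Picard map $\bcH_{\bcU}$ leaves $\cR$ invariant.

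For convergence I would show $\bcH_{\bcU}$ is a contraction on $\cR$. Since $\bcH_{\bcU}(\bcM)=\bcM-\lambda\big(\vect{\mathcal{L}}(\bcM,\vect{v})\bcM-\bcU\big)$, its Jacobian is $I-\lambda\,(\partial\bcU/\partial\bcM)$ with $(\partial\bcU/\partial\bcM)$ the matrix displayed in Section~\ref{sec:closure}: the diagonal is $\mathcal{O}(1)$ while the off-diagonal blocks and the deviation from the identity are governed by $\vect{v}$ and by the closure derivatives $\partial\mathsf{k}_{ik}/\partial\cD$ and $\partial\mathsf{k}_{ik}/\partial\mathcal{I}^j$ (equivalently $\psi$ and $\psi'_{\mathsf{a}}$). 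With $1-\lambda=v/(1+v)<1$ and off-diagonal entries of order $\lambda v$, the choice $\lambda=(1+v)^{-1}$ is tuned so that $\|I-\lambda\,(\partial\bcU/\partial\bcM)\|<1$. I expect the \emph{main obstacle} to be a clean, $v$-uniform bound of this norm below one over all of $\cR$, since it requires quantitative control of the Eddington-tensor derivatives up to the boundary $h=1$, where the closure degenerates to a Dirac mass; that boundary case should be checked separately with $f$ interpreted as a measure. Granting the contraction, the Banach fixed-point theorem gives convergence of Eq.~\eqref{eq:Picard} from any $\bcM^{[0]}\in\cR$ to a unique fixed point $\bcM^{*}\in\cR$, and since $\lambda\neq0$, $\bcM^{*}$ is a fixed point of $\bcH_{\bcU}$ if and only if $\vect{\mathcal{L}}(\bcM^{*},\vect{v})\bcM^{*}=\bcU$, i.e.\ if and only if it solves Eq.~\eqref{eq:ConservedToPrimitive}.

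Finally, the converse implication and the uniqueness assertion follow together. The forward step shows $\vect{\mathcal{L}}(\cdot,\vect{v})$ maps $\cR$ into $\cR$, while the contraction shows that for each $\bcU\in\cR$ there is exactly one $\bcM^{*}\in\cR$ solving Eq.~\eqref{eq:ConservedToPrimitive}; hence $\vect{\mathcal{L}}(\cdot,\vect{v})$ restricts to a bijection of $\cR$ onto itself. For an arbitrary pair $(\bcU,\bcM)$ satisfying Eq.~\eqref{eq:ConservedToPrimitive} with $\bcU\in\cR$, I would argue the solution is unique: under Assumption~\ref{assum:exact_closure} the Eddington closure $\mathsf{k}_{ij}(\bcM)$, and therefore $\vect{\mathcal{L}}$, is defined only for realizable $\bcM$, so no non-realizable partner exists and $\bcM$ must equal $\bcM^{*}\in\cR$. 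This yields the equivalence $\bcU\in\cR\Leftrightarrow\bcM\in\cR$ together with convergence of the solver \eqref{eq:Picard} to the unique realizable solution.
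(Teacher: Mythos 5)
Your realizability arguments match the paper's almost exactly: the forward map $\bcM\mapsto\bcU$ is handled by the nonnegative distribution $f(1+v_i\ell^i)$ (the paper's Lemma~\ref{lem:ForwardRealizability}), and your distribution $G=f\,(1-\lambda(1+v_i\ell^i))+\lambda g_{\bcU}$ for one Picard step is literally the same computation as the paper's convex-combination decomposition $\bcM^{[k+1]}=(1-\lambda)\widetilde{\bcM}^{[k]}+\lambda\bcU$ in Lemma~\ref{lem:solver_realizability}, with the same role for $\lambda=(1+v)^{-1}$. Your route to the converse implication (arguing that the closure, hence $\vect{\mathcal{L}}$, is only defined on $\cR$) differs from the paper's Lemma~\ref{lem:BackwardRealizability}, which explicitly constructs the realizable $\bcM$ by dividing the distribution underlying $\bcU$ by $(1+v_i\ell^i)$; the paper's version is more self-contained since it does not lean on a convention about the domain of the closure, but yours is defensible.

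The genuine gap is the contraction step, which you explicitly defer (``granting the contraction\ldots''). Two things go wrong with your sketch. First, the assertion that $\lambda=(1+v)^{-1}$ is ``tuned so that $\|I-\lambda(\partial\bcU/\partial\bcM)\|<1$'' is misleading: in the paper's estimate the contraction constant is $1-\lambda(1-\tilde L)$ with $\tilde L$ the Lipschitz constant of the velocity-dependent perturbation, so it is $<1$ for \emph{any} $\lambda\in(0,1]$ provided $\tilde L<1$; the choice of $\lambda$ buys realizability preservation, not contractivity. Second, and more importantly, establishing $\tilde L<1$ is precisely the quantitative control of the Eddington-tensor derivatives you flag as the ``main obstacle,'' and the paper supplies it via Lemmas~\ref{lemma:dD_term} and~\ref{lemma:dI_term} ($\|\partial_{\cD}(v^{i}\mathsf{k}_{ij}\cD)\|\le v$ and $\|\nabla_{\bcI}(v^{i}\mathsf{k}_{ij}\cD)\|\le 2v$, proved in the appendix from polynomial bounds on $\psi$ and $\psi'$ valid up to $h=1$, where the worst constants are attained). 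Combining these gives $\tilde L=\sqrt{3+2\sqrt{2}}\,v=(1+\sqrt{2})\,v$, so the contraction holds only for $v<\sqrt{2}-1$ --- a strictly stronger hypothesis than the $v<1$ you assume suffices. Without these derivative bounds your argument cannot close, and the conclusion you aim for (convergence for all $v<1$) is stronger than what the paper actually proves.
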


\begin{prop}[Implicit collision solve]\label{prop:realizabilityImplicit}
	Suppose Assumption~\ref{assum:exact_closure} holds and $v_h<1$ on all $\vect{K}\in\cT$. Let $\vect{\mathcal{U}}_{h}^{n+\sfrac{1}{2}}\in\cR$ on $S^{\vect{K}}_{\otimes}$ for all $\vect{K} \in \cT$, then solving  Eq.~\eqref{eq:imexBackwardEuler} with the iterative solvers considered in  Section~\ref{sec:iterative_solvers} gives $\widehat{\vect{\mathcal{U}}}_{h}^{n+1}\in\cR$ on $S^{\vect{K}}_{\otimes}$ for all $\vect{K} \in \cT$.
\end{prop}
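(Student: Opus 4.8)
The plan is to exploit the nodal decoupling of the implicit update and then mirror the realizability and convergence argument already developed for the moment conversion solver. First I would observe that, in the nodal DG framework of Eq.~\eqref{eq:nodalSemiDiscrete}, the backward-Euler collision update \eqref{eq:imexBackwardEuler} couples no distinct phase-space nodes, so it suffices to work at a single node $\vect{z}\in S^{\vect{K}}_{\otimes}$. There the update is exactly the implicit system \eqref{eq:ImplicitSystem} with $\bcU^{\old}=\vect{\mathcal{U}}_{h}^{n+\sfrac{1}{2}}(\vect{z})$, which is realizable by hypothesis, and $\dt_{\bcC}=\dt$. The solver of Section~\ref{sec:collision_solver} recasts this as the primitive-variable fixed-point problem \eqref{eq:collision_fixed_point1} driven by the map $\bcQ$, iterated by \eqref{eq:collision_Picard}, so the whole proposition reduces to analyzing $\bcQ$ pointwise.

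The core of the argument is to show that $\bcQ$ is realizability-preserving, i.e. $\bcM\in\cR\Rightarrow\bcQ(\bcM)\in\cR$. For the density component I would use the choice $\lambda=(1+v)^{-1}$: writing out the number row of $\bcQ$ and bounding $-v^{i}\cI_{i}\ge -v\,\cD$ for $\bcM\in\cR$, the combination $((1-\lambda)-\lambda v)\,\cD$ vanishes identically, leaving $\cD^{+}\ge\mu_{\chi}\,\lambda\,\cN^{\old}>0$, since $\cN^{\old}\ge(1-v)\cD^{\old}>0$ for a realizable $\bcU^{\old}$ and $\mu_{\chi},\,\dt_{\bcC}\chi\cD_{0}\ge 0$. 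The harder sub-step is the flux bound $|\cI^{+}|\le\cD^{+}$, i.e. $\gamma(\bcQ(\bcM))\ge 0$; here I would invoke Assumption~\ref{assum:exact_closure} to control the closure contribution $v^{i}\mathsf{k}_{ij}\cD$ through the realizability of the underlying distribution, together with the contractive diagonal factors $\mu_{\chi},\mu_{\kappa}\le 1$ and the convex structure inherited from the $\lambda$-splitting. This is essentially the computation behind Proposition~\ref{prop:realizabilityMomentConversion}, now carrying the extra scaling $\Lambda=\mathrm{diag}(\mu_{\chi},\mu_{\kappa})$ and the nonnegative source $\dt_{\bcC}\chi\cD_{0}$.

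Next I would establish convergence of the Picard iteration \eqref{eq:collision_Picard} by showing $\bcQ$ is a contraction on $\cR$. The Lipschitz estimate again follows the moment-conversion analysis: the velocity- and closure-dependent terms contribute a Lipschitz constant controlled by $v<1$ through $\lambda$, while the factors $\mu_{\chi},\mu_{\kappa}\le1$ only shrink it further, so the additional collision terms cannot spoil the contraction. By the Banach fixed-point theorem the iterates converge to a unique $\bcM^{\star}$, and since every iterate remains in $\cR$ by the previous paragraph, $\bcM^{\star}\in\cR$.

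Finally I would close the loop by identifying the fixed point with the conserved update. Clearing the factors $\mu_{\chi}^{-1}=1+\lambda\dt_{\bcC}\chi$ and $\mu_{\kappa}^{-1}=1+\lambda\dt_{\bcC}\kappa$ in the relation $\bcM^{\star}=\bcQ(\bcM^{\star})$ recovers precisely $\widehat{\vect{\mathcal{U}}}_{h}^{n+1}=\bcU^{\old}+\dt\,\bcC(\bcM^{\star})$ together with the conserved-to-primitive relation \eqref{eq:ConservedToPrimitive} between $\bcM^{\star}$ and $\widehat{\vect{\mathcal{U}}}_{h}^{n+1}$. Proposition~\ref{prop:realizabilityMomentConversion} then yields $\bcM^{\star}\in\cR\iff\widehat{\vect{\mathcal{U}}}_{h}^{n+1}\in\cR$, so $\widehat{\vect{\mathcal{U}}}_{h}^{n+1}\in\cR$ at the node $\vect{z}$; as $\vect{z}\in S^{\vect{K}}_{\otimes}$ and $\vect{K}\in\cT$ were arbitrary, the claim follows. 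I expect the flux-realizability sub-step of the second paragraph to be the main obstacle, since it is the only place where the closure nonlinearity and the multidimensional velocity coupling interact nontrivially.
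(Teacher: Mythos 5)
Your architecture coincides with the paper's: reduce to a single node, recast the backward-Euler update as the primitive-variable fixed point $\bcM=\bcQ(\bcM)$ of Eq.~\eqref{eq:collision_fixed_point1}, prove per-iteration realizability and contraction of $\bcQ$ by piggybacking on the moment-conversion analysis, and transfer realizability back to $\widehat{\vect{\mathcal{U}}}_{h}^{n+1}$ via Lemma~\ref{lem:ForwardRealizability}. Two of your intermediate claims are not quite right as stated, and the first one sits exactly at the sub-step you yourself flag as the main obstacle.

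First, you attribute the harmlessness of the diagonal scaling $\Lambda=\textup{diag}(\mu_{\chi},\mu_{\kappa})$ to $\mu_{\chi},\mu_{\kappa}\le 1$. That is not the relevant property: scaling $\cD$ by $\mu_{\chi}$ and the flux by $\mu_{\kappa}$ preserves $\gamma(\bcM)=\cD-\cI\ge0$ only if $\mu_{\kappa}\le\mu_{\chi}$ (take $\mu_{\chi}=1/2$, $\mu_{\kappa}=9/10$ and a moment with $\cI=\cD$ to see realizability fail even though both factors are below one). The needed ordering comes from $\kappa=\chi+\sigma\ge\chi\ge0$, which is why Lemma~\ref{lem:collision_solver_realizability} carries the explicit hypothesis $\kappa\ge\chi\ge0$. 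The paper's route is to write $\bcQ(\bcM^{[k]})=(1-\lambda)\,\Lambda\,\tilde{\bcM}^{[k]}+\lambda\,\Lambda\,\tilde{\bcU}^{\old}$, show $\tilde{\bcM}^{[k]}\in\cR$ by applying the distribution-function argument of Lemma~\ref{lem:ForwardRealizability} with $\vect{v}$ replaced by $-\tfrac{\lambda}{1-\lambda}\vect{v}$ (this is where $\lambda\le(1+v)^{-1}$ enters), note $\tilde{\bcU}^{\old}\in\cR$ since $\dt\,\chi\,\cD_{0}\ge0$ only augments the density, and then use $\mu_{\chi}\ge\mu_{\kappa}$ together with convexity of $\cR$. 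Your direct computation giving $\cD^{+}>0$ is fine, but the bound $|\cI^{+}|\le\cD^{+}$ needs the ordering of $\mu_{\chi}$ and $\mu_{\kappa}$, not merely their size.

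Second, the contraction of $\bcH_{\bcU}$ (and hence of $\bcQ$, whose increments equal $\Lambda$ applied to those of $\bcH_{\bcU}$) is \emph{not} available for all $v<1$: Lemmas~\ref{lemma:dD_term} and \ref{lemma:dI_term} yield a Lipschitz constant $\sqrt{3+2\sqrt{2}}\,v=(1+\sqrt{2})\,v$, so Propositions~\ref{prop:contraction} and \ref{prop:collision_contraction} require $v<\sqrt{2}-1$. Your assertion that the Lipschitz constant is ``controlled by $v<1$'' overstates what is proved. The observation that $\|\Lambda\|\le1$ cannot enlarge the constant is correct and is exactly Proposition~\ref{prop:collision_contraction}, but the convergence guarantee for the solver rests on the stricter velocity bound; only the per-iterate realizability gets by with $\lambda\le(1+v)^{-1}$ and $v<1$.
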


These propositions form a basis for the proof of Theorem~\ref{thm:realizability}.
Specifically, Proposition~\ref{prop:realizabilityExplicit} guarantees that the updated moments $\widehat{\vect{\mathcal{U}}}_{h}^{n+\sfrac{1}{2}}$ from Eq.~\eqref{eq:imexForwardEuler} have a nonnegative cell-averaged density $\widehat{\mathcal{N}}_{\vect{K}}^{n+\sfrac{1}{2}}$ for each $\vect{K}\in\cT$. It follows from Proposition~\ref{prop:realizabilityLimiter} that the limited moments $\vect{\mathcal{U}}_{h}^{n+\sfrac{1}{2}}$ are realizable on ${S}^{\vect{K}}_{\otimes}$ for all $\vect{K}\in\cT$.
Solving Eq.~\eqref{eq:imexBackwardEuler} on each nodal point in ${S}^{\vect{K}}_{\otimes}$ for all $\vect{K}\in\cT$ gives the updated moment $\widehat{\vect{\mathcal{U}}}_{h}^{n+1}$, which is guaranteed to be realizable on  ${S}^{\vect{K}}_{\otimes}$, $\forall \vect{K}\in\cT$, by Proposition~\ref{prop:realizabilityImplicit}.
Applying the realizability-enforcing limiter again to $\widehat{\vect{\mathcal{U}}}_{h}^{n+1}$ on every $\vect{K}\in\cT$ leads to $\vect{\mathcal{U}}_{h}^{n+1}$, which is realizable on $\widehat{S}^{\vect{K}}_{\otimes}$, $\forall \vect{K}\in\cT$, again from Proposition~\ref{prop:realizabilityLimiter}.

In Sections~\ref{sec:streaming}, \ref{sec:realizabilityLimiter}, \ref{sec:momentConversionRealizability}, and \ref{sec:collision}, we prove Propositions~\ref{prop:realizabilityExplicit},  \ref{prop:realizabilityLimiter}, \ref{prop:realizabilityMomentConversion}, and \ref{prop:realizabilityImplicit}, respectively. 
These results together lead to the main realizability-preserving property of the numerical scheme Eqs.~\eqref{eq:imexForwardEuler}--\eqref{eq:imexBackwardEuler} given in Theorem~\ref{thm:realizability}, under the exact closure assumption, Assumption~\ref{assum:exact_closure}.
In Section~\ref{sec:approxClosure}, we extend the realizability-preserving and convergence results in Propositions~\ref{prop:realizabilityMomentConversion} and \ref{prop:realizabilityImplicit} to the case of evaluating the closure with the approximate Eddington factor $\psi_{\mathsf{a}}$ in Eq.~\eqref{eq:psiApproximate}, which is often used in practice to reduce computational cost.

\subsection{Explicit Advection Update}
\label{sec:streaming}

In this section, we prove Proposition~\ref{prop:realizabilityExplicit} by deriving the time-step restriction \eqref{eq:timestepRestriction} under which the updated cell-averaged number density  $\widehat{\mathcal{N}}_{\vect{K}}^{n+\sfrac{1}{2}}>0$.  In a one-dimensional planar geometry, we show that $\widehat{\vect{\mathcal{U}}}_{\vect{K}}^{n+\sfrac{1}{2}}\in\cR$ under an additional time-step restriction given in Eq.~\eqref{eq:sourceTimeStepRestriction}.

Since constant functions are in the approximation space $\mathbb{V}_{h}^{k}(\vect{K})$, we start with deriving the update formula for cell-averaged moments by setting $\varphi_{h}=1$ in Eq.~\eqref{eq:imexForwardEuler}, which leads to 
\begin{align}
  \widehat{\vect{\mathcal{U}}}_{\vect{K}}^{n+\sfrac{1}{2}}
  &=\vect{\mathcal{U}}_{\vect{K}}^{n}+\f{\dt}{|\vect{K}|}\,\vect{\mathcal{B}}_{h}\big(\,\vect{\mathcal{U}}_{h}^{n},\vect{v}_{h}\,\big)_{\vect{K}} \nonumber \\
  &=\gamma^{\vect{x}}\,\Big\{\,\vect{\mathcal{U}}_{\vect{K}}^{n}+\f{\dt}{\gamma^{\vect{x}}\,|\vect{K}|}\,\vect{\mathcal{B}}_{h}^{\vect{x}}\big(\,\vect{\mathcal{U}}_{h}^{n},\vect{v}_{h}\,\big)_{\vect{K}}\,\Big\}
  +\gamma^{\varepsilon}\,\Big\{\,\vect{\mathcal{U}}_{\vect{K}}^{n}+\f{\dt}{\gamma^{\varepsilon}\,|\vect{K}|}\,\vect{\mathcal{B}}_{h}^{\varepsilon}\big(\,\vect{\mathcal{U}}_{h}^{n},\vect{v}_{h}\,\big)_{\vect{K}}\,\Big\} \nonumber \\
  &\hspace{12pt}
  +\gamma^{\mathcal{S}}\,\Big\{\,\vect{\mathcal{U}}_{\vect{K}}^{n}+\f{\dt}{\gamma^{\mathcal{S}}\,|\vect{K}|}\,\big(\,\vect{\mathcal{S}}(\vect{\mathcal{U}}_{h}^{n},\vect{v}_{h})\,\big)_{\vect{K}}\,\Big\},   \label{eq:fullCellAverageUpdateSplit}  \\
  &=:\gamma^{\vect{x}}\,\widehat{\vect{\mathcal{U}}}_{\vect{K}}^{n+\sfrac{1}{2},\,\vect{x}} +   \gamma^{\varepsilon}\,\widehat{\vect{\mathcal{U}}}_{\vect{K}}^{n+\sfrac{1}{2},\,\varepsilon} +   \gamma^{\mathcal{S}}\,\widehat{\vect{\mathcal{U}}}_{\vect{K}}^{n+\sfrac{1}{2},\,\mathcal{S}}\:,\nonumber
\end{align}
where we have defined $\gamma^{\vect{x}},\gamma^{\varepsilon},\gamma^{\mathcal{S}}>0$, satisfying $\gamma^{\vect{x}}+\gamma^{\varepsilon}+\gamma^{\mathcal{S}}=1$.  
In the following subsections, we show that, when $\vect{\mathcal{U}}_h^n\in\cR$ on $\widehat{S}^{\vect{K}}_{\otimes}$ for all $\vect{K}\in\cT$, $\widehat{\vect{\mathcal{U}}}_{\vect{K}}^{n+\sfrac{1}{2},\vect{x}}$ and $\widehat{\vect{\mathcal{U}}}_{\vect{K}}^{n+\sfrac{1}{2},\varepsilon}$ are realizable under time-step restrictions given in Eq.~\eqref{eq:timestepRestriction} (Sections~\ref{sec:realizabilitySpatial} and \ref{sec:realizabilityEnergy}) and that $\widehat{\mathcal{N}}_{\vect{K}}^{n+\sfrac{1}{2},\mathcal{S}}>0$ (Section~\ref{sec:realizabilitySource})  for all $\vect{K}\in\cT$. 
Further, we show in Section~\ref{sec:realizabilitySource} that $\widehat{\vect{\mathcal{U}}}_{\vect{K}}^{n+\sfrac{1}{2},\mathcal{S}}$ is realizable in one-dimensional, planar geometry under an additional time-step restriction given in Eq.~\eqref{eq:sourceTimeStepRestriction}. 
Since the realizable set $\mathcal{R}$ is convex and $\widehat{\vect{\mathcal{U}}}_{\vect{K}}^{n+\sfrac{1}{2}}$ is written as a convex combination of $\widehat{\vect{\mathcal{U}}}_{\vect{K}}^{n+\sfrac{1}{2},\vect{x}}$, $\widehat{\vect{\mathcal{U}}}_{\vect{K}}^{n+\sfrac{1}{2},\varepsilon}$, and $\widehat{\vect{\mathcal{U}}}_{\vect{K}}^{n+\sfrac{1}{2},\mathcal{S}}$ in Eq.~\eqref{eq:fullCellAverageUpdateSplit}, we thus conclude that, under the time-step restrictions in Eqs.~\eqref{eq:timestepRestriction} and \eqref{eq:sourceTimeStepRestriction},  (i) $\widehat{\mathcal{N}}_{\vect{K}}^{n+\sfrac{1}{2}}>0$ and (ii)  $\widehat{\vect{\mathcal{U}}}_{\vect{K}}^{n+\sfrac{1}{2}}\in\cR$ in a planar geometry.

\subsubsection{Position Space Fluxes}
\label{sec:realizabilitySpatial}

For transport in position space we follow the approach in \cite{chu_etal_2019} and write
\begin{equation}
  \widehat{\vect{\mathcal{U}}}_{\vect{K}}^{n+\sfrac{1}{2},\vect{x}}
  =\vect{\mathcal{U}}_{\vect{K}}^{n}+\f{\dt_{\vect{x}}}{|\vect{K}|}\,\vect{\mathcal{B}}_{h}^{\vect{x}}\big(\,\vect{\mathcal{U}}_{h}^{n},\vect{v}_{h}\,\big)_{\vect{K}}
  \quad(\dt_{\vect{x}}=\dt/\gamma^{\vect{x}}).
\end{equation}
To find sufficient conditions such that $\widehat{\vect{\mathcal{U}}}_{\vect{K}}^{n+\sfrac{1}{2},\vect{x}}\in\mathcal{R}$, we define (cf.~\cite{chu_etal_2019})
\begin{equation}
  \Gamma^{i}\big[\vect{\mathcal{U}}_{h}^{n}\big](\tilde{\vect{z}}^{i})
  =\f{1}{|K_{\vect{x}}^{i}|}
  \Big[\,
    \int_{K_{\vect{x}}^{i}}\vect{\mathcal{U}}_{h}^{n}\,dx^{i} 
    - \f{\dt_{\vect{x}}}{\beta^{i}}
    \Big(\,
      \widehat{\vect{\mathcal{F}}^{i}}\big(\vect{\mathcal{U}}_{h}^{n},\vect{v}_{h}\big)|_{x_{\hi}^{i}}
      -\widehat{\vect{\mathcal{F}}^{i}}\big(\vect{\mathcal{U}}_{h}^{n},\vect{v}_{h}\big)|_{x_{\lo}^{i}}
    \,\Big)
  \,\Big],
  \label{eq:GammaSpatial}
\end{equation}
so that
\begin{equation}
  \widehat{\vect{\mathcal{U}}}_{\vect{K}}^{n+\sfrac{1}{2},\vect{x}}
  =\sum_{i=1}^{d_{\vect{x}}}\f{\beta^{i}}{|\tilde{\vect{K}}^{i}|}
  \int_{\tilde{\vect{K}}^{i}}\Gamma^{i}\big[\vect{\mathcal{U}}_{h}^{n}\big](\tilde{\vect{z}}^{i})\,\tau\,d\tilde{\vect{z}}^{i},
  \label{eq:cellAverageInTermsOfGammaSpatial}
\end{equation}
where we have defined the set of positive constants $\{\beta^{i}\}_{i=1}^{d_{\vect{x}}}$ satisfying $\sum_{i=1}^{d_{\vect{x}}}\beta^{i}=1$.  

If a quadrature rule $\tilde{\vect{\mathcal{Q}}}^{i}:C^{0}(\tilde{\vect{K}}^{i})\to\mathbb{R}$ with positive weights, e.g., the tensor product of one-dimensional LG quadrature, is used to approximate the integral in Eq.~\eqref{eq:cellAverageInTermsOfGammaSpatial}, it is sufficient to show that, under the assumptions in Proposition~\ref{prop:realizabilityExplicit}, $\Gamma^{i}\big[\vect{\mathcal{U}}_{h}^{n}\big](\tilde{\vect{z}}^{i})\in\mathcal{R}$ holds for $\tilde{\vect{z}}^{i}\in\tilde{\vect{\mathcal{S}}}^{i}\subset\tilde{\vect{K}}^{i}$, where $\tilde{\vect{\mathcal{S}}}^{i}$ denotes the set of quadrature points given by $\tilde{\vect{\mathcal{Q}}}^{i}$. 
We prove this sufficient condition in the remainder of this subsection.

Let $\hat{Q}^{i}:C^{0}(K_{\vect{x}}^{i})\to\mathbb{R}$ denote the $\hat{k}$-point LGL quadrature rule on $K_{\vect{x}}^{i}$ with points $\widehat{S}^{\vect{K}}_{i}=\{x_{\lo}^{i}=\hat{x}_{1}^{i},\ldots,\hat{x}_{\hat{k}}^{i}=x_{\hi}^{i}\}$ as defined in Section~\ref{sec:dgMethod} and strictly positive weights $\{\hat{w}_{q}\}_{q=1}^{\hat{k}}$, normalized such that $\sum_{q=1}^{\hat{k}}\hat{w}_{q}=1$.  
Since $\hat{k}\geq \frac{k+5}{2}$, this quadrature integrates $\vect{\mathcal{U}}_{h}^{n}$ exactly, and thus we have
\begin{equation}
  \int_{\vect{K}_{\vect{x}}^{i}}\vect{\mathcal{U}}_{h}^{n}(x^{i})\,dx^{i} 
  = \hat{Q}^{i}\big[\vect{\mathcal{U}}_{h}^{n}\big]
  = |K_{\vect{x}}^{i}|\,\sum_{q=1}^{\hat{k}}\hat{w}_{q}\,\vect{\mathcal{U}}_{h}^{n}(\hat{x}_{q}^{i}),
  \label{eq:glQuadratureSpace}
\end{equation}
where, for notational convenience, we have suppressed explicit dependence on $\tilde{\vect{z}}^{i}$ in writing $\vect{\mathcal{U}}_{h}^{n}(\hat{x}_{q}^{i},\tilde{\vect{z}}^{i})=\vect{\mathcal{U}}_{h}^{n}(\hat{x}_{q}^{i})$.  
Similarly, $\vect{\mathcal{U}}_{h}^{n}(x_{\lo}^{i,\pm},\tilde{\vect{z}}^{i})=\vect{\mathcal{U}}_{h}^{n}(x_{\lo}^{i,\pm})$ and $\vect{\mathcal{U}}_{h}^{n}(x_{\hi}^{i,\pm},\tilde{\vect{z}}^{i})=\vect{\mathcal{U}}_{h}^{n}(x_{\hi}^{i,\pm})$.  
Then, using the quadrature rule in Eq.~\eqref{eq:glQuadratureSpace} and the LF flux in Eq.~\eqref{eq:globalLF}, we can write Eq.~\eqref{eq:GammaSpatial} as a convex combination
\begin{align}
  &\Gamma^{i}\big[\vect{\mathcal{U}}_{h}^{n}\big](\tilde{\vect{z}}^{i}) \nonumber \\
  &=\sum_{q=2}^{\hat{k}-1}\hat{w}_{q}\,\vect{\mathcal{U}}_{h}^{n}(\hat{x}_{q}^{i})
  + \hat{w}_{1}\,\Phi_{1}^{i}\big[\,\vect{\mathcal{U}}_{h}^{n}(x_{\lo}^{i,-}),\,\vect{\mathcal{U}}_{h}^{n}(x_{\lo}^{i,+}),\,\hat{\vect{v}}(x_{\lo}^{i})\,\big]
  + \hat{w}_{\hat{k}}\,\Phi_{\hat{k}}^{i}\big[\,\vect{\mathcal{U}}_{h}^{n}(x_{\hi}^{i,-}),\,\vect{\mathcal{U}}_{h}^{n}(x_{\hi}^{i,+}),\,\hat{\vect{v}}(x_{\hi}^{i})\,\big],
  \label{eq:GammaSpatialConvex}
\end{align}
where
\begin{align}
  \Phi_{1}^{i}\big[\,\vect{\mathcal{U}}_{a},\vect{\mathcal{U}}_{b},\hat{\vect{v}}\,\big]
  &=\vect{\mathcal{U}}_{b}+\lambda_{\vect{x}}^{i}\,\mathscr{F}_{\LaxF}^{i}\big(\vect{\mathcal{U}}_{a},\vect{\mathcal{U}}_{b},\hat{\vect{v}}\big), \label{eq:phi1} \\
  \Phi_{\hat{k}}^{i}\big[\,\vect{\mathcal{U}}_{a},\vect{\mathcal{U}}_{b},\hat{\vect{v}}\,\big]
  &=\vect{\mathcal{U}}_{a}-\lambda_{\vect{x}}^{i}\,\mathscr{F}_{\LaxF}^{i}\big(\vect{\mathcal{U}}_{a},\vect{\mathcal{U}}_{b},\hat{\vect{v}}\big), \label{eq:phiN}
\end{align}
and $\lambda_{\vect{x}}^{i}=\dt_{\vect{x}}/(\beta^{i}\,\hat{w}_{\hat{k}}\,|K_{\vect{x}}^{i}|)$.  
Since Eq.~\eqref{eq:GammaSpatialConvex} is a convex combination, it is sufficient to show the realizability of each term independently to obtain  $\Gamma^{i}\big[\vect{\mathcal{U}}_{h}^{n}\big](\tilde{\vect{z}}^{i})\in\mathcal{R}$.  
For the first term on the right-hand side of Eq.~\eqref{eq:GammaSpatialConvex}, it is sufficient that $\vect{\mathcal{U}}_{h}^{n}(\hat{x}_{q}^{i})\in\mathcal{R}$, which holds under the assumption that $\vect{\mathcal{U}}_{h}^{n}\in\mathcal{R}$ on $S^{\vect{K}}_{\otimes}$ for all $\vect{K}\in\cT$.
It remains to find conditions for which $\Phi_{1}^{i},\Phi_{\hat{k}}^{i}\in\mathcal{R}$, which we summarize in the following lemmas.  
\begin{lemma}
  Define
  \begin{equation}
    \Theta_{\pm}^{i}(\,\vect{\mathcal{U}},\hat{\vect{v}}\,) = \vect{\mathcal{U}}[\hat{\vect{v}}^{i}] \pm \vect{\mathcal{F}}^{i}(\,\vect{\mathcal{U}},\hat{\vect{v}}\,),
  \end{equation}
  where $\vect{\mathcal{U}}[\hat{\vect{v}}^{i}]$ and $\vect{\mathcal{F}}^{i}(\,\vect{\mathcal{U}},\hat{\vect{v}}\,)$ are defined as in Eqs.~\eqref{eq:conservedMoments} and \eqref{eq:phaseSpaceFluxes}, respectively, and $\hat{\vect{v}}^{i}=\big(\,\delta^{i1}\,\hat{v}^{1},\,\delta^{i2}\,\hat{v}^{2},\,\delta^{i3}\,\hat{v}^{3}\,\big)^{\intercal}$ as defined in Remark~\ref{rem:LF_flux}.
  Suppose that $\vect{\mathcal{U}}\in\mathcal{R}$ and $\hat{v}=|\hat{\vect{v}}|<1$.  
  Then $\Theta_{\pm}^{i}(\,\vect{\mathcal{U}},\hat{\vect{v}}\,)\in\mathcal{R}$.  
  \begin{proof}
    The first component of $\Theta_{\pm}^{i}(\,\vect{\mathcal{U}},\hat{\vect{v}}\,)$ can be written as
    \begin{equation}
      \f{1}{4\pi}\int_{\mathbb{S}^{2}}(\,1 \pm \hat{v}^{i}\,)\,(\,1 \pm \ell^{i}\,)\,f\,d\omega,
    \end{equation}
    while the remaining components can be written as
    \begin{equation}
      \f{1}{4\pi}\int_{\mathbb{S}^{2}}(\,1 \pm \hat{v}^{i}\,)\,(\,1 \pm \ell^{i}\,)\,f\,\ell_{j}\,d\omega, \quad (j=1,2,3).  
    \end{equation}
    Since $(\,1 \pm \hat{v}^{i}\,)\,(\,1 \pm \ell^{i}\,)\,f\in\mathfrak{R}$, the result follows.  
  \end{proof}
  \label{lem:realizableTheta}
\end{lemma}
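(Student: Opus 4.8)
The plan is to exhibit, for each choice of sign, a single nonnegative angular density whose zeroth and first moments reproduce, component by component, the four entries of $\Theta_{\pm}^{i}(\vect{\mathcal{U}},\hat{\vect{v}})$; realizability then follows at once from the definition of $\mathcal{R}$. First I would use the hypothesis $\vect{\mathcal{U}}\in\mathcal{R}$ together with Assumption~\ref{assum:exact_closure} to fix a distribution $f\in\mathfrak{R}$ whose angular moments, in the sense of Eq.~\eqref{eq:angularMoments}, are $\mathcal{D}$, $\mathcal{I}^{i}$, and $\mathcal{K}^{ij}$. The one structural observation needed upfront is that, because $\hat{\vect{v}}^{i}=(\delta^{i1}\hat{v}^{1},\delta^{i2}\hat{v}^{2},\delta^{i3}\hat{v}^{3})^{\intercal}$ keeps only its $i$th slot, the contractions $v^{k}\mathcal{I}_{k}$ and $v^{k}\mathcal{K}_{kj}$ hidden inside $\vect{\mathcal{U}}[\hat{\vect{v}}^{i}]$ (see Eq.~\eqref{eq:conservedMoments}) collapse to the single, unsummed terms $\hat{v}^{i}\mathcal{I}_{i}$ and $\hat{v}^{i}\mathcal{K}_{ij}$.

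The central step is a direct moment-matching calculation. Substituting the definitions in Eqs.~\eqref{eq:conservedMoments} and \eqref{eq:phaseSpaceFluxes}, the first entry of $\vect{\mathcal{U}}[\hat{\vect{v}}^{i}]\pm\vect{\mathcal{F}}^{i}(\vect{\mathcal{U}},\hat{\vect{v}})$ reads $(\mathcal{D}+\hat{v}^{i}\mathcal{I}_{i})\pm(\mathcal{I}^{i}+\hat{v}^{i}\mathcal{D})$, and its $j$th entry reads $(\mathcal{I}_{j}+\hat{v}^{i}\mathcal{K}_{ij})\pm(\mathcal{K}_{ij}+\hat{v}^{i}\mathcal{I}_{j})$, where I have used that indices are raised and lowered with the Kronecker tensor so that $\mathcal{I}_{i}=\mathcal{I}^{i}$ and $\mathcal{K}^{i}_{\,j}=\mathcal{K}_{ij}$. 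Factoring the constant $(1\pm\hat{v}^{i})$ out of each entry and inserting the integral representations $\mathcal{D}=\tfrac{1}{4\pi}\int f\,d\omega$, $\mathcal{I}^{i}=\tfrac{1}{4\pi}\int\ell^{i}f\,d\omega$, and $\mathcal{K}_{ij}=\tfrac{1}{4\pi}\int\ell^{i}\ell_{j}f\,d\omega$, I would recognize the first entry as $\tfrac{1}{4\pi}\int(1\pm\hat{v}^{i})(1\pm\ell^{i})\,f\,d\omega$ and the $j$th entry as $\tfrac{1}{4\pi}\int(1\pm\hat{v}^{i})(1\pm\ell^{i})\,f\,\ell_{j}\,d\omega$. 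Thus $\Theta_{\pm}^{i}(\vect{\mathcal{U}},\hat{\vect{v}})$ is exactly the density--flux pair of the reweighted distribution $g_{\pm}:=(1\pm\hat{v}^{i})(1\pm\ell^{i})\,f$.

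It then remains to verify $g_{\pm}\in\mathfrak{R}$, which by the definition of $\mathcal{R}$ finishes the proof. Nonnegativity is immediate: $f\ge0$, the constant factor obeys $1\pm\hat{v}^{i}>0$ since $|\hat{v}^{i}|\le\hat{v}<1$, and $1\pm\ell^{i}\ge0$ since $\vect{\ell}$ is a unit vector and hence $|\ell^{i}|\le1$. For the strict positivity of the associated density $\tfrac{1}{4\pi}\int g_{\pm}\,d\omega$, I would observe that the weight $(1\pm\hat{v}^{i})(1\pm\ell^{i})$ vanishes only on the measure-zero set $\{\ell^{i}=\mp1\}$ while $f$ carries positive mass (as $f\in\mathfrak{R}$), so the integral is strictly positive.

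The single delicate point is precisely this positivity of the new density at the boundary of $\mathcal{R}$: when the flux factor $h=1$ the closing distribution degenerates to a Dirac mass, and if that mass sits exactly at $\ell^{i}=\mp1$ the reweighted $g_{\pm}$ can vanish, leaving the zeroth moment only nonnegative. I expect to dispose of this either by working in the interior of $\mathcal{R}$---which is all the convex-combination argument of Eq.~\eqref{eq:GammaSpatialConvex} ultimately needs---or by a short continuity argument from the interior. Everything else is the routine algebra of the moment-matching identity, which is where the bookkeeping in the two signs and the fixed index $i$ resides.
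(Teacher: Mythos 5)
Your proof is correct and follows essentially the same route as the paper's: rewrite each component of $\Theta_{\pm}^{i}$ as an angular moment of the reweighted distribution $(1\pm\hat{v}^{i})(1\pm\ell^{i})\,f$ and invoke membership in $\mathfrak{R}$. The only difference is that you spell out the moment-matching algebra and explicitly flag the boundary degeneracy (a Dirac mass at $\ell^{i}=\mp1$ making the new zeroth moment vanish rather than stay strictly positive), a point the paper's one-line proof leaves implicit; your proposed resolution via the interior of $\mathcal{R}$ is reasonable.
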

\begin{lemma}
  Let $\Phi_{1}^{i}$ and $\Phi_{\hat{k}}^{i}$ be defined as in Eqs.~\eqref{eq:phi1} and \eqref{eq:phiN}, respectively.  
  Assume that the following holds
  \begin{enumerate}
    \item[\textup{(a)}] $\vect{\mathcal{U}}_{a},\vect{\mathcal{U}}_{b}\in\mathcal{R}$, defined as in Eq.~\eqref{eq:conservedMoments} as the moments of distributions $f_{a},f_{b}\in\mathfrak{R}$.  
    \item[\textup{(b)}] The three-velocity in Eq.~\eqref{eq:faceVelocity} satisfies $\hat{v}=|\hat{\vect{v}}|<1$.  
    \item[\textup{(c)}] The time step $\dt_{\vect{x}}$ is chosen such that $\lambda_{\vect{x}}^{i}\le(1-\hat{v})$.  
  \end{enumerate}
  Then $\Phi_{1}^{i}\big[\,\vect{\mathcal{U}}_{a},\vect{\mathcal{U}}_{b},\hat{\vect{v}}\,\big],\Phi_{\hat{k}}^{i}\big[\,\vect{\mathcal{U}}_{a},\vect{\mathcal{U}}_{b},\hat{\vect{v}}\,\big]\in\mathcal{R}$.
  \begin{proof}
    Define
    \begin{equation}
      \Theta_{0}^{i}(\,\vect{\mathcal{U}},\hat{\vect{v}}\,)
      =\f{\vect{\mathcal{U}}[\hat{\vect{v}}]-\lambda_{\vect{x}}^{i}\,\vect{\mathcal{U}}[\hat{\vect{v}}^{i}]}{1-\lambda_{\vect{x}}^{i}}.  
    \end{equation}
    Then, using the LF flux in Eq.~\eqref{eq:globalLF}, we can write
    \begin{equation}
      \Phi_{1}^{i}\big[\,\vect{\mathcal{U}}_{a},\vect{\mathcal{U}}_{b},\hat{\vect{v}}\,\big]
      = (1-\lambda_{\vect{x}}^{i})\,\Theta_{0}^{i}(\,\vect{\mathcal{U}}_{b},\hat{\vect{v}}\,)
      +\f{1}{2}\lambda_{\vect{x}}^{i}\,\Theta_{+}^{i}(\,\vect{\mathcal{U}}_{a},\hat{\vect{v}}\,)
      +\f{1}{2}\lambda_{\vect{x}}^{i}\,\Theta_{+}^{i}(\,\vect{\mathcal{U}}_{b},\hat{\vect{v}}\,),
    \end{equation}
    which is a convex combination for $\lambda_{\vect{x}}^{i}<1$.  
    From assumptions (a) and (b) above, it follows from Lemma~\ref{lem:realizableTheta} that $\Theta_{+}^{i}(\,\vect{\mathcal{U}}_{a},\hat{\vect{v}}\,),\Theta_{+}^{i}(\,\vect{\mathcal{U}}_{b},\hat{\vect{v}}\,)\in\mathcal{R}$.  
    It remains to show that $\Theta_{0}^{i}(\,\vect{\mathcal{U}}_{b},\hat{\vect{v}}\,)\in\mathcal{R}$.  
    The first component of $\Theta_{0}^{i}(\,\vect{\mathcal{U}}_{b},\hat{\vect{v}}\,)$ can be written as
    \begin{equation}
      \f{1}{4\pi}\int_{\mathbb{S}^{2}}\mathsf{f}(\omega)\,d\omega,
      \quad\text{where}\quad
      \mathsf{f}(\omega) = \f{[(1-\hat{\vect{v}}\cdot\vect{\ell})-\lambda_{\vect{x}}^{i}\,(1+\hat{v}^{i}\,\ell^{i})]}{(1-\lambda_{\vect{x}}^{i})}\,f,
    \end{equation}
    while the remaining components can be written as
    \begin{equation}
      \f{1}{4\pi}\int_{\mathbb{S}^{2}}\mathsf{f}(\omega)\,\ell_{j}(\omega)\,d\omega, \quad (j=1,2,3).  
    \end{equation}
    From assumptions (b) and (c), it follows that $\mathsf{f}\in\mathfrak{R}$, which implies $\Theta_{0}^{i}(\,\vect{\mathcal{U}}_{b},\hat{\vect{v}}\,)\in\mathcal{R}$.  
    The proof for $\Phi_{\hat{k}}^{i}\big[\,\vect{\mathcal{U}}_{a},\vect{\mathcal{U}}_{b},\hat{\vect{v}}\,\big]$ is analogous and is omitted.  
  \end{proof}
  \label{lem:realizablePhi}
\end{lemma}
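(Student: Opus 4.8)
The plan is to exploit the convexity of $\mathcal{R}$: since both $\Phi_{1}^{i}$ and $\Phi_{\hat{k}}^{i}$ are affine in the Lax--Friedrichs flux, I would rewrite each as a convex combination of moments that individually lie in $\mathcal{R}$. The natural building blocks are the quantities $\Theta_{\pm}^{i}(\vect{\mathcal{U}},\hat{\vect{v}})=\vect{\mathcal{U}}[\hat{\vect{v}}^{i}]\pm\vect{\mathcal{F}}^{i}(\vect{\mathcal{U}},\hat{\vect{v}})$ already shown realizable in Lemma~\ref{lem:realizableTheta}, together with one auxiliary moment
\[
\Theta_{0}^{i}(\vect{\mathcal{U}},\hat{\vect{v}})=\frac{\vect{\mathcal{U}}[\hat{\vect{v}}]-\lambda_{\vect{x}}^{i}\,\vect{\mathcal{U}}[\hat{\vect{v}}^{i}]}{1-\lambda_{\vect{x}}^{i}},
\]
which blends the moment evaluated with the full interface velocity $\hat{\vect{v}}$ against the one evaluated with only its $i$th component $\hat{\vect{v}}^{i}$.

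First I would substitute the explicit global LF flux $\mathscr{F}_{\LaxF}^{i}$ of Eq.~\eqref{eq:globalLF} (with $\alpha^{i}=1$) into the definition of $\Phi_{1}^{i}$ in Eq.~\eqref{eq:phi1} and collect terms, matching the $\pm\vect{\mathcal{F}}^{i}$ pieces against the $\vect{\mathcal{U}}[\hat{\vect{v}}^{i}]$ pieces. A short regrouping yields the identity
\[
\Phi_{1}^{i}=(1-\lambda_{\vect{x}}^{i})\,\Theta_{0}^{i}(\vect{\mathcal{U}}_{b},\hat{\vect{v}})+\tfrac{1}{2}\lambda_{\vect{x}}^{i}\,\Theta_{+}^{i}(\vect{\mathcal{U}}_{a},\hat{\vect{v}})+\tfrac{1}{2}\lambda_{\vect{x}}^{i}\,\Theta_{+}^{i}(\vect{\mathcal{U}}_{b},\hat{\vect{v}}),
\]
whose coefficients are nonnegative and sum to one for $\lambda_{\vect{x}}^{i}<1$, a bound implied by assumption (c) since $\hat{v}\ge 0$. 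By Lemma~\ref{lem:realizableTheta}, assumptions (a) and (b) place $\Theta_{+}^{i}(\vect{\mathcal{U}}_{a},\hat{\vect{v}})$ and $\Theta_{+}^{i}(\vect{\mathcal{U}}_{b},\hat{\vect{v}})$ in $\mathcal{R}$, so the only remaining task is to establish $\Theta_{0}^{i}(\vect{\mathcal{U}}_{b},\hat{\vect{v}})\in\mathcal{R}$.

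The crux is therefore the realizability of $\Theta_{0}^{i}$, and this is where assumption (c) is genuinely used. I would write each component of $\Theta_{0}^{i}(\vect{\mathcal{U}}_{b},\hat{\vect{v}})$ as an angular moment $\tfrac{1}{4\pi}\int_{\mathbb{S}^{2}}\mathsf{f}(\omega)\,\{1,\ell_{j}\}\,d\omega$ of the reweighted density
\[
\mathsf{f}(\omega)=\frac{(1+\hat{\vect{v}}\cdot\vect{\ell})-\lambda_{\vect{x}}^{i}\,(1+\hat{v}^{i}\,\ell^{i})}{1-\lambda_{\vect{x}}^{i}}\,f_{b},
\]
using that the number density built with $\hat{\vect{v}}$ integrates $(1+\hat{\vect{v}}\cdot\vect{\ell})f_{b}$ while the one built with $\hat{\vect{v}}^{i}$ integrates $(1+\hat{v}^{i}\ell^{i})f_{b}$. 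It then suffices to show the scalar weight multiplying $f_{b}$ is nonnegative for every $\vect{\ell}\in\mathbb{S}^{2}$, for then $\mathsf{f}\in\mathfrak{R}$ and hence $\Theta_{0}^{i}\in\mathcal{R}$. Since the denominator is positive, the sign is governed by the numerator $(1-\lambda_{\vect{x}}^{i})+\vect{a}\cdot\vect{\ell}$, where the effective velocity $\vect{a}$ agrees with $\hat{\vect{v}}$ except that its $i$th component is shrunk by $(1-\lambda_{\vect{x}}^{i})$. Its minimum over $\mathbb{S}^{2}$ is $(1-\lambda_{\vect{x}}^{i})-|\vect{a}|$, and a direct computation gives $|\vect{a}|^{2}=\hat{v}^{2}-(\hat{v}^{i})^{2}\,\lambda_{\vect{x}}^{i}(2-\lambda_{\vect{x}}^{i})\le\hat{v}^{2}$ for $\lambda_{\vect{x}}^{i}\in[0,1]$, so that $|\vect{a}|\le\hat{v}$; the minimum is thus bounded below by $(1-\lambda_{\vect{x}}^{i})-\hat{v}$, which is nonnegative exactly under assumption (c).

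I expect this last estimate to be the main obstacle: the weight mixes the full velocity $\hat{\vect{v}}$ with only its $i$th component, so the crude bounds $1+\hat{\vect{v}}\cdot\vect{\ell}\ge 1-\hat{v}$ and $\lambda_{\vect{x}}^{i}(1+\hat{v}^{i}\ell^{i})\le(1-\hat{v})(1+\hat{v})$ are too lossy and return a negative lower bound. The observation that rescues the argument is that the $i$th velocity component enters $\vect{a}$ with the reduced factor $(1-\lambda_{\vect{x}}^{i})$, which is precisely what keeps $|\vect{a}|\le\hat{v}$ and makes the CFL-type condition (c), $\lambda_{\vect{x}}^{i}\le 1-\hat{v}$, exactly sufficient. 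Finally, the claim for $\Phi_{\hat{k}}^{i}$ follows from the identical regrouping with $\Theta_{-}^{i}$ in place of $\Theta_{+}^{i}$ and $\vect{\mathcal{U}}_{a}$ in place of $\vect{\mathcal{U}}_{b}$ in the $\Theta_{0}^{i}$ term, so I would invoke this symmetry rather than repeat the computation.
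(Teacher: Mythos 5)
Your proof takes essentially the same route as the paper: the identical convex decomposition of $\Phi_{1}^{i}$ into $(1-\lambda_{\vect{x}}^{i})\,\Theta_{0}^{i}+\tfrac{1}{2}\lambda_{\vect{x}}^{i}\,\Theta_{+}^{i}(\vect{\mathcal{U}}_{a},\hat{\vect{v}})+\tfrac{1}{2}\lambda_{\vect{x}}^{i}\,\Theta_{+}^{i}(\vect{\mathcal{U}}_{b},\hat{\vect{v}})$, the appeal to Lemma~\ref{lem:realizableTheta}, and the reduction of $\Theta_{0}^{i}\in\mathcal{R}$ to the nonnegativity of a reweighted density. Where you go beyond the paper is in actually proving that nonnegativity: the paper simply asserts that assumptions (b) and (c) give $\mathsf{f}\in\mathfrak{R}$, whereas you correctly observe that the naive term-by-term bounds fail and that one must exploit the fact that the $i$th velocity component enters the effective vector $\vect{a}$ shrunk by $(1-\lambda_{\vect{x}}^{i})$, so that $|\vect{a}|\le\hat{v}$ and condition (c) is exactly sufficient — this fills a real gap in the paper's argument. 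Note also that your weight $(1+\hat{\vect{v}}\cdot\vect{\ell})-\lambda_{\vect{x}}^{i}(1+\hat{v}^{i}\ell^{i})$ carries the correct sign, since the conserved density integrates $(1+\vect{v}\cdot\vect{\ell})f$; the paper's printed $(1-\hat{\vect{v}}\cdot\vect{\ell})$ appears to be a typographical error, and with that sign the stated CFL condition would not suffice.
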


To this end, the results of Lemma~\ref{lem:realizablePhi} lead to $\widehat{\vect{\mathcal{U}}}_{\vect{K}}^{n+\sfrac{1}{2},\vect{x}}\in\cR$ under the assumptions therein. It is straightforward to verify that these assumptions are fulfilled for each $\vect{K}\in\cT$ when the assumptions in Proposition~\ref{prop:realizabilityExplicit} hold. In particular, from Eq.~\eqref{eq:faceVelocity}, it is clear that $\hat{v}<1$ is implied by $v_h<1$.
Also, by defining 
\begin{equation}\label{eq:CFLSpatial}
	C^{i}:=\gamma^{\vect{x}} \beta^{i} \hat{w}_{\hat{k}}\:, 
\end{equation}
the time-step restriction in Eq.~\eqref{eq:timestepRestriction} guarantees $\lambda_{\vect{x}}^{i}\le(1-\hat{v})$ for all $\vect{K}\in\cT$.
Therefore, we have shown that, under the assumptions of Proposition~\ref{prop:realizabilityExplicit}, $\widehat{\vect{\mathcal{U}}}_{\vect{K}}^{n+\sfrac{1}{2},\vect{x}}\in\cR$ for all $\vect{K}\in\cT$.

\subsubsection{Energy Space Fluxes}
\label{sec:realizabilityEnergy}

For energy space advection, we define
\begin{equation}
  \widehat{\vect{\mathcal{U}}}_{\vect{K}}^{n+\sfrac{1}{2},\varepsilon}
  =\vect{\mathcal{U}}_{\vect{K}}^{n}+\f{\dt_{\varepsilon}}{|\vect{K}|}\,\vect{\mathcal{B}}_{h}^{\varepsilon}\big(\,\vect{\mathcal{U}}_{h}^{n},\vect{v}_{h}\,\big)_{\vect{K}}
  \quad (\dt_{\varepsilon}=\dt/\gamma^{\varepsilon}),
\end{equation}
and seek to find sufficient conditions such that $\widehat{\vect{\mathcal{U}}}_{\vect{K}}^{n+\sfrac{1}{2},\varepsilon}\in\mathcal{R}$.  
We proceed in a fashion similar to that in Section~\ref{sec:realizabilitySpatial}, and define
\begin{equation}
  \Gamma^{\varepsilon}[\vect{\mathcal{U}}_{h}](\vect{x})
  =\f{1}{|K_{\varepsilon}|}
  \Big[\,
    \int_{K_{\varepsilon}}\vect{\mathcal{U}}_{h}\,\varepsilon^{2}d\varepsilon
    -\dt_{\varepsilon}\,
    \Big(\,
      \varepsilon^{3}\,\widehat{\vect{\mathcal{F}}^{\varepsilon}}\big(\vect{\mathcal{U}}_{h},\vect{v}_{h}\big)|_{\varepsilon_{\hi}}
      -\varepsilon^{3}\,\widehat{\vect{\mathcal{F}}^{\varepsilon}}\big(\vect{\mathcal{U}}_{h},\vect{v}_{h}\big)|_{\varepsilon_{\lo}}
    \,\Big)
  \,\Big]
\end{equation}
so that
\begin{equation}
  \widehat{\vect{\mathcal{U}}}_{\vect{K}}^{n+\sfrac{1}{2},\varepsilon}
  =\f{1}{|\vect{K}_{\vect{x}}|}\int_{\vect{K}_{\vect{x}}}\Gamma^{\varepsilon}[\vect{\mathcal{U}}_{h}](\vect{x})\,d\vect{x}.
\end{equation}
Evaluating the integrals in the energy dimension using the same $\hat{k}$-point LGL quadrature rule leads to 
\begin{equation}  \label{eq:GammaEnergyConvex}
  \Gamma^{\varepsilon}[\vect{\mathcal{U}}_{h}](\vect{x})
  =\sum_{q=2}^{\hat{k}-1}\hat{w}_{q}\,\hat{\varepsilon}_{q}^{2}\,\vect{\mathcal{U}}_{h}(\hat{\varepsilon}_{q})
  +\hat{w}_{1}\,\varepsilon_{\lo}^{2}\,\Phi_{1}^{\varepsilon}\big[\,\vect{\mathcal{U}}_{h}(\varepsilon_{\lo}^{-}),\vect{\mathcal{U}}_{h}(\varepsilon_{\lo}^{+}),\vect{v}_{h}\,\big]
  +\hat{w}_{\hat{k}}\,\varepsilon_{\hi}^{2}\,\Phi_{\hat{k}}^{\varepsilon}\big[\,\vect{\mathcal{U}}_{h}(\varepsilon_{\hi}^{-}),\vect{\mathcal{U}}_{h}(\varepsilon_{\hi}^{+}),\vect{v}_{h}\,\big]\:,
\end{equation}
where the integral of the moments is exact when $\hat{k}\geq\frac{k+5}{2}$, i.e., 
\begin{equation}
	\int_{K_{\varepsilon}}\vect{\mathcal{U}}_{h}(\varepsilon)\,\varepsilon^{2}d\varepsilon
	=\hat{Q}^{\varepsilon}\big[\vect{\mathcal{U}}_{h}\big]
	=|K_{\varepsilon}|\sum_{q=1}^{\hat{k}}\hat{w}_{q}\,\vect{\mathcal{U}}_{h}(\hat{\varepsilon}_{q})\,\hat{\varepsilon}_{q}^{2}\:.
\end{equation}
Since $\Gamma^{\varepsilon}[\vect{\mathcal{U}}_{h}](\vect{x})$ is written as a convex combination in Eq.~\eqref{eq:GammaEnergyConvex}, the realizability of each term on the right-hand side gives the realizability of  $\Gamma^{\varepsilon}[\vect{\mathcal{U}}_{h}](\vect{x})$. Since $\vect{\mathcal{U}}_{h}(\hat{\varepsilon}_{q})\in\cR$ for each $\hat{\varepsilon}_{q}$ under the assumption that $\vect{\mathcal{U}}_{h}\in\cR$ on $\widehat{S}^{\vect{K}}_{\otimes}$ for all $\vect{K}\in\cT$, we focus on proving realizability of $\Phi_{1}^{\varepsilon}$ and $\Phi_{\hat{k}}^{\varepsilon}$, which are defined as
\begin{align}
  \Phi_{1}^{\varepsilon}\big[\,\vect{\mathcal{U}}_{a},\vect{\mathcal{U}}_{b},\vect{v}_h\,\big]
  &=\vect{\mathcal{U}}_{b}
    +\lambda_{\lo}^{\varepsilon}\,\mathscr{F}_{\LaxF}^{\varepsilon}\big(\vect{\mathcal{U}}_{a},\vect{\mathcal{U}}_{b},\vect{v}_h\big) \label{eq:phi1Energy} \\
  &=(1-\alpha^{\varepsilon}\lambda_{\lo}^{\varepsilon})\,\Theta_{0,\lo}^{\varepsilon}(\vect{\mathcal{U}}_{b},\vect{v}_h)
  +\f{1}{2}\,\alpha^{\varepsilon}\lambda_{\lo}^{\varepsilon}\,\Theta_{+}^{\varepsilon}(\vect{\mathcal{U}}_{a},\vect{v}_h)
  +\f{1}{2}\,\alpha^{\varepsilon}\lambda_{\lo}^{\varepsilon}\,\Theta_{+}^{\varepsilon}(\vect{\mathcal{U}}_{b},\vect{v}_h), \nonumber \\
  \Phi_{\hat{k}}^{\varepsilon}\big[\,\vect{\mathcal{U}}_{a},\vect{\mathcal{U}}_{b},\vect{v}_h\,\big]
  &=\vect{\mathcal{U}}_{a}
    -\lambda_{\hi}^{\varepsilon}\,\mathscr{F}_{\LaxF}^{\varepsilon}\big(\vect{\mathcal{U}}_{a},\vect{\mathcal{U}}_{b},\vect{v}_h\big) \label{eq:phiNEnergy} \\
  &=(1-\alpha^{\varepsilon}\lambda_{\hi}^{\varepsilon})\,\Theta_{0,\hi}^{\varepsilon}(\vect{\mathcal{U}}_{a},\vect{v}_h)
  +\f{1}{2}\,\alpha^{\varepsilon}\lambda_{\hi}^{\varepsilon}\,\Theta_{-}^{\varepsilon}(\vect{\mathcal{U}}_{a},\vect{v}_h)
  +\f{1}{2}\,\alpha^{\varepsilon}\lambda_{\hi}^{\varepsilon}\,\Theta_{-}^{\varepsilon}(\vect{\mathcal{U}}_{b},\vect{v}_h), \nonumber
\end{align}
where we used the definition of $\mathscr{F}_{\LaxF}^{\varepsilon}$ given in Eq.~\eqref{eq:globalLFenergy} and defined $\lambda_{\lo/\hi}^{\varepsilon}=\varepsilon_{\lo/\hi}\,\dt_{\varepsilon}/(\hat{w}_{\hat{k}}\,|K_{\varepsilon}|)$, 
\begin{equation}\label{eq:thetaEnergy}
  \Theta_{0,\lo/\hi}^{\varepsilon}(\vect{\mathcal{U}},\vect{v}_h)
  =\f{\vect{\mathcal{U}}[\vect{v}_h]-\alpha^{\varepsilon}\lambda_{\lo/\hi}^{\varepsilon}\,\vect{\mathcal{M}}}{1-\alpha^{\varepsilon}\lambda_{\lo/\hi}^{\varepsilon}},  \quand
  \Theta_{\pm}^{\varepsilon}(\vect{\mathcal{U}},\vect{v}_h)
  =\vect{\mathcal{M}} \pm \f{1}{\alpha^{\varepsilon}}\,\vect{\mathcal{F}}^{\varepsilon}(\vect{\mathcal{U}},\vect{v}_h) \:.
\end{equation}

Similar to the approach in Section~\ref{sec:realizabilitySpatial}, the following two lemmas show realizability of $\Theta_{\pm}^{\varepsilon}$ and $\Theta_{0,\lo/\hi}^{\varepsilon}$.

\begin{lemma}
	Let $\Theta_{\pm}^{\varepsilon}(\vect{\mathcal{U}},\vect{v}_h)$ be given as in Eq.~\eqref{eq:thetaEnergy}.  
	Assume that $\vect{\mathcal{U}}\in\mathcal{R}$.  
	Then $\Theta_{\pm}^{\varepsilon}\in\mathcal{R}$.  
	\begin{proof}
		The first component of $\Theta_{\pm}^{\varepsilon}$ can be written as
		\begin{equation}
			\f{1}{4\pi}\int_{\mathbb{S}^{2}}\mathsf{f}_{\pm}[\vect{v}_h,\alpha^{\varepsilon}](\omega)\,d\omega,
			\quad\text{where}\quad
			\mathsf{f}_{\pm}[\vect{v}_h,\alpha^{\varepsilon}](\omega)
			=\big(\,1\pm Q(\vect{v}_h)/\alpha^{\varepsilon}\,\big)\,f(\omega),
		\end{equation}
		and where $Q(\vect{v}_h)$ is the quadratic form in Eq.~\eqref{eq:quadraticForm}.  
		Similarly, the remaining components of $\Theta_{\pm}^{\varepsilon}$ can be written as
		\begin{equation}
			\f{1}{4\pi}\int_{\mathbb{S}^{2}}\mathsf{f}_{\pm}[\vect{v}_h,\alpha^{\varepsilon}](\omega)\,\vect{\ell}(\omega)\,d\omega.  
		\end{equation}
		Since $|Q(\vect{v}_h)|/\alpha^{\varepsilon}\le1$, it follows that $\mathsf{f}_{\pm}[\vect{v}_h,\alpha^{\varepsilon}](\omega)\in\mathfrak{R}$ and $\Theta_{\pm}^{\varepsilon}\in\mathcal{R}$.  
	\end{proof}
\end{lemma}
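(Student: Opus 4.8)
The plan is to show that every component of $\Theta_{\pm}^{\varepsilon}$ is an angular moment of one and the same nonnegative angular density, after which realizability is immediate from the definition of $\mathcal{R}$ in Eq.~\eqref{eq:realizableSet}. Under Assumption~\ref{assum:exact_closure}, the hypothesis $\vect{\mathcal{U}}\in\mathcal{R}$ (equivalently, by Proposition~\ref{prop:realizabilityMomentConversion}, the primitive moments $\vect{\mathcal{M}}$ are realizable) lets me fix a distribution $f\in\mathfrak{R}$ whose moments in Eq.~\eqref{eq:angularMoments} reproduce $\mathcal{D}$, $\mathcal{I}^{i}$, and the closure tensors $\mathcal{K}^{ij}$ and $\mathcal{Q}^{ijk}$ that enter the energy flux $\vect{\mathcal{F}}^{\varepsilon}$ of Eq.~\eqref{eq:phaseSpaceFluxes}. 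The whole task is then to absorb the velocity-gradient contraction carried by $\vect{\mathcal{F}}^{\varepsilon}$ into a single scalar angular weight.

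First I would insert these integral representations into $\Theta_{\pm}^{\varepsilon}=\vect{\mathcal{M}}\pm(\alpha^{\varepsilon})^{-1}\vect{\mathcal{F}}^{\varepsilon}(\vect{\mathcal{U}},\vect{v}_h)$. For the number-density (first) component this produces $\mathcal{D}\mp(\alpha^{\varepsilon})^{-1}\mathcal{K}^{i}_{k}\,(\partial_{i}v^{k})_{h}$; pulling the contraction under the integral sign in Eq.~\eqref{eq:angularMoments} and relabeling indices, I recognize that $-\mathcal{K}^{i}_{k}(\partial_{i}v^{k})_{h}=\frac{1}{4\pi}\int_{\mathbb{S}^{2}}f\,Q(\vect{v}_h)\,d\omega$, where $Q(\vect{v}_h)=-(\partial_{j}v^{i})_{h}\,\ell_{i}\ell^{j}$ is precisely the quadratic form of Eq.~\eqref{eq:quadraticForm} regarded as a function of $\omega$ through $\vect{\ell}$. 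Hence the first component equals $\frac{1}{4\pi}\int_{\mathbb{S}^{2}}(1\pm Q/\alpha^{\varepsilon})\,f\,d\omega$. Repeating the manipulation on the flux components, which instead carry the rank-three tensor $\mathcal{Q}^{i}_{kj}$, simply attaches an extra factor $\ell_{j}$ inside the integral, so all four components are the zeroth and first angular moments of the common weighted density $\mathsf{f}_{\pm}:=(1\pm Q/\alpha^{\varepsilon})\,f$.

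It then remains to verify $\mathsf{f}_{\pm}\in\mathfrak{R}$, i.e.\ $\mathsf{f}_{\pm}\ge0$ and $\frac{1}{4\pi}\int_{\mathbb{S}^{2}}\mathsf{f}_{\pm}\,d\omega>0$. Nonnegativity is exactly where the choice $\alpha^{\varepsilon}=\lambda_{A}$ is used: since $A$ is symmetric and $|\vect{\ell}|=1$, the Rayleigh-quotient bound $|Q(\vect{v}_h)|=|\vect{\ell}^{\intercal}A\vect{\ell}|\le\lambda_{A}=\alpha^{\varepsilon}$ forces $1\pm Q/\alpha^{\varepsilon}\ge0$ pointwise, and with $f\ge0$ this gives $\mathsf{f}_{\pm}\ge0$. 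Because $\Theta_{\pm}^{\varepsilon}$ has just been written as the moments of $\mathsf{f}_{\pm}$, membership in $\mathcal{R}$ follows directly.

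I expect the main obstacle to be the index bookkeeping in the second step: one must confirm that the contractions $\mathcal{K}^{i}_{k}(\partial_{i}v^{k})_{h}$ and $\mathcal{Q}^{i}_{kj}(\partial_{i}v^{k})_{h}$ reproduce exactly $-Q$ and $-Q\,\ell_{j}$, and not some other bilinear combination. The key observation that makes this work is that the antisymmetric part of $(\partial_{i}v^{k})_{h}$ drops out against the symmetric factor $\ell^{i}\ell_{k}$, leaving precisely the symmetrized matrix $A$ hidden in Eq.~\eqref{eq:quadraticForm}. A secondary point worth checking is the strict positivity $\frac{1}{4\pi}\int_{\mathbb{S}^{2}}\mathsf{f}_{\pm}\,d\omega>0$ demanded by $\mathfrak{R}$: the weight $1\pm Q/\alpha^{\varepsilon}$ can vanish on the eigendirections where $|Q|=\lambda_{A}$, so one should note that this zero set is generically of measure zero and hence does not annihilate the integral; the degenerate isotropic case in which $Q\equiv\lambda_{A}$ over all of $\mathbb{S}^{2}$ would need separate comment or the closed-realizability convention.
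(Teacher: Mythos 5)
Your proposal is correct and follows essentially the same route as the paper: both represent the components of $\Theta_{\pm}^{\varepsilon}$ as angular moments of the weighted density $\mathsf{f}_{\pm}=(1\pm Q(\vect{v}_h)/\alpha^{\varepsilon})\,f$ and invoke the bound $|Q(\vect{v}_h)|\le\alpha^{\varepsilon}=\lambda_{A}$ to conclude $\mathsf{f}_{\pm}\in\mathfrak{R}$. Your added remarks on the index bookkeeping (the antisymmetric part of $(\partial_{i}v^{k})_{h}$ dropping out) and on the degenerate case where the weight could vanish identically are sound observations that the paper's terser proof leaves implicit.
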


\begin{lemma}\label{lem:realizableTheta0Energy}
  Consider $\Theta_{0,\lo/\hi}^{\varepsilon}(\vect{\mathcal{U}},\vect{v}_h)$ as defined in Eq.~\eqref{eq:thetaEnergy}.  
  Assume that $\vect{\mathcal{U}},\vect{\mathcal{M}}\in\mathcal{R}$, $v_h=|\vect{v}_h|<1$, and $\eta_{\lo/\hi}^{\varepsilon}:=\alpha^{\varepsilon}\lambda_{\lo/\hi}^{\varepsilon}<(1-v_h)$.  
  Then, $\Theta_{0,\lo/\hi}^{\varepsilon}(\vect{\mathcal{U}},\vect{v}_h)\in\mathcal{R}$.  
  \begin{proof}
    The first component of $\Theta_{0,\lo/\hi}^{\varepsilon}(\vect{\mathcal{U}},\vect{v}_h)$ can be written as
    \begin{equation}
      \f{1}{4\pi}\int_{\mathbb{S}^{2}}\mathsf{f}[\vect{v}_h,\eta_{\lo/\hi}^{\varepsilon}](\omega)\,d\omega,
      \quad\text{where}\quad
      \mathsf{f}[\vect{v}_h,\eta_{\lo/\hi}^{\varepsilon}](\omega) = \f{(1-\vect{v}_h\cdot\vect{\ell}-\eta_{\lo/\hi}^{\varepsilon})}{(1-\eta_{\lo/\hi}^{\varepsilon})}\,f(\omega).
    \end{equation}
    The remaining components of $\Theta_{0,\lo/\hi}^{\varepsilon}(\vect{\mathcal{U}},\vect{v}_h)$ can be written as
    \begin{equation}
      \f{1}{4\pi}\int_{\mathbb{S}^{2}}\mathsf{f}[\vect{v}_h,\eta_{\lo/\hi}^{\varepsilon}](\omega)\,\vect{\ell}(\omega)\,d\omega.  
    \end{equation}
    Since $v_h<1$ and $\eta_{\lo/\hi}^{\varepsilon}<1-v$, we have $(1-\vect{v}_h\cdot\vect{\ell}-\eta_{\lo/\hi}^{\varepsilon})\ge(1-v_h)-\eta_{\lo/\hi}^{\varepsilon}>0$.  
    This, together with $f\in\mathfrak{R}$, implies that $\mathsf{f}[\vect{v}_h,\eta_{\lo/\hi}^{\varepsilon}](\omega)\in\mathfrak{R}$ and $\Theta_{0,\lo/\hi}^{\varepsilon}(\vect{\mathcal{U}},\vect{v}_h)\in\mathcal{R}$.
  \end{proof}
\end{lemma}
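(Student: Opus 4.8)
The plan is to establish $\Theta_{0,\lo/\hi}^{\varepsilon}\in\mathcal{R}$ by the same device used in the proofs of Lemmas~\ref{lem:realizableTheta} and \ref{lem:realizablePhi}: I would exhibit a single nonnegative angular density $\mathsf{f}\in\mathfrak{R}$ whose zeroth and first angular moments, in the sense of Eq.~\eqref{eq:angularMoments}, reproduce the four components of $\Theta_{0,\lo/\hi}^{\varepsilon}$. Once such an $\mathsf{f}$ is produced, the characterization of $\mathcal{R}$ in Eq.~\eqref{eq:realizableSet} immediately yields realizability, since any moment pair arising from a distribution in $\mathfrak{R}$ lies in $\mathcal{R}$.

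The first step is to rewrite the numerator $\vect{\mathcal{U}}[\vect{v}_h]-\eta_{\lo/\hi}^{\varepsilon}\,\vect{\mathcal{M}}$ (writing $\eta_{\lo/\hi}^{\varepsilon}:=\alpha^{\varepsilon}\lambda_{\lo/\hi}^{\varepsilon}$) as an angular moment of a weighted $f$. Under Assumption~\ref{assum:exact_closure}, the hypothesis $\vect{\mathcal{M}}\in\mathcal{R}$ supplies a single $f\in\mathfrak{R}$ for which $\mathcal{D}$, $\mathcal{I}_j$, and—through the exact closure—$\mathcal{K}_{ij}$ are all genuine moments of that same $f$. Using the definition of $\vect{\mathcal{U}}[\vect{v}_h]=\vect{\mathcal{U}}(\vect{\mathcal{M}},\vect{v}_h)$ from Eq.~\eqref{eq:conservedMoments}, its density component is $\mathcal{N}=\mathcal{D}+v_h^{i}\mathcal{I}_i=\f{1}{4\pi}\int_{\mathbb{S}^{2}}f\,(1+\vect{v}_h\cdot\vect{\ell})\,d\omega$ and its flux components are $\mathcal{G}_j=\mathcal{I}_j+v_h^{i}\mathcal{K}_{ij}=\f{1}{4\pi}\int_{\mathbb{S}^{2}}f\,\ell_j\,(1+\vect{v}_h\cdot\vect{\ell})\,d\omega$, while $\mathcal{D}$ and $\mathcal{I}_j$ are the moments of $f$ against $1$ and $\ell_j$. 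Hence every component of $\vect{\mathcal{U}}[\vect{v}_h]-\eta_{\lo/\hi}^{\varepsilon}\,\vect{\mathcal{M}}$ is the corresponding moment of the scalar density $(1+\vect{v}_h\cdot\vect{\ell}-\eta_{\lo/\hi}^{\varepsilon})\,f$, so that after dividing by $1-\eta_{\lo/\hi}^{\varepsilon}$ the vector $\Theta_{0,\lo/\hi}^{\varepsilon}$ is exactly the zeroth/first-moment pair of
\[
  \mathsf{f}(\omega):=\f{\,1+\vect{v}_h\cdot\vect{\ell}-\eta_{\lo/\hi}^{\varepsilon}\,}{1-\eta_{\lo/\hi}^{\varepsilon}}\,f(\omega).
\]

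It then remains only to verify $\mathsf{f}\in\mathfrak{R}$, which is the crux and the sole place the quantitative hypotheses enter. The scalar prefactor is strictly positive uniformly in $\omega$: by Cauchy--Schwarz, $\vect{v}_h\cdot\vect{\ell}\ge-v_h\,|\vect{\ell}|=-v_h$, so the numerator obeys $1+\vect{v}_h\cdot\vect{\ell}-\eta_{\lo/\hi}^{\varepsilon}\ge 1-v_h-\eta_{\lo/\hi}^{\varepsilon}>0$, the strict inequality being precisely the assumed restriction $\eta_{\lo/\hi}^{\varepsilon}<1-v_h$; moreover $1-\eta_{\lo/\hi}^{\varepsilon}>0$ since $\eta_{\lo/\hi}^{\varepsilon}<1-v_h<1$. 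Consequently $\mathsf{f}\ge0$ inherits nonnegativity from $f$, and its angular average is bounded below by $\tfrac{1-v_h-\eta_{\lo/\hi}^{\varepsilon}}{1-\eta_{\lo/\hi}^{\varepsilon}}\,\f{1}{4\pi}\int_{\mathbb{S}^{2}}f\,d\omega>0$, so $\mathsf{f}\in\mathfrak{R}$ and therefore $\Theta_{0,\lo/\hi}^{\varepsilon}\in\mathcal{R}$. I do not expect a genuine obstacle here: the argument is a direct transcription of the $\Theta_0^{i}$ computation inside Lemma~\ref{lem:realizablePhi}, the only structural difference being that the full velocity $\vect{v}_h$ (rather than the single-component $\hat{\vect{v}}^{i}$) appears both in $\vect{\mathcal{U}}[\vect{v}_h]$ and in the dissipation measure $\eta_{\lo/\hi}^{\varepsilon}$. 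The one point requiring care is that $\vect{\mathcal{U}}[\vect{v}_h]$ and $\vect{\mathcal{M}}$ must be moments of a \emph{common} $f$; this is exactly what the exact-closure Assumption~\ref{assum:exact_closure} guarantees, which is why that assumption together with $\vect{\mathcal{M}}\in\mathcal{R}$ is listed among the hypotheses.
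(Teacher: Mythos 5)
Your proof is correct and follows essentially the same route as the paper: both exhibit $\Theta_{0,\lo/\hi}^{\varepsilon}$ as the moments of a weighted density $\mathsf{f}=\frac{1+\vect{v}_h\cdot\vect{\ell}-\eta_{\lo/\hi}^{\varepsilon}}{1-\eta_{\lo/\hi}^{\varepsilon}}\,f$ and use $\eta_{\lo/\hi}^{\varepsilon}<1-v_h$ to show the weight is positive. (Your sign $1+\vect{v}_h\cdot\vect{\ell}$ is in fact the one that follows from Eq.~\eqref{eq:conservedMoments}; the paper's printed $1-\vect{v}_h\cdot\vect{\ell}$ is immaterial to the bound since $\vect{v}_h\cdot\vect{\ell}\in[-v_h,v_h]$.)
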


Analogous to the spatial advection case, the assumptions in Lemma~\ref{lem:realizableTheta0Energy} are fulfilled for all $\vect{K}\in\cT$ under assumptions of Proposition~\ref{prop:realizabilityExplicit}, when 
\begin{equation}\label{eq:CFLEnergy}
	C^{\varepsilon}:=\gamma^{\varepsilon} \alpha^{\varepsilon} \hat{w}_{\hat{k}}
\end{equation}
is used in the time-step restriction \eqref{eq:timestepRestriction}. 
Under these assumptions, $\eta_{\lo/\hi}^{\varepsilon}:=\alpha^{\varepsilon}\lambda_{\lo/\hi}^{\varepsilon}<(1-v_h)\le 1$.  
Therefore $\Phi_{1}^{\varepsilon}$ and $\Phi_{\hat{k}}^{\varepsilon}$ are convex combinations of realizable terms, and are thus realizable.
We have shown that, under the assumptions of Proposition~\ref{prop:realizabilityExplicit}, $\widehat{\vect{\mathcal{U}}}_{\vect{K}}^{n+\sfrac{1}{2},\varepsilon}\in\cR$ for all $\vect{K}\in\cT$.

\subsubsection{Sources}
\label{sec:realizabilitySource}

The last part of the explicit update involves the source term in the number flux equation.  
We define
\begin{align}
  \widehat{\vect{\mathcal{U}}}_{\vect{K}}^{n+\sfrac{1}{2},\vect{\mathcal{S}}}
  &=\vect{\mathcal{U}}_{\vect{K}}^{n}+\f{\dt_{\vect{\mathcal{S}}}}{|\vect{K}|}\big(\,\vect{\mathcal{S}}(\vect{\mathcal{U}}_{h}^{n},\vect{v}_{h})\,\big)_{\vect{K}}
  \quad(\dt_{\vect{\mathcal{S}}}=\dt/\gamma^{\vect{\mathcal{S}}}) \nonumber \\
  &=\f{1}{|\vect{K}|}\int_{\vect{K}}\Big[\,\vect{\mathcal{U}}_{h}^{n}+\dt_{\vect{\mathcal{S}}}\,\vect{\mathcal{S}}(\vect{\mathcal{U}}_{h}^{n},\vect{v}_{h})\,\Big]\,\tau\,d\vect{z}.
  \label{eq:source_update}
\end{align}
From the definition of the source term $\vect{\mathcal{S}}$ in Eq.~\eqref{eq:sources}, the number density is not affected in the source update. Thus we have $\widehat{\mathcal{N}}_{\vect{K}}^{n+\sfrac{1}{2},\vect{\mathcal{S}}} = {\mathcal{N}}_{\vect{K}}^{n} > 0$, which, together with the results obtained in Sections~\ref{sec:realizabilitySpatial} and \ref{sec:realizabilityEnergy}, concludes the proof of the first claim in Proposition~\ref{prop:realizabilityExplicit}.

Ideally, one would expect to show that $\widehat{\vect{\mathcal{U}}}_{\vect{K}}^{n+\sfrac{1}{2},\vect{\mathcal{S}}}\in\mathcal{R}$ under time-step restrictions similar to the ones in Sections~\ref{sec:realizabilitySpatial} and \ref{sec:realizabilityEnergy}. Unfortunately, this is not true in the three-dimensional case considered in this paper. 
In the rest of this section, we will show that (i) realizability of $\widehat{\vect{\mathcal{U}}}_{\vect{K}}^{n+\sfrac{1}{2},\vect{\mathcal{S}}}$ is preserved by the semi-discrete equation, i.e., without time discretization, and (ii) with the forward Euler discretization in Eq.~\eqref{eq:imexForwardEuler}, $\widehat{\vect{\mathcal{U}}}_{\vect{K}}^{n+\sfrac{1}{2},\vect{\mathcal{S}}}\in\cR$ in a reduced, one-dimensional planar geometry.

\begin{prop}[Semi-discrete source update]
Given a quadrature rule $\vect{Q}:C^{0}(\vect{K})\to\mathbb{R}$ with positive weights and points given by the set $\vect{S}^{\vect{K}}_{\otimes}$, we show that, for all $\vect{z}\in\vect{S}^{\vect{K}}_{\otimes}\subset\vect{K}$, the solution $\vect{\mathcal{U}}_{h}(\vect{z},t)$ to the semi-discrete equation
\begin{equation}\label{eq:continuum_source_update}
	\p_t \vect{\mathcal{U}}_{h}(\vect{z},t) = \vect{\mathcal{S}}(\vect{\mathcal{U}}_{h}(\vect{z},t),\vect{v}_{h}(\vect{z}))
\end{equation}
remains in the realizable set $\cR$ for all $t\geq t_0$, provided that $\vect{\mathcal{U}}_{h}(\vect{z},t_0)$ is realizable. 
\end{prop}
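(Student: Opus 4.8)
The plan is to prove invariance of $\mathcal{R}$ under the flow of the source ODE~\eqref{eq:continuum_source_update} by a Nagumo-type tangency argument carried out directly in the conserved variables $\vect{\mathcal{U}}=(\mathcal{N},\mathcal{G}_{j})$. Since the source $\vect{\mathcal{S}}$ in Eq.~\eqref{eq:sources} has a vanishing first component, $\mathcal{N}$ is conserved along the flow, so $\mathcal{N}(\vect{z},t)=\mathcal{N}(\vect{z},t_{0})>0$ for all $t\ge t_{0}$, and it suffices to control the flux components $\mathcal{G}_{j}$. Because the source is local, the ODE decouples across nodes, so the analysis is carried out at a single $\vect{z}$ with the velocity gradient $\pd{v^{k}}{i}$ frozen at its nodal value.

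First I would characterize $\partial\mathcal{R}$ in conserved variables. Under Assumption~\ref{assum:exact_closure}, a moment on $\partial\mathcal{R}$ (flux factor $h=1$) is generated by a Dirac distribution in a direction $\hat{\mathsf{n}}$, so $\mathcal{I}^{i}=\mathcal{D}\,\hat{\mathsf{n}}^{i}$, $\mathcal{K}^{ij}=\mathcal{D}\,\hat{\mathsf{n}}^{i}\hat{\mathsf{n}}^{j}$, and $\mathcal{Q}^{ijk}=\mathcal{D}\,\hat{\mathsf{n}}^{i}\hat{\mathsf{n}}^{j}\hat{\mathsf{n}}^{k}$ (equivalently $\psi=\zeta=1$). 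A short computation from Eq.~\eqref{eq:conservedMoments} then gives $\mathcal{G}_{j}=\mathcal{N}\,\hat{\mathsf{n}}_{j}$, hence $|\vect{\mathcal{G}}|=\mathcal{N}$, so that $\mathcal{R}$ is described in conserved variables by $\{\mathcal{N}>0,\ \Gamma\le 0\}$ with $\Gamma:=|\vect{\mathcal{G}}|-\mathcal{N}$; this identification is consistent with Proposition~\ref{prop:realizabilityMomentConversion}.

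The key step is the tangency check. On the boundary $\Gamma=0$, where $\mathcal{G}_{j}=\mathcal{N}\,\hat{\mathsf{n}}_{j}$ and $\dot{\mathcal{N}}=0$, I compute
\[
\dot{\Gamma}=\frac{\mathcal{G}^{j}\,\dot{\mathcal{G}}_{j}}{|\vect{\mathcal{G}}|}-\dot{\mathcal{N}}=\hat{\mathsf{n}}^{j}\,\big(\,\mathcal{Q}^{i}_{\hspace{2pt}kj}\,\pd{v^{k}}{i}-\mathcal{I}^{i}\,\pd{v_{j}}{i}\,\big).
\]
Substituting the boundary closure $\mathcal{Q}^{i}_{\hspace{2pt}kj}=\mathcal{D}\,\hat{\mathsf{n}}^{i}\hat{\mathsf{n}}_{k}\hat{\mathsf{n}}_{j}$ and $\mathcal{I}^{i}=\mathcal{D}\,\hat{\mathsf{n}}^{i}$, and using $\hat{\mathsf{n}}^{j}\hat{\mathsf{n}}_{j}=1$, both terms reduce to $\mathcal{D}\,\hat{\mathsf{n}}^{i}\hat{\mathsf{n}}_{k}\,\pd{v^{k}}{i}$ and cancel, so $\dot{\Gamma}=0$ on all of $\partial\mathcal{R}$. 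The source field is therefore exactly tangent to the realizability boundary; geometrically, at $h=1$ the source merely transports the generating Dirac distribution along $\mathbb{S}^{2}$, keeping $|\vect{\mathcal{G}}|=\mathcal{N}$ fixed. Nagumo's invariance theorem then yields $\vect{\mathcal{U}}_{h}(\vect{z},t)\in\mathcal{R}$ for $t\ge t_{0}$; equivalently, for fixed $\mathcal{N}$ the sphere $|\vect{\mathcal{G}}|=\mathcal{N}$ is an invariant manifold, so trajectories starting in its interior cannot cross it, and conservation of $\mathcal{N}>0$ keeps $\mathcal{D}>0$ through Proposition~\ref{prop:realizabilityMomentConversion}. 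This exact tangency also explains the failure of the forward-Euler version in three dimensions: an explicit step advances along the tangent and overshoots the curved boundary.

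The hard part will be the regularity of the closure at $h=1$. The exact Minerbo closure is smooth on $[0,1)$ but its $h$-derivatives blow up as $h\to 1$, so $\vect{\mathcal{S}}$ need not be Lipschitz up to $\partial\mathcal{R}$, and I cannot invoke uniqueness or the naive Nagumo criterion verbatim. I would handle this by working on interior level sets of $\Gamma$ and combining the exact identity $\dot{\Gamma}=0$ on $\partial\mathcal{R}$ with a barrier/continuity argument (or a vanishing-viscosity regularization of the closure) to rule out any trajectory reaching $\Gamma>0$ in finite time. A secondary item to make rigorous is that the description $\{\mathcal{N}>0,\ |\vect{\mathcal{G}}|\le\mathcal{N}\}$ coincides with $\mathcal{R}$ throughout the interior, not merely on the boundary, which again follows from the bijectivity established in Proposition~\ref{prop:realizabilityMomentConversion}.
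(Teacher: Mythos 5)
Your proposal is correct and follows essentially the same route as the paper's own proof: both reduce the claim to showing that the source vector field is tangent to the realizability boundary, using the fact that $\mathcal{N}$ is conserved and that a moment with $|\vect{\mathcal{G}}|=\mathcal{N}$ is generated by a Dirac distribution, so that $\mathcal{Q}^{i}_{\hspace{2pt}kj}\,\pd{v^{k}}{i}\,\hat{\mathsf{n}}^{j}$ and $\mathcal{I}^{i}\,\pd{v_{j}}{i}\,\hat{\mathsf{n}}^{j}$ cancel via $\ell^{i}\ell_{i}=1$. The one point where you go beyond the paper is your observation that the closure need not be Lipschitz as $h\to 1$, so the continuity/tangency step requires a barrier or regularization argument to be fully rigorous; the paper's proof shares this informality (it asserts invariance of the boundary directly from the pointwise tangency identity), so your flag is a fair criticism of both arguments rather than a gap in yours alone.
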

This semi-discrete equation is consistent with the source update portion in Eq.~\eqref{eq:twoMomentModelCompact} and results in Eq.~\eqref{eq:source_update} after applying forward Euler discretization and cell-averaging.

\begin{proof}
To show that $\vect{\mathcal{U}}_{h}(\vect{z},t)\in\cR$ for $t\geq t_0$, we first observe that since the first component of $\vect{\mathcal{S}}(\vect{\mathcal{U}},\vect{v})$ is zero (see Eq.~\eqref{eq:sources}), the source update does not affect $\mathcal{N}_h$. Thus, showing $\vect{\mathcal{U}}_{h}(\vect{z},t)\in \cR$ is equivalent to proving that $\mathcal{G}_h(\vect{z},t) \leq \mathcal{N}_h(\vect{z})$, where $\mathcal{G}_h(\vect{z},t) = |\vect{\mathcal{G}}_h(\vect{z},t)|$ with $\vect{\mathcal{G}}_h$ the number flux governed by Eq.~\eqref{eq:continuum_source_update}.
Due to the continuity of $\mathcal{G}_{h}(\vect{z},t)$ in time, it suffices to show that if $\mathcal{G}_h(\vect{z},\hat{t}) = \mathcal{N}_h(\vect{z})$ for some $\hat{t}\geq t_0$, then $\mathcal{G}_h(\vect{z},t) = \mathcal{N}_h(\vect{z})$ for all $t\geq\hat{t}$, i.e., the number flux magnitude does not exceed the number density. Indeed, the number flux portion of Eq.~\eqref{eq:continuum_source_update} is given by
\begin{equation}
	\p_t \mathcal{G}_{h,j}
	= \mathcal{Q}^{i}_{\iipt kj}(\pd{v^{k}}{i})_{h}-\mathcal{I}^{i}(\pd{v_{j}}{i})_{h} 
	=\f{1}{4\pi}\int_{\mathbb{S}^{2}}
	\big(\,\ell^{i}(\omega)\ell_{k}(\omega)\ell_{j}(\omega)(\pd{v^{k}}{i})_{h}-\ell^{i}(\omega)(\pd{v_{j}}{i})_{h}\,\big)
	\,f(\omega,t)\,d\omega. 
\end{equation}
Suppose $\mathcal{G}_h(\vect{z},\hat{t}) = \mathcal{N}_h(\vect{z})$ for some $\hat{t}\geq t_0$, it is known \cite{fialkow1991recursiveness} that the distribution function $f(\omega)$ takes the form of a Dirac delta function, i.e., $f(\omega) = c\, \delta(\hat{\omega})$ for some $c>0$, $\hat{\omega}\in\bbS^2$. Therefore, at $t=\hat{t}$, we have $\mathcal{G}_h^j = c \, \big(\,1+v^{k}\ell_{k}(\hat{\omega})\,\big)\,\ell^{j}(\hat{\omega})$ and
\begin{equation}
	\p_t \mathcal{G}_{h,j}
	=\f{c}{4\pi}
	\big(\,\ell^{i}(\hat{\omega})\ell_{k}(\hat{\omega})\ell_{j}(\hat{\omega})(\pd{v^{k}}{i})_{h}-\ell^{i}(\hat{\omega})(\pd{v_{j}}{i})_{h}\,\big)\:.
\end{equation}
Thus,
\begin{equation}\label{eq:continuumRealizability}
	\begin{alignedat}{2}
	\f{1}{2} \p_t (\mathcal{G}_{h})^2 &=\, \mathcal{G}_{h}^j \p_t \mathcal{G}_{h,j}\\
	&=	\f{c^2}{4\pi} \big(\,1+v^{k}\ell_{k}(\hat{\omega})\,\big)\,\ell^{j}(\hat{\omega}) 
	\big(\,\ell^{i}(\hat{\omega})\ell_{k}(\hat{\omega})\ell_{j}(\hat{\omega})(\pd{v^{k}}{i})_{h}-\ell^{i}(\hat{\omega})(\pd{v_{j}}{i})_{h}\,\big) = 0\:,
	\end{alignedat}
\end{equation}
where the fact $\ell^{i}\ell_{i}=1$ is used in the last equality.
Eq.~\eqref{eq:continuumRealizability} indicates that the number flux magnitude does not change once $\mathcal{G}_h(\vect{z},\hat{t}) = \mathcal{N}_h(\vect{z})$ for some $\hat{t}\geq t_0$ and implies that $\vect{\mathcal{U}}_{h}(\vect{z},t)\in\cR$ for $t\geq t_0$.
\end{proof}

\begin{rem}
	The result in Eq.~\eqref{eq:continuumRealizability} also explains why the discretized source update \eqref{eq:source_update} cannot guarantee realizability of the updated moments. 
	Specifically, Eq.~\eqref{eq:continuumRealizability} suggests that, for moments on the realizable boundary ($\mathcal{G}_h = \mathcal{N}_h$), the continuous source update \eqref{eq:continuum_source_update} moves the moments tangentially with the boundary of the realizable set. 
	Once explicit discretization is applied, e.g., Eq.~\eqref{eq:source_update}, the update may result in unrealizable moments, regardless of the time-step size.
\end{rem}

Next, we show that, in a one-dimensional planar geometry \cite[Section~6.5]{mihalasMihalas_1999}, the discretized source update Eq.~\eqref{eq:source_update} preserves realizability of the moments when a time-step restriction is satisfied.
In the planar geometry, the spatial fluxes are zero in two of the three spatial dimensions (e.g., $\p_{x^2} f = \p_{x^3}f = 0$) with the angular direction reduced from $\omega=(\vartheta, \varphi)\in\bbS^2$ to $\mu=\cos\,\vartheta\in[-1,1]$. In the remainder of this subsection, we use $x(=x^1)$ to denote the only spatial dimension that has nonzero fluxes, and use a scalar function $v$ to denote the velocity which varies only in the $x$ direction. Moreover, the primitive moments in the planar geometry are given by
\begin{equation}
	  \big\{\,\mathcal{D},\,\mathcal{I},\,\mathcal{K},\,\mathcal{Q}\,\big\}(\varepsilon,{x},t)
	=\f{1}{2}\int_{-1}^{1}f(\omega,\varepsilon,{x},t)\,\big\{\,1,\,\mu,\,\mu^{2},\,\mu^{3}\,\big\}\,d\mu,
	\label{eq:reducedAngularMoments}
\end{equation}
and the conserved moments are 
$\mathcal{N} =\mathcal{D}+v\,\mathcal{I}$ and $\mathcal{G}=\mathcal{I}+v\,\mathcal{K}$.
In this case, the semi-discrete source update Eq.~\eqref{eq:continuum_source_update} reduces to
\begin{equation}
	\p_t \cN = 0\:,\quand \p_t \cG = \frac{1}{2} \int_{-1}^{1} \mu(\mu^2-1)(\p_x v)_{h}\,f(\mu) d\mu\:.
\end{equation}
The following proposition shows that the discretized version of this source update preserves moment realizability under a time step restriction.

\begin{prop}\label{prop:realizableSource1D}
	In the planar geometry,
	suppose Assumption~\ref{assum:exact_closure} holds, $v_{h}<1$, and the time step satisfies 
	\begin{equation}\label{eq:sourceTimeStepRestriction}
		\dt \leq \f{1}{2}\,\gamma^{\vect{\mathcal{S}}}\, \f{1-v_{h}}{|(\pd{v}{x})_{h}|}, \quad\bigg(\,\text{ i.e., }\,	\dt_{\vect{\mathcal{S}}} \leq \f{1}{2}\, \f{1-v_{h}}{|(\pd{v}{x})_{h}|}\bigg).
	\end{equation} 
	Then the discretized source update Eq.~\eqref{eq:source_update} gives a realizable cell-averaged moment $\widehat{\vect{\mathcal{U}}}_{\vect{K}}^{n+\sfrac{1}{2},\vect{\mathcal{S}}}$ for all $\vect{K}\in\cT$, provided
	$\vect{\mathcal{U}}_{h}^{n} \in \cR$ on all $\vect{K}\in\cT$.
\end{prop}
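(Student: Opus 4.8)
The plan is to reduce the cell-averaged statement to a pointwise one and then use the kinetic representation of the moments, as in the proofs of Lemmas~\ref{lem:realizableTheta}--\ref{lem:realizableTheta0Energy}. Evaluating the cell-average integral in Eq.~\eqref{eq:source_update} with the positive-weight quadrature on $S^{\vect{K}}_{\otimes}$ used to define the nodal DG inner products expresses $\widehat{\vect{\mathcal{U}}}_{\vect{K}}^{n+\sfrac{1}{2},\vect{\mathcal{S}}}$ as a convex combination of the nodal updates $\vect{\mathcal{U}}_{h}^{n}(\vect{z})+\dt_{\vect{\mathcal{S}}}\,\vect{\mathcal{S}}(\vect{\mathcal{U}}_{h}^{n}(\vect{z}),\vect{v}_{h}(\vect{z}))$, $\vect{z}\in S^{\vect{K}}_{\otimes}$. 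Because $\cR$ is convex, it suffices to show each nodal update is realizable. Since the first component of $\vect{\mathcal{S}}$ vanishes, the number density is unchanged, so $\widehat{\cN}=\cN>0$ (using $\cN\ge(1-v_{h})\cD>0$), and the entire difficulty is to control the updated flux $\widehat{\cG}=\cG+\dt_{\vect{\mathcal{S}}}\,\p_t\cG$.

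Next I would pass to the kinetic representation at a fixed node. Under Assumption~\ref{assum:exact_closure} the realizable $\vect{\mathcal{U}}_{h}^{n}$ is generated by some $f\ge0$, and using $\cN=\tfrac12\int_{-1}^{1}(1+v\mu)f\,d\mu$, $\cG=\tfrac12\int_{-1}^{1}\mu(1+v\mu)f\,d\mu$, and the planar source $\p_t\cG=\tfrac12\int_{-1}^{1}\mu(\mu^{2}-1)(\pd{v}{x})_{h}f\,d\mu$, I would evaluate the two functions defining $\partial\cR$, namely $\widehat{\cN}\pm\widehat{\cG}$, obtaining
\begin{equation}
\widehat{\cN}\pm\widehat{\cG}=\f{1}{2}\int_{-1}^{1}w_{\pm}(\mu)\,f(\mu)\,d\mu,\qquad
w_{\pm}(\mu)=(1\pm\mu)\big[\,(1+v\mu)+\dt_{\vect{\mathcal{S}}}\,\mu(\mu\mp1)(\pd{v}{x})_{h}\,\big],
\end{equation}
where the factorization rests on $\mu(\mu^{2}-1)=(1+\mu)\,\mu(\mu-1)=-(1-\mu)\,\mu(\mu+1)$. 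It then remains to show $w_{\pm}\ge0$ on $[-1,1]$. Since $(1\pm\mu)\ge0$ there, it is enough to control the bracket $g_{\pm}(\mu):=(1+v\mu)+\dt_{\vect{\mathcal{S}}}\,\mu(\mu\mp1)(\pd{v}{x})_{h}$. Bounding $1+v\mu\ge1-v_{h}$ and $|\mu(\mu\mp1)|\le2$ for $\mu\in[-1,1]$ gives $g_{\pm}(\mu)\ge(1-v_{h})-2\,\dt_{\vect{\mathcal{S}}}\,|(\pd{v}{x})_{h}|$, which is nonnegative exactly under the restriction $\dt_{\vect{\mathcal{S}}}\le\tfrac12(1-v_{h})/|(\pd{v}{x})_{h}|$ of Eq.~\eqref{eq:sourceTimeStepRestriction}. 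Hence $\widehat{\cN}>0$ and $|\widehat{\cG}|\le\widehat{\cN}$ at each node.

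The step I expect to be the crux is converting these two scalar inequalities into the membership $\widehat{\vect{\mathcal{U}}}\in\cR$; invoking Proposition~\ref{prop:realizabilityMomentConversion} directly is circular, since the conversion to primitive variables presupposes realizability. To avoid this I would characterize the conserved realizable set in the planar case directly: $\widehat{\vect{\mathcal{U}}}=(\widehat{\cN},\widehat{\cG})\in\cR$ iff there is a nonnegative $\tilde f$ with $\widehat{\cN}=\tfrac12\int_{-1}^{1}(1+v\mu)\tilde f\,d\mu$ and $\widehat{\cG}=\tfrac12\int_{-1}^{1}\mu(1+v\mu)\tilde f\,d\mu$. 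Writing $d\nu=(1+v\mu)\tilde f\,d\mu$, which is a nonnegative measure precisely because $1+v\mu>0$ for $v_{h}<1$, this is the truncated (Hausdorff) moment problem of prescribing the zeroth and first moments $m_{0}=2\widehat{\cN}$ and $m_{1}=2\widehat{\cG}$ of a nonnegative measure on $[-1,1]$. Such a measure exists iff $m_{0}>0$ and $|m_{1}|\le m_{0}$ (realized, e.g., by atoms at $\mu=\pm1$), i.e.\ iff $\widehat{\cN}>0$ and $|\widehat{\cG}|\le\widehat{\cN}$ --- exactly the inequalities just established, and consistent with the boundary correspondence $h=\pm1\leftrightarrow\cG=\pm\cN$ underlying Proposition~\ref{prop:realizabilityMomentConversion}. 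Each nodal update is therefore in $\cR$, and convexity of $\cR$ propagates this to $\widehat{\vect{\mathcal{U}}}_{\vect{K}}^{n+\sfrac{1}{2},\vect{\mathcal{S}}}$, completing the proof.
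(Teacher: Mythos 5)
Your proof is correct and arrives at exactly the time-step restriction of Eq.~\eqref{eq:sourceTimeStepRestriction}, but the central estimate differs from the paper's. The paper bounds $|\widehat{\cG}_{h}^{n+\sfrac{1}{2},\vect{\mathcal{S}}}|^{2}$ by the Cauchy--Schwarz inequality, writing the updated flux as an integral against the weight $(1+v_{h}\mu)f(\mu)$ and then showing that the bracket $\big[\mu-\dt_{\vect{\mathcal{S}}}\,(\pd{v}{x})_{h}\,\mu\,\f{1-\mu^{2}}{1+v_{h}\mu}\big]^{2}\le1$ on $[-1,1]$ under the time-step restriction, which requires a short case analysis on the sign of $(\pd{v}{x})_{h}\,\mu$. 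You instead compute $\widehat{\cN}\pm\widehat{\cG}$ directly, exploit the factorizations $\mu(\mu^{2}-1)=(1+\mu)\,\mu(\mu-1)=-(1-\mu)\,\mu(\mu+1)$, and reduce everything to the elementary bound $|\mu(\mu\mp1)|\le2$; this avoids both Cauchy--Schwarz and the sign cases, and it exhibits $\widehat{\cN}\pm\widehat{\cG}$ as integrals of manifestly nonnegative integrands. Your closing discussion of the truncated moment problem is sound and not circular: the paper simply identifies membership of $(\widehat{\cN},\widehat{\cG})$ in $\cR$ with the algebraic conditions $\widehat{\cN}>0$ and $|\widehat{\cG}|\le\widehat{\cN}$ (cf.\ Eq.~\eqref{eq:realizableSet} and the kinetic characterizations in Lemmas~\ref{lem:ForwardRealizability}--\ref{lem:BackwardRealizability}), so that final step is immediate there, whereas your explicit construction of a generating nonnegative measure (atoms at $\mu=\pm1$) justifies the same equivalence from first principles. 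The reduction of the cell average to nodal updates via a positive-weight quadrature, the observation that the number density is untouched by the source, and the appeal to convexity of $\cR$ are identical in both arguments.
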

\begin{proof}
	In this proof, we show the realizability of $\widehat{\vect{\mathcal{U}}}_{h}^{n+\sfrac{1}{2},\vect{\mathcal{S}}}:= \vect{\mathcal{U}}_{h}^{n}+\dt_{\vect{\mathcal{S}}}\,\vect{\mathcal{S}}(\vect{\mathcal{U}}_{h}^{n},\vect{v}_{h})$, which leads to the realizability of $\widehat{\vect{\mathcal{U}}}_{\vect{K}}^{n+\sfrac{1}{2},\vect{\mathcal{S}}}$ when the element integral in Eq.~\eqref{eq:source_update} is evaluated using quadrature rules with positive weights in both the spatial and energy dimensions. 
	
	We start with denoting $\widehat{\vect{\mathcal{U}}}_{h}^{n+\sfrac{1}{2},\vect{\mathcal{S}}}=:(\widehat{\mathcal{N}}_{h}^{n+\sfrac{1}{2},\vect{\mathcal{S}}}, \widehat{\mathcal{G}}_{h}^{n+\sfrac{1}{2},\vect{\mathcal{S}}})$. 
	In the planar geometry, the number density $\widehat{\mathcal{N}}_{h}^{n+\sfrac{1}{2},\vect{\mathcal{S}}}$ and number flux $\widehat{\mathcal{G}}_{h}^{n+\sfrac{1}{2},\vect{\mathcal{S}}}$ are both scalar-valued.
	From Assumption~\ref{assum:exact_closure} and the definition of the source terms $\vect{\mathcal{S}}$ in Eq.~\eqref{eq:sources}, we can write 
	\begin{align}
		\widehat{\mathcal{N}}_{h}^{n+\sfrac{1}{2},\vect{\mathcal{S}}}
	&=\f{1}{2}\int_{-1}^{1}\big(\,1+v_{h}\mu\,\big)\,f(\mu)\,d\mu,\\
	\widehat{\mathcal{G}}_{h}^{n+\sfrac{1}{2},\vect{\mathcal{S}}}
	&=\f{1}{2}\int_{-1}^{1}
	\big[\,
	\big(\,1+v_{h}\mu\,\big)\,\mu
	+\dt_{\vect{\mathcal{S}}}\,\big(\,\mu^3(\pd{v}{x})_{h}-\mu(\pd{v}{x})_{h}\,\big)
	\,\big]\,f(\mu)\,d\mu \\
	&=\f{1}{2}\int_{-1}^{1}
	\big(\,1+v_{h}\mu\,\big)\,f(\mu)\,\big[\,
	\mu
	-\dt_{\vect{\mathcal{S}}}\,(\pd{v}{x})_{h}\mu\,\f{1-\mu^2}{1+v_{h}\mu}\,
	\,\big]\,d\mu,\nonumber
\end{align}
where $f\in\mathfrak{R}$. Since $v_{h}<1$ and $\mu\in[-1,1]$, it is clear that $\widehat{\mathcal{N}}_{h}^{n+\sfrac{1}{2},\vect{\mathcal{S}}}>0$.
We next prove $\widehat{\mathcal{N}}_{h}^{n+\sfrac{1}{2},\vect{\mathcal{S}}} - |\widehat{\mathcal{G}}_{h}^{n+\sfrac{1}{2},\vect{\mathcal{S}}} |\geq0$ when $\dt_{\vect{\mathcal{S}}}$ satisfies Eq.~\eqref{eq:sourceTimeStepRestriction}.
By Cauchy-Schwartz inequality,
\begin{align}
	|\widehat{\mathcal{G}}_{h}^{n+\sfrac{1}{2},\vect{\mathcal{S}}}|^2
	&\leq
	\f{1}{4}\int_{-1}^{1}
	(1+v_{h}\mu)\,f(\mu)\,d\mu
	\int_{-1}^{1}
	(1+v_{h}\mu)\,f(\mu)\,\big[\,
	\mu
	-\dt_{\vect{\mathcal{S}}}\,(\pd{v}{x})_{h}\mu\,\f{1-\mu^2}{1+v_{h}\mu}\,
	\,\big]^2\,d\mu.
\end{align}
We then show that, under Eq.~\eqref{eq:sourceTimeStepRestriction},
$
\big[\,\mu-\dt_{\vect{\mathcal{S}}}\,(\pd{v}{x})_{h}\mu\,\f{1-\mu^2}{1+v_{h}\mu}\,\big]^2
\leq 1$ for $\mu\in[-1,1]$.
This inequality clearly holds when $\mu=\pm 1$ and $\mu=0$. 
We thus focus on the case when $\mu\in(-1,1)$ and $\mu\neq0$.
Since $\dt_{\vect{\mathcal{S}}}>0$ and $\,\f{1-\mu^2}{1+v_{h}\mu}\geq0$, the inequality holds when
\begin{equation}
	\dt_{\vect{\mathcal{S}}} \leq \f{1+v_{h}\mu}{(\pd{v}{x})_{h}\mu\,(1-\mu)}\,\text{ if }\, (\pd{v}{x})_{h}\,\mu>0,\,\text{ and }\,
	\dt_{\vect{\mathcal{S}}} \leq \f{1+v_{h}\mu}{(-(\pd{v}{x})_{h}\mu) (1+\mu)}\,\text{ if }\, (\pd{v}{x})_{h}\,\mu<0.
\end{equation}
It is straightforward to verify that Eq.~\eqref{eq:sourceTimeStepRestriction} gives a sufficient condition to the two time-step restrictions above.
\end{proof}

\subsection{Realizability-enforcing Limiter}
\label{sec:realizabilityLimiter}

It has been shown in Proposition~\ref{prop:realizabilityExplicit} that, when starting from realizable moments $\vect{\mathcal{U}}_{h}^{n}$, the explicit update in Eq.~\eqref{eq:imexForwardEuler} is guaranteed to provide updated cell-averaged moments $\widehat{\vect{\mathcal{U}}}_{\vect{K}}^{n+\sfrac{1}{2}}$ with number density $\widehat{\mathcal{N}}_{\vect{K}}^{n+\sfrac{1}{2}}>0$ for every $\vect{K}$ under a reasonable time-step restriction.
In this section, we discuss how the realizability-enforcing limiter proposed in \cite{chu_etal_2019} is used here in Eq.~\eqref{eq:realizabilityLimiter} to enforce realizability of moments $\vect{\mathcal{U}}_{h}$ at a point set $\widetilde{S}^{\vect{K}}_{\otimes}$ defined in Eq.~\eqref{eq:AllSetUnion}, which covers all DG nodal points as well as the auxiliary points in element $\vect{K}$.

In \cite{chu_etal_2019}, the realizability-enforcing limiter was formulated following the approach considered in \cite{zhangShu_2010a,zhangShu_2010b} for constructing bound-preserving limiters for high-order DG schemes. 
The limiter enforces moment realizability at each quadrature point in a DG element by relaxing unrealizable moments towards the realizable cell-averaged moments. 
Specifically, this limiter replaces unrealizable moments with their convex combinations with the cell-averaged moment, which preserves the Eulerian-frame particle number in each element (but not the energy; see Section~\ref{sec:EnergyLimiter} for further discussions) when the same convex combination factor ($\theta^{\cN}_{\vect{K}}$ and $\theta^{\vect{\mathcal{U}}}_{\vect{K}}$) is applied to all moments within the element.
For completeness, the steps taken in this realizability-enforcing limiter are summarized in Algorithm~\ref{algo:realizabilityLimiter}.  
We refer to \cite{chu_etal_2019} and references therein for detailed discussions.

\begin{algorithm}[h]
	\normalsize
	\medskip
	\caption{${\vect{\mathcal{U}}}_{h}=\texttt{RealizabilityLimiter}(\widehat{\vect{\mathcal{U}}}_{h})$}
	\label{algo:realizabilityLimiter}
	
	{\bf Inputs:} Discretized moments $\widehat{\vect{\mathcal{U}}}_h$ with $\widehat{\mathcal{N}}_{\vect{K}}>0$ for all $\vect{K}\in\cT$.\\
	{\bf Parameter:} $0<\delta\ll 1$.\\
	\For{each element $\vect{K}$}{
		\If{$\widehat{\vect{\mathcal{U}}}_{\vect{K}}\in\cR$}{
			\tcc{limit number density}
			$\tilde{\mathcal{N}}_{h} \leftarrow \theta^{\cN}_{\vect{K}} \widehat{\mathcal{N}}_{h}+ (1-\theta^{\cN}_{\vect{K}}) \widehat{\mathcal{N}}_{\vect{K}}$ with
			$\theta^{\cN}_{\vect{K}}\leftarrow \min\{\f{\widehat{\mathcal{N}}_{\vect{K}}}{\widehat{\mathcal{N}}_{\vect{K}} - \min_{\vect{z}\in \widetilde{S}^{\vect{K}}_{\otimes}} \widehat{\mathcal{N}}_{h}(\vect{z})},1\}$\;
			
			\tcc{build intermediate moments}
			$\tilde{\vect{\mathcal{U}}}_{h} \leftarrow (\tilde{\mathcal{N}}_{h}, \widehat{\vect{\mathcal{G}}}_{h})$\;
			
			\tcc{limit full moments}
			${\vect{\mathcal{U}}}_{h}\leftarrow\theta^{\vect{\mathcal{U}}}_{\vect{K}} \tilde{\vect{\mathcal{U}}}_{h}+ (1-\theta^{\vect{\mathcal{U}}}_{\vect{K}})\, \widehat{\vect{\mathcal{U}}}_{\vect{K}}$ where\\
			$
				\theta^{\vect{\mathcal{U}}}_{\vect{K}}\leftarrow \argmin_{\theta}\{\theta\in[0,1]\colon \gamma(\theta\, \tilde{\vect{\mathcal{U}}}_{h}(\vect{z})+ (1-\theta)\, \widehat{\vect{\mathcal{U}}}_{\vect{K}}) \geq0,\,\forall\vect{z}\in \widetilde{S}^{\vect{K}}_{\otimes}\} 
			$
			with $\gamma$ defined in Eq.~\eqref{eq:realizableSet}.
		}
		\Else{
			\tcc{replace number densities with the cell average and shrink number fluxes accordingly}
			${\vect{\mathcal{U}}}_{h} \leftarrow ({\mathcal{N}}_{h},{ \vect{\mathcal{G}}}_{h})$
			with ${\mathcal{N}}_{h} = \widehat{\mathcal{N}}_{\vect{K}}$ and ${ \vect{\mathcal{G}}}_{h} =  (1-\delta)\,\widehat{\mathcal{N}}_{\vect{K}}\, \frac{\widehat{\vect{\mathcal{G}}}_{h}}{|\widehat{\vect{\mathcal{G}}}_{h}|}$\;
		}
	}

\end{algorithm}	

As seen in Algorithm~\ref{algo:realizabilityLimiter}, starting from discretized moment $\widehat{\vect{\mathcal{U}}}_{h}$ with positive cell-averaged number density $\widehat{\mathcal{N}}_{\vect{K}}$, the limiter enforces realizability of the resulting moments ${\vect{\mathcal{U}}}_{h}$ in the point set $\widetilde{S}^{\vect{K}}_{\otimes}$ by limiting toward the cell-averaged moments.
The limiter is guaranteed to provide realizable outputs at the point set when the starting moment has a positive cell-averaged number density, thus Proposition~\ref{prop:realizabilityLimiter} holds.

We note that, when approximate closures are considered, the explicit update may not result in moments with positive cell-averaged number density (since Assumption~\ref{assum:exact_closure} does not hold). 
If a negative cell-averaged number density is observed in element $\vect{K}$, we set the moments in $\vect{K}$ to be an isotropic moment with close to zero but positive number density and zero number flux. 
This safeguard affects the conservation property of the scheme, however, we do not observe a negative cell-averaged number density in any of the numerical experiments presented in Section~\ref{sec:numericalResults}.

\subsection{Conversion between Conserved and Primitive Moments}
\label{sec:momentConversionRealizability}

In this section, we prove Proposition~\ref{prop:realizabilityMomentConversion} by showing that, under Assumption~\ref{assum:exact_closure} and assuming $v_h<1$, (i) the conversion between conserved and primitive moments preserves realizability and (ii) the iterative solver in Eq.~\eqref{eq:Picard} is guaranteed to converge to a unique $\vect{\mathcal{M}}\in\cR$ that satisfies Eq.~\eqref{eq:ConservedToPrimitive} given $\vect{\mathcal{U}}\in\cR$.

In the following two lemmas, we show that the realizability is preserved in the conversion between conserved and primitive moments.
\begin{lemma}\label{lem:ForwardRealizability}
	Suppose Assumption~\ref{assum:exact_closure} holds and $v<1$. Let $\vect{\mathcal{U}}$ be given as in Eq.~\eqref{eq:ConservedToPrimitive} with $\vect{\mathcal{M}}\in\mathcal{R}$,
	then $\vect{\mathcal{U}}\in\mathcal{R}$.  
\end{lemma}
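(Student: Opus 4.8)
The plan is to exploit the exact closure assumption to write every component of $\vect{\mathcal{U}}$ as an angular moment of one common nonnegative distribution, and then to recognize $\vect{\mathcal{U}}$ as the zeroth and first moments of a rescaled distribution that still lies in $\mathfrak{R}$. By Definition of $\mathcal{R}$ this is exactly what is needed.

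First I would invoke Assumption~\ref{assum:exact_closure} to fix a distribution $f\in\mathfrak{R}$ whose angular moments $(\mathcal{D},\mathcal{I}^{i},\mathcal{K}^{ij},\dots)$ are given by Eq.~\eqref{eq:angularMoments}. Substituting these integral representations into the definitions $\mathcal{N}=\mathcal{D}+v^{i}\mathcal{I}_{i}$ and $\mathcal{G}_{j}=\mathcal{I}_{j}+v^{i}\mathcal{K}_{ij}$ from Eq.~\eqref{eq:eachconservedMoments} and collecting the common prefactor gives
\[
\mathcal{N}=\f{1}{4\pi}\int_{\mathbb{S}^{2}}\big(\,1+v^{i}\ell_{i}\,\big)\,f\,d\omega,
\qquad
\mathcal{G}_{j}=\f{1}{4\pi}\int_{\mathbb{S}^{2}}\big(\,1+v^{i}\ell_{i}\,\big)\,\ell_{j}\,f\,d\omega.
\]
Next I would introduce the auxiliary weight $g(\omega):=\big(1+v^{i}\ell_{i}\big)\,f(\omega)$ and verify that $g\in\mathfrak{R}$: since $|\vect{\ell}|=1$, the Cauchy--Schwarz bound $v^{i}\ell_{i}\ge-|\vect{v}|=-v$ together with $v<1$ yields $1+v^{i}\ell_{i}\ge1-v>0$, so $g\ge0$; moreover $\f{1}{4\pi}\int_{\mathbb{S}^{2}}g\,d\omega\ge(1-v)\,\f{1}{4\pi}\int_{\mathbb{S}^{2}}f\,d\omega>0$ because $\vect{\mathcal{M}}\in\mathcal{R}$ forces $\mathcal{D}>0$. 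Hence $(\mathcal{N},\vect{\mathcal{G}})$ are precisely the zeroth and first angular moments of the admissible distribution $g$, so they are realizable by definition: concretely $\mathcal{N}>0$ and $|\vect{\mathcal{G}}|\le\f{1}{4\pi}\int_{\mathbb{S}^{2}}g\,|\vect{\ell}|\,d\omega=\mathcal{N}$, i.e.\ $\gamma(\vect{\mathcal{U}})=\mathcal{N}-|\vect{\mathcal{G}}|\ge0$, giving $\vect{\mathcal{U}}\in\mathcal{R}$.

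The only delicate point—rather than a genuine obstacle—is that the representation of $\mathcal{G}_{j}$ requires $\mathcal{K}_{ij}$ to be the \emph{exact} second moment of the \emph{same} $f$ that generates $\mathcal{D}$ and $\mathcal{I}^{i}$; this is exactly what Assumption~\ref{assum:exact_closure} provides, and it is what lets the factor $(1+v^{i}\ell_{i})$ be pulled out uniformly across all components. Everything else reduces to the elementary facts that multiplying a nonnegative distribution by a strictly positive angular weight keeps it in $\mathfrak{R}$, and that the first angular moment of a nonnegative density is dominated in magnitude by its zeroth moment.
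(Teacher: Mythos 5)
Your proof is correct and follows essentially the same route as the paper: both rewrite $(\mathcal{N},\mathcal{G}_{j})$ as the zeroth and first angular moments of the reweighted distribution $(1+v^{i}\ell_{i})\,f$ and use $v<1$ to conclude this weight keeps the distribution in $\mathfrak{R}$. Your version simply spells out a few more of the elementary verifications (the lower bound $1+v^{i}\ell_{i}\ge 1-v>0$ and the inequality $|\vect{\mathcal{G}}|\le\mathcal{N}$) that the paper leaves implicit.
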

\begin{proof}
	Let $f\in\mathfrak{R}$ be the underlying distribution for $\vect{\mathcal{M}}\in\cR$. Then, from Eq.~\eqref{eq:ConservedToPrimitive}, the components of $\vect{\mathcal{U}}$ can be written as
	\begin{equation}
		\big(\mathcal{N}, \mathcal{G}_{j}\big)^{\intercal} = \f{1}{4\pi}\int_{\mathbb{S}^{2}}\big(\,1+v^{i}\,\ell_{i}(\omega)\,\big)\,f(\omega)\,\big(1, \ell_{j}(\omega)\big)^{\intercal}\,d\omega:=\f{1}{4\pi}\int_{\mathbb{S}^{2}}\mathsf{f}(\omega)\,\big(1, \ell_{j}(\omega)\big)^{\intercal}\,d\omega.
	\end{equation}
	Since $f\in\mathfrak{R}$ and $v^{i}\,\ell_{i}\in(-1,1)$, it follows that $\mathsf{f}(\omega):=\big(\,1+v^{i}\,\ell_{i}(\omega)\,\big)\,f(\omega)\in\mathfrak{R}$ and thus $\vect{\mathcal{U}}\in\cR$.
\end{proof}

\begin{lemma}\label{lem:BackwardRealizability}
	Suppose Assumption~\ref{assum:exact_closure} holds, $v<1$, and $\vect{\mathcal{U}}\in\cR$.
	Then there exists some $\vect{\mathcal{M}}\in\mathcal{R}$ that satisfies Eq.~\eqref{eq:ConservedToPrimitive}.  
\end{lemma}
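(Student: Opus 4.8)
The plan is to establish existence via Brouwer's fixed-point theorem applied to the Richardson-type iteration map $\bcH_{\bcU}$ of Eq.~\eqref{eq:richardson_fixed_pt} with $\lambda=(1+v)^{-1}$, since any fixed point $\vect{\mathcal{M}}^{\ast}$ of $\bcH_{\bcU}$ satisfies $\vect{\mathcal{L}}(\vect{\mathcal{M}}^{\ast},\vect{v})\,\vect{\mathcal{M}}^{\ast}=\vect{\mathcal{U}}$, i.e.\ Eq.~\eqref{eq:ConservedToPrimitive}. Writing $\langle\,\cdot\,\rangle:=\tfrac{1}{4\pi}\int_{\mathbb{S}^{2}}(\cdot)\,d\omega$, the hypothesis $\vect{\mathcal{U}}\in\cR$ furnishes a distribution $g\in\mathfrak{R}$ with $\mathcal{N}=\langle g\rangle$ and $\mathcal{G}_{j}=\langle g\,\ell_{j}\rangle$, while Assumption~\ref{assum:exact_closure} guarantees that any candidate $\vect{\mathcal{M}}\in\cR$ carries a nonnegative $f\in\mathfrak{R}$ with $\mathcal{D}=\langle f\rangle$, $\mathcal{I}_{j}=\langle f\,\ell_{j}\rangle$, and $\mathcal{K}_{ij}=\mathsf{k}_{ij}(\vect{\mathcal{M}})\,\mathcal{D}=\langle f\,\ell_{i}\ell_{j}\rangle$.

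First I would record the a priori bound $(1-v)\,\mathcal{D}\le\mathcal{N}\le(1+v)\,\mathcal{D}$, which follows from $\mathcal{N}=\langle(1+v^{i}\ell_{i})\,f\rangle$ together with $|v^{i}\ell_{i}|\le v<1$, and use it to define the compact convex set $\mathcal{W}:=\{\vect{\mathcal{M}}:\ \mathcal{N}/(1+v)\le\mathcal{D}\le\mathcal{N}/(1-v),\ \mathcal{I}\le\mathcal{D}\}\subset\cR$, which is nonempty since it contains $(\mathcal{N}/(1+v),\vect{0})$. The core step is to verify that $\bcH_{\bcU}$ maps $\mathcal{W}$ into itself. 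Expanding the components of $\bcH_{\bcU}(\vect{\mathcal{M}})$ and inserting the representations above, the image moments $(\mathcal{D}^{\prime},\vect{\mathcal{I}}^{\prime})$ turn out to be exactly the angular moments of
\[
  \mathsf{f}:=\big[(1-\lambda)-\lambda\,v^{i}\ell_{i}\big]\,f+\lambda\,g .
\]
The choice $\lambda=(1+v)^{-1}$ renders the bracket nonnegative, because $[(1-\lambda)-\lambda\,v^{i}\ell_{i}]\ge(1-\lambda)-\lambda v=1-\lambda(1+v)=0$; hence $\mathsf{f}\ge0$, so $\mathsf{f}\in\mathfrak{R}$ and $\mathcal{I}^{\prime}=|\langle\mathsf{f}\,\vect{\ell}\rangle|\le\langle\mathsf{f}\rangle=\mathcal{D}^{\prime}$, i.e.\ $\gamma(\vect{\mathcal{M}}^{\prime})\ge0$ in the sense of Eq.~\eqref{eq:realizableSet}. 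This is precisely the role of the weight $\lambda=(1+v)^{-1}$.

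It then remains to keep $\mathcal{D}^{\prime}$ inside its slab and to secure continuity. The same representation gives $\mathcal{D}^{\prime}=\langle\mathsf{f}\rangle=(1-\lambda)\,\mathcal{D}+\lambda(\mathcal{N}-v^{i}\mathcal{I}_{i})$; combining $|v^{i}\mathcal{I}_{i}|\le v\,\mathcal{D}$ with the defining $\mathcal{D}$-bounds of $\mathcal{W}$ and again $\lambda=(1+v)^{-1}$ yields the lower bound $\mathcal{D}^{\prime}\ge(1-\lambda(1+v))\,\mathcal{D}+\lambda\mathcal{N}=\mathcal{N}/(1+v)$ and the upper bound $\mathcal{D}^{\prime}\le\mathcal{N}/(1-v)$, so $\vect{\mathcal{M}}^{\prime}\in\mathcal{W}$. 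Continuity of $\bcH_{\bcU}$ on $\mathcal{W}$ reduces to continuity of the closure $\mathsf{k}_{ij}(\vect{\mathcal{M}})$, which holds on all of $\overline{\cR}$, including the streaming boundary $h=1$ (with $\psi(1)=1$) and the isotropic point $\mathcal{I}=0$, where $3\psi-1\to0$ suppresses the undefined direction $\hat{\mathsf{n}}_{i}\hat{\mathsf{n}}_{j}$. Brouwer's theorem then supplies $\vect{\mathcal{M}}^{\ast}\in\mathcal{W}\subset\cR$ with $\bcH_{\bcU}(\vect{\mathcal{M}}^{\ast})=\vect{\mathcal{M}}^{\ast}$, the required realizable primitive moment; note that this converse complements the forward implication of Lemma~\ref{lem:ForwardRealizability}.

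The main obstacle is the self-map verification, and within it the representation $\mathcal{K}_{ij}=\langle f\,\ell_{i}\ell_{j}\rangle$: it is here that Assumption~\ref{assum:exact_closure} is indispensable. The conceptually delicate point is that the auxiliary density $\mathsf{f}$ is generally \emph{not} itself a maximum-entropy distribution, yet its mere nonnegativity is enough to place $\vect{\mathcal{M}}^{\prime}$ in $\cR$; one does not need $\mathsf{f}$ to coincide with the closure's reconstructing distribution. The remaining checks—continuity of the closure up to $\partial\cR$, and the fact that $\langle\mathsf{f}\rangle=\mathcal{D}^{\prime}\ge\mathcal{N}/(1+v)>0$ forbids $\mathsf{f}\equiv0$—are comparatively routine.
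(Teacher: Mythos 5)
Your proposal is correct, but it takes a genuinely different route from the paper. The paper's proof is a one-line explicit construction that mirrors (and inverts) Lemma~\ref{lem:ForwardRealizability}: starting from the distribution $\mathsf{f}\in\mathfrak{R}$ underlying $\vect{\mathcal{U}}$, it defines $f:=(1+v^{i}\ell_{i})^{-1}\mathsf{f}$, notes that $|v^{i}\ell_{i}|<1$ keeps $f$ in $\mathfrak{R}$, takes its angular moments to obtain $\vect{\mathcal{M}}\in\cR$, and asserts that the relation $\mathsf{f}=(1+v^{i}\ell_{i})f$ yields Eq.~\eqref{eq:ConservedToPrimitive}. Your Brouwer argument on the Richardson map $\bcH_{\bcU}$ over the compact convex set $\mathcal{W}$ is heavier machinery, but it buys something concrete: the fixed-point equation $\bcH_{\bcU}(\vect{\mathcal{M}}^{\ast})=\vect{\mathcal{M}}^{\ast}$ \emph{is} Eq.~\eqref{eq:ConservedToPrimitive} with the closure $\mathsf{k}_{ij}(\vect{\mathcal{M}}^{\ast})$ appearing exactly as the scheme evaluates it, so you never have to argue that the second moment of a separately constructed distribution coincides with the closure's output $\mathsf{k}_{ij}(\vect{\mathcal{M}})\,\mathcal{D}$ --- a step the paper's verification of the $\mathcal{G}_{j}$-component quietly relies on, and which Assumption~\ref{assum:exact_closure} (which only guarantees \emph{some} representing distribution) does not hand over for the quotient distribution $f=(1+v^{i}\ell_{i})^{-1}\mathsf{f}$. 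Your invariance estimates for $\mathcal{W}$ also foreshadow, and are consistent with, the realizability-preservation and contraction analysis of Lemma~\ref{lem:solver_realizability} and Proposition~\ref{prop:contraction}; notably, existence via Brouwer needs only $v<1$, not the stronger $v<\sqrt{2}-1$ required for the contraction. The price you pay is the need for continuity of $\mathsf{k}_{ij}$ up to $\partial\cR$ (including $\mathcal{I}=0$ and $h=1$), which you correctly identify and dispatch, and the informal treatment of the Dirac-delta representing distribution on the realizable boundary --- both acceptable at the paper's level of rigor. Both proofs are valid; the paper's is shorter and constructive, yours is more robust at the one genuinely delicate point.
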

\begin{proof}
	Let $\mathsf{f}\in\mathfrak{R}$ denote the underlying distribution for $\vect{\mathcal{U}}\in\cR$. Then the components of $\vect{\mathcal{U}}$ can be written as
	\begin{equation}
	\big(\mathcal{N}, \mathcal{G}_{j}\big)^{\intercal} = \f{1}{4\pi}\int_{\mathbb{S}^{2}}\mathsf{f}(\omega)\,\big(1, \ell_{j}(\omega)\big)^{\intercal}\,d\omega.
\end{equation}
	Since $\mathsf{f}\in\mathfrak{R}$ and $v^{i}\,\ell_{i}\in(-1,1)$, it follows that $f(\omega):=\big(\,1+v^{i}\,\ell_{i}(\omega)\,\big)^{-1}\,\mathsf{f}(\omega)\in\mathfrak{R}$.
	Taking the moments of $f$ leads to $\vect{\mathcal{M}}\in\cR$. Using the relation between $f$ and $\mathsf{f}$ it is then straightforward to verify that $\vect{\mathcal{M}}$ satisfies Eq.~\eqref{eq:ConservedToPrimitive}.
\end{proof}

Lemma~\ref{lem:BackwardRealizability} shows the existence of realizable primitive moments corresponding to given conserved moments.  
However, it does not provide guarantees on the convergence of the iterative solver we use to find the primitive moments. 
In the remainder of this subsection, we prove that the iterative solver in Eq.~\eqref{eq:Picard} guarantees the convergence to a realizable moment $\bcM$.
To start, in the following lemma we show that realizability is guaranteed at each iteration of the solver in Eq.~\eqref{eq:Picard}.

\begin{lemma}\label{lem:solver_realizability}
	Let $\vect{\mathcal{U}}\in\mathcal{R}$ and $\lambda \leq \frac{1}{1+v}$ in Eq.~\eqref{eq:richardson_fixed_pt}. 
	Then, the solver in Eq.~\eqref{eq:Picard} guarantees that $\vect{\mathcal{M}}^{[k+1]}=(\cD^{[k+1]},\bcI^{[k+1]})^{\intercal}\in\mathcal{R}$, provided that $\vect{\mathcal{M}}^{[k]}=(\cD^{[k]},\bcI^{[k]})^{\intercal}\in\mathcal{R}$.
\end{lemma}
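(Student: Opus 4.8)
The plan is to exhibit a single nonnegative angular density whose zeroth and first moments are exactly $\cD^{[k+1]}$ and $\bcI^{[k+1]}$, so that $\vect{\mathcal{M}}^{[k+1]}\in\cR$ follows immediately from the definition of the realizable set. First I would introduce the underlying distributions. Since $\vect{\mathcal{U}}\in\cR$, there is $\mathsf{f}\in\mathfrak{R}$ with $\cN=\f{1}{4\pi}\int_{\bbS^{2}}\mathsf{f}\,d\omega$ and $\cG_{j}=\f{1}{4\pi}\int_{\bbS^{2}}\mathsf{f}\,\ell_{j}\,d\omega$; since $\vect{\mathcal{M}}^{[k]}\in\cR$, there is $f^{[k]}\in\mathfrak{R}$ whose zeroth and first moments are $\cD^{[k]}$ and $\cI_{j}^{[k]}$. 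The crucial step is to invoke the exact-closure Assumption~\ref{assum:exact_closure}: exactness means that $\mathsf{k}_{ij}(\vect{\mathcal{M}}^{[k]})\,\cD^{[k]}$ equals the genuine second angular moment $\f{1}{4\pi}\int_{\bbS^{2}}f^{[k]}\,\ell_{i}\ell_{j}\,d\omega$, so the otherwise awkward term $v^{i}\mathsf{k}_{ij}\cD^{[k]}$ appearing in Eq.~\eqref{eq:richardson_fixed_pt} is itself a bona fide moment, namely $\f{1}{4\pi}\int_{\bbS^{2}}(v^{i}\ell_{i})\,f^{[k]}\,\ell_{j}\,d\omega$.

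With these representations in hand I would substitute into the Picard step Eq.~\eqref{eq:Picard} and collect all contributions under one integral. Both components of $\vect{\mathcal{M}}^{[k+1]}$ then read as the zeroth and first angular moments of the single density
\begin{equation}
	g(\omega):=\big(\,1-\lambda-\lambda\,v^{i}\ell_{i}(\omega)\,\big)\,f^{[k]}(\omega)+\lambda\,\mathsf{f}(\omega),
\end{equation}
that is, $\cD^{[k+1]}=\f{1}{4\pi}\int_{\bbS^{2}}g\,d\omega$ and $\cI_{j}^{[k+1]}=\f{1}{4\pi}\int_{\bbS^{2}}g\,\ell_{j}\,d\omega$. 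It then remains only to verify that $g\in\mathfrak{R}$.

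Nonnegativity of $g$ is where the hypothesis on $\lambda$ enters. Since $f^{[k]},\mathsf{f}\ge0$ and $\lambda>0$, it suffices that the scalar coefficient $1-\lambda-\lambda\,v^{i}\ell_{i}$ is nonnegative pointwise on $\bbS^{2}$. By Cauchy--Schwarz, $v^{i}\ell_{i}=\vect{v}\cdot\vect{\ell}\le|\vect{v}|=v$, so this coefficient is bounded below by $1-\lambda(1+v)$, which is $\ge0$ precisely because $\lambda\le(1+v)^{-1}$; this worst case ($v^{i}\ell_{i}=v$) is exactly what pins the threshold. Positivity of the mean, $\cD^{[k+1]}=\f{1}{4\pi}\int_{\bbS^{2}}g\,d\omega>0$, follows because the term $\lambda\,\mathsf{f}$ integrates to $\lambda\,\cN>0$ while the remaining contribution is nonnegative. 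Hence $g\in\mathfrak{R}$, and by the definition of $\cR$ we conclude $\vect{\mathcal{M}}^{[k+1]}\in\cR$.

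The only genuinely delicate point is the reliance on Assumption~\ref{assum:exact_closure} to turn $v^{i}\mathsf{k}_{ij}\cD^{[k]}$ into an honest angular moment of $f^{[k]}$; without it the flux term cannot be folded into $g$ and the whole distributional argument collapses. Everything else---assembling $g$, applying the Cauchy--Schwarz bound, and reading off the threshold $\lambda=(1+v)^{-1}$---is routine once that representation is secured. I expect the extension to the approximate Eddington factor $\psi_{\mathsf{a}}$, deferred to Section~\ref{sec:approxClosure}, to require replacing this exact identity by a perturbation estimate on $\mathsf{k}_{ij}$, which is why the clean statement is given under the exact closure.
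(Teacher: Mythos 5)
Your proof is correct and rests on the same essential mechanism as the paper's: the exact-closure assumption lets $v^{i}\mathsf{k}_{ij}\cD^{[k]}$ be read as a genuine second angular moment, and the threshold $\lambda\le(1+v)^{-1}$ comes from requiring the linear-in-$\vect{\ell}$ weight to stay nonnegative at $v^{i}\ell_{i}=v$. The only difference is organizational — you assemble a single density $g$ directly, whereas the paper writes $\vect{\mathcal{M}}^{[k+1]}=(1-\lambda)\widetilde{\vect{\mathcal{M}}}^{[k]}+\lambda\vect{\mathcal{U}}$, invokes convexity of $\cR$, and reduces $\widetilde{\vect{\mathcal{M}}}^{[k]}$ to Lemma~\ref{lem:ForwardRealizability} with velocity $-\tfrac{\lambda}{1-\lambda}\vect{v}$; unwinding that reduction yields exactly your $g$.
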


\begin{proof}
	We write the iterative update in Eq.~\eqref{eq:Picard} as
	\begin{equation}
		\begin{alignedat}{2}
			\vect{\mathcal{M}}^{[k+1]} = \left(
			\begin{array}{c}
				\mathcal{D}^{[k+1]}  \\
				\mathcal{I}_{j}^{[k+1]}
			\end{array}
			\right)	
			&= (1-\lambda)
			\left(\begin{array}{c}
				\mathcal{D}^{[k]} - \frac{\lambda}{1-\lambda} v^{i}\mathcal{I}_{i}^{[k]}\\
				\mathcal{I}_{j}^{[k]} - \frac{\lambda}{1-\lambda} v^{i}\mathsf{k}_{ij}^{[k]}\mathcal{D}^{[k]}
			\end{array}\right) + 
			\lambda 
			\left(
			\begin{array}{c}
				\mathcal{N}\\
				\mathcal{G}_{j}
			\end{array}\right)\\ &=: (1-\lambda)\,\widetilde{\vect{\mathcal{M}}}^{[k]} + \lambda\, {\vect{\mathcal{U}}}.
		\end{alignedat}
	\end{equation}
	Since the realizable set $\cR$ is convex and ${\vect{\mathcal{M}}}^{[k+1]}$ is a convex combination of $\widetilde{\vect{\mathcal{M}}}^{[k]}$ and $\vect{\mathcal{U}}\in\cR$, it suffices to show that $\widetilde{\vect{\mathcal{M}}}^{[k]}\in\cR$.
	We observe that the entries in $\widetilde{\vect{\mathcal{M}}}^{[k]}$ takes the exact same form as the ones on the right-hand side of Eq.~\eqref{eq:ConservedToPrimitive}, except with $\vect{v}$ replaced by $-\frac{\lambda}{1-\lambda} \vect{v}$. It then follows from Lemma~\ref{lem:ForwardRealizability} that $\widetilde{\vect{\mathcal{M}}}^{[k]}\in\cR$ if $\frac{\lambda}{1-\lambda} v \leq1$, i.e., $\lambda\leq\frac{1}{1+v}$.
\end{proof}

It is well-known that, when solving a fixed-point problem defined by a contraction operator, the Picard iteration converges to the unique fixed point (see, e.g., \cite{hairer1993solving}). 
We show below in Proposition~\ref{prop:contraction} that the fixed-point operator $\bcH_{\bcU}$ defined in Eq.~\eqref{eq:richardson_fixed_pt} is a contraction under mild assumptions on $v_h$, which thus guarantees the convergence of the iterative solver in Eq.~\eqref{eq:Picard}.
The proof of Proposition~\ref{prop:contraction} uses results from the following two technical lemmas.

\begin{lemma}\label{lemma:dD_term}
	For any $\bcM\in\cR$,  $\| \partial_{\cD}(v^{i}\mathsf{k}_{ij}\cD)\|  \leq v$.
\end{lemma}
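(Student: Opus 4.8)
The plan is to compute the derivative $\partial_{\cD}(v^{i}\mathsf{k}_{ij}\cD)$ in closed form and reduce the bound to two scalar inequalities on the Eddington factor. First I would observe that, through $\hat{\mathsf{n}}^{i}=\cI^{i}/\cI$ and $\psi=\psi(h)$ with $h=\cI/\cD$, the Eddington tensor $\mathsf{k}_{ij}$ in Eq.~\eqref{eq:VariableEddingtonTensor} depends on $\cD$ only via $\psi$, while $v_{j}$, $\hat{\mathsf{n}}_{j}$, and $w:=v^{i}\hat{\mathsf{n}}_{i}$ are all $\cD$-independent. Writing $G_{j}:=v^{i}\mathsf{k}_{ij}\cD=\tfrac{\cD}{2}\big[(1-\psi)v_{j}+(3\psi-1)\,w\,\hat{\mathsf{n}}_{j}\big]$ and using $\partial_{\cD}h=-h/\cD$, a direct chain-rule computation gives
\begin{equation}
\partial_{\cD}G_{j}=A\,v_{j}+B\,w\,\hat{\mathsf{n}}_{j},\qquad A:=\tfrac12(1-\psi+h\psi'),\qquad B:=\tfrac12(3\psi-1-3h\psi'),
\end{equation}
where $\psi'$ denotes $d\psi/dh$.

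Next I would bound the Euclidean norm of this vector by an orthogonal decomposition. Splitting $\vect{v}=w\,\hat{\vect{\mathsf{n}}}+\vect{v}_{\perp}$ with $\vect{v}_{\perp}\perp\hat{\vect{\mathsf{n}}}$ and $|\vect{v}_{\perp}|^{2}=v^{2}-w^{2}$, the derivative becomes $(A+B)\,w\,\hat{\vect{\mathsf{n}}}+A\,\vect{v}_{\perp}$, so that
\begin{equation}
\|\partial_{\cD}(v^{i}\mathsf{k}_{ij}\cD)\|^{2}=(A+B)^{2}w^{2}+A^{2}(v^{2}-w^{2}).
\end{equation}
Since this is affine in $w^{2}$ and $0\le w^{2}\le v^{2}$ (because $|w|\le v$), it is maximized at an endpoint, giving $\|\partial_{\cD}(v^{i}\mathsf{k}_{ij}\cD)\|^{2}\le\max\{A^{2},(A+B)^{2}\}\,v^{2}$. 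The claim therefore reduces to showing $|A|\le1$ and $|A+B|\le1$, i.e. $|1-\psi+h\psi'|\le2$ and $|\psi-h\psi'|\le1$.

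Finally I would verify these two inequalities from the basic closure properties $0\le\psi\le1$ and $\psi'\ge0$, both of which hold for the exact Minerbo closure and its polynomial approximation (the upper bound $\psi\le1$ follows from realizability of the second moment, and $\psi'\ge0$ from monotonicity of the Eddington factor in $h$). Using $\psi\le1$ and $h\psi'\ge0$ one gets immediately $A=\tfrac12(1-\psi+h\psi')\ge0\ge-1$ and $A+B=\psi-h\psi'\le\psi\le1$, so everything collapses to the single inequality $h\psi'(h)\le1+\psi(h)$, which simultaneously yields $A\le1$ and $A+B\ge-1$. I expect this last inequality to be the main obstacle: for the approximate closure $\psi_{\mathsf{a}}$ of Eq.~\eqref{eq:psiApproximate} it is a routine polynomial check (it rearranges to $3h^{2}-2h^{3}+9h^{4}\le10$ on $[0,1]$, which holds with equality at $h=1$), whereas for the exact Minerbo closure it requires translating the bound through the Langevin relations $h=L(\beta)$ and $\psi=1-2h/\beta$. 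I would also dispatch the degenerate case $h=0$ separately, where $\mathsf{k}_{ij}=\tfrac13\delta_{ij}$ is isotropic, $B=0$, and the bound $\|\partial_{\cD}G\|=\tfrac13 v\le v$ is trivial, so that the decomposition above is understood in the limiting sense where $\hat{\vect{\mathsf{n}}}$ would otherwise be undefined.
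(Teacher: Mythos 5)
Your proof is correct, and it reaches the same (tight) bound by a genuinely different route in the norm-estimation step. The paper computes the same derivative but organizes it as $\tfrac12\phi_1\big(w\,\hat{\mathsf{n}}_j-\tfrac13 v_j\big)+\tfrac13 v_j$ with $\phi_1=3\psi-1-3h\psi'$ (so your $B=\tfrac12\phi_1$ and $A=\tfrac13-\tfrac16\phi_1$), expands $\|\cdot\|^2$ directly in terms of $(v^i\hat{\mathsf{n}}_i)^2$ and $v^iv_i$, and then invokes $\phi_1\in[-4,0]$ (its Lemma on Eddington-factor bounds, part (a)) to control the coefficients and land on $\tfrac19(\phi_1+1)^2\,v^iv_i\le v^iv_i$. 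Your orthogonal split $\vect{v}=w\,\hat{\vect{\mathsf{n}}}+\vect{v}_\perp$ diagonalizes the quadratic form, so the maximization over $w^2\in[0,v^2]$ is immediate and the lemma reduces transparently to the two scalar bounds $|A|\le1$ and $|A+B|\le1$; given $\psi\le1$ and $\psi'\ge0$, these collapse to $h\psi'\le1+\psi$, which is precisely the lower half of the paper's condition $\phi_1\ge-4$. So the two arguments ultimately rest on the same property of the closure, but your decomposition buys two things: it avoids the sign bookkeeping on the cross-term coefficient that the paper's direct expansion requires, and it isolates exactly which closure inequality is needed (you do not need $\phi_1\le 0$ at all). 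Where the paper verifies its Eddington-factor bounds only graphically, you give an explicit polynomial check for $\psi_{\mathsf{a}}$ ($3h^2-2h^3+9h^4\le10$ on $[0,1]$, with equality at $h=1$, where indeed $A=1$ and $A+B=-1$, showing the bound is sharp); for the exact Minerbo closure you leave $h\psi'\le1+\psi$ to a computation through the Langevin relations, which is the same gap the paper's graphical verification leaves, so your proof is no less rigorous than the original and is more explicit in the approximate case. Your separate treatment of $h=0$ is appropriate and harmless.
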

\begin{proof}
	See \ref{sec:proof_of_dD} for the proof.
\end{proof}

\begin{lemma}\label{lemma:dI_term}
	For any $\bcM\in\cR$,  $\| \nabla_{\bcI}(v^{i}\mathsf{k}_{ij}\cD) \|   \leq 2v$.
\end{lemma}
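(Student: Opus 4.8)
The plan is to reduce the claim to a two-by-two spectral-norm estimate by exploiting rotational invariance, and then to settle that estimate using only the elementary bounds $0\le\psi^{\prime}\le 2$ and $0\le(3\psi-1)/(2h)\le 1$ obeyed by the (exact or approximate) Minerbo Eddington factor. First I would write $v^{i}\mathsf{k}_{ij}\cD=v^{i}\mathcal{K}_{ij}$ and insert the closed form Eq.~\eqref{eq:VariableEddingtonTensor}, obtaining $v^{i}\mathcal{K}_{ij}=\tfrac{\cD}{2}\big[(1-\psi)v_{j}+(3\psi-1)(v^{k}\hat{\mathsf{n}}_{k})\hat{\mathsf{n}}_{j}\big]$. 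Differentiating with respect to $\cI^{m}$ by the chain rule, using $\partial_{m}\hat{\mathsf{n}}_{j}=(\delta_{jm}-\hat{\mathsf{n}}_{j}\hat{\mathsf{n}}_{m})/\cI$, $\partial_{m}\psi=\psi^{\prime}\hat{\mathsf{n}}_{m}/\cD$, and $h=\cI/\cD$ (with $\cI:=|\bcI|$), produces the Jacobian $J_{jm}=\partial_{\cI^{m}}(v^{i}\mathcal{K}_{ij})$ as a combination of $\vect{v}\otimes\hat{\mathsf{n}}$, $\hat{\mathsf{n}}\otimes\vect{v}$, the identity, and $\hat{\mathsf{n}}\otimes\hat{\mathsf{n}}$ with scalar coefficients in $\psi,\psi^{\prime},h$. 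The target $\|\cdot\|$ is the induced $\ell^{2}\to\ell^{2}$ operator norm of $J$, which reduces to the Euclidean vector norm used in the companion Lemma~\ref{lemma:dD_term}.

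Since $\|J\|$ is invariant under an orthogonal change of coordinates acting simultaneously on the input and output copies of $\mathbb{R}^{3}$, and since the coefficients $\psi,\psi^{\prime},h$ depend only on the rotation-invariant data $(\cD,\cI)$, I would evaluate $J$ in a frame with $\hat{\mathsf{n}}=\vect{e}_{1}$ (i.e. $\bcI=\cI\,\vect{e}_{1}$) and then, by a further rotation in the plane orthogonal to $\vect{e}_{1}$, arrange $\vect{v}=(v_{1},v_{\perp},0)$. In this aligned frame the four tensors collapse and $J$ becomes block diagonal: a scalar block $b\,v_{1}$, with $b:=(3\psi-1)/(2h)$, and the two-by-two block
\[
M=\begin{bmatrix} a\,v_{1} & b\,v_{\perp} \\[2pt] -\tfrac{a}{2}\,v_{\perp} & b\,v_{1}\end{bmatrix},\qquad a:=\psi^{\prime}.
\]
Hence $\|J\|=\max\{\,|b\,v_{1}|,\ \|M\|\,\}$; because $b\le 1$ and $|v_{1}|\le v$, the scalar block is at most $v\le 2v$, so it remains to show $\|M\|\le 2v$.

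To bound $\|M\|$ I would establish $\Lambda_{\max}(M^{\intercal}M)\le 4v^{2}$. Writing $s=v_{1}^{2}$, $t=v_{\perp}^{2}$ (so $s+t=v^{2}$) and computing $\operatorname{tr}(M^{\intercal}M)=(a^{2}+b^{2})s+(\tfrac{a^{2}}{4}+b^{2})t$ and $\det(M^{\intercal}M)=a^{2}b^{2}(s+\tfrac{t}{2})^{2}$, the condition $\Lambda_{\max}\le 4v^{2}$ is equivalent to $4v^{2}\ge\tfrac12\operatorname{tr}(M^{\intercal}M)$ together with $(4v^{2})^{2}-4v^{2}\operatorname{tr}(M^{\intercal}M)+\det(M^{\intercal}M)\ge 0$. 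The trace condition is immediate since $\operatorname{tr}(M^{\intercal}M)\le(a^{2}+b^{2})v^{2}\le 5v^{2}$. The remaining inequality is a homogeneous quadratic $P(s,t)=As^{2}+Bst+Ct^{2}$ whose coefficients factor as $A=(4-a^{2})(4-b^{2})$, $C=\tfrac14(16-a^{2})(4-b^{2})$, and $B=32-5a^{2}-8b^{2}+a^{2}b^{2}$. Invoking $a=\psi^{\prime}\in[0,2]$ and $b\in[0,1]$ gives $A\ge 0$ and $C\ge 0$, while minimizing $B$ over the box $[0,2]\times[0,1]$ (the minimum being attained at $a=2,b=1$) gives $B\ge 8>0$; therefore $P(s,t)\ge 0$ for all $s,t\ge 0$. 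This yields $\|M\|\le 2v$, hence $\|J\|\le 2v$. I would remark that the bound is sharp, equality occurring at $h=1$ (so $a=2,b=1$) with $\vect{v}\parallel\hat{\mathsf{n}}$.

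The main obstacle is twofold. The heavier bookkeeping is the chain-rule differentiation of the closure and the verification that, in the aligned frame, the $\hat{\mathsf{n}}\otimes\hat{\mathsf{n}}$ and identity contributions combine so that $J$ genuinely decouples into the scalar and the displayed two-by-two block; the sign cancellations there are where slips are easiest. The conceptual crux, however, is supplying the two structural facts $0\le\psi^{\prime}\le 2$ and $\tfrac13\le\psi\le\tfrac{1+2h}{3}$ (equivalently $b\in[0,1]$), since these are precisely what force $A,C,B\ge 0$. They follow from $\psi(0)=\tfrac13$, $\psi(1)=1$, and the monotonicity and convexity of the Minerbo Eddington factor; for the polynomial approximation $\psi_{\mathsf{a}}$ of Eq.~\eqref{eq:psiApproximate} both are immediate by direct inspection of $\psi_{\mathsf{a}}$ and $\psi_{\mathsf{a}}^{\prime}$, so the same argument also covers the approximate-closure case treated later.
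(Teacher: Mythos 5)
Your proof is correct, and it takes a genuinely different route from the paper's. The paper bounds $\|\nabla_{\bcI}(v^{i}\mathsf{k}_{ij}\cD)\,\vect{y}\|$ directly for arbitrary $\vect{y}$: it expands the sum of squares, absorbs the cross term with $-2ab\le a^{2}+b^{2}$, majorizes $s^{2}\le v^{2}$ and $(y^{k}\hat{\mathsf{n}}_{k})^{2}\le y^{2}$, and reduces everything to the single scalar function $\phi_{2}^{2}-\psi'\phi_{2}+(\psi')^{2}$ (with $\phi_{2}=(3\psi-1)/h$), whose maximum value $4$ at $h=1$ is obtained from the monotonicity claim in Lemma~\ref{lemma:polynomial_bounds}(d), together with the sign conditions (b) and (c) of that lemma --- all of which are verified graphically. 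You instead exploit rotational equivariance of the closure to put the Jacobian in block-diagonal form (I checked the aligned-frame computation: the blocks are exactly your $M$ and the scalar $b\,v_{1}$), and then compute the operator norm exactly via the trace and determinant of $M^{\intercal}M$; the resulting quadratic form $As^{2}+Bst+Ct^{2}$ with $A=(4-a^{2})(4-b^{2})$, $C=\tfrac14(16-a^{2})(4-b^{2})$, $B\ge 8$ is nonnegative on the positive quadrant, which I verified. What your approach buys is (i) a reduction to only two box bounds, $\psi'\in[0,2]$ and $(3\psi-1)/(2h)\in[0,1]$, in place of the paper's three coupled inequalities, and (ii) an identification of the extremal configuration ($h=1$, $\vect{v}\parallel\hat{\mathsf{n}}$) showing the constant $2$ is sharp, which the paper's chain of majorizations obscures. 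The one caveat is the same one the paper itself carries: your two structural facts about the Eddington factor still require verification for the exact Minerbo closure (your appeal to monotonicity and convexity of $\psi$ is plausible but not proved here, just as the paper's Lemma~\ref{lemma:polynomial_bounds} is only checked by plotting); for $\psi_{\mathsf{a}}$ both bounds do follow by direct inspection of the polynomial, as you say. Note also that your bounds are not equivalent to the paper's --- e.g.\ Lemma~\ref{lemma:polynomial_bounds}(b) amounts to $2b\ge a$, which you never need --- so the two arguments really are logically independent routes to the same constant.
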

\begin{proof}
	See \ref{sec:proof_of_dI} for the proof.
\end{proof}

We now state and prove Proposition~\ref{prop:contraction}. 

\begin{prop}\label{prop:contraction}
	Suppose $v<\sqrt{2}-1$ and $\lambda\in(0,1]$. Then, $\bcH_{\bcU}$ defined in Eq.~\eqref{eq:richardson_fixed_pt} is a contraction operator, i.e., there exists some $L<1$ such that 
	\begin{equation}
		\|\bcH_{\bcU}(\Mone) - \bcH_{\bcU}(\Mtwo) \| 	\leq L \| \Mone - \Mtwo \|\:, \quad \forall \Mone, \Mtwo \in \cR\:.
	\end{equation}
\end{prop}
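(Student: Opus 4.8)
The plan is to bound the Jacobian of $\bcH_{\bcU}$ uniformly on the convex set $\cR$ and then conclude contraction via the mean value inequality. First I would rewrite the map from Eq.~\eqref{eq:richardson_fixed_pt} as $\bcH_{\bcU}(\bcM) = \bcM - \lambda\big(\bcL(\bcM,\vect{v})\,\bcM - \bcU\big)$, so that $\bcH_{\bcU}(\Mone)-\bcH_{\bcU}(\Mtwo) = \int_0^1 (D\bcH_{\bcU})\big(\Mtwo + t(\Mone-\Mtwo)\big)\,(\Mone-\Mtwo)\,dt$, where the entire segment remains in $\cR$ by convexity. Hence it suffices to prove $\sup_{\bcM\in\cR}\|D\bcH_{\bcU}(\bcM)\| =: L < 1$, with $L$ then serving as the contraction constant.

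Second, I would compute $D\bcH_{\bcU} = I - \lambda\,(\partial\vect{\mathcal{U}}/\partial\vect{\mathcal{M}})$, where $\partial\vect{\mathcal{U}}/\partial\vect{\mathcal{M}}$ is exactly the Jacobian displayed in Section~\ref{sec:closure}. Writing $\partial\vect{\mathcal{U}}/\partial\vect{\mathcal{M}} = I + E$ isolates the velocity-dependent part
\begin{equation}
E = \left[\begin{array}{cc} 0 & \vect{v}^{\intercal} \\ \partial_{\cD}(v^{i}\mathsf{k}_{ij}\cD) & \nabla_{\bcI}(v^{i}\mathsf{k}_{ij}\cD) \end{array}\right],
\end{equation}
so that $D\bcH_{\bcU} = (1-\lambda)\,I - \lambda\,E$. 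Since $\lambda\in(0,1]$ gives $1-\lambda\geq 0$, the triangle inequality yields $\|D\bcH_{\bcU}\| \le (1-\lambda) + \lambda\,\|E\| = 1 - \lambda\,(1-\|E\|)$. Thus the whole problem reduces to establishing $\|E\| < 1$, which immediately produces $L<1$ for every admissible $\lambda$.

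Third, I would bound $\|E\|$ by testing it on a unit vector $(w_0,\vect{w})$ with $w_0^2 + \|\vect{w}\|^2 = 1$. The top-right block contributes $\|\vect{v}\| = v$, while the lower blocks are controlled by Lemmas~\ref{lemma:dD_term} and \ref{lemma:dI_term}, giving $\|\partial_{\cD}(v^{i}\mathsf{k}_{ij}\cD)\| \le v$ and $\|\nabla_{\bcI}(v^{i}\mathsf{k}_{ij}\cD)\| \le 2v$. These estimates lead to $\|E(w_0,\vect{w})\|^2 \le v^2\big[\,\|\vect{w}\|^2 + (|w_0| + 2\|\vect{w}\|)^2\,\big]$, and maximizing the bracket over the unit sphere (a one-parameter trigonometric optimization) gives the value $3 + 2\sqrt{2} = (\sqrt{2}+1)^2$, hence $\|E\| \le (\sqrt{2}+1)\,v$. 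Therefore $v < \sqrt{2}-1$ forces $\|E\|<1$, completing the argument.

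The main obstacle I anticipate is twofold. First, the mean value step requires $\bcM\mapsto\mathsf{k}_{ij}(\bcM)$ to be differentiable along segments in $\cR$; care is needed near $\bcI=\vect{0}$, where $\hat{\mathsf{n}}^{i}$ is formally undefined, and near the boundary $h=1$, so one must argue that the Eddington-tensor closure remains Lipschitz with the derivative bounds of the two lemmas holding almost everywhere. Second, the sharp threshold $\sqrt{2}-1$ hinges on combining the three block bounds through the quadratic-form optimization rather than a naive triangle inequality (which would only yield $\|E\|\le 4v$ and the weaker condition $v<\tfrac14$); it is precisely the cross term $4\,|w_0|\,\|\vect{w}\|$ that produces the $\sqrt{2}$, so getting this constant right is the crux of the estimate.
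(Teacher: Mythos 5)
Your proposal is correct and follows essentially the same route as the paper: both reduce the contraction estimate to controlling the velocity-dependent block via the gradient bounds of Lemmas~\ref{lemma:dD_term} and \ref{lemma:dI_term}, and both extract the constant $\sqrt{3+2\sqrt{2}}=\sqrt{2}+1$ --- your unit-sphere optimization of $\|\vect{w}\|^{2}+(|w_{0}|+2\|\vect{w}\|)^{2}$ is just the eigenvalue form of the paper's weighted Young inequality $2ab\le(\sqrt{2}+1)a^{2}+(\sqrt{2}-1)b^{2}$. The differentiability caveat you flag near $\bcI=\vect{0}$ and $h=1$ applies equally to the paper's own argument, which likewise passes from those gradient bounds to a Lipschitz estimate by the mean value inequality.
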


\begin{proof}
	First, for convenience, we denote $\Delta \cD = \Done - \Dtwo $ and $\Delta \cI_j = \Ione_j - \Itwo_j$.
	It then follows from the definition of $\bcH_{\bcU}$ and the triangle inequality that	
	\begin{equation}
		\|\bcH_{\bcU}(\Mone) - \bcH_{\bcU}(\Mtwo)\| 
		\leq (1-\lambda)\left\|\left(\begin{array}{l}
			\Delta \cD \\
			\Delta \cI_j
		\end{array}\right)\right\| +
		\lambda
		\left\|\left(\begin{array}{c}
			v^{i}\,\Delta \cI_i\\
			v^{i}(\Kone_{ij}\Done - \Ktwo_{ij}\Dtwo )
		\end{array}\right)\right\| 
	\end{equation}
	Thus, it suffices to show that, there exists some $\tilde{L}<1$ such that  
	\begin{equation}\label{eq:contraction_proof_1}
		\left\|\left(\begin{array}{c}
			v^{i}\,\Delta \cI_i\\
			v^{i}(\Kone_{ij}\Done - \Ktwo_{ij}\Dtwo )
		\end{array}\right)\right\| 
		\leq \tilde{L}
		\left\|\left(\begin{array}{l}	\Delta \cD \\	\Delta \cI_j \end{array}\right)\right\|,\quad
		\forall \Mone, \Mtwo \in \cR. 
	\end{equation}
	Lemmas~\ref{lemma:dD_term} and \ref{lemma:dI_term} imply that the gradients of $v^{i}\mathsf{k}_{ij}\cD$ in the $\cD$ and $\bcI$ directions are bounded. Thus, we have
	\begin{equation}
		\begin{alignedat}{2}
			\| v^{i}(\Kone_{ij}\Done - \Ktwo_{ij}\Dtwo ) \| 
			&\leq 
			\| \partial_{\cD}(v^{i}\mathsf{k}_{ij}\cD)\|    \| \Delta \cD \| + 
			\| \nabla_{\bcI}(v^{i}\mathsf{k}_{ij}\cD) \| 	\| \Delta \cI_j \|\\
			&\leq v \| \Delta \cD \| +  2v \| \Delta \cI_j \|\:,
		\end{alignedat}
	\end{equation}
	which leads to
	\begin{equation}\label{eq:vkD_bound}
		\begin{alignedat}{2}
		\| v^{i}(\Kone_{ij}\Done - \Ktwo_{ij}\Dtwo ) \|^2 
		&\leq v^2 \| \Delta \cD \|^2 + 4v^2 \| \Delta \cD \| \|\Delta \cI_j \| +  4v^2 \|\Delta \cI_j \|^2\\
		&\leq (3+2\sqrt{2}) v^2 \| \Delta \cD \|^2 + (2+2\sqrt{2})v^2 \|\Delta \cI_j \|^2\:,
		\end{alignedat}
	\end{equation}
	where the second inequality follows from the inequality, $2ab \leq (\sqrt{2}+1) a^2 + (\sqrt{2}-1) b^2$, with $a=\sqrt{2}v\|\Delta\cD\|$ and $b=\sqrt{2}v\|\Delta \cI_j\|$.
Taking the square of the left-hand side in Eq.~\eqref{eq:contraction_proof_1} and applying the inequality in Eq.~\eqref{eq:vkD_bound} gives
	\begin{equation}
		\begin{alignedat}{2}
			\left\|\left(\begin{array}{c}
				v^{i}\,\Delta \cI_i\\
				v^{i}(\Kone_{ij}\Done - \Ktwo_{ij}\Dtwo )
			\end{array}\right)\right\| ^2 &=  v^2 \| \Delta \cI_j \|^2 + \| v^{i}(\Kone_{ij}\Done - \Ktwo_{ij}\Dtwo ) \|^2\\
			&\leq (3+2\sqrt{2}) v^2 (\| \Delta \cD \|^2 + \| \Delta \cI_j \|^2)\:.
		\end{alignedat}
	\end{equation}
	Let $\tilde{L}:=\sqrt{3+2\sqrt{2}} \, v$, the claim then holds when 
	$v < \big(\sqrt{3+2\sqrt{2}}\,\big)^{-1} = \sqrt{2}-1$.
\end{proof}

\begin{theorem}\label{thm:convergence}
	Suppose $v<\sqrt{2}-1$ and $\lambda \leq \frac{1}{1+v}$ in Eq.~\eqref{eq:richardson_fixed_pt}.
	Then, for any given conserved moment $\bcU=(\cN,\bcG)^{\intercal}\in\cR$ and initial primitive moment $\bcM^{[0]}\in\cR$, the iterative solver in Eq.~\eqref{eq:Picard} converges to the unique realizable primitive moment $\bcM$ that satisfies Eq.~\eqref{eq:ConservedToPrimitive} as $k\to\infty$.
\end{theorem}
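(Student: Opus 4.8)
The plan is to recognize Eq.~\eqref{eq:Picard} as a Picard (fixed-point) iteration for the operator $\bcH_{\bcU}$ defined in Eq.~\eqref{eq:richardson_fixed_pt}, and to establish convergence by combining the contraction property (Proposition~\ref{prop:contraction}) with the realizability-preservation of each iterate (Lemma~\ref{lem:solver_realizability}) and the existence of a realizable solution (Lemma~\ref{lem:BackwardRealizability}). The first observation is that, because $\lambda>0$, a point $\bcM\in\cR$ is a fixed point of $\bcH_{\bcU}$ if and only if the bracketed residual in Eq.~\eqref{eq:richardson_fixed_pt} vanishes, i.e., if and only if $\bcM$ satisfies the conserved-to-primitive relation Eq.~\eqref{eq:ConservedToPrimitive}. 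Thus it suffices to show that the iteration converges to this (unique) fixed point.

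First I would fix a realizable solution $\bcM^{\ast}\in\cR$ of Eq.~\eqref{eq:ConservedToPrimitive}, whose existence is guaranteed by Lemma~\ref{lem:BackwardRealizability} under the hypotheses $\bcU\in\cR$ and $v<1$ (the latter implied by $v<\sqrt{2}-1$). By the remark above, $\bcH_{\bcU}(\bcM^{\ast})=\bcM^{\ast}$. Next, since $\lambda\le(1+v)^{-1}$, Lemma~\ref{lem:solver_realizability} together with a straightforward induction on $k$ (starting from $\bcM^{[0]}\in\cR$) shows that every iterate $\bcM^{[k]}$ remains in $\cR$; this keeps the whole trajectory inside the domain on which the contraction estimate is valid. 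Because $v<\sqrt{2}-1$, Proposition~\ref{prop:contraction} furnishes a constant $L<1$ with $\|\bcH_{\bcU}(\Mone)-\bcH_{\bcU}(\Mtwo)\|\le L\|\Mone-\Mtwo\|$ for all $\Mone,\Mtwo\in\cR$. Applying this inequality to the pair $(\bcM^{[k]},\bcM^{\ast})$, and using $\bcH_{\bcU}(\bcM^{[k]})=\bcM^{[k+1]}$ and $\bcH_{\bcU}(\bcM^{\ast})=\bcM^{\ast}$, yields $\|\bcM^{[k+1]}-\bcM^{\ast}\|\le L\|\bcM^{[k]}-\bcM^{\ast}\|$, hence $\|\bcM^{[k]}-\bcM^{\ast}\|\le L^{k}\|\bcM^{[0]}-\bcM^{\ast}\|\to0$ as $k\to\infty$. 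Uniqueness of the realizable fixed point then follows immediately: if $\Mone,\Mtwo\in\cR$ both satisfy Eq.~\eqref{eq:ConservedToPrimitive}, the same contraction bound gives $\|\Mone-\Mtwo\|\le L\|\Mone-\Mtwo\|$ with $L<1$, forcing $\Mone=\Mtwo$.

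The main subtlety---and the reason I would not simply quote the Banach fixed-point theorem---is that the realizable set $\cR$ is not closed (it excludes the hyperplane $\cD=0$), so it is not a complete metric space and the standard theorem does not directly guarantee a fixed point inside $\cR$. I sidestep this by importing existence from Lemma~\ref{lem:BackwardRealizability} and using the contraction property only for convergence to, and uniqueness of, that already-known solution. The accompanying technical point to be verified is that the two hypotheses are simultaneously usable: $\lambda\le(1+v)^{-1}$ is exactly what Lemma~\ref{lem:solver_realizability} requires to keep the iterates realizable, while $v<\sqrt{2}-1$ is what Proposition~\ref{prop:contraction} requires for contractivity. Both hold under the stated assumptions, so the trajectory never leaves $\cR$ and the geometric estimate applies at every step, completing the argument.
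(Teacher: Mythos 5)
Your proof is correct and follows essentially the same route as the paper, which simply declares the theorem a direct consequence of Lemma~\ref{lem:solver_realizability} (realizability of iterates) and Proposition~\ref{prop:contraction} (contraction), with existence of the realizable solution supplied by Lemma~\ref{lem:BackwardRealizability} earlier in the same section. Your explicit remark that $\cR$ is not complete—so one must import existence rather than invoke the Banach fixed-point theorem wholesale—is a detail the paper leaves implicit, but it does not change the argument.
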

\begin{proof}
	This theorem is a direct consequence from the realizability-preserving property of the solver shown in Lemma~\ref{lem:solver_realizability} and the contraction property of $\bcH_{\bcU}$ proved in Proposition~\ref{prop:contraction}.
\end{proof}

The results in Theorem~\ref{thm:convergence} lead to the following corollary on the uniqueness of realizable primitive moments associated with realizable conserved moments.

\begin{corollary}\label{coro:UtoM}
	Suppose $v<\sqrt{2}-1$.  
	For any conserved moment $\vect{\mathcal{U}}\in\cR$, there exists a unique realizable primitive moment $\vect{\mathcal{M}}$ that satisfies Eq.~\eqref{eq:ConservedToPrimitive}.  
\end{corollary}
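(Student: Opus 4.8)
The plan is to obtain the corollary as an immediate distillation of the machinery already assembled for Theorem~\ref{thm:convergence}, separating the claim into its existence and uniqueness components. Since the hypothesis $v<\sqrt{2}-1$ implies $v<1$, both of the earlier structural results apply, and the corollary is essentially the ``well-posedness'' statement that the convergence theorem already delivers as a byproduct. My proof would therefore not introduce any new estimates but instead assemble Lemma~\ref{lem:BackwardRealizability} and Proposition~\ref{prop:contraction} into the desired existence-and-uniqueness conclusion.

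For existence, I would invoke Lemma~\ref{lem:BackwardRealizability} directly: given $\vect{\mathcal{U}}\in\cR$ and $v<1$, there exists some $\vect{\mathcal{M}}\in\cR$ satisfying the conserved-to-primitive relation in Eq.~\eqref{eq:ConservedToPrimitive}. No further work is needed here, since the construction in that lemma (dividing the underlying distribution by the strictly positive factor $1+v^{i}\ell_{i}$ and taking moments) produces a realizable primitive moment for any $v<1$, and in particular for $v<\sqrt{2}-1$.

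For uniqueness, the key observation is that solving Eq.~\eqref{eq:ConservedToPrimitive} is equivalent to finding a fixed point of the operator $\bcH_{\bcU}$ in Eq.~\eqref{eq:richardson_fixed_pt}: because $\lambda\in(0,1]$ is strictly positive, $\bcH_{\bcU}(\bcM)=\bcM$ holds if and only if the Richardson residual vanishes, i.e. if and only if $\bcM$ satisfies Eq.~\eqref{eq:ConservedToPrimitive}. Suppose then that $\Mone,\Mtwo\in\cR$ both satisfy Eq.~\eqref{eq:ConservedToPrimitive}; both are fixed points of $\bcH_{\bcU}$. By Proposition~\ref{prop:contraction}, which applies precisely because $v<\sqrt{2}-1$, the operator $\bcH_{\bcU}$ is a contraction with some constant $L<1$, so
\begin{equation}
\|\Mone-\Mtwo\|=\|\bcH_{\bcU}(\Mone)-\bcH_{\bcU}(\Mtwo)\|\leq L\,\|\Mone-\Mtwo\|,
\end{equation}
which forces $\|\Mone-\Mtwo\|=0$ and hence $\Mone=\Mtwo$. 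This uniqueness argument uses only the contraction inequality evaluated at the two candidate solutions, so it needs neither completeness of $\cR$ nor the realizability-preserving Lemma~\ref{lem:solver_realizability}.

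I do not anticipate a genuine obstacle, as all the analytic content is already proved upstream; the only point requiring a moment of care is the equivalence between fixed points of $\bcH_{\bcU}$ and solutions of Eq.~\eqref{eq:ConservedToPrimitive}, which hinges on $\lambda>0$ and should be stated explicitly so that the contraction property of $\bcH_{\bcU}$ can be transferred to the uniqueness of the conserved-to-primitive inversion. Combining the two parts then yields the corollary in full.
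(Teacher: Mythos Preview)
Your proposal is correct and uses the same toolkit the paper relies on. The paper simply states that the corollary follows from Theorem~\ref{thm:convergence}, which already packages existence (via convergence of the Picard iteration, using Lemma~\ref{lem:solver_realizability} to keep iterates in $\cR$) together with uniqueness (via the contraction property of Proposition~\ref{prop:contraction}). You instead decouple the two halves: existence comes straight from Lemma~\ref{lem:BackwardRealizability}, and uniqueness from the contraction inequality applied to two putative fixed points. This is a slightly more direct route, since it bypasses the iterative machinery and Lemma~\ref{lem:solver_realizability} entirely; the trade-off is that your existence argument leans on Assumption~\ref{assum:exact_closure} through Lemma~\ref{lem:BackwardRealizability}, whereas the paper's convergence-based existence also works verbatim in the approximate-closure setting of Section~\ref{sec:approxClosure} (where no analogue of Lemma~\ref{lem:BackwardRealizability} is available). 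Both arguments are sound for the corollary as stated.
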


\subsection{Implicit Collision Update}
\label{sec:collision}

In this section, we prove Proposition~\ref{prop:realizabilityImplicit}, which states that, under Assumption~\ref{assum:exact_closure}, the implicit update in Eq.~\eqref{eq:imexBackwardEuler} preserves realizability when the iterative solver in Eq.~\eqref{eq:collision_Picard} is used. 
We first show in the following lemma that realizability is preserved in each iteration when starting from a realizable moment.

\begin{lemma}\label{lem:collision_solver_realizability}
	Let $\vect{\mathcal{U}}^{\old}=(\cN^{\old},\bcG^{\old})^{\intercal}\in\mathcal{R}$, $\lambda \leq \frac{1}{1+v}$, and $\kappa\geq\chi\geq0$ in Eq.~\eqref{eq:collision_fixed_point1}. 
	Then, the solver in Eq.~\eqref{eq:collision_Picard} guarantees that $\vect{\mathcal{M}}^{[k+1]}=(\cD^{[k+1]},\bcI^{[k+1]})^{\intercal}\in\mathcal{R}$, provided that $\vect{\mathcal{M}}^{[k]}=(\cD^{[k]},\bcI^{[k]})^{\intercal}\in\mathcal{R}$.
\end{lemma}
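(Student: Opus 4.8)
The plan is to mirror the convex-combination argument used for the moment-conversion solver in Lemma~\ref{lem:solver_realizability}, but now accounting for the asymmetric diagonal scaling $\Lambda=\textup{diag}(\mu_{\chi},\mu_{\kappa})$ introduced by the implicit collision term. The idea is to exhibit a single nonnegative distribution whose zeroth and first angular moments reproduce the bracketed quantities inside $\bcQ$ in Eq.~\eqref{eq:collision_fixed_point1}, so that realizability of the \emph{unscaled} update follows from membership in $\mathfrak{R}$; the subsequent scaling by $\Lambda$ is then handled separately using the ordering $\kappa\geq\chi$.

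Concretely, let $f^{[k]},\mathsf{f}^{\old}\in\mathfrak{R}$ denote the distributions underlying $\bcM^{[k]}\in\cR$ and $\bcU^{\old}\in\cR$ (the latter obtained via Lemma~\ref{lem:BackwardRealizability}, so that $\cN^{\old}=\tfrac{1}{4\pi}\int_{\bbS^{2}}\mathsf{f}^{\old}\,d\omega$ and $\cG_{j}^{\old}=\tfrac{1}{4\pi}\int_{\bbS^{2}}\mathsf{f}^{\old}\,\ell_{j}\,d\omega$), and let $f_{0}$ be the isotropic equilibrium distribution associated with $\cD_{0}$. Under Assumption~\ref{assum:exact_closure} one has $\mathsf{k}_{ij}^{[k]}\cD^{[k]}=\mathcal{K}_{ij}^{[k]}=\tfrac{1}{4\pi}\int_{\bbS^{2}}f^{[k]}\,\ell_{i}\ell_{j}\,d\omega$, so the zeroth component of the bracketed vector in Eq.~\eqref{eq:collision_fixed_point1} equals $\tfrac{1}{4\pi}\int_{\bbS^{2}}\big[(1-\lambda)-\lambda\,v^{i}\ell_{i}\big]\,f^{[k]}\,d\omega+\lambda\,\cN^{\old}+\lambda\,\dt_{\bcC}\,\chi\,\cD_{0}$, and its first component equals $\tfrac{1}{4\pi}\int_{\bbS^{2}}\big[(1-\lambda)-\lambda\,v^{i}\ell_{i}\big]\,\ell_{j}\,f^{[k]}\,d\omega+\lambda\,\cG_{j}^{\old}$. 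I would therefore define
\begin{equation}
	g(\omega):=\big[(1-\lambda)-\lambda\,v^{i}\ell_{i}(\omega)\big]\,f^{[k]}(\omega)+\lambda\,\mathsf{f}^{\old}(\omega)+\lambda\,\dt_{\bcC}\,\chi\,f_{0}(\omega),
\end{equation}
and observe that the prefactor obeys $(1-\lambda)-\lambda\,v^{i}\ell_{i}\geq 1-\lambda(1+v)\geq 0$ exactly when $\lambda\leq(1+v)^{-1}$, which is the hypothesis (note that $f_{0}$ isotropic contributes $\cD_{0}$ to the zeroth moment and $0$ to the flux). Hence $g$ is a nonnegative combination of elements of $\mathfrak{R}$ with strictly positive average, so $g\in\mathfrak{R}$, and its moments $\hat{\cD}:=\tfrac{1}{4\pi}\int_{\bbS^{2}}g\,d\omega$ and $\hat{\cI}_{j}:=\tfrac{1}{4\pi}\int_{\bbS^{2}}g\,\ell_{j}\,d\omega$ satisfy $(\hat{\cD},\hat{\bcI})\in\cR$, i.e.\ $\hat{\cD}>0$ and $\hat{\cD}\geq|\hat{\bcI}|$.

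The main obstacle, and the only place the collision update differs structurally from the conversion solver, is the asymmetric scaling: $\bcM^{[k+1]}=\Lambda(\hat{\cD},\hat{\bcI})^{\intercal}$ gives $\cD^{[k+1]}=\mu_{\chi}\hat{\cD}$ but $\cI_{j}^{[k+1]}=\mu_{\kappa}\hat{\cI}_{j}$ with generally $\mu_{\chi}\neq\mu_{\kappa}$, so the map does not merely rescale a realizable moment. Positivity is immediate, since $\cD^{[k+1]}=\mu_{\chi}\hat{\cD}>0$. For the flux bound I would use that $\kappa\geq\chi\geq0$ forces $\mu_{\kappa}=(1+\lambda\,\dt_{\bcC}\,\kappa)^{-1}\leq(1+\lambda\,\dt_{\bcC}\,\chi)^{-1}=\mu_{\chi}$, whence
\begin{equation}
	|\bcI^{[k+1]}|=\mu_{\kappa}\,|\hat{\bcI}|\leq\mu_{\chi}\,|\hat{\bcI}|\leq\mu_{\chi}\,\hat{\cD}=\cD^{[k+1]},
\end{equation}
so that $\gamma(\bcM^{[k+1]})=\cD^{[k+1]}-|\bcI^{[k+1]}|\geq0$ and $\bcM^{[k+1]}\in\cR$. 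The physical intuition confirming the $\kappa\geq\chi$ hypothesis is that collisions damp the flux at least as strongly as the density (isotropic scattering with $\sigma\geq0$ removes only flux), so the realizability inequality can only tighten under the update.
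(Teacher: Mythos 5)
Your proof is correct and takes essentially the same route as the paper's: where the paper writes the update as the convex combination $(1-\lambda)\,\Lambda\tilde{\vect{\mathcal{M}}}^{[k]}+\lambda\,\Lambda\tilde{\vect{\mathcal{U}}}^{\old}$ and checks realizability of each piece via earlier lemmas, you assemble the same two pieces into a single nonnegative distribution $g$ before applying $\Lambda$, but both arguments rest on precisely the two facts you isolate --- $(1-\lambda)-\lambda\,v^{i}\ell_{i}\ge 0$ under $\lambda\le(1+v)^{-1}$, and $\mu_{\chi}\ge\mu_{\kappa}\ge 0$ rendering the asymmetric scaling harmless. The only nit is that the distribution $\mathsf{f}^{\old}$ underlying $\bcU^{\old}\in\cR$ is furnished by the definition of realizability itself rather than by Lemma~\ref{lem:BackwardRealizability}, which concerns recovering primitive moments from conserved ones.
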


\begin{proof}
	We follow the approach in the proof of Lemma~\ref{lem:solver_realizability} and write Eq.~\eqref{eq:collision_Picard} as
	\begin{equation}
		\begin{alignedat}{2}
			\vect{\mathcal{M}}^{[k+1]} 
			&= (1-\lambda)\,\Lambda
			\left(\begin{array}{c}
				\mathcal{D}^{[k]} - \frac{\lambda}{1-\lambda} v^{i}\mathcal{I}_{i}^{[k]}\\
				\mathcal{I}_{j}^{[k]} - \frac{\lambda}{1-\lambda} v^{i}\mathsf{k}_{ij}^{[k]}\mathcal{D}^{[k]}
			\end{array}\right) + 
			\lambda\, \Lambda
			\left(
			\begin{array}{c}
				\mathcal{N}^{\old}+ \dt \chi\,\cD_{0}\\
				\mathcal{G}_{j}^{\old}
			\end{array}\right)\\ &=: (1-\lambda)\,\Lambda\,\tilde{\vect{\mathcal{M}}}^{[k]} + \lambda\,\Lambda\, \tilde{\vect{\mathcal{U}}}^{\old}.
		\end{alignedat}
	\end{equation}
 In the proof of Lemma~\ref{lem:solver_realizability}, we have shown that $\tilde{\vect{\mathcal{M}}}^{[k]}\in\cR$ when $\lambda\leq\frac{1}{1+v}$. Also, it is clear that $\tilde{\bcU}^{\old}\in\cR$ because ${\bcU}^{\old}\in\cR$, $\chi\geq0$ and $\cD_0\geq0$. Since $\kappa\geq\chi\geq0$, we have $\mu_{\chi}\geq\mu_{\kappa}\geq0$, implying that $\Lambda\,\bcM\in\cR$ for all $\bcM\in\cR$ based on the definition of $\cR$ in Eq.~\eqref{eq:realizableSet}.
	Therefore, $\Lambda\,\tilde{\vect{\mathcal{M}}}^{[k]}$ and $\Lambda\, \tilde{\vect{\mathcal{U}}}^{\old}$ are both realizable, which, together with the convexity of $\cR$, completes the proof.
\end{proof}

Similar to the moment conversion problem considered in Section~\ref{sec:momentConversionRealizability}, we next show in the following proposition that the operator $\bcQ$ in Eq.~\eqref{eq:collision_fixed_point1} is a contraction, which implies convergence of the Picard iteration method in Eq.~\eqref{eq:collision_Picard}.

\begin{prop}\label{prop:collision_contraction}
	Suppose $v<\sqrt{2}-1$, $\lambda\in(0,1]$, and $\kappa\geq\chi\geq0$. Then, $\bcQ$ is a contraction operator, i.e., there exists some $L<1$ such that 
	\begin{equation}
		\|\bcQ(\Mone) - \bcQ(\Mtwo) \| 	\leq L \| \Mone - \Mtwo \|\:, \quad \forall \Mone, \Mtwo \in \cR\:.
	\end{equation}
\end{prop}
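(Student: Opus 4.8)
The plan is to reduce this statement to the already-established contraction property of $\bcH_{\bcU}$ in Proposition~\ref{prop:contraction}, exploiting the fact that $\bcQ$ in Eq.~\eqref{eq:collision_fixed_point1} differs from $\bcH_{\bcU^{\old}}$ only by $\bcM$-independent source terms and an overall left-multiplication by the diagonal matrix $\Lambda=\textup{diag}(\mu_{\chi},\mu_{\kappa})$. As the surrounding text notes, setting $\chi=\kappa=0$ (so $\Lambda=I$) reduces $\bcQ$ exactly to $\bcH_{\bcU^{\old}}$, with $\bcU^{\old}$ playing the role of $\bcU$; this is the structural observation I would build the proof on.

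First I would write out the difference $\bcQ(\Mone)-\bcQ(\Mtwo)$ componentwise. The constant terms $\cN^{\old}$, $\dt_{\bcC}\chi\cD_{0}$, and $\cG_{j}^{\old}$ are independent of $\bcM$ and therefore cancel, and the nonlinear term $v^{i}\mathsf{k}_{ij}\cD$ enters $\bcQ$ identically to how it enters $\bcH_{\bcU^{\old}}$. What remains, after factoring the diagonal entries $\mu_{\chi}$ and $\mu_{\kappa}$, is exactly
\begin{equation}
	\bcQ(\Mone)-\bcQ(\Mtwo) = \Lambda\big(\,\bcH_{\bcU^{\old}}(\Mone)-\bcH_{\bcU^{\old}}(\Mtwo)\,\big),
\end{equation}
where $\Lambda$ acts as $\mu_{\chi}$ on the number-density component and as $\mu_{\kappa}$ on each of the three flux components. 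I would emphasize that, because the difference $\bcH_{\bcU}(\Mone)-\bcH_{\bcU}(\Mtwo)$ does not depend on the constant $\bcU$, this factorization is insensitive to the extra source contribution $\dt_{\bcC}\chi\cD_{0}$.

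Next I would bound $\|\Lambda\|\le1$. Since $\lambda\in(0,1]$, $\dt_{\bcC}\ge0$, and $\chi,\kappa\ge0$, the definitions $\mu_{\chi}=(1+\lambda\dt_{\bcC}\chi)^{-1}$ and $\mu_{\kappa}=(1+\lambda\dt_{\bcC}\kappa)^{-1}$ give $\mu_{\chi},\mu_{\kappa}\in(0,1]$. Hence for any vector $\mathbf{w}$ one has $\|\Lambda\mathbf{w}\|^{2}=\mu_{\chi}^{2}w_{1}^{2}+\mu_{\kappa}^{2}(w_{2}^{2}+w_{3}^{2}+w_{4}^{2})\le\|\mathbf{w}\|^{2}$, i.e.\ $\Lambda$ is norm-non-increasing in the Euclidean norm. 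Combining this with the previous display and invoking Proposition~\ref{prop:contraction} (whose contraction constant $L=1-\lambda(1-\sqrt{3+2\sqrt{2}}\,v)<1$ depends only on $v$ and $\lambda$, and requires precisely $v<\sqrt{2}-1$ and $\lambda\in(0,1]$) yields
\begin{equation}
	\|\bcQ(\Mone)-\bcQ(\Mtwo)\| \le \|\bcH_{\bcU^{\old}}(\Mone)-\bcH_{\bcU^{\old}}(\Mtwo)\| \le L\,\|\Mone-\Mtwo\|,
\end{equation}
which is the desired contraction, with the same constant $L$.

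The genuinely hard estimates have in fact already been carried out: the bounds on the Eddington-tensor derivatives $\partial_{\cD}(v^{i}\mathsf{k}_{ij}\cD)$ and $\nabla_{\bcI}(v^{i}\mathsf{k}_{ij}\cD)$ in Lemmas~\ref{lemma:dD_term} and \ref{lemma:dI_term} feed into Proposition~\ref{prop:contraction}, and $\bcQ$ inherits them verbatim because it contains the identical nonlinearity. The only points needing care are the clean cancellation of the $\bcM$-independent source terms and the verification that $\Lambda$ contracts the Euclidean (not merely componentwise) norm. I would also note that the hypothesis $\kappa\ge\chi\ge0$, although essential for the realizability argument of Lemma~\ref{lem:collision_solver_realizability}, is \emph{not} needed for the contraction bound here, since the argument uses only $\mu_{\chi},\mu_{\kappa}\le1$.
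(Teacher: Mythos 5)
Your proposal is correct and follows essentially the same route as the paper: factor the difference as $\Lambda\big(\bcH_{\bcU}(\Mone)-\bcH_{\bcU}(\Mtwo)\big)$, note $\|\Lambda\|\le 1$ since $\mu_{\chi},\mu_{\kappa}\in(0,1]$, and invoke Proposition~\ref{prop:contraction}. Your added observations (the explicit constant $L=1-\lambda(1-\sqrt{3+2\sqrt{2}}\,v)$ and the fact that only $\chi,\kappa\ge0$, not $\kappa\ge\chi$, is needed for the contraction bound) are accurate refinements of the paper's argument.
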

\begin{proof}
	From the definitions of $\bcH_{\bcU}$ and $\bcQ$ in Eqs.~\eqref{eq:richardson_fixed_pt} and \eqref{eq:collision_fixed_point1}, we observe that 
	\begin{equation}
		\begin{alignedat}{2}
		\|\bcQ(\Mone) - \bcQ(\Mtwo) \| &= \|\Lambda(\bcH_{\bcU}(\Mone) - \bcH_{\bcU}(\Mtwo)) \|\\
		&\leq \|\Lambda\|\|\bcH_{\bcU}(\Mone) - \bcH_{\bcU}(\Mtwo) \|
		\end{alignedat}
	\end{equation}
	for all $\Mone, \Mtwo \in \cR$. Since $\kappa\geq\chi\geq0$ and $\lambda>0$, we have $0\leq\mu_{\kappa}\leq \mu_{\chi} \leq 1$, i.e., $\|\Lambda\|\leq 1$. The claim is thus a direct consequence of Proposition~\ref{prop:contraction}. 
\end{proof}

\begin{theorem}\label{thm:collision_convergence}
	Suppose $v<\sqrt{2}-1$, $\lambda \leq \frac{1}{1+v}$, and $\kappa\geq\chi\geq0$ in Eq.~\eqref{eq:collision_fixed_point1}.
	Then, for any given conserved moment $\bcU^{\old}=(\cN^{\old},\bcG^{\old})^{\intercal}\in\cR$ and initial primitive moment $\bcM^{[0]}\in\cR$, the iterative solver in Eq.~\eqref{eq:collision_Picard} converges to a unique realizable primitive moment $\bcM$ as $k\to\infty$. Further, the conserved moment $\bcU$ associated to $\bcM$ via Eq.~\eqref{eq:ConservedToPrimitive} is also realizable and solves the implicit system in Eq.~\eqref{eq:ImplicitSystem}.
\end{theorem}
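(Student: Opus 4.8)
The plan is to establish the two assertions of the theorem in turn, following closely the template of Theorem~\ref{thm:convergence} for the moment-conversion solver. For the first assertion---convergence of the Picard iteration in Eq.~\eqref{eq:collision_Picard} to a unique realizable primitive moment---I would combine the two results already proved for the collision solver: Lemma~\ref{lem:collision_solver_realizability}, which guarantees that every iterate $\bcM^{[k]}$ stays in $\cR$ once $\bcM^{[0]}\in\cR$, under $\lambda\leq(1+v)^{-1}$ and $\kappa\geq\chi\geq0$; and Proposition~\ref{prop:collision_contraction}, which shows that $\bcQ$ is a contraction when $v<\sqrt{2}-1$. First I would note that the contraction property makes $\{\bcM^{[k]}\}$ a Cauchy sequence, hence convergent in $\mathbb{R}^{4}$ to some limit $\bcM$; continuity of $\bcQ$ then gives $\bcM=\bcQ(\bcM)$, and the contraction estimate forces this fixed point to be unique.

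The one point in the first assertion that requires care is that the limit $\bcM$ lies in $\cR$ and not merely in its closure, since the defining constraint $\cD>0$ is open. The iterates satisfy $\cD^{[k]}>0$ and $\cD^{[k]}-|\bcI^{[k]}|\geq0$, so the limit satisfies $\cD\geq0$ and $\cD-|\bcI|\geq0$; it remains only to exclude the degenerate case $\bcM=0$. I would do this from the first row of the fixed-point relation, which---after clearing $\mu_{\chi}$ (see below)---reads $\cN=\cN^{\old}+\dt_{\bcC}\,\chi\,(\cD_{0}-\cD)$ with $\cN=\cD+v^{i}\cI_{i}$: if $\bcM=0$ then $\cN=0$, contradicting $\cN=\cN^{\old}+\dt_{\bcC}\,\chi\,\cD_{0}>0$, which holds because $\bcU^{\old}\in\cR$ gives $\cN^{\old}>0$ while $\chi,\cD_{0}\geq0$. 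Hence $\cD>0$ and $\bcM\in\cR$.

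For the second assertion, I would first obtain realizability of the associated conserved moment $\bcU$ directly from Lemma~\ref{lem:ForwardRealizability}: since $\bcM\in\cR$ and $v<1$, the moment $\bcU$ built from $\bcM$ via Eq.~\eqref{eq:ConservedToPrimitive} is realizable. It then remains to verify that this $\bcU$ solves the implicit system Eq.~\eqref{eq:ImplicitSystem}, which is a purely algebraic check. Starting from $\bcM=\bcQ(\bcM)$ in Eq.~\eqref{eq:collision_fixed_point1}, I would multiply the density and flux rows by $\mu_{\chi}^{-1}=1+\lambda\,\dt_{\bcC}\,\chi$ and $\mu_{\kappa}^{-1}=1+\lambda\,\dt_{\bcC}\,\kappa$, respectively, subtract the $(1-\lambda)$ contributions, cancel the common factor $\lambda$, and substitute $\cN=\cD+v^{i}\cI_{i}$ and $\cG_{j}=\cI_{j}+v^{i}\mathsf{k}_{ij}\cD$. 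This reproduces exactly the two components $\cN=\cN^{\old}+\dt_{\bcC}\,\chi\,(\cD_{0}-\cD)$ and $\cG_{j}=\cG_{j}^{\old}-\dt_{\bcC}\,\kappa\,\cI_{j}$ of Eq.~\eqref{eq:ImplicitSystem}, using that the collision term $\bcC$ in Eq.~\eqref{eq:sources} is expressed through the primitive moments $(\cD,\cI_{j})$.

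The main obstacle is not the convergence, which is essentially a corollary of Lemma~\ref{lem:collision_solver_realizability} and Proposition~\ref{prop:collision_contraction}, but rather confirming that the fixed point inherits \emph{strict} realizability despite $\cR$ being non-closed, and then keeping the bookkeeping in the algebraic equivalence transparent---in particular, correctly tracking how the diagonal damping factors $\Lambda=\mathrm{diag}(\mu_{\chi},\mu_{\kappa})$ encode the implicit source terms. Both are routine once the reduction to the fixed-point identity $\bcM=\bcQ(\bcM)$ is in place.
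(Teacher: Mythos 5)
Your proposal is correct and follows essentially the same route as the paper's proof: convergence and uniqueness from Lemma~\ref{lem:collision_solver_realizability} together with Proposition~\ref{prop:collision_contraction}, realizability of $\bcU$ from Lemma~\ref{lem:ForwardRealizability}, and the algebraic identification of the fixed point with the implicit system Eq.~\eqref{eq:ImplicitSystem}. Your extra step ruling out the degenerate limit $\cD=0$ via the density row of the fixed-point identity addresses a point the paper's terser proof leaves implicit (since $\cD>0$ is an open condition, the limit of realizable iterates is a priori only in the closure of $\cR$), and it is a worthwhile addition.
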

\begin{proof}
	The convergence to a unique $\bcM\in\cR$ is given by the realizability-preserving property in Lemma~\ref{lem:collision_solver_realizability} and the contraction property in Proposition~\ref{prop:collision_contraction}. Realizability of $\bcU$ follows from Lemma~\ref{lem:ForwardRealizability}, and the formulation of the fixed-point problem in Eq.~\eqref{eq:collision_fixed_point1} guarantees that $\bcU$ solves the implicit system in Eq.~\eqref{eq:ImplicitSystem}.
\end{proof}

\subsection{Extension to Approximate Moment Closures}
\label{sec:approxClosure}

In the earlier sections, we have shown the realizability-preserving property of the numerical scheme \eqref{eq:imexForwardEuler}--\eqref{eq:imexBackwardEuler} under Assumption~\ref{assum:exact_closure}, in which the use of exact moment closures is assumed.
As discussed in Sections~\ref{sec:model} and \ref{sec:closure}, the approximate Minerbo closure is often used in practice to reduce the computational cost, where the approximate Eddington factor $\psi_{\mathsf{a}}$ and heat-flux factor $\zeta_{\mathsf{a}}$, defined respectively in Eqs.~\eqref{eq:psiApproximate} and \eqref{eq:zetaApproximate}, are considered.
In this section, we show that the realizability-preserving and convergence analyses for the conserved-to-primitive moment conversion (Eq.~\eqref{eq:ConservedToPrimitive}) and the implicit update (Eq.~\eqref{eq:imexBackwardEuler}) given in Sections~\ref{sec:momentConversionRealizability} and \ref{sec:collision} can be extended to the case when the approximate Minerbo closure is used.

When the approximate Eddington factor $\psi_{\mathsf{a}}$ in Eq.~\eqref{eq:psiApproximate} is used, we replace Lemma~\ref{lem:ForwardRealizability} with the following lemma.

\begin{lemma}\label{lemma:MtoU}
		Suppose $v<1$. Let $\vect{\mathcal{U}}$ be given as in Eq.~\eqref{eq:ConservedToPrimitive} with $\vect{\mathcal{M}}\in\mathcal{R}$,
	then $\vect{\mathcal{U}}\in\mathcal{R}$.  
\end{lemma}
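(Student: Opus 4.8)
The plan is to mirror the proof of Lemma~\ref{lem:ForwardRealizability}, the only obstruction being that without Assumption~\ref{assum:exact_closure} we can no longer assert that $(\mathcal{D},\mathcal{I}^{i},\mathcal{K}^{ij})$ are the angular moments of a single $f\in\mathfrak{R}$. The strategy is therefore to first recover such a representing distribution for the approximate closure and then reuse the boost argument of Lemma~\ref{lem:ForwardRealizability} verbatim. This works because $\mathcal{N}$ and $\mathcal{G}_{j}$ in Eq.~\eqref{eq:conservedMoments} involve the primitive moments only through $\mathcal{D}$, $\mathcal{I}^{i}$, and $\mathcal{K}^{ij}$, so it suffices to exhibit a nonnegative distribution reproducing these three moments.

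First I would establish the realizability bounds $h^{2}\le\psi_{\mathsf{a}}(h)\le1$ for $h=\mathcal{I}/\mathcal{D}\in[0,1]$. These follow from the factorizations
\begin{equation}
  \psi_{\mathsf{a}}(h)-h^{2}=\tfrac{1}{15}\,(1-h)^{2}\,(6h^{2}+10h+5)
  \quand
  1-\psi_{\mathsf{a}}(h)=\tfrac{1}{15}\,(1-h)\,(6h^{3}+4h^{2}+10h+10),
\end{equation}
whose right-hand sides are nonnegative on $[0,1]$ since the trailing quadratic and cubic factors are strictly positive there. Second, I would invoke the truncated Hausdorff moment problem on $[-1,1]$: given $\mathcal{D}>0$, the three slab moments $(\mathcal{D},\,\mathcal{I},\,\psi_{\mathsf{a}}(h)\,\mathcal{D})$ admit a nonnegative representing measure $\tilde{g}(\mu)$ precisely when $\psi_{\mathsf{a}}\mathcal{D}^{2}-\mathcal{I}^{2}\ge0$ and $\mathcal{D}-\psi_{\mathsf{a}}\mathcal{D}\ge0$, i.e.\ exactly the two bounds just proved. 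Setting $g(\omega):=\tilde{g}(\hat{\mathsf{n}}_{i}\ell^{i})$ produces an axisymmetric distribution on $\mathbb{S}^{2}$, and a short symmetry computation shows that its angular moments reproduce $\mathcal{D}$, $\mathcal{I}^{i}=\mathcal{I}\,\hat{\mathsf{n}}^{i}$, and the full tensor $\mathcal{K}^{ij}=\mathsf{k}^{ij}\mathcal{D}$ with $\mathsf{k}^{ij}$ as in Eq.~\eqref{eq:VariableEddingtonTensor}, because any axisymmetric distribution yields a second angular moment of precisely that form with $\psi$ equal to the normalized second $\mu$-moment of $\tilde{g}$, namely $\psi_{\mathsf{a}}(h)$.

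With $g\in\mathfrak{R}$ in hand, the argument of Lemma~\ref{lem:ForwardRealizability} applies unchanged: writing $(\mathcal{N},\mathcal{G}_{j})^{\intercal}=\tfrac{1}{4\pi}\int_{\mathbb{S}^{2}}(1+v^{i}\ell_{i})\,g\,(1,\ell_{j})^{\intercal}\,d\omega$ and using $v<1$ to guarantee $1+v^{i}\ell_{i}>0$, the boosted density $(1+v^{i}\ell_{i})\,g$ lies in $\mathfrak{R}$, whence $\vect{\mathcal{U}}\in\mathcal{R}$. The main obstacle is the moment-problem step: confirming that $h^{2}\le\psi_{\mathsf{a}}\le1$ are not merely necessary but also sufficient for a representing measure, and that the axisymmetric lift recovers the full Eddington tensor rather than only its eigenvalue $\psi_{\mathsf{a}}$. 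An alternative, purely algebraic route avoids the moment problem altogether: decomposing $\mathcal{G}_{j}$ into parts parallel and perpendicular to $\hat{\mathsf{n}}$, the claim $|\vect{\mathcal{G}}|\le\mathcal{N}$ reduces (after normalizing $\mathcal{D}=1$) to the scalar inequality $(1+ha)^{2}\ge(h+\psi_{\mathsf{a}}a)^{2}+\tfrac{1}{4}(1-\psi_{\mathsf{a}})^{2}b^{2}$ in the boost components $a=v^{i}\hat{\mathsf{n}}_{i}$ and $b=\sqrt{v^{2}-a^{2}}$ with $a^{2}+b^{2}=v^{2}<1$, which can be checked directly from the same polynomial bounds. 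I expect the moment-problem route to give the cleaner write-up, since it parallels the exact-closure proof and makes transparent why the two bounds on $\psi_{\mathsf{a}}$ are exactly what is needed.
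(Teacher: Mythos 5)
Your proposal is correct, and your primary route is genuinely different from the paper's. The paper proves this lemma by brute force: it normalizes by $\mathcal{D}$, computes $\mathcal{N}^{2}-\mathcal{G}^{2}$ explicitly using $\hat{n}^{j}v^{i}\mathsf{k}_{ij}=\psi_{\mathsf{a}}\,v^{i}\hat{n}_{i}$ and $v_{\ell}\mathsf{k}^{\ell j}v^{i}\mathsf{k}_{ij}=\tfrac{1}{4}\big((1-\psi_{\mathsf{a}})^{2}v^{2}+(1+\psi_{\mathsf{a}})(3\psi_{\mathsf{a}}-1)(v_{i}\hat{n}^{i})^{2}\big)$, and reduces the claim to a two-variable polynomial inequality in $s=v^{i}\hat{n}_{i}$ and $h$, which it settles via $2sh\ge-1-s^{2}h^{2}$ together with bounds (e) and (f) of Lemma~\ref{lemma:polynomial_bounds} --- this is precisely your ``alternative, purely algebraic route'' (your parallel/perpendicular decomposition of $\mathcal{G}_{j}$ yields exactly the paper's identities). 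Your preferred route instead restores the structure of Lemma~\ref{lem:ForwardRealizability} by constructing an axisymmetric representing distribution for $(\mathcal{D},\mathcal{I}^{i},\psi_{\mathsf{a}}\mathcal{D}\,\mathsf{k}^{ij}/\psi_{\mathsf{a}})$ via the truncated moment problem on $[-1,1]$ and then boosting. What this buys is twofold: it needs only the bounds $h^{2}\le\psi_{\mathsf{a}}\le1$, which your explicit factorizations prove rigorously, whereas the paper additionally needs $\psi_{\mathsf{a}}-h^{2}-\tfrac{1}{4}(1-\psi_{\mathsf{a}})^{2}\ge0$ and verifies all of these only graphically (Figure~\ref{fig:psi_bounds}); and it makes transparent that realizability of $\vect{\mathcal{U}}$ for \emph{any} closure satisfying $h^{2}\le\psi\le1$ follows from the boost argument. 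Two small points to nail down in a write-up: the sufficiency of $h^{2}\le\psi_{\mathsf{a}}\le1$ for a nonnegative representing measure on $[-1,1]$ should be made explicit (e.g., by exhibiting a two-atom measure), and the representing object may be atomic rather than a function, which is consistent with the paper's own use of weighted Dirac deltas on the boundary of $\mathcal{R}$ but sits slightly uneasily with the literal definition of $\mathfrak{R}$; the degenerate case $\mathcal{I}=0$ (where $\hat{n}$ is undefined) is handled by taking $g$ isotropic.
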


\begin{proof}
	Since $\vect{\mathcal{M}}=\big(\mathcal{D},\vect{\mathcal{I}}\big)^{\intercal}\in\mathcal{R}$, we know that $\mathcal{D}>0$ and $\mathcal{D}-\mathcal{I}\geq0$.
	To show $\vect{\mathcal{U}}=\big(\mathcal{N},\vect{\mathcal{G}}\big)^{\intercal}\in\mathcal{R}$, we first prove $\mathcal{N}>0$.
	By definition, 
	\begin{equation}
		\mathcal{N} = \mathcal{D} + v^{i} \mathcal{I}_{i} \geq \mathcal{D} - v {\mathcal{I}} > \mathcal{D} - \mathcal{I} \geq0\:,
	\end{equation}	
	where the Cauchy-Schwartz inequality and the assumption that $v<1$ are used.
	We next prove that $\mathcal{N}^2 - \mathcal{G}^2 \geq0$ with $\mathcal{G} := |\vect{\mathcal{G}}|$, which implies $\mathcal{N} - \mathcal{G} \geq0$.	
	Writing $\mathcal{N}^2$ and $\mathcal{G}^2$ in terms of the primitive moments leads to
	\begin{equation}
		\begin{alignedat}{2}
			\mathcal{N}^2 &= \mathcal{D}^2 + 2 (v^{i} \mathcal{I}_{i}) \mathcal{D} + (v^{i} \mathcal{I}_{i})^2\:,\\
			&= \mathcal{D}^2 \big( 1 + 2\, v^{i}\, \hat{n}_{i} h + (v^{i} \,\hat{n}_{i})^2 h^2 \big)\:,\\
			\mathcal{G}^2 
			&= \mathcal{I}^2 +  2 \mathcal{I}^{j}(v^{i}\,\mathsf{k}_{ij})\mathcal{D} + (v_{\ell}\,\mathsf{k}^{\ell j})(v^{i}\,\mathsf{k}_{ij}) \mathcal{D}^2\:.\\
			&= \mathcal{D}^2 \big( h^2 +  2\, \hat{n}^{j} \,v^{i}\,\mathsf{k}_{ij} h+ v_{\ell}\,\mathsf{k}^{\ell j}\,v^{i}\,\mathsf{k}_{ij}\, \big)\:.
		\end{alignedat}
		\label{eq:NormalizedNG}
	\end{equation}
	Using the definition of $\mathsf{k}_{ij}$ in Eq.~\eqref{eq:VariableEddingtonTensor} we obtain
	\begin{align}
		\hat{n}^{j} \,v^{i}\,\mathsf{k}_{ij} &= \hat{n}^{j} \,v^{i}\,\frac{1}{2} \big( (1-\psi_{\mathsf{a}}) \delta_{ij} + (3\psi_{\mathsf{a}}-1) \hat{n}_i \hat{n}_j\big) = \psi_{\mathsf{a}}\,v^{i}\,\hat{n}_i\:,\\
		v_{\ell}\,\mathsf{k}^{\ell j}\,v^{i}\,\mathsf{k}_{ij} 
		&= \frac{1}{4} \big((1-\psi_{\mathsf{a}})^2 v^2 + (1+\psi_{\mathsf{a}})(3\psi_{\mathsf{a}} - 1) (v_i\,\hat{n}^{i})^2 \big)\:.
	\end{align}
	Plugging these terms into Eq.~\eqref{eq:NormalizedNG}, denoting $s:= v^i \hat{n}_i$, and using the assumption that $v<1$ leads to a sufficient condition for $\mathcal{N}^2 - \mathcal{G}^2 \geq0$: $\forall s\in[-1,1]$ and $\forall h \in [0, 1]$,
	\begin{equation}
		\big(1 - h^2  - \frac{1}{4} (1-\psi_{\mathsf{a}})^2 \big) + 2 (1-\psi_{\mathsf{a}})s h + \big( h^2- \frac{1}{4}  (1+\psi_{\mathsf{a}})(3\psi_{\mathsf{a}} - 1)  \big) s^2 \geq0\:.
	\end{equation}
	From Lemma~\ref{lemma:polynomial_bounds}~(e), we have $1-\psi_{\mathsf{a}}\geq0$. Thus, by applying the inequality $2sh\geq -1-s^2h^2$ to the second term above, it suffices to show that
	\begin{equation}
		\big[\psi_{\mathsf{a}} - h^2  - \frac{1}{4} (1-\psi_{\mathsf{a}})^2 \big] +\big[ \psi_{\mathsf{a}} h^2- \frac{1}{4}  (1+\psi_{\mathsf{a}})(3\psi_{\mathsf{a}} - 1)  \big] s^2 \geq0\:.
	\end{equation}
	Since $\psi_{\mathsf{a}} - h^2  - \frac{1}{4} (1-\psi_{\mathsf{a}})^2\geq0$ (Lemma~\ref{lemma:polynomial_bounds}~(f)) and $s^2\in[0,1]$,
	\begin{equation}
		\big[\psi_{\mathsf{a}} - h^2  - \frac{1}{4} (1-\psi_{\mathsf{a}})^2  + \psi_{\mathsf{a}} h^2- \frac{1}{4}  (1+\psi_{\mathsf{a}})(3\psi_{\mathsf{a}} - 1)  \big] s^2 \geq0\:,
	\end{equation}
	which then becomes 
	\begin{equation}
		(\psi_{\mathsf{a}} - h^2)(1-\psi_{\mathsf{a}}) s^2\geq0\:.
	\end{equation}
	With $h^2\leq\psi_{\mathsf{a}}\leq1$ from Lemma~\ref{lemma:polynomial_bounds}~(e), the proof is complete.
\end{proof}

Lemma~\ref{lemma:MtoU} extends Lemma~\ref{lem:ForwardRealizability} by showing that the mapping from primitive moments to conserved moments via Eq.~\eqref{eq:ConservedToPrimitive} preserves realizability even when the approximate Minerbo closure is used. 
However, there is not an analogous extension of Lemma~\ref{lem:BackwardRealizability} to the case of approximate closures.
To show that the map from conserved to primitive moments is realizability-preserving with the approximate closure, we verify that the analysis from Lemma~\ref{lem:solver_realizability} to Corollary~\ref{coro:UtoM} is still valid when the approximate closure is considered.
Specifically, when $\psi$ is replaced by $\psi_{\mathsf{a}}$, the result of Lemma~\ref{lem:solver_realizability} can be obtained by invoking Lemma~\ref{lemma:MtoU} rather than Lemma~\ref{lem:ForwardRealizability} in the proof, the results of Lemmas~\ref{lemma:dD_term} and \ref{lemma:dI_term} hold since it is shown in \ref{sec:polynomial_bounds} that $\psi_{\mathsf{a}}$ also satisfies the required properties of $\psi$, and the remainder of the analysis stays identical to the exact closure case considered in Section~\ref{sec:momentConversionRealizability}.
Therefore, we have shown that, when $\psi$ is replaced by $\psi_{\mathsf{a}}$, the iterative solver in Eq.~\eqref{eq:Picard} converges to the unique, realizable primitive moment that satisfies Eq.~\eqref{eq:ConservedToPrimitive} for the given conserved moment, which implies that the conserved to primitive moment map from Eq.~\eqref{eq:ConservedToPrimitive} still preserves realizability when $v<\sqrt{2}-1$.
Further, we also verified that the convergence and realizability-preserving properties of the iterative solver in Eq.~\eqref{eq:collision_Picard} for the implicit system in Eq.~\eqref{eq:collision_fixed_point1} given in Theorem~\ref{thm:collision_convergence} also hold in the approximate closure case by applying the same arguments to the analysis in Section~\ref{sec:collision}.

\section{Conservation Property}
\label{sec:conservation}

\subsection{Simultaneous Number and Energy Conservation of the DG Scheme}
\label{sec:dgConservation}

It has been shown in Proposition~\ref{prop:EnergyandMomentumConservation} that the two-moment model in Eqs.~\eqref{eq:spectralNumberEquation}--\eqref{eq:spectralNumberFluxEquation} conserves the Eulerian-frame energy up to $\mathcal{O}(v)$. 
In this section, we discuss the simultaneous Eulerian-frame number and energy conservation properties of the two-moment model with the discontinuous Galerkin phase-space discretization presented in Section~\ref{sec:dgMethod}.  
We are primarily concerned with consistency with the Eulerian-frame energy equation for the phase-space advection problem.  
For this reason, we consider the collisionless case.  

Eulerian-frame number conservation follows from the first component of the semi-discrete DG scheme in Eq.~\eqref{eq:dgSemiDiscrete} (treating the general case with $d_{\bx}=3$), 
\begin{align}
	&\big(\,\p_{t}\cN_{h},\varphi_{h}\,\big)_{\bK}
	=-\sum_{i=1}^{3}\int_{\tilde{\vect{K}}^{i}}
  	\Big[\,
    		\widehat{\cF_{\cN}^{i}}\big(\vect{\mathcal{U}}_{h},\vect{v}_{h}\big)\,\varphi_{h}|_{x_{\hi}^{i}}
    		-\widehat{\cF_{\cN}^{i}}\big(\vect{\mathcal{U}}_{h},\vect{v}_{h}\big)\,\varphi_{h}|_{x_{\lo}^{i}}
  	\,\Big]\,\tau\,d\tilde{\vect{z}}^{i} 
	+\sum_{i=1}^{3}\big(\,\cF_{\cN}^{i}(\vect{\mathcal{U}}_{h},\vect{v}_{h}),\pd{\varphi_{h}}{i}\,\big)_{\vect{K}} \nonumber \\
	&\hspace{36pt}
	-\int_{\vect{K}_{\vect{x}}}
  	\Big[\,
    		\varepsilon^{3}\,\widehat{\cF_{\cN}^{\varepsilon}}\big(\vect{\mathcal{U}}_{h},\vect{v}_{h}\big)\,\varphi_{h}|_{\varepsilon_{\hi}}
    		-\varepsilon^{3}\,\widehat{\cF_{\cN}^{\varepsilon}}\big(\vect{\mathcal{U}}_{h},\vect{v}_{h}\big)\,\varphi_{h}|_{\varepsilon_{\lo}}
  	\,\Big]\,d\vect{x} 
  	+\big(\,\varepsilon\,\cF_{\cN}^{\varepsilon}(\vect{\mathcal{U}}_{h},\vect{v}_{h}),\pd{\varphi_{h}}{\varepsilon}\,\big)_{\vect{K}},
	\label{eq:dgSemiDiscreteNumber}
\end{align}
where $\cF_{\cN}^{i}$ and $\cF_{\cN}^{\varepsilon}$, respectively, are the first component of the position and energy space fluxes, defined in Eq.~\eqref{eq:phaseSpaceFluxes}, and $\widehat{\cF_{\cN}^{i}}$ and $\widehat{\cF_{\cN}^{\varepsilon}}$ are the corresponding numerical fluxes, defined in Eqs.~\eqref{eq:numericalFluxPosition} and \eqref{eq:numericalFluxEnergy}.  
Setting $\varphi_h=1$ as the test function in Eq.~\eqref{eq:dgSemiDiscreteNumber} results in the equation for the element-integrated Eulerian-frame number density.  
Note that the volume terms (the second and fourth terms) on the right-hand side of Eq.~\eqref{eq:dgSemiDiscreteNumber} vanish when $\varphi_{h}=1$.  
Then, because the numerical fluxes $\widehat{\cF_{\cN}^{i}}$ and $\widehat{\cF_{\cN}^{\varepsilon}}$ are continuous on element interfaces, summation over all phase-space elements $\bK\in D$ results in cancellation of all interior fluxes, and the resulting rate of change in the total Eulerian-frame particle number is only due to the flow of particles though the boundary of the domain $D$.  
That is, the DG scheme for the Eulerian-frame particle number is conservative by construction.  

As for Eulerian-frame energy conservation, similar to Eq.~\eqref{eq:energyEquationEulerian} in Proposition~\ref{prop:EnergyandMomentumConservation}, the element-integrated Eulerian-frame energy equation can be derived by adding the Eulerian-frame number equation in Eq.~\eqref{eq:dgSemiDiscreteNumber}, with $\varphi_h = \varepsilon$, and the sum of the three number flux equations in Eq.~\eqref{eq:dgSemiDiscrete} with test functions $\varphi_h = \varepsilon v_h^{j}$.  
To accommodate this choice of test functions, the approximation space $\mathbb{V}_h^k$ must include the piecewise linear function in the energy dimension, i.e., $k\geq1$.  
Let $\cE_h:=\varepsilon\, ( \mathcal{N}_h +  v^{j}_h \,{\cG}_{h,j} )$ denote the discretized Eulerian-frame energy density.  
Then, the resulting equation for the element-integrated Eulerian-frame energy takes the form
\begin{align}
	&\big(\,\p_{t}\cE_{h}\,\big)_{\bK} := \big(\,\p_{t}\cN_{h},\varepsilon\,\big)_{\bK} + \big(\,\p_{t}\cG_{j,h},\varepsilon v_{h}^{j}\,\big)_{\bK} \nonumber \\
	&=-\sum_{i=1}^{3}\int_{\tilde{\vect{K}}^{i}}
  	\Big[\,
    		\widehat{\cF_{\cE}^{i}}\big(\vect{\mathcal{U}}_{h},\vect{v}_{h}\big)|_{x_{\hi}^{i}}
    		-\widehat{\cF_{\cE}^{i}}\big(\vect{\mathcal{U}}_{h},\vect{v}_{h}\big)|_{x_{\lo}^{i}}
  	\,\Big]\,\tau\,d\tilde{\vect{z}}^{i}
	-\int_{\vect{K}_{\vect{x}}}
  	\Big[\,
    		\varepsilon^{3}\,\widehat{\cF_{\cE}^{\varepsilon}}\big(\vect{\mathcal{U}}_{h},\vect{v}_{h}\big)|_{\varepsilon_{\hi}}
    		-\varepsilon^{3}\,\widehat{\cF_{\cE}^{\varepsilon}}\big(\vect{\mathcal{U}}_{h},\vect{v}_{h}\big)|_{\varepsilon_{\lo}}
  	\,\Big]\,d\vect{x} \nonumber \\
  	&\hspace{36pt}
  	+\big(\,\varepsilon\,\cF_{\cN}^{\varepsilon}(\vect{\mathcal{U}}_{h},\vect{v}_{h})\,\big)_{\vect{K}}
	+\sum_{i=1}^{3}\big(\,\cF_{\cG_{j}}^{i}(\vect{\mathcal{U}}_{h},\vect{v}_{h}),\varepsilon\pd{v_{h}^{j}}{i}\,\big)_{\vect{K}}
	+\cO(v_{h}^{2}),
	\label{eq:dgSemiDiscreteEnergy}
\end{align}
where we have defined the position space numerical fluxes,
\begin{equation}
	\widehat{\cF_{\cE}^{i}}\big(\vect{\mathcal{U}}_{h},\vect{v}_{h}\big)|_{x_{\hi/\lo}^{i}}
	=\varepsilon\,\big[\,\widehat{\cF_{\cN}^{i}}\big(\vect{\mathcal{U}}_{h},\vect{v}_{h}\big) + v_{h}^{j}\,\widehat{\cF_{\cG_{j}}^{i}}\big(\vect{\mathcal{U}}_{h},\vect{v}_{h}\big)\,\big]|_{x_{\hi/\lo}^{i}},
	\label{eq:numericalFluxEulerianEnergy}
\end{equation}
the energy space numerical fluxes, 
\begin{equation}
	\widehat{\cF_{\cE}^{\varepsilon}}\big(\vect{\mathcal{U}}_{h},\vect{v}_{h}\big)|_{\varepsilon_{\hi/\lo}}
	=\varepsilon\,\widehat{\cF_{\cN}^{\varepsilon}}\big(\vect{\mathcal{U}}_{h},\vect{v}_{h}\big)|_{\varepsilon_{\hi/\lo}}, 
\end{equation}
and $v_h^{2}:=|\vect{v}_h|^{2}$.  
Here, $\cF_{\cG_{j}}^{i}$ and $\widehat{\cF_{\cG_{j}}^{i}}$ represent the fluxes and the corresponding numerical fluxes for the number flux equation, defined in Eqs.~\eqref{eq:phaseSpaceFluxes} and \eqref{eq:numericalFluxPosition}, respectively.  
The third and fourth term on the right-hand side of Eq.~\eqref{eq:dgSemiDiscreteEnergy}, which emanate from the energy derivative term of the number equation and the spatial derivative of the number flux equations, respectively, can be written as
\begin{equation}
	\big(\,\varepsilon\,\cF_{\cN}^{\varepsilon}(\vect{\mathcal{U}}_{h},\vect{v}_{h})\,\big)_{\vect{K}}
	+\sum_{i=1}^{3}\big(\,\cF_{\cG_{j}}^{i}(\vect{\mathcal{U}}_{h},\vect{v}_{h}),\varepsilon\pd{v_{h}^{j}}{i}\,\big)_{\vect{K}}
	=\int_{\bK}\varepsilon\cK^{i}_{\hspace{2pt}j}\,\big[\,\p_{i}v_{h}^{j}-(\p_{i}v^{j})_{h}\,\big]\,\tau d\bz + \cO(v_{h}^{2}),
	\label{eq:cancellations}
\end{equation}
where $(\p_{i}v^{j})_h$ is the discretized velocity derivative that satisfies Eq.~\eqref{eq:velocityDerivatives}.  

Provided (i) the numerical flux in Eq.~\eqref{eq:numericalFluxEulerianEnergy} is uniquely defined on element interfaces and (ii) the first term on the right-hand side of Eq.~\eqref{eq:cancellations} vanishes, Eq.~\eqref{eq:dgSemiDiscreteEnergy} is, to $\cO(v_{h}^{2})$, a phase-space conservation law for the element-integrated Eulerian-frame energy, in accordance with Proposition~\ref{prop:EnergyandMomentumConservation}.  
These requirements --- which generally require the discrete velocity $\vect{v}_h$ to be continuous across the elements --- are not satisfied exactly by the DG scheme proposed here. 
Since the components of $\vect{v}_h$ are represented by piecewise polynomials, which are discontinuous on element boundaries, the violation in Eulerian-frame energy conservation may be larger than what would be expected from $\cO(v_h^2)$ contributions alone.  
We will investigate the simultaneous conservation of Eulerian-frame number and energy numerically in Section~\ref{sec:numericalResults}.  

\subsection{Energy Limiter}
\label{sec:EnergyLimiter}

In addition to the potential violations of Eulerian-frame energy conservation, beyond the $\cO(v^{2})$ violations inherent to the model, from discontinuous $\vect{v}_h$ as discussed in Section~\ref{sec:dgConservation}, the realizability-enforcing limiter introduced in Section~\ref{sec:realizabilityLimiter} can also affect the conservation of energy. 
In fact, for small velocities (including $v=0$, when the total energy should be preserved to machine precision) the realizability-enforcing limiter is the dominant source of non-conservation of the Eulerian-frame energy.  
To improve Eulerian-frame energy conservation, we propose an ``energy limiter'' that corrects the change of energy induced by the realizability-enforcing limiter via a redistribution of particles between energy elements through a sweeping procedure.  
This approach maintains Eulerian-frame number and energy conservation across all energy elements for a given spatial element, at the expense of local number conservation in each energy element.  
The energy limiter does not correct for Eulerian-frame energy conservation violations inherent to the $\cO(v)$ two-moment model or due to discontinuous $\vect{v}_h$ (see Section~\ref{sec:dgConservation}).

To facilitate the discussion, we denote the element integrated Eulerian-frame number and energy by $\textsf{N}$ and $\textsf{E}$, respectively.  
Given evolved moments $\bcU_h=(\cN_h,\bcG_h)$ on element $\vect{K}$, the element-integrated number and energy can be computed by
\begin{align}
	\textsf{N}_{\vect{K}} 
	&= \int_{\vect{K}} {\cN}_{h} \varepsilon^2 d\vect{z}
	:=|\bK|\sum_{\vect{k}=1}^{|S_{\otimes}^{\vect{K}}|} w_{\vect{k}}^{(2)} {\cN}_{\vect{k}},
	\quad\text{and} \label{eq:ConservedNumber} \\
	\textsf{E}_{\vect{K}} 
	&= \int_{\vect{K}} ({\cN}_{h} + v^j \cG_{h,j})\varepsilon^3 d\vect{z}
	:=|\bK|\sum_{\vect{k}=1}^{|S_{\otimes}^{\vect{K}}|}  w_{\vect{k}}^{(3)} ({\cN}_{\vect{k}} + v^j \cG_{\vect{k},j}), \label{eq:ConservedEnergy}
\end{align}
where $S_{\otimes}^{\vect{K}}$ denote the set of local DG nodes as defined in Eq.~\eqref{eq:localNodes}, $\cN_{\vect{k}}$ and $\bcG_{\vect{k}}$ denotes the nodal values at points in $S_{\otimes}^{\vect{K}}$, and the weights $w_{\vect{k}}^{(2)}$ and $w_{\vect{k}}^{(3)}$ are given by the tensor product of the $(k+1)$-point one-dimensional LG quadrature rules introduced in Section~\ref{sec:dgMethod}, weighted by $\varepsilon^2$ and $\varepsilon^3$, respectively.  
Let $\widetilde{\vect{\mathcal{U}}}_{h}:=\texttt{RealizabilityLimiter}(\widehat{\vect{\mathcal{U}}}_{h})$ be the output of the realizability-enforcing limiter given a potentially non-realizable solution $\widehat{\vect{\mathcal{U}}}_{h}$, and let $(\widetilde{\textsf{N}}_{\vect{K}},\widetilde{\textsf{E}}_{\vect{K}})$ and $(\widehat{\textsf{N}}_{\vect{K}},\widehat{\textsf{E}}_{\vect{K}})$ denote the element-integrated number and energy, defined in Eqs.~\eqref{eq:ConservedNumber} and \eqref{eq:ConservedEnergy}, for $\widetilde{\vect{\mathcal{U}}}_{h}$ and $\widehat{\vect{\mathcal{U}}}_{h}$, respectively.  
As discussed in Section~\ref{sec:realizabilityLimiter}, the realizability-enforcing limiter gives a solution $\widetilde{\vect{\mathcal{U}}}_{h}$ that is realizable on $\widetilde{S}_{\otimes}^{\vect{K}}$ while maintaining number conservation in each element; i.e., $\widetilde{\textsf{N}}_{\vect{K}} = \widehat{\textsf{N}}_{\vect{K}}$.
However, in part because of the additional factor of $\varepsilon$ in the definition of the element-integrated energy in Eq.~\eqref{eq:ConservedEnergy}, the limiter results in energy changes (i.e., $\widetilde{\textsf{E}}_{\vect{K}} \neq \widehat{\textsf{E}}_{\vect{K}}$), which can lead to $\sum_{{\vect{K}}\in\cT}\widetilde{\textsf{E}}_{\vect{K}} \neq \sum_{{\vect{K}}\in\cT} \widehat{\textsf{E}}_{\vect{K}}$; i.e., a change in the global Eulerian-frame energy.  

The proposed energy limiter corrects Eulerian-frame energy conservation violations by redistributing particles via a sweeping procedure in the energy dimension to produce $\vect{\mathcal{U}}_{h}:=\texttt{EnergyLimiter}(\widetilde{\vect{\mathcal{U}}}_{h})$, as detailed in Algorithm~\ref{algo:energyLimiter}.  
Here we let $\cT_{\vect{x}}$ denote the collection of all spatial elements $\vect{K}_{\vect{x}}$ and let $\cT_{\varepsilon}=\{{K}_{\varepsilon,n}\}_{n=1}^{N_\varepsilon}$ denote the collection of all energy elements ${K}_{\varepsilon}$ that cover the energy domain $D_\varepsilon$.  
For a given spatial element $\vect{K}_{\vect{x}}\in\cT_{\vect{x}}$, the proposed energy limiter sweeps through elements $\vect{K}=K_{\varepsilon}\times \vect{K}_{\vect{x}}$ for all $K_{\varepsilon}\in\cT_{\varepsilon}$ in a user-prescribed order to redistribute particles in a way that the number and energy are both conserved for the given spatial element $\vect{K}_{\vect{x}}\in\cT_{\vect{x}}$, i.e., 
\begin{equation}
	\sum_{K_{\varepsilon}\in\cT_{\varepsilon}}\textsf{N}_{K_{\varepsilon}\times \vect{K}_{\vect{x}}} 
	= \sum_{K_{\varepsilon}\in\cT_{\varepsilon}} \widehat{\textsf{N}}_{K_{\varepsilon}\times \vect{K}_{\vect{x}}} 
	\quand 
	\sum_{K_{\varepsilon}\in\cT_{\varepsilon}}\widetilde{\textsf{E}}_{K_{\varepsilon}\times \vect{K}_{\vect{x}}} 
	= \sum_{K_{\varepsilon}\in\cT_{\varepsilon}} \widehat{\textsf{E}}_{K_{\varepsilon}\times \vect{K}_{\vect{x}}},
\end{equation}
which then leads to global number and energy conservation,
$\sum_{{\vect{K}}\in\cT}\textsf{N}_{\vect{K}} = \sum_{{\vect{K}}\in\cT} \widehat{\textsf{N}}_{\vect{K}}$ and
$\sum_{{\vect{K}}\in\cT}\textsf{E}_{\vect{K}} = \sum_{{\vect{K}}\in\cT} \widehat{\textsf{E}}_{\vect{K}}$, by summing over all spatial elements.

The sweeping procedure and the particle redistribution strategy used in the energy limiter are listed in Algorithm~\ref{algo:energyLimiter}.  
Specifically, when the Eulerian-frame energy conservation violation $\delta\textsf{E}$ is nonzero, the energy limiter redistributes particles between elements in a pairwise manner to correct $\delta\textsf{E}$.  
The pairwise redistribution strategy is detailed in Algorithm~\ref{algo:energyCorrection}, where a pair of scaling coefficients $(\theta_{1},\theta_{2})$ is computed by solving a linear system that requires the sum of the scaled energies to correct $\delta \textsf{E}$ while preserving the sum of particles (see Line~3).  
When at least one of the coefficients ($\theta_{1}$ and $\theta_{2}$) is less than a prescribed threshold $\theta_{\min}>-1$, a damping factor $\gamma$ is applied so that $\theta_{1}>\theta_{\min}$ and $\theta_{2}>\theta_{\min}$, which preserves moment realizability.
(The moment realizability property is invariant to scaling by a positive scalar.)  % (which results in nonpositive particle number after scaling)
When the linear system does not have a solution, or when $\min(\theta_{1},\theta_{2})<\theta_{\min}$, the output of Algorithm~\ref{algo:energyCorrection} does not fully correct $\delta \textsf{E}$, and the remainder is propagated to the next pair of elements in the sweeping procedure.  
As shown in Algorithm~\ref{algo:energyLimiter}, beginning on Line~18, a backward sweep will be launched after the forward sweep when $|\delta {\textsf{E}}|>\delta$; i.e., when the energy conservation violation is not fully corrected.
Here $\delta>0$ is a user-specified tolerance on the energy conservation violation. 
In the implementation, we choose to omit the condition in Line~19 and perform the full backward sweep in order to improve the computational efficiency on GPUs.
Moreover, to avoid numerical issues, we restrict the damping factor $\gamma$ in Algorithm~\ref{algo:energyCorrection} such that the resulting corrected moment ${\vect{\mathcal{U}}}_{h}$ remains a strictly positive number density; i.e., $\cN_h>0$.
In the numerical results reported in Section~\ref{sec:numericalResults}, we choose $\theta_{\min}=-0.5$ and permute the energy elements in an ascending order based on the associated energy values.  
We observe that the forward and backward sweeping procedure is sufficient for correcting the energy conservation violations introduced by the realizability-enforcing limiter --- i.e., $\delta\textsf{E}\to0$ during the sweeping procedure --- and that the additional scaling introduced in this energy correction process has no noticeable adverse impact on the solution to the two-moment system.  \\

\begin{algorithm}[H]
	\normalsize
	\medskip
	\caption{${\vect{\mathcal{U}}}_{h}=\texttt{EnergyLimiter}(\widetilde{\vect{\mathcal{U}}}_{h})$} 
	\label{algo:energyLimiter}
	
	{\bf Inputs:} Discretized moments before and after the realizability-enforcing limiter, i.e., $\widehat{\vect{\mathcal{U}}}_{h}$ and $\widetilde{\vect{\mathcal{U}}}_h$; a permutation of the energy elements, denoted as $K_{\varepsilon,n}$, $n=1,\dots, N_\varepsilon$. \\
	{\bf Parameter:} $\delta$ (Energy conservation violation tolerance)

	${\vect{\mathcal{U}}}_{h}\leftarrow\widetilde{\vect{\mathcal{U}}}_{h}$	\tcp*{Initialize the output moment}
	\For{each spatial element $\vect{K}_{\vect{x}}\in\cT_{\vect{x}}$}{
		\For{$n=1,\dots,N_\varepsilon$}{		
			Compute $({\textsf{N}}_{K_{\varepsilon,n}\times \vect{K}_{\vect{x}}}, {\textsf{E}}_{K_{\varepsilon,n}\times \vect{K}_{\vect{x}}} )$ from ${\vect{\mathcal{U}}}_{h}$ using Eqs.~\eqref{eq:ConservedNumber} and \eqref{eq:ConservedEnergy}\;
			Compute $\widehat{\textsf{E}}_{K_{\varepsilon,n}\times \vect{K}_{\vect{x}}}$ from $\widehat{\vect{\mathcal{U}}}_{h}$ using Eq.~\eqref{eq:ConservedEnergy}\;
			
			${\textsf{N}}_n \leftarrow {\textsf{N}}_{K_{\varepsilon,n}\times \vect{K}_{\vect{x}}} $,\, 
			${\textsf{E}}_n \leftarrow {\textsf{E}}_{K_{\varepsilon,n}\times \vect{K}_{\vect{x}}} $,\,
			$\widehat{\textsf{E}}_n \leftarrow \widehat{\textsf{E}}_{K_{\varepsilon,n}\times \vect{K}_{\vect{x}}} $\;
			
		}
		\If{$\sum_{n=1}^{N_\varepsilon}{\textsf{E}}_n\neq\sum_{n=1}^{N_\varepsilon} \widehat{\textsf{E}}_n$}{
			$\delta {\textsf{E}} \leftarrow {\textsf{E}}_1 - \widehat{\textsf{E}}_1$\;
			\tcc{Forward sweep}
			\For{$n=1,\dots,N_\varepsilon-1$}{
				$\delta {\textsf{E}} \leftarrow \delta {\textsf{E}} + {\textsf{E}}_{n+1} - \widehat{\textsf{E}}_{n+1}$\;
				\If{$|\delta {\textsf{E}}|>\delta$}{
					$(\theta_{n},\theta_{n+1})=\texttt{ComputeCorrection}(\textsf{N}_{n}, \textsf{E}_{n}, \textsf{N}_{n+1}, \textsf{E}_{n+1}, \delta\textsf{E})$\;
					\tcc{Update corrected moments, numbers, and energies}
					${\vect{\mathcal{U}}}_{h}\leftarrow (1+\theta_{n}) \, {\vect{\mathcal{U}}}_{h}$ on $K_{\varepsilon,n}\times \vect{K}_{\vect{x}}$, and
					${\vect{\mathcal{U}}}_{h}\leftarrow (1+\theta_{n+1}) \, {\vect{\mathcal{U}}}_{h}$ on $K_{\varepsilon,n+1}\times \vect{K}_{\vect{x}}$\;					
					$({\textsf{N}}_{n}, {\textsf{E}}_{n} )\leftarrow (1+\theta_{n}) \, ({\textsf{N}}_{n}, {\textsf{E}}_{n} )$, and
					$({\textsf{N}}_{n+1}, {\textsf{E}}_{n+1} )\leftarrow (1+\theta_{n+1}) \, ({\textsf{N}}_{n+1}, {\textsf{E}}_{n+1} )$\;
					$\delta {\textsf{E}} \leftarrow {\textsf{E}}_{n} + {\textsf{E}}_{n+1} + \delta {\textsf{E}}$\;
					
				}
			}
		\tcc{Backward sweep}
		\For{$n=N_\varepsilon-1,\dots,2$}{
		\If{$|\delta {\textsf{E}}|>\delta$}{
			$(\theta_{n},\theta_{n-1})=\texttt{ComputeCorrection}(\textsf{N}_{n}, \textsf{E}_{n}, \textsf{N}_{n-1}, \textsf{E}_{n-1}, \delta\textsf{E})$\;
					\tcc{Update corrected moments, numbers, and energies}
			${\vect{\mathcal{U}}}_{h}\leftarrow (1+\theta_{n}) \, {\vect{\mathcal{U}}}_{h}$ on $K_{\varepsilon,n}\times \vect{K}_{\vect{x}}$, and
			${\vect{\mathcal{U}}}_{h}\leftarrow (1+\theta_{n-1}) \, {\vect{\mathcal{U}}}_{h}$ on $K_{\varepsilon,n-1}\times \vect{K}_{\vect{x}}$\;					
			$({\textsf{N}}_{n}, {\textsf{E}}_{n} )\leftarrow (1+\theta_{n}) \, ({\textsf{N}}_{n}, {\textsf{E}}_{n} )$, and
			$({\textsf{N}}_{n-1}, {\textsf{E}}_{n-1} )\leftarrow (1+\theta_{n-1}) \, ({\textsf{N}}_{n-1}, {\textsf{E}}_{n-1} )$\;
			$\delta {\textsf{E}} \leftarrow {\textsf{E}}_{n} + {\textsf{E}}_{n-1} + \delta {\textsf{E}}$\;
			
		}
		\Else{
		\texttt{break}\;}
	}
		
	}
	}

\end{algorithm}	

\begin{algorithm}[H]
	\normalsize
	\medskip
	\caption{$(\theta_{1},\theta_{2})=\texttt{ComputeCorrection}(\textsf{N}_{1}, \textsf{E}_{1}, \textsf{N}_{2}, \textsf{E}_{2}, \delta\textsf{E})$} 
	\label{algo:energyCorrection}
	
	{\bf Inputs:} $\textsf{N}_{1}, \textsf{E}_{1}, \textsf{N}_{2}, \textsf{E}_{2}, \delta\textsf{E}$ \\
	{\bf Parameter:} $\theta_{\min}>-1$ \\

				Compute $(\theta_{1},\theta_{2})$ by solving
					$\left\{\begin{alignedat}{2}
						\theta_{1} {\textsf{N}}_{1} + \theta_{2} {\textsf{N}}_{2} & = 0\:\\
						\theta_{1} {\textsf{E}}_{1} + \theta_{2} {\textsf{E}}_{2} & = -\delta \textsf{E}\:
					\end{alignedat}\right.$\;
					\If{no solution}{
						$(\theta_{1},\theta_{2})\leftarrow(0,0)$\;
					}
					\If{$\min(\theta_{1},\theta_{2})<\theta_{\min}$}{
						$\gamma \leftarrow \frac{\theta_{\min}}{\min(\theta_{1},\theta_{2})}$\;
						$(\theta_{1},\theta_{2}) \leftarrow  (\gamma\theta_{1},\gamma\theta_{2})$  \tcp*{Limit for realizability}
					}	
	
\end{algorithm}	

\section{Implementation, Programming Models, and Portability}
\label{sec:gpu}

The DG-IMEX method proposed here has been implemented in the toolkit for high-order neutrino radiation hydrodynamics (\thornado).  
Here we briefly discuss some considerations in this process.  

Neutrino transport is only one component (along with, e.g., hydrodynamics, nuclear reaction kinetics, and gravity) of a broader, multiphysics simulation framework needed to model multiscale astrophysical systems, e.g., core-collapse supernova explosions.
However, the number of evolved degrees of freedom is relatively high compared to other components.  
For example, simulations incorporating a two-moment model (four moments), evolving three independent neutrino flavors (six species), with 16 linear elements (k=1) to discretize the energy dimension evolve $4\times6\times16\times(k+1)=768$ degrees of freedom per spatial point.  
As such, spectral neutrino radiation transport represents the bulk of the computational load in such scientific applications.
With this in mind, node-level performance and portability for heterogeneous computing systems are prioritized in \thornado\ development as a collection of modular physics components that can be incorporated into distributed mutliphysics simulation codes (e.g., \flashx\ \cite{dubey_etal_2022}), which are equipped with native infrastructure for distributed parallelism.  
In particular, we target frameworks that utilize adaptive mesh refinement, where simulation data is mapped to smaller grid blocks of relatively even size.  

\thornado\ uses a combination of compiler directives and optimized linear algebra libraries to accelerate all components of the DG-IMEX method.
All of the solver components --- e.g., the computation of numerical fluxes, evaluation of phase-space divergences, and limiters --- are reduced to small, discrete kernels that can be executed either as collapsible, tightly-nested loops over phase-space dimensions or basic linear algebra operations.  
In addition to optimizing many key metrics for GPU performance (e.g., occupancy, register pressure, and memory coalescence), this strategy naturally exposes vector-level parallelism which also benefits performance on modern, multicore CPUs.
This is especially important when invoking iterative solvers, such as those described in Sections~\ref{sec:moment_conversion} and \ref{sec:collision_solver}, across many independent phase-space points.  
Since iteration counts can vary, assigning an even number of phase-space points to each thread can lead to severe load imbalance among GPU threads.  
We address this problem by tracking the convergence of each point independently, removing them from calculations in each kernel until all points have converged.

Our portability strategy focuses on maintaining a single code-base that can efficiently execute on different hardware architectures and software environments.  
\thornado\ contains three distinct implementations of compiler directives that are managed with C preprocessor macros: traditional OpenMP (CPU multi-core), OpenMP offload (GPU), and OpenACC (GPU).  
We refer to code listings in \cite{laiu_etal_2020} for specific examples.  

Interfaces to optimized linear algebra routines are also written in a generic way for portability across different libraries.  
Currently, \thornado\ has linear algebra interfaces supporting several LAPACK and BLAS \cite{laug} routines with GPU implementations from NVIDIA, AMD, Intel, and \texttt{MAGMA} \cite{MAGMA}.  
This approach hides the complexities of managing different interfaces in a single \thornado\ module that can be easily used throughout the code.  
In addition to the individual routine interfaces, each linear algebra package requires specific attention to interoperability with the compiler directives to ensure correct synchronization when using multiple execution streams per device.  
This is managed during initialization with compiler directives and C preprocessor macros.  

We provide timing results and a breakdown of the computational cost associated with key solver components for one of the numerical examples in Section~\ref{sec:numericalResults}.  
\section{Numerical Tests}
\label{sec:numericalResults}

In this section, we demonstrate the performance of our implementation of the DG-IMEX method to solve the $\cO(v)$ two-moment model.  
We consider problems with and without collisions.  
For problems with collisions, we use the IMEX scheme proposed in \cite{chu_etal_2019} (see also \cite{laiu_etal_2021} for details).  
For problems without collisions, we use the optimal second- and third-order accurate strong stability-preserving Runge--Kutta methods from \cite{shuOsher_1988}, referred to as SSPRK2 and SSPRK3, respectively.  
For the tests in Sections~\ref{sec:sineWaveStreaming} and \ref{sec:gaussianDiffusion}, unless stated otherwise, we set the time step to $\dt=0.3\times|K_{\bx}^{1}|/(k+1)$, where $k$ is the polynomial degree.  
For the tests in Sections~\ref{sec:streamingDopplerShift}-\ref{sec:transparentVortex}, we enforce the time step restriction given in Theorem~\ref{thm:realizability}.

Collisions tend to drive the distribution towards isotropy in the angular dimensions of momentum space (i.e., $|\bcI|\to0$), which places the comoving-frame moments $\bcM$ safely inside the realizable domain.  
Therefore, to emphasize the improved robustness resulting from our analysis, our main focus is on phase-space advection problems without collisions, where the moments evolve close to the boundary of the realizable domain.  

\subsection{Moment Conversion Solver}
\label{sec:momentConversion}

The solution of the conserved-to-primitive moment conversion problem in Eq.~\eqref{eq:ConservedToPrimitive} and the implicit system in Eq.~\eqref{eq:ImplicitSystem} contribute the majority of the computational cost of the realizability-preserving scheme.  
In this section, we test the iterative solver for solving the moment conversion problem  Eq.~\eqref{eq:ConservedToPrimitive} with various solver configurations, and the results reported provide guidance for selecting iterative solver configurations for this critical part of the algorithm.

As discussed in Section~\ref{sec:moment_conversion}, we formulate the moment conversion problem in Eq.~\eqref{eq:ConservedToPrimitive} as a fixed-point problem on the primitive moments $\bcM=(\cD,\bcI)^{\intercal}$ of the form stated in Eq.~\eqref{eq:richardson_fixed_pt}.  
In Lemma~\ref{lem:solver_realizability}, we have shown that the moment realizability is preserved in the iterative procedure when Eq.~\eqref{eq:richardson_fixed_pt} is solved with the Picard iteration method in Eq.~\eqref{eq:Picard}
and $\lambda\leq(1+v)^{-1}$ in Eq.~\eqref{eq:richardson_fixed_pt}.
The convergence of Picard iteration is guaranteed in Theorem~\ref{thm:convergence} with the additional assumption that $v < \sqrt{2}-1$.  

We first compare the iteration counts required for convergence of the Picard iteration solver and an Anderson acceleration (AA) solver, using two different choices for $\lambda$.  
The AA technique was first proposed in \cite{Anderson-1965} to accelerate the convergence of fixed-point iterations by accounting for the past iteration history to compute new iterates.  
Here we follow the formulation and implementation in \cite{Walker-Ni-2011,laiu_etal_2021} and apply the AA solver to the moment conversion problem Eq.~\eqref{eq:richardson_fixed_pt}.  
In Figure~\ref{fig:SolverIterCnt}, the iteration counts are reported for the two iterative solvers applied to solve Eq.~\eqref{eq:richardson_fixed_pt} at varying fluid speed $v:=|\vect{v}|$ and flux factor $h=|\bcI|/\cD$, with $\lambda$ chosen to be the largest allowable value, i.e., $\lambda=(1+v)^{-1}$ and a more conservative value $\lambda=0.5$.  
The AA solver uses the memory parameter $m=1$ (defined in \cite{laiu_etal_2021}), so that only information from the previous and current iterate is used.  
The stopping criteria for both solvers are given as
\begin{equation}\label{eq:stoppingCriteria}
	\|\bcM^{[k]} - \bcM^{[k-1]}\|\leq \texttt{tol}\, \|\bcU\|,
\end{equation}
where we consider the norms in the $L^2$ sense and the tolerance $\texttt{tol}=10^{-8}$.
For each choice of $(v,h)$ in Figure~\ref{fig:SolverIterCnt}, the fixed-point problem is solved for 100 randomly generated $\bcU\in\cR$ (varying the direction of $\vect{v}$ and $\bcI/\cD$ randomly), and the averaged iteration counts over these 100 problems are recorded. In each test, the initial guess takes the form $\bcM^{[0]} = \bcU$.
The results in Figure~\ref{fig:SolverIterCnt} illustrate that, for both the Picard iteration and the AA solvers, choosing the parameter $\lambda$ to be the largest allowable value $(1+v)^{-1}$ indeed reduces the iteration counts from the more conservative choice $\lambda=0.5$, particularly in the low velocity regime.  
In addition, it can be found from Figure~\ref{fig:SolverIterCnt} that AA solver consistently outperforms the Picard iteration method, and the advantage of using AA grows as the velocity increases.
We note that the realizability-preserving and convergence properties analyzed in Section~\ref{sec:momentConversionRealizability} are only applicable to the Picard iteration solver, and\ not to the AA solver.%
\footnote{The realizability-preserving and convergence properties of the AA solver require additional conditions such as boundedness of extrapolation coefficients, which we do not enforce in the implementation.}
However, in the numerical results reported in Figure~\ref{fig:SolverIterCnt}, we have not observed convergence failure by any of the solvers, even when the velocity is larger than the upper bound ($v=\sqrt{2}-1$; plotted as a red vertical line in each panel in Figure~\ref{fig:SolverIterCnt}) required in the convergence analysis in Theorem~\ref{thm:convergence}.

In Figure~\ref{fig:SolverIC}, we show results from experimenting with two choices for the initial guess, $\bcM^{[0]} = (\cN, \vect{0})^{\intercal}$ and $\bcM^{[0]} = \bcU = (\cN, \bcG)^{\intercal}$, for the AA solver with $\lambda = (1+v)^{-1}$, which is the best performing configuration shown in Figure~\ref{fig:SolverIterCnt}.  
As shown in Figure~\ref{fig:SolverIC}, initializing with the conserved moment $\bcU$ generally outperforms the isotropic initial condition $(\cN,\vect{0})$, except for the case when the flux factor $h=0$, for which the isotropic initial condition is exactly the primitive moment.  
Since we expect moments with $h=0$ to be rarely encountered in numerical simulations, adopting the AA solver with $\lambda = (1+v)^{-1}$ and initial guess $\bcM^{[0]} = \bcU$ appears to be the best choice.  This conjecture is confirmed in the performance comparison reported in Section~\ref{sec:performance}, where we observe a considerable improvement in terms of computational time by using the initial guess $\bcM^{[0]} = \bcU$.

\begin{figure}[h]
	\centering
	\begin{tabular}{cc}
		\includegraphics[width=0.44\textwidth]{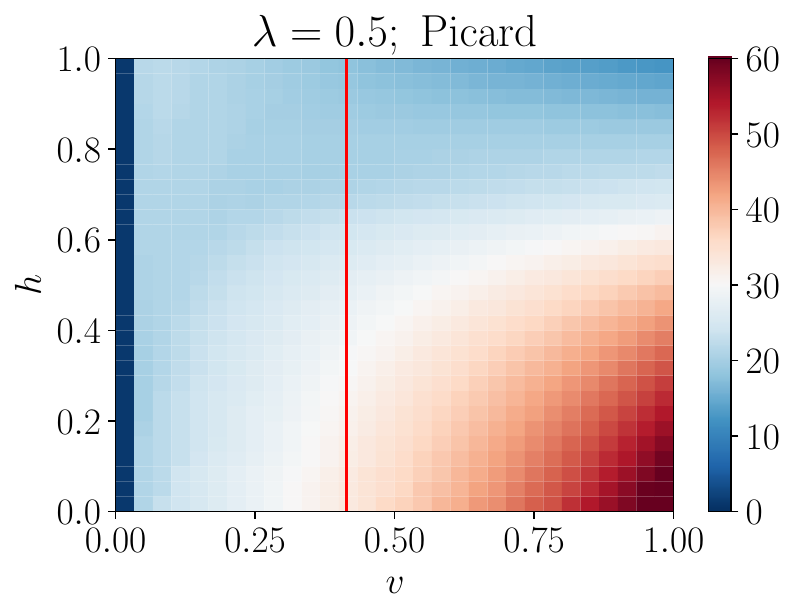} &
		\includegraphics[width=0.44\textwidth]{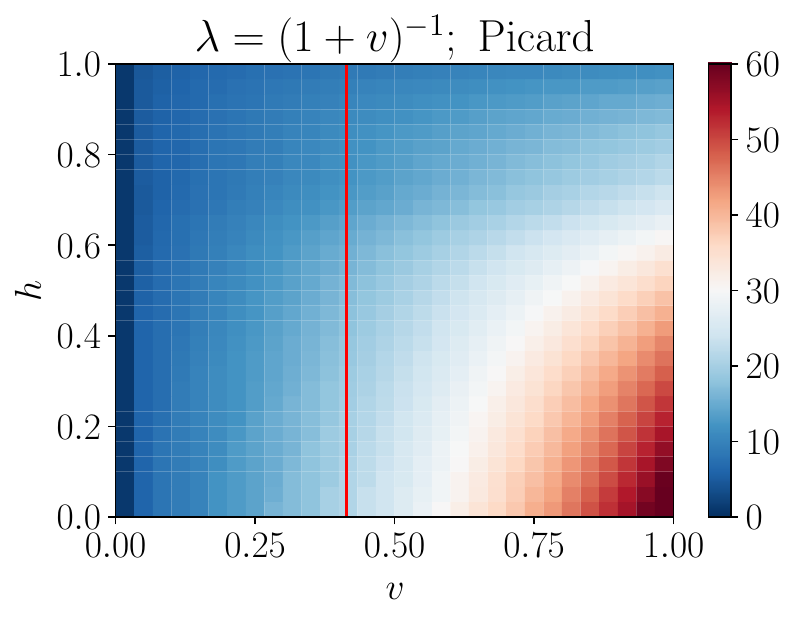} \\
		\includegraphics[width=0.44\textwidth]{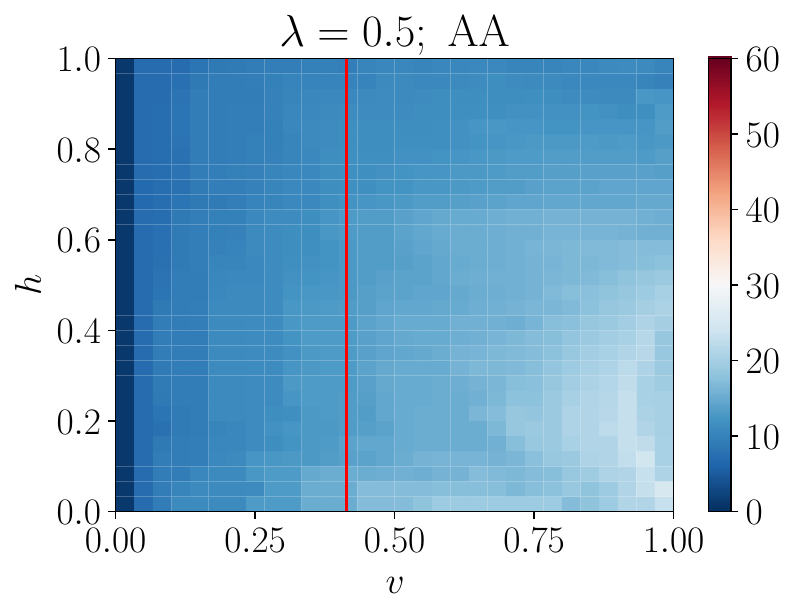} &
		\includegraphics[width=0.44\textwidth]{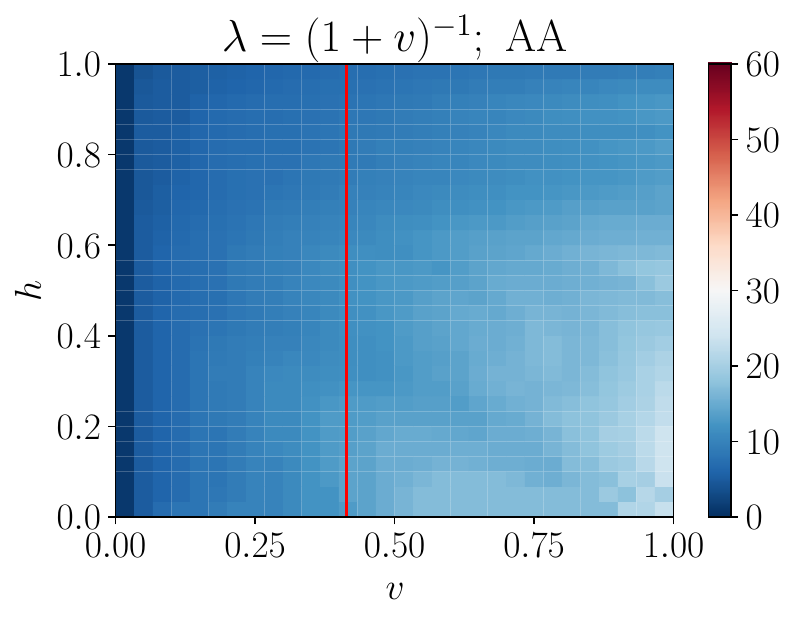}
	\end{tabular}
	\caption{Iteration counts for the Picard iteration (top panels) and AA (bottom panels) solvers with modified Richardson iteration parameter $\lambda = (1+v)^{-1}$ and $\lambda=0.5$ (right and left columns, respectively), applied to the moment conversion problems with various velocity $v$ and flux factor $h$. The reported iteration counts are the average over 100 randomly generated moment conversion problems at each $(v,h)$, where the randomness is applied to the directions of $\vect{v}$ and $\bcI/\cD$. In each panel, the red vertical line indicates the upper velocity bound for guaranteed convergence, $v=\sqrt{2}-1$.}
	\label{fig:SolverIterCnt}
\end{figure}

\begin{figure}[h]
	\centering
	\begin{tabular}{cc}
		\includegraphics[width=0.44\textwidth]{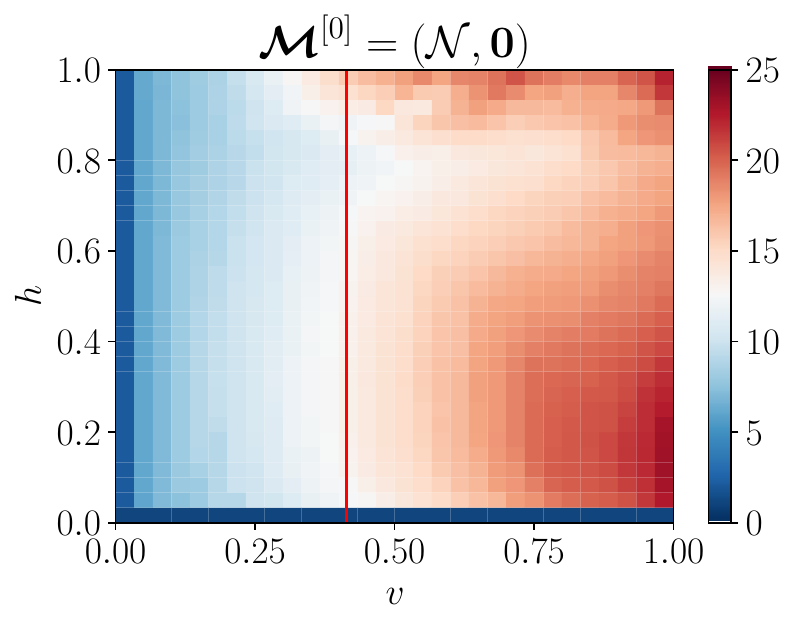} &
		\includegraphics[width=0.44\textwidth]{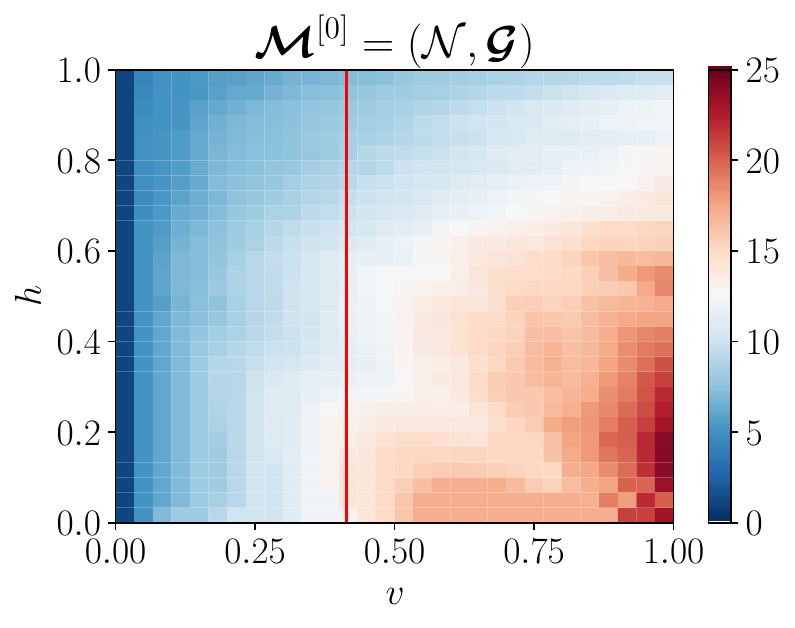} 
	\end{tabular}
	\caption{Iteration counts for the AA solver with modified Richardson iteration parameters $\lambda = (1+v)^{-1}$ applied to the moment conversion problems with various velocity $v$ and flux factor $h$ for two different initial guesses. The reported iteration counts are the average over 100 randomly generated moment conversion problems at each $(v,h)$.}
	\label{fig:SolverIC}
\end{figure}

\subsection{Sine Wave Streaming}
\label{sec:sineWaveStreaming}

The first test we consider that evolves the two-moment system models free-streaming radiation through a background with a spatially (and temporally) constant velocity field in the $x^{1}$-direction.  
That is, we set $\chi=\sigma=0$, while $\vect{v}=[v,0,0]^{\intercal}$, with $v=0.1$.  
The purpose of this test is to verify (i) the correct radiation propagation speed in this idealized setting, and (ii) the expected order of accuracy of the implemented method.  
We consider a periodic one-dimensional unit spatial domain $D_{x^{1}}=[0,1]$.  
The initial number density and flux are set to $\cD(x^{1},0)=\cD_{0}(x^{1})=0.5+0.49\times\sin(2\pi x^{1})$ and $\cI^{1}(x^{1},0)=\cD_{0}(x^{1})$, respectively.  
Then, the flux factor is $h=1$, and the analytic solution is given by $\cD(x^{1},t)=\cI^{1}(x^{1},t)=\cD_{0}(x^{1}-t)$; i.e., the initial profile propagates with unit speed, \emph{independent} of $v$.  
(As noted by \cite{lowrie_etal_2001}, dropping the velocity-dependent terms in the time derivatives of Eqs.~\eqref{eq:spectralNumberEquation} and \eqref{eq:spectralNumberFluxEquation}, as is done in \cite{just_etal_2015,skinner_etal_2019}, the propagation speed becomes $1+v$ for this test, which is unphysical.)  
Since the background velocity is constant, there is no coupling in the energy dimension.  
Therefore, this test is performed with a single energy.  
We run this test until $t=1$, when the initial profile has crossed the grid once before returning to its initial position.  

\begin{figure}[H]
  \centering
  \includegraphics[width=0.5\textwidth]{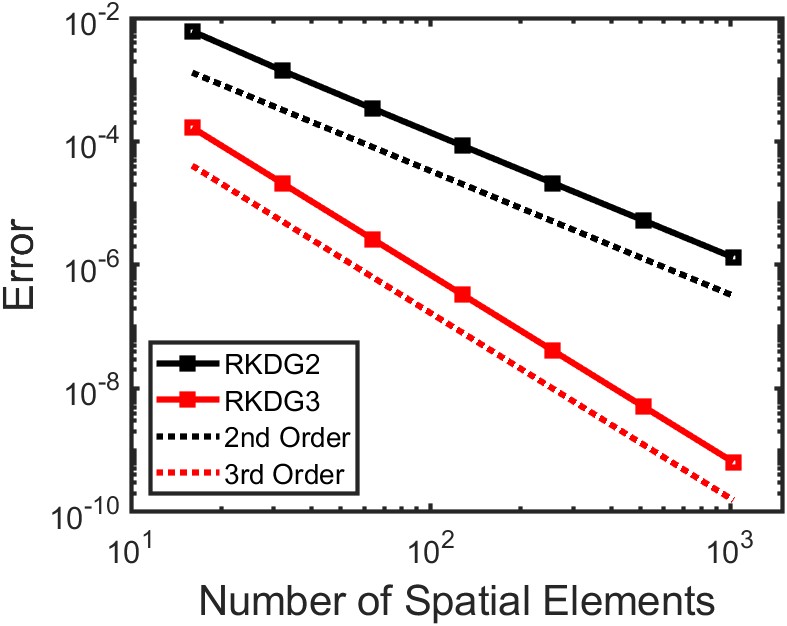}
  \caption{Error in the $L^{2}$ norm versus number of spatial elements $N$ for the Sine Wave Streaming test.  Results obtained with second- and third-order schemes (using $k=1$ polynomials and SSPRK2 time stepping and $k=2$ polynomials and SSPRK3 time stepping, respectively), along with dotted reference lines proportional to $1/N^{k+1}$, are plotted using black and red lines, respectively.}
  \label{fig:SinewaveStreamingOrder}
\end{figure}

In Figure~\ref{fig:SinewaveStreamingOrder}, we plot the error in the $L^2$ norm versus the number of spatial elements for the second-order scheme, using linear polynomials ($k=1$) and SSPRK2 time stepping, and the third-order scheme, using quadratic polynomials ($k=2$) and SSPRK3 time stepping.  
The results confirm the expected convergence rate to the exact solution.  

\subsection{Gaussian Diffusion}
\label{sec:gaussianDiffusion}

The next test we consider, adopted from \cite{just_etal_2015}, models the diffusion of particles through a medium moving with constant velocity in the $x^{1}$-direction.  
We consider a purely scattering medium and set $\sigma=3.2\times10^{3}$ and $\chi=0$, and let $\vect{v}=[v,0,0]^{\intercal}$, with $v=0.1$.  
We let the spatial domain be periodic, $D_{x^{1}}=[0,3]$, with initial conditions $\cD_{0}(x^{1})=\exp[-(x^{1}-x_{0}^{1})^{2}/(4t_{0}\kappa_{\mathsf{D}})]$ and $\cI_{0}^{1}(x^{1})=-\kappa_{\mathsf{D}}\p_{x^{1}}\cD_{0}$, where $\kappa_{\mathsf{D}}=(3\sigma)^{-1}$, and we set $x_{0}^{1}=1$ and $t_{0}=5$.  
Then, the evolution of the number density is approximately governed by the advection-diffusion equation
\begin{equation}
	\p_{t}\cD+\p_{x^{1}}\big(\,\cD v - \kappa_{\mathsf{D}}\p_{x^{1}}\cD\,\big) = 0,
	\label{eq:advectionDiffusion}
\end{equation}
whose analytical solution is given by \cite{just_etal_2015}
\begin{equation}
	\cD(x^{1},t) = \sqrt{\f{t_{0}}{t_{0}+t}}\exp\Big\{\,-\f{((x^{1}-v t)-x_{0}^{1})^2}{4(t_{0}+t)\kappa_{\mathsf{D}}}\,\Big\}.  
	\label{eq:advectionDiffusionAnalytic}
\end{equation}
Since there is no coupling in the energy dimension ($v$ is constant), we perform this test with a single energy.  
We use quadratic elements ($k=2$) and the IMEX time stepping scheme from \cite{chu_etal_2019}, integrating the collision term implicitly.  
For this test, the time step is set to $\dt=C_{\rm CFL}\times |K_{\bx}^{1}|$, where $C_{\rm CFL}$ is specified below.  
The purpose of this test is to investigate the performance of the DG-IMEX scheme in a regime where both advection and diffusion contribute to the evolution of the number density.  
For $t>0$, the Gaussian profile is advected with the flow, while the amplitude decreases and the width increases due to diffusion.  

The left panel in Figure~\ref{fig:GaussianDiffusion} shows the number density versus $x^{1}-v t$ for various times as the Gaussian profile propagates once across the periodic domain and returns to its initial position at $t=30$.  
At this time the amplitude is reduced by a factor $\sqrt{5/35}\approx0.378$.  
For this simulation, the spatial domain is discretized using $96$ elements, so that the radio of the element width to the mean-free path is $|K_{\bx}^{1}|\sigma=10^{2}$.  
The numerical solution (open circles) agrees well with the expression in Eq.~\eqref{eq:advectionDiffusionAnalytic} (solid lines).  

The right panel in Figure~\ref{fig:GaussianDiffusion} shows the error in the $L^2$ norm at $t=5$ versus the number of spatial elements $N$.  
Since the expression given by Eq.~\eqref{eq:advectionDiffusionAnalytic} is not an exact solution to the $\cO(v)$ two-moment model in the limit of high scattering opacity, we compare the numerical results to a high-resolution reference solution, computed with $8192$ elements.  
(We have confirmed that for fixed $N$ and $t$, and varying $v$, the difference between the numerical solution and the expression in Eq.~\eqref{eq:advectionDiffusionAnalytic} increases as $\cO(v^{2})$.)  
The solid black curve with squares shows the error obtained with the standard CFL number $C_{\rm CFL}:=C_{\rm CFL}^{0}=0.3/(k+1)$.  
For smaller $N$, the error falls off as $N^{-3}$ (see dashed black reference line), consistent with a third-order convergence rate, while for larger $N$ the convergence rate transitions to first-order (see dotted black reference line).  
The reason for this change in convergence rate is because the IMEX scheme, taken from \cite{chu_etal_2019}, is formally only first-order accurate.  
Spatial discretization errors dominate for small $N$, but since these errors decrease with the third-order rate, temporal errors become dominant for large $N$.  
To verify this, we also plot convergence results obtained after reducing the time step by a factor of $25$, $C_{\rm CFL}:=C_{\rm CFL}^{0}/25$; see solid red curve with squares.  
For this case, the error decreases with the third-order rate for all $N$.  
We also show the error obtained with a second-order IMEX scheme (IMEXRKCB2 from \cite{cavaglieriBewley2015}), using the standard CFL number.  
Due to better temporal accuracy, the error decreases with the third-order rate, but the scheme does not satisfy the convex-invariant conditions delineated in \cite{chu_etal_2019}, and can therefore not be guaranteed to maintain moment realizability by our analysis.  

\begin{figure}[H]
  \centering
  \begin{tabular}{cc}
    \includegraphics[width=0.44\textwidth]{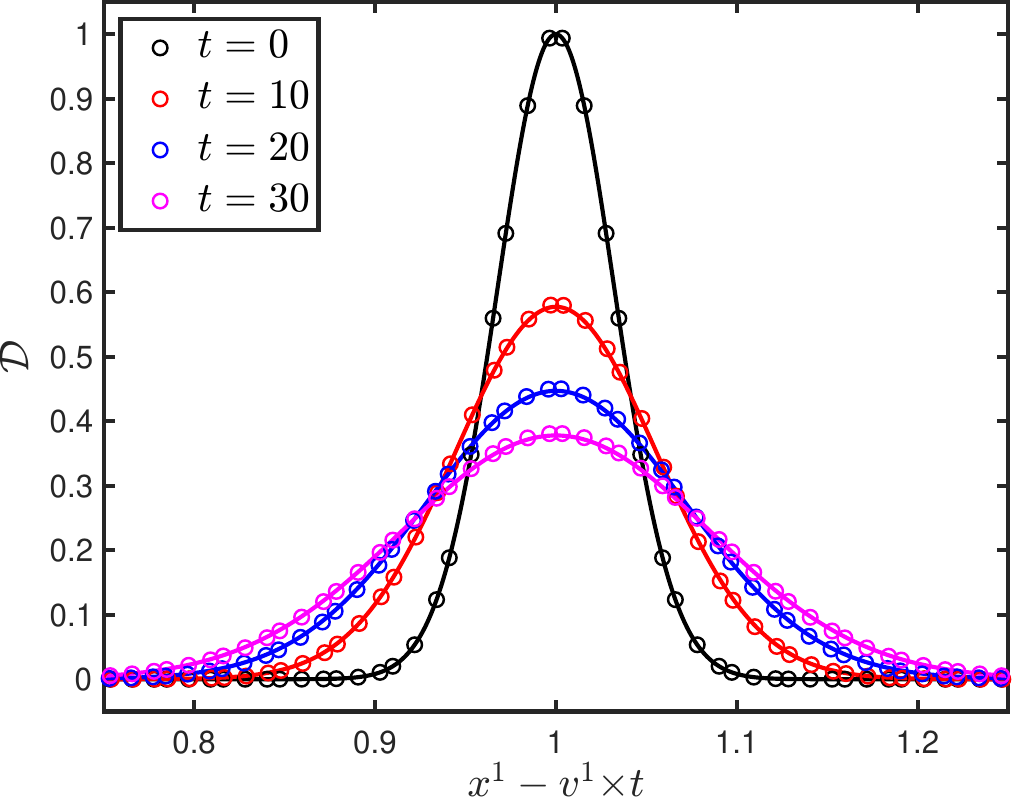} &
    \includegraphics[width=0.44\textwidth]{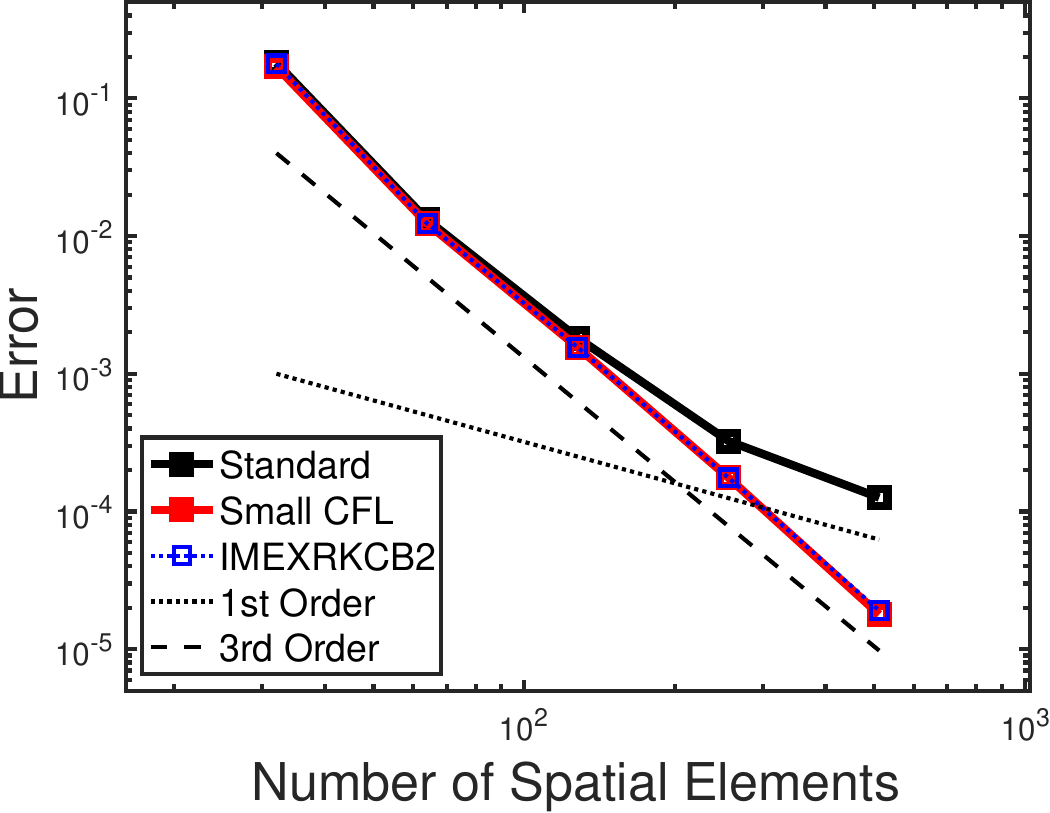}
  \end{tabular}
   \caption{Results for the Gaussian diffusion test.  The left panel shows the numerical solution (open circles) versus the shifted coordinate $x^{1}-v t$ for various times, compared with the analytic solution (solid lines) to the advection-diffusion equation in Eq.~\eqref{eq:advectionDiffusion}.  The right panel shows the error in the $L^{2}$ norm versus the number of elements $N$.  The error is computed with respect to a high-resolution reference run with $N=8192$.  Convergence results are shown for standard and reduced CFL number, using the first-order SSP-IMEX scheme from \cite{chu_etal_2019} (solid black and red, respectively; see text for details), and a second-order IMEX scheme from \cite{cavaglieriBewley2015} with standard CFL number (dotted blue).  The dotted and dashed black reference lines are proportional to $N^{-1}$ and $N^{-3}$, respectively.  }
  \label{fig:GaussianDiffusion}
\end{figure}

\subsection{Streaming Doppler Shift}
\label{sec:streamingDopplerShift}

This test, adopted from \cite{vaytet_etal_2011} (see also \cite{just_etal_2015,skinner_etal_2019}), models the propagation of free-streaming radiation along the $x^{1}$-direction through a background with a spatially varying velocity field.  
Because the two-moment model adopts momentum-space coordinates associated with a comoving observer, the radiation energy spectra will be Doppler shifted.  
We consider a one-dimensional spatial domain $D_{x^{1}}=[0,10]$.  
Again, we set $\chi=\sigma=0$, while the velocity field is set to $\vect{v}=(v,0,0)^{\intercal}$, where
\begin{equation}
	v(x^{1}) = 
	\left\{
	\begin{array}{ll}
		0, & x^{1}\in[0,2) \\
		v_{\max} \times \sin^{2}[2\pi(x^{1}-2)/6], & x^{1}\in[2,3.5) \\
		v_{\max}, & x^{1}\in[3.5,6.5) \\
		v_{\max} \times \sin^{2}[2\pi(x^{1}-2)/6], & x^{1}\in[6.5,8) \\
		0, & x^{1}\in[8,10]
	\end{array}
	\right.,
\end{equation}
and where we will vary $v_{\max}$.  
We set the energy domain to $D_{\varepsilon}=[0,50]$.  
In this test, we discretize the spatial and energy domains into $128$ and $32$ elements, respectively, and use quadratic elements ($k=2$) and SSPRK3 time stepping.  
In the computational domain, the moments are initially set to $\cD=1\times10^{-40}$ and $\cI^{1}=0$ for all $(x^{1},\varepsilon)\in D_{x^{1}}\times D_{\varepsilon}$.  
At the inner spatial boundary, we impose an incoming, forward-peaked radiation field with a Fermi-Dirac spectrum; i.e., we set $\cD(\varepsilon,x^{1}=0)=1/[\exp(\varepsilon/3-3)+1]$ and $\cI^{1}(\varepsilon,x^{1}=0)=0.999\times\cD(\varepsilon,x^{1}=0)$, so that the flux factor $h\approx1$.  
(We impose outflow boundary conditions at $x^{1}=10$.)  
Then, for $t>0$, a radiation front propagates through the computational domain, and a steady state is established for $t\gtrsim10$, where the spectrum is Doppler-shifted according to the velocity field.  
From special relativistic considerations, similar to \cite{just_etal_2015}, the analytical spectral number density in the steady state can be written as
\begin{equation}
	\cD_{\rm A}=\f{s^{2}}{\exp(s\varepsilon/3-3)+1}, 
	\label{eq:dopplerSpectraSR}
\end{equation}
where $s=\sqrt{(1+v)/(1-v)}$.  
The purpose of this test is to (i) compare steady state numerical solutions with the prediction from special relativity given by Eq.~\eqref{eq:dopplerSpectraSR}, and (ii) investigate the simultaneous Eulerian-frame number and energy conservation properties of the method as the initial conditions are evolved to steady state.  
To reach an approximate steady state, we run all models until $t=20$.  

\subsubsection{General Solution Characteristics}

\begin{figure}[H]
  \centering
  \begin{tabular}{cc}
    \includegraphics[width=0.44\textwidth]{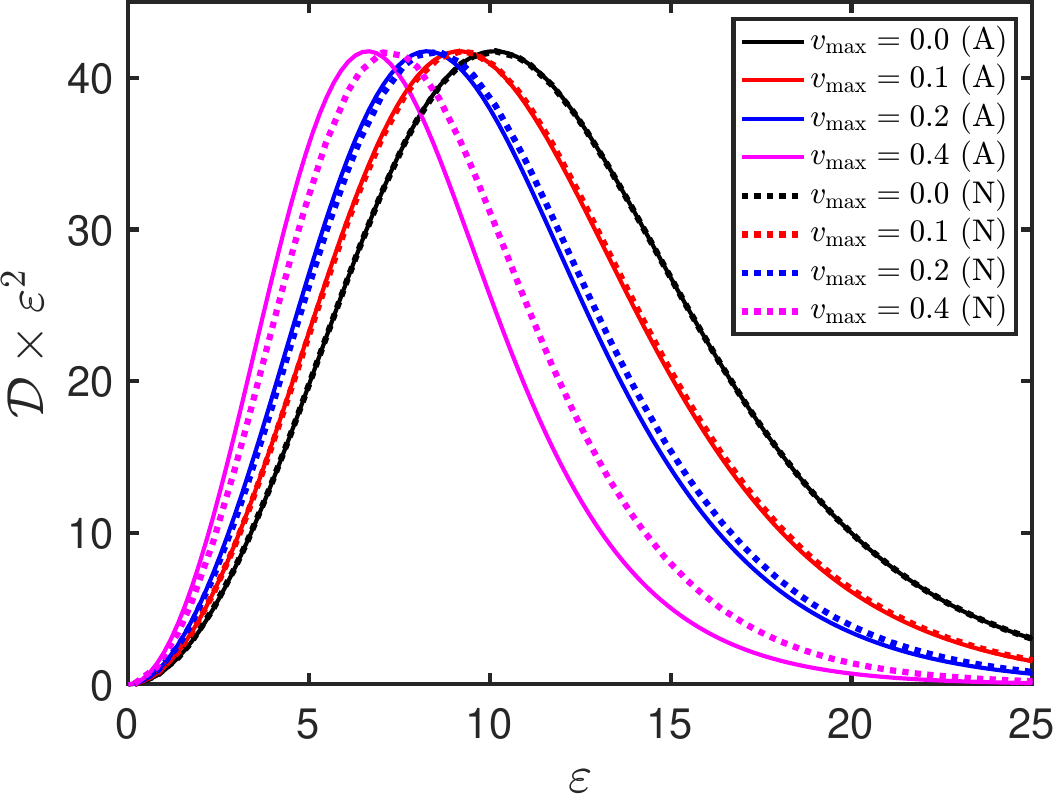} &
    \includegraphics[width=0.44\textwidth]{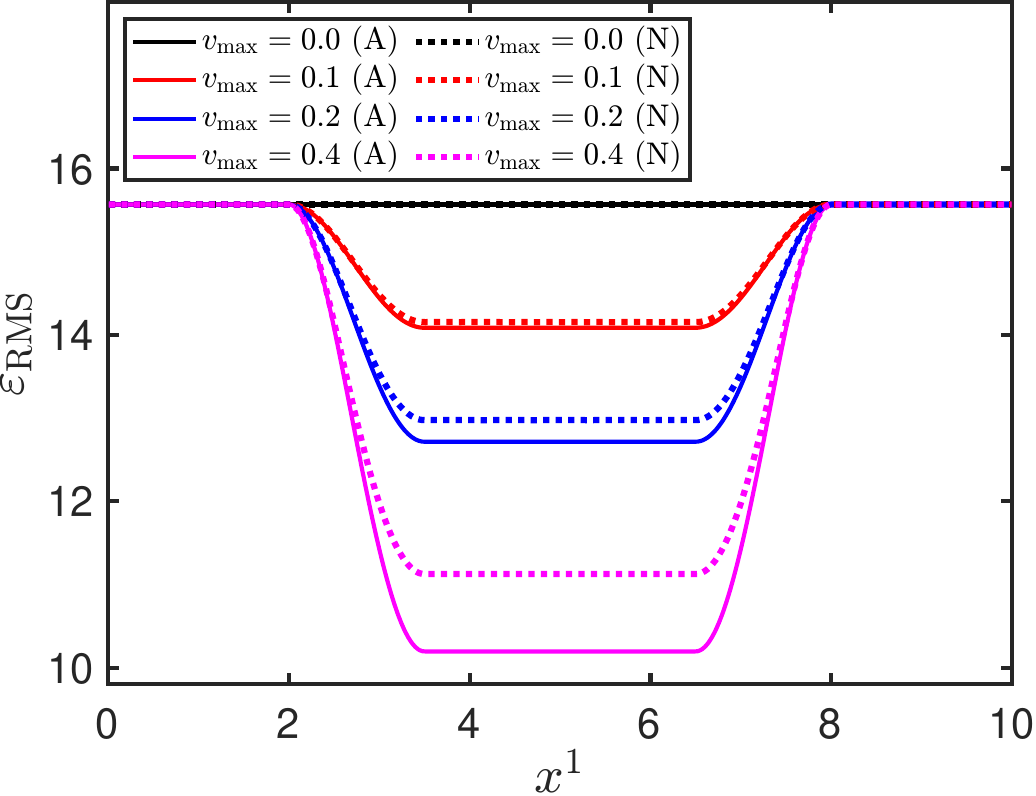} \\
    \includegraphics[width=0.44\textwidth]{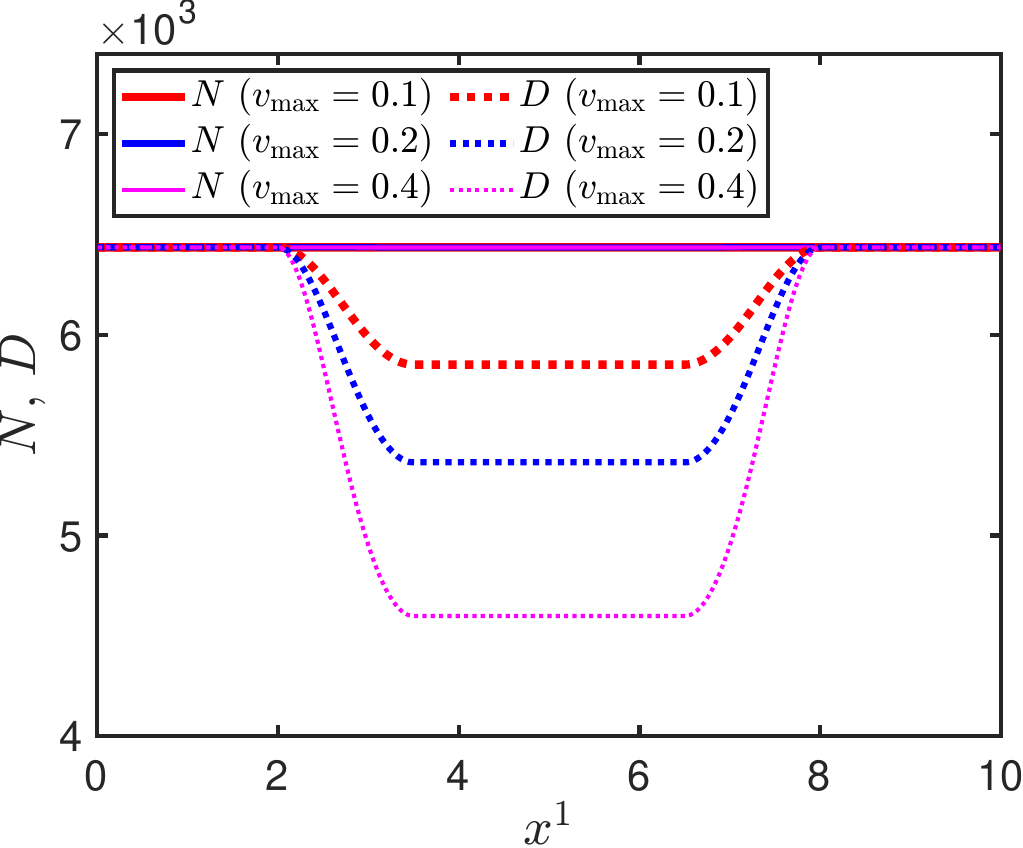} &
    \includegraphics[width=0.44\textwidth]{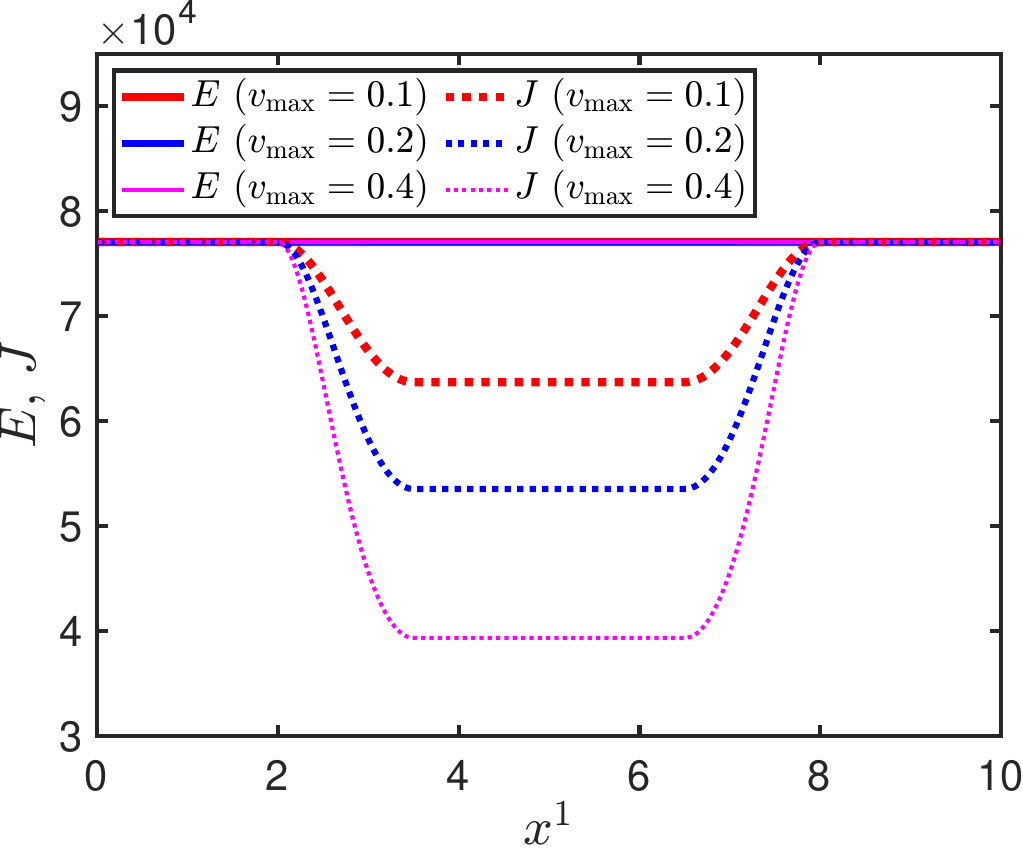}
  \end{tabular}
   \caption{Steady state solutions ($t=20$) for the streaming Doppler shift problem for various $v_{\max}\in\{0.0,0.1,0.2,0.4\}$.  In the top panels, we plot spectra at $x^{1}=5$ ($\cD\varepsilon^{2}$ versus $\varepsilon$; left) and the RMS energy, as defined in Eq.~\eqref{eq:rmsEnergy}, versus position $x^{1}$ (right).  In these panels, solid lines represent the analytic (A) solution from special relativistic considerations, given by Eq.~\eqref{eq:dopplerSpectraSR}, while dotted lines represent the numerical (N) results.  In all panels, black, red, blue, and magenta curves represent runs with $v_{\max}$ set to $0.0$, $0.1$, $0.2$, and $0.4$, respectively.  In the bottom left panel, the Eulerian-frame (solid lines) and comoving-frame (dotted lines) number densities are plotted versus position.  In the bottom right panel, the Eulerian-frame (solid lines) and comoving-frame (dotted lines) energy densities are plotted versus position.}
  \label{fig:StreamingDopplerShift_1}
\end{figure}

Figure~\ref{fig:StreamingDopplerShift_1} displays steady state solution characteristics for models where we have varied $v_{\max}\in\{0.0,0.1,0.2,0.4\}$.  
From the top left panel, we see that the $\cO(v)$ spectra (dotted) --- which for $v_{\max}>0$ are redshifted relative to the case with $v_{\max}=0$ --- agree well with the analytic, special relativistic results (solid) for the lower values of $v_{\max}$, while the difference between the $\cO(v)$ and the special relativistic results are larger when $v_{\max}=0.4$, which is to be expected since $\cO(v^{2})$ terms are no longer negligible.  
The top right panel shows the RMS energy, defined as
\begin{equation}
	\varepsilon_{\rm RMS} = \sqrt{\int_{D_{\varepsilon}}\cD\varepsilon^{5}d\varepsilon/\int_{D_{\varepsilon}}\cD\varepsilon^{3}d\varepsilon},
	\label{eq:rmsEnergy}
\end{equation}
versus position, computed from the numerical, $\cO(v)$ solution and the analytic, special relativistic solution in Eq.~\eqref{eq:dopplerSpectraSR}.  
Indeed, at $x^{1}=5$, the relative difference in $\varepsilon_{\rm RMS}$ between the $\cO(v)$ and the special relativistic result for $v_{\max}=0.1$ is $4.8\times10^{-3}$, while it is $2.0\times10^{-2}$ and $9.1\times10^{-2}$ for $v_{\max}=0.2$ and $v_{\max}=0.4$, respectively.  
That is, the relative error in the RMS energy increases roughly as $\cO(v^{2})$.  

The bottom panels of Figure~\ref{fig:StreamingDopplerShift_1} show the Eulerian- and comoving-frame number densities ($N$ and $D$, respectively; left), and the Eulerian- and comoving-frame energy densities ($E$ and $J$, respectively; right) versus position.  
Here, 
\begin{equation}
	\big\{\,N,\,D,\,E,\,J\,\big\}=4\pi\int_{D_{\varepsilon}}\big\{\,\cN,\,\cD,\,\cE,\,\cD\varepsilon\,\big\}\varepsilon^{2}d\varepsilon,
\end{equation}
where $\cN$ and $\cE$ are defined in Eqs.~\eqref{eq:eachconservedMoments} and \eqref{eq:conservedEnergy}, respectively.  
Relative to where $v=0$, both $D$ and $J$ are lower in the region where $v>0$, which is expected from the redshifted spectra displayed in the top left panel in Figure~\ref{fig:StreamingDopplerShift_1}.  
The Eulerian-frame quantities, $N$ and $E$, are practically unaffected by the velocity field, and remain relatively constant throughout the spatial domain.  

\subsubsection{Simultaneous Number and Energy Conservation}

Next, we investigate the simultaneous number and energy conservation properties of the scheme as a function of time for $t\in[0,20]$.  
Here, a main challenge stems from the fact that, since the flux factor $h\approx1$, the moments evolve close to the boundary of the realizable set $\cR$.  
With high-order polynomials ($k\ge1$), the solution can become non-realizable in one or more quadrature points in some elements, which then triggers the realizability-enforcing limiter discussed in Section~\ref{sec:realizabilityLimiter}.  
The realizability-enforcing limiter preserves the Eulerian-frame particle number, but not the Eulerian-frame energy, which is the reason for introducing the `energy limiter' in Section~\ref{sec:EnergyLimiter}.  
Here, we demonstrate the performance of the energy limiter, and its effect on the simultaneous number and energy conservation properties of the method.  
Recall from Proposition~\ref{prop:EnergyandMomentumConservation} that the $\cO(v)$ two-moment model is conservative for the Eulerian-frame energy only to $\cO(v^{2})$.  

In the context of the current test, the Eulerian-frame number density satisfies a conservation law of the form
\begin{equation}
	\p_{t}N+\p_{1}F_{N}^{1} = 0.  
\end{equation}
Integration over space $D_{x^{1}}$ and time $[t_{0},t]$ gives
\begin{equation}
	\underbrace{\int_{D_{x^{1}}}[\,N(x^{1},t)-N(x^{1},0)]\,dx^{1}}_{\Delta N_{\rm int}(t)} + \underbrace{\int_{0}^{t}[F_{N}^{1}|_{x^{1}=10}-F_{N}^{1}|_{x^{1}=0}]\,d\tau}_{\Delta N_{\rm ext}(t)} = 0, 
	\label{eq:numberBalance}
\end{equation}
where $\Delta N_{\rm int}(t)$ and $\Delta N_{\rm ext}(t)$ represent the change in the total number of particles, from $t_{0}$ to $t$, \emph{interior} and \emph{exterior} to the domain $D_{x^{1}}$, respectively.  
Since there is no creation or destruction of particles, the sum vanishes.  
We can obtain a similar expression for the Eulerian-frame energy (with $E$ replacing $N$ in Eq.~\eqref{eq:numberBalance}), but for the $\cO(v)$ two-moment model considered here, by Proposition~\ref{prop:EnergyandMomentumConservation}, one would in general expect
\begin{equation}
	\Delta E_{\rm int} + \Delta E_{\rm ext} = \cO(v^{2}),
	\label{eq:energyBalance}
\end{equation}
at the continuous level.  
At the discrete level, with consistent discretization of the left-hand side of the two-moment model, Eulerian-frame energy violations of $\cO(v^{2})$ should be considered optimal.  
(Recall the discussion on this issue specific to the DG scheme from Section~\ref{sec:dgConservation}.)  
For this test, given our chosen spatial resolution and use of quadratic elements, velocity jumps across elements are small, and we expect to observe near optimal Eulerian-frame energy conservation properties.  
However, the acceptable level of Eulerian-frame energy nonconservation is application dependent, and should be considered on a case-by-case basis.  

\begin{figure}[H]
  \centering
  \begin{tabular}{cc}
    \includegraphics[width=0.44\textwidth]{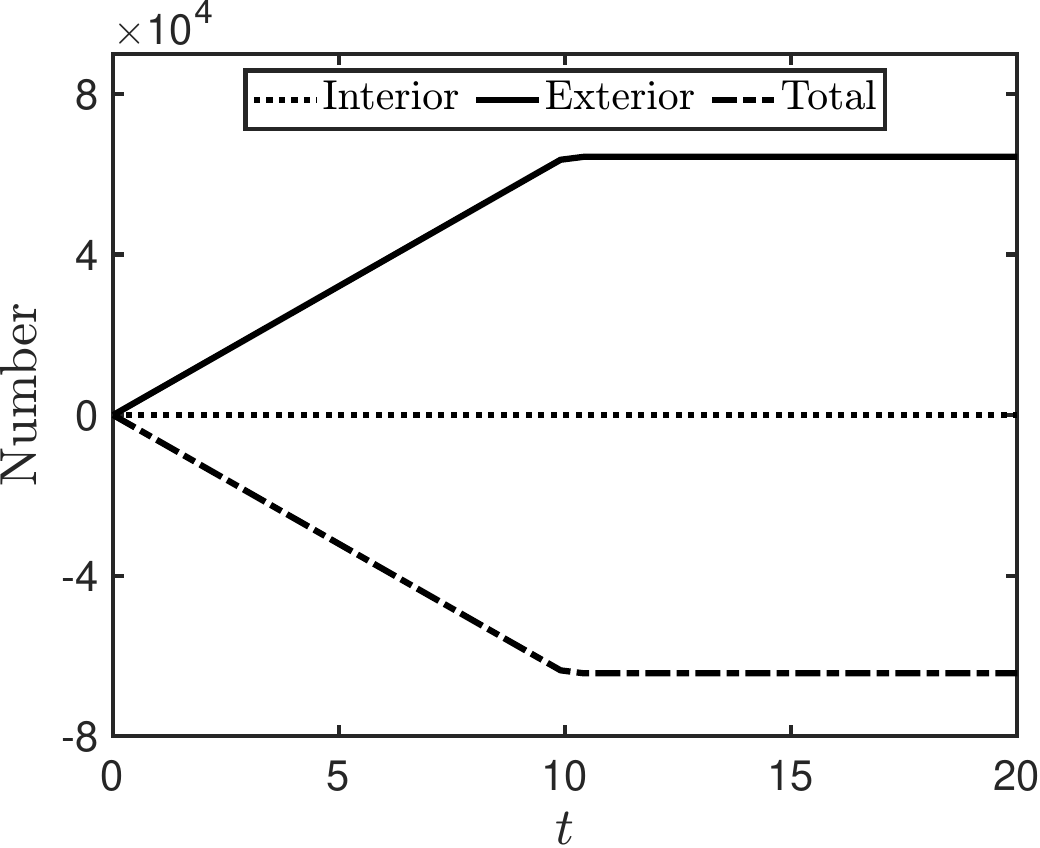} &
    \includegraphics[width=0.44\textwidth]{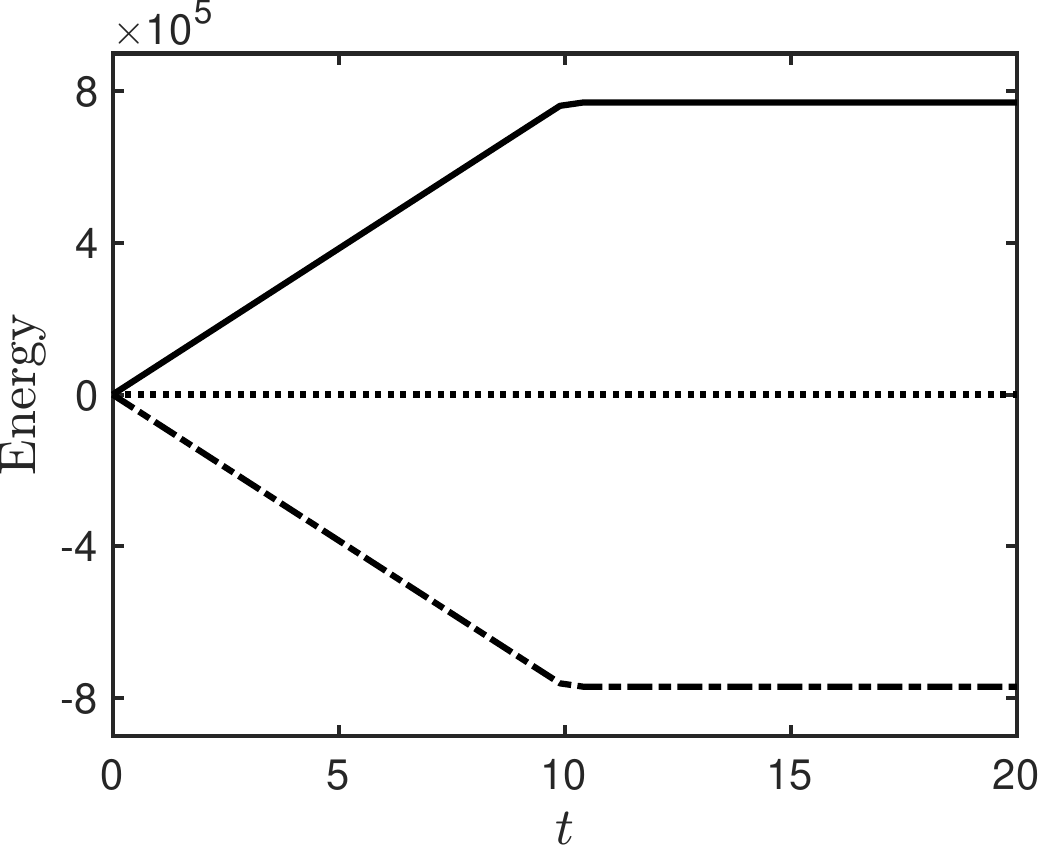}
  \end{tabular}
   \caption{Plot of Eulerian-frame number (left) and energy (right) balance versus time for the streaming Doppler shift problem for a run with $v_{\max}=0.2$. In the left panel, $\Delta N_{\rm int}$ (solid), $\Delta N_{\rm ext}$ (dash-dotted), and $\Delta N_{\rm int}+\Delta N_{\rm ext}$ (dotted), see Eq.~\eqref{eq:numberBalance}, are plotted.  Similarly, in the right panel, $\Delta E_{\rm int}$ (solid), $\Delta E_{\rm ext}$ (dash-dotted), and $\Delta E_{\rm int}+\Delta E_{\rm ext}$ (dotted), see Eq.~\eqref{eq:energyBalance}, are plotted.}
  \label{fig:StreamingDopplerShift_Balance}
\end{figure}

Figure~\ref{fig:StreamingDopplerShift_Balance} plots the number and energy balance versus time for a run with $v_{\max}=0.2$.  
Initially, there are essentially no particles in the computational domain $D_{x^{1}}$, and the flux at the outer boundary is zero.  
For $t>0$, particles enter the domain through the inner boundary, and $\Delta N_{\rm int}$ begins to increase linearly with time, while $\Delta N_{\rm ext}$ decreases at the same rate, and the sum $\Delta N_{\rm int}+\Delta N_{\rm ext}$ remains zero to machine precision (see also Figure~\ref{fig:StreamingDopplerShift_Change}).  
Around $t=10$, the particles that entered the domain at $t=0$ reach the outer boundary, establishing a balance between particles entering and leaving the domain, and the system reaches a steady state where both $\Delta N_{\rm int}$ and $\Delta N_{\rm ext}$ remain unchanged.  
The evolution observed for the Eulerian-frame energy quantities ($\Delta E_{\rm ext}$ and $\Delta E_{\rm ext}$) is similar to that for the particle number, and, on the scale of the ordinate on the right panel in Figure~\ref{fig:StreamingDopplerShift_Balance}, the sum $\Delta E_{\rm ext}+\Delta E_{\rm ext}$ remains close to zero.  

\begin{figure}[H]
  \centering
  \begin{tabular}{cc}
    \includegraphics[width=0.44\textwidth]{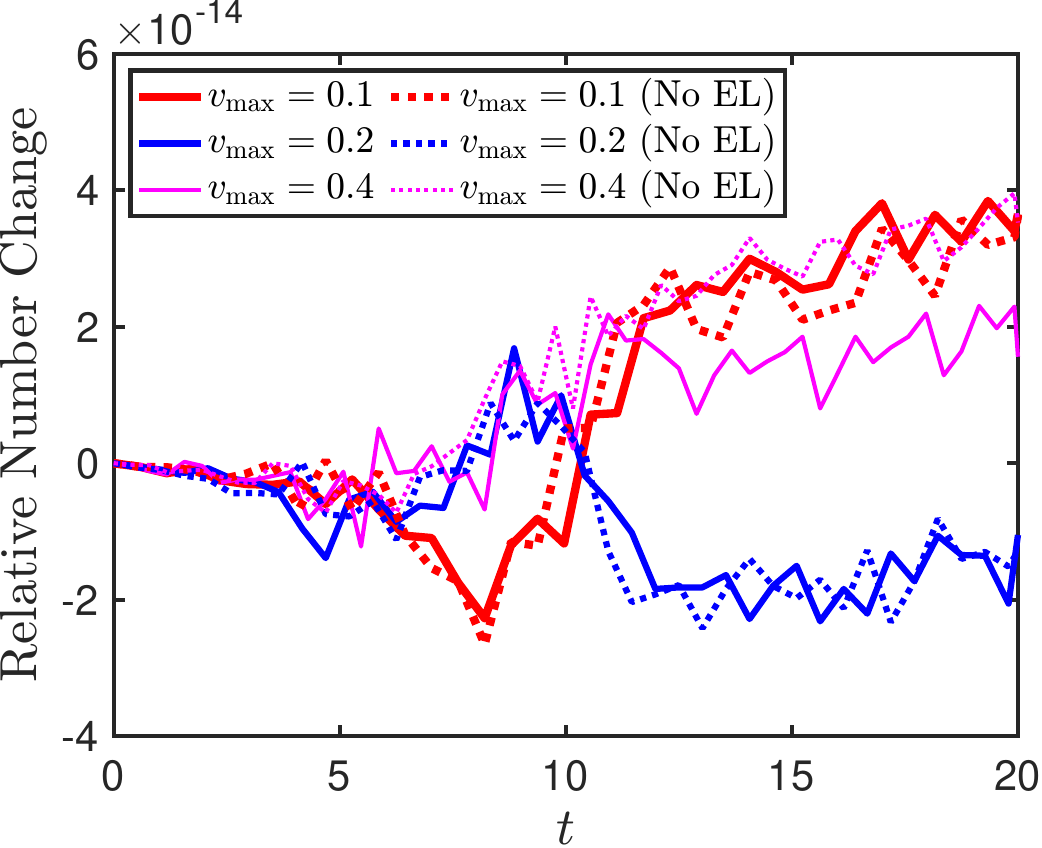} &
    \includegraphics[width=0.44\textwidth]{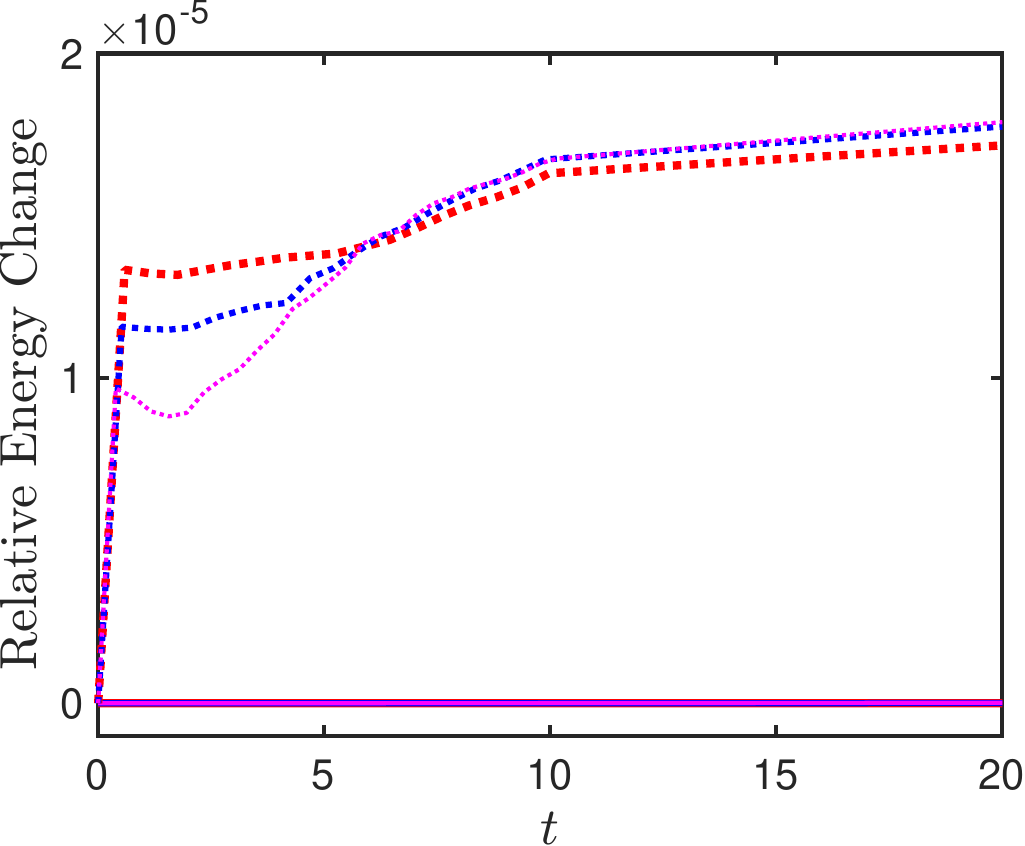} \\
    \includegraphics[width=0.44\textwidth]{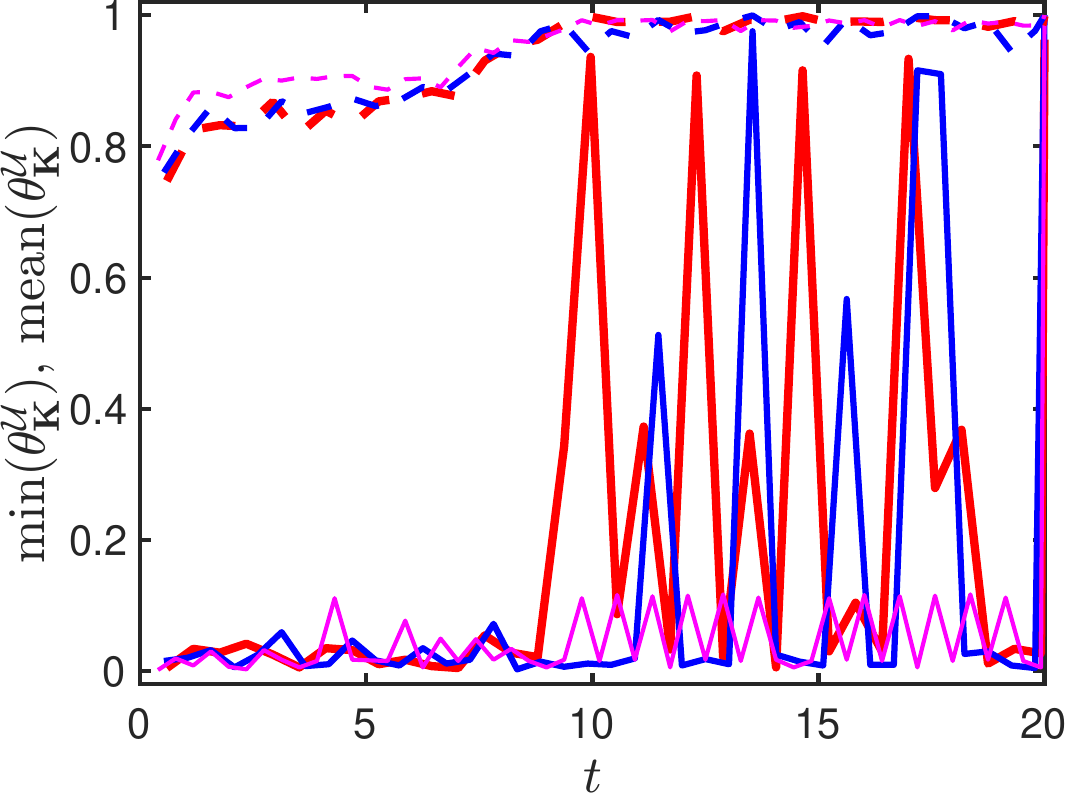} &
    \includegraphics[width=0.44\textwidth]{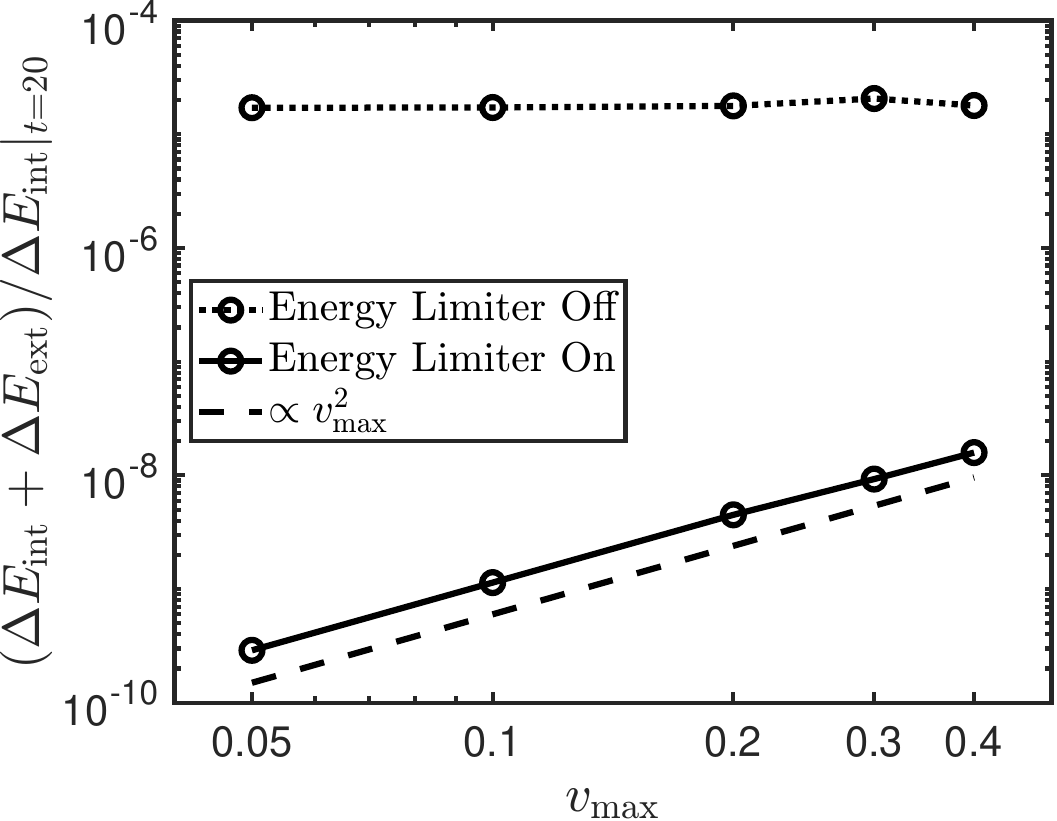}
  \end{tabular}
   \caption{Results from the streaming Doppler shift problem for models with various values of $v_{\max}$.  In the upper left panel the relative change in the Eulerian-frame particle number, defined as $(\Delta N_{\rm int}+\Delta N_{\rm ext})/\Delta N_{\rm int}(t=20)$, is plotted versus time.  Similarly, in the upper right panel, the relative change in the Eulerian-frame energy, defined as $(\Delta E_{\rm int}+\Delta E_{\rm ext})/\Delta E_{\rm int}(t=20)$, is plotted.  In these panels, solid lines represent results obtained with the fiducial algorithm with the energy limiter on, while dotted lines represent results obtained with the energy limiter turned off.  In the lower left panel, the minimum (solid) and mean (dashed) value of the limiter parameter $\theta_{\bK}^{\bcU}$ (see Algorithm~\ref{algo:realizabilityLimiter}) are plotted versus time for the fiducial models with the energy limiter on.  Results with $v_{\max}$ set to $0.1$, $0.2$, and $0.3$ are plotted with red, blue, and magenta curves, respectively.  In the lower right panel, the relative change in the Eulerian-frame energy at $t=20$ is plotted versus $v_{\max}$ for models where the energy limiter is on (solid lines) and off (dotted lines) (the dashed black reference line is proportional to $v_{\max}^{2}$).}
   \label{fig:StreamingDopplerShift_Change}
\end{figure}

Figure~\ref{fig:StreamingDopplerShift_Change} shows select results from runs with $v_{\max}\in\{0.05,0.1,0.2,0.3,0.4\}$ and provides further details on the simultaneous number and energy conservation properties of the scheme when applied to the streaming Doppler shift problem.  
For comparison, we have also run all the models with the energy limiter turned off.  
The relative change in the total number of particles (top left panel) is at the level of machine precision for all values of $v_{\max}$ (with and without the energy limiter), which is expected since the two-moment model is formulated in number conservative form.  

As mentioned earlier, the moments evolve close to the boundary of the realizable set in this test, and the realizability-enforcing limiter is frequently invoked to enforce pointwise realizability in all elements.  
From the lower left panel in Figure~\ref{fig:StreamingDopplerShift_Change}, we observe that the minimum value of the limiter parameter $\theta_{\bK}^{\bcU}$ (solid lines) varies between $0$ and $0.1$ for $t\lesssim10$, while the average value of $\theta_{\bK}^{\bcU}$ (dashed lines) grows from about $0.8$ to $1$ during this time.  
For $t\le10$, a discontinuity in the moments, driven by the inner boundary condition, propagates through the domain and is mainly responsible for triggering the realizability-enforcing limiter.
Recall that $\theta_{\bK}^{\bcU}=0$ implies full limiting where all the moments within an element are set to their respective cell average, while $\theta_{\bK}^{\bcU}=1$ implies no limiting.  
After $t\approx10$, the average $\theta_{\bK}^{\bcU}$ value hovers around unity, but a few elements still require significant limiting when $t>10$, especially for the $v_{\max}=0.4$ model, with the minimum $\theta_{\bK}^{\bcU}$ still dropping down close to zero.  
Closer inspection reveals that a few elements around the location of the velocity gradients ($x^{1}\in[2,3.5)$ and $x^{1}\in[6.5,8)$) require limiting beyond $t=10$.  

The relative change in the Eulerian-frame energy is plotted in the top right panel of Figure~\ref{fig:StreamingDopplerShift_Change}, which reveals a significant improvement in conservation for the fiducial models with the energy limiter turned on (solid lines), when compared to models without the energy limiter (dotted lines).  
(We compute the relative change in number and energy by normalizing by interior values at $t=20$, when the system is in steady state, because the initial interior values are close to zero.)  
For the models without the energy limiter, the relative change in total energy immediately jumps to about $1\times10^{-5}$, and continues to grow for later times, while it remains relatively constant for the fiducial models.  
This implies that the realizability-enforcing limiter is the main driver of Eulerian-frame energy nonconservation for this test, and not the inherent nonconservation properties of the continuum $\cO(v)$ two-moment model.  
(Velocity jumps across elements are small, and we infer from our results that their contribution to energy nonconservation is negligible.)  
The relative change in the Eulerian-frame energy at $t=20$ is plotted versus $v_{\max}$ in the lower right panel of Figure~\ref{fig:StreamingDopplerShift_Change}.  
For the models with the energy limiter turned off, the relative change is essentially independent of $v_{\max}$, while Eulerian-frame energy violations grow as $v_{\max}^{2}$ for the fiducial models.  
Note that the energy limiter only recovers energy conservation violations caused by the realizability-enforcing limiter.  
Energy conservation violations caused by the use of the $\cO(v)$ two-moment model are unaffected by the energy limiter.  
Since we observe the $v_{\max}^{2}$ scaling when using the energy limiter, we posit that the DG discretization maintains consistency with the continuum model on this aspect.  

\subsection{Transparent Shock}
\label{sec:transparentShock}

In this test, we investigate the performance of the method when the background velocity gradient is varied.  
We consider a one-dimensional spatial domain $D_{x^{1}}=[0,2]$, set the opacities $\chi=\sigma=0$, and the velocity field $\vect{v}=(v,0,0)^{\intercal}$, where
\begin{equation}
	v(x^{1}) = \f{1}{2} v_{\max} \times\big[\,1-\tanh\big(\,(x^{1}-1)/H\,\big)\,\big].  
	\label{eq:TransparentShockVelocity}
\end{equation}
We will vary both the velocity magnitude $v_{\max}$ and gradient, parametrized by the length scale $H$.  
The energy domain is again $D_{\varepsilon}=[0,50]$.  
We discretize the spatial and energy domains using $80$ and $32$ elements, respectively, and use quadratic elements ($k=2$) and SSPRK3 time stepping.  
Then, $\Delta x^{1}=0.025$, and $\Delta x^{1}/H=5/6$, $2.5$, and $25$, for $H=3\times10^{-2}$, $10^{-2}$, and $10^{-3}$, respectively.  
We use the same boundary conditions as in the Doppler shift test, and the moments are initially set to $\cD=1\times 10^{-8}$ and $\cI^{1}=0$ for all $(x^{1},\varepsilon)\in D_{x^{1}}\times D_{\varepsilon}$.  
With the given initial and boundary conditions, the equations are integrated until $t=3$, when an approximate steady state has been established.  

Figure~\ref{fig:TransparentShock_Profiles} shows velocity profiles and comoving-frame and Eulerian-frame number densities versus position around the `shock' ($x^{1}\in[0.9,1.1]$) for the different values of $H$ for the case with $v_{\max}=-0.1$.  
(The markers indicate locations of LG quadrature points in each spatial element.)  
For $H=3\times10^{-2}$, the shock is resolved by the spatial grid, for $H=10^{-2}$ it is under-resolved, while for $H=10^{-3}$, the velocity profile is discontinuous.  

The comoving-frame number densities increase across the velocity gradient because of the Doppler effect, increasing the particle energy measured by the comoving observer, who is moving towards the inner boundary.  
Beyond the shock, the values for the computed comoving-frame number densities (solid lines) are about $0.5~\%$ higher than the analytic values obtained using Eq.~\eqref{eq:dopplerSpectraSR} (dashed lines), and this fact is independent of the value of $H$.  
The Eulerian-frame number densities, which should remain unaffected by the presence of the background velocity, are essentially constant across the shock.  
These results indicate that the method is able to capture Doppler shifts correctly, even when velocity gradients are large.  

\begin{figure}[H]
  \centering
  \begin{tabular}{cc}
    \includegraphics[width=0.44\textwidth]{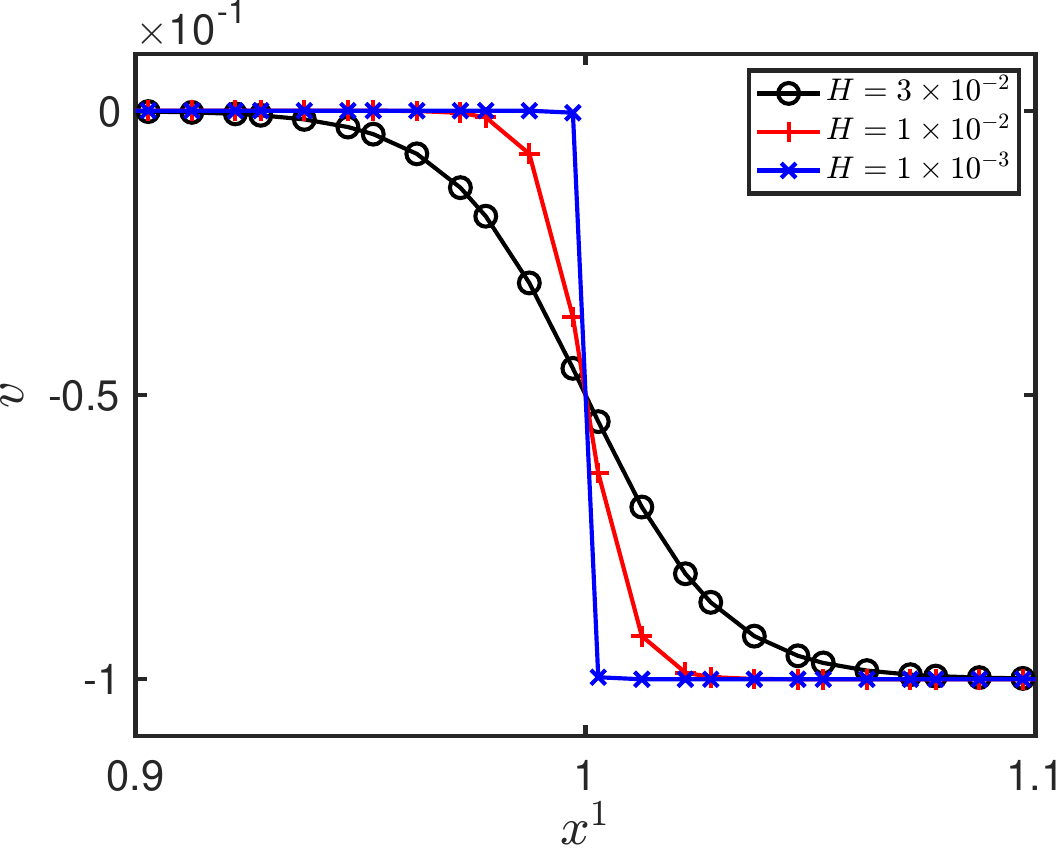} &
    \includegraphics[width=0.44\textwidth]{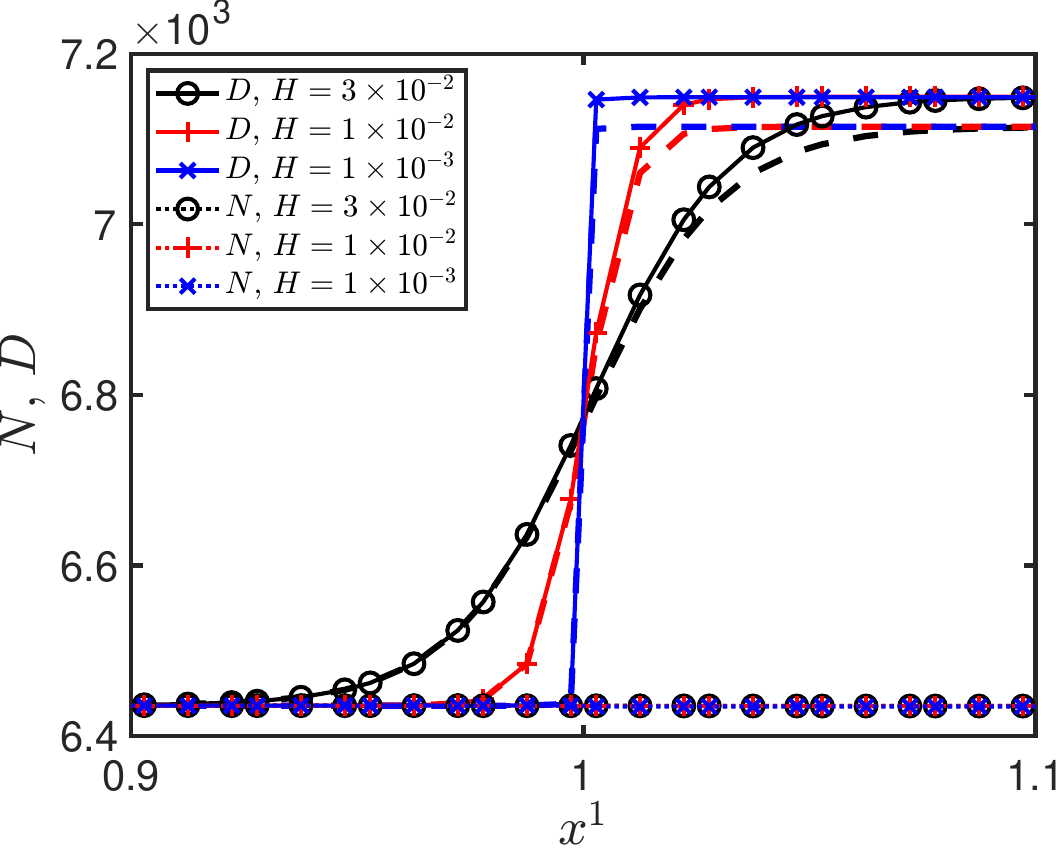}
  \end{tabular}
   \caption{Results from the transparent shock problem for $v_{\max}=-0.1$ and various values of the shock width parameter $H$, defined in Eq.~\eqref{eq:TransparentShockVelocity}.  
   In the left panel, velocity profiles are plotted versus position for $H=3\times10^{-2}$, $10^{-2}$, and $10^{-3}$ (black, red, and blue curves, respectively).  
   In the right panel, the numerical and analytic (special relativistic) comoving-frame number densities (solid lines with markers and dashed lines, respectively), and the numerical Eulerian-frame number densities (dotted lines with markers) are plotted versus position.  
   For the results displayed in the right panel, the line colors correspond to the velocity profile with the matching color plotted in the left panel.}
   \label{fig:TransparentShock_Profiles}
\end{figure}

In Figure~\ref{fig:TransparentShock_Energies}, the left panel displays the relative change in the Eulerian-frame energy, defined by the left-hand side of Eq.~\eqref{eq:energyBalance}, versus $|v_{\max}|$ at $t=3$, for various values of $H$.  
The right panel displays the relative error in $\varepsilon_{\rm RMS}$.  
Both panels display results obtained with and without the energy limiter.  
(The relative change in the Eulerian-frame particle number, not shown, is at the level of machine precision for all models.)  
Considering the results obtained with the energy limiter active, in the left panel we observe that, for a given value of $H$, the relative change in the total energy increases with increasing $|v_{\max}|$, roughly proportional to $|v_{\max}|^{2}$.  
Also, for a given value of $|v_{\max}|$, the relative change in total energy increases with decreasing shock width $H$.  
For the models with the energy limiter turned off, the behavior is different in a large region of the $(v_{\max},H)$-space.  
With $H=3\times10^{-2}$ (dotted black line), the relative change in the Eulerian-frame energy at $t=3$ is around $2\times10^{-5}$; independent of $v_{\max}$.  
This can be attributed to the realizability-enforcing limiter.  
For the models with $H=10^{-2}$ (dotted red line), the energy change is roughly constant until $|v_{\max}|=0.1$, when the relative energy change begins to increase with $|v_{\max}|$ in a manner similar to the models with the energy limiter activated (solid red line).  
The models with the steepest velocity gradient ($H=10^{-3}$; dotted blue line) follow the corresponding models with active energy limiter, and the relative change in the Eulerian-frame energy increases as $|v_{\max}|^{2}$ for all $|v_{\max}|$.  
From this we conclude that the energy limiter can help to recover the $\cO(v^{2})$ Eulerian-frame energy conservation property of the $\cO(v)$ two-moment model for small velocities and velocity gradients.  
The relative error in $\varepsilon_{\rm RMS}$, which increases as $|v_{\max}|^{2}$ for all models, is essentially unaffected by the energy limiter.  

\begin{figure}[H]
  \centering
  \begin{tabular}{cc}
    \includegraphics[width=0.45\textwidth]{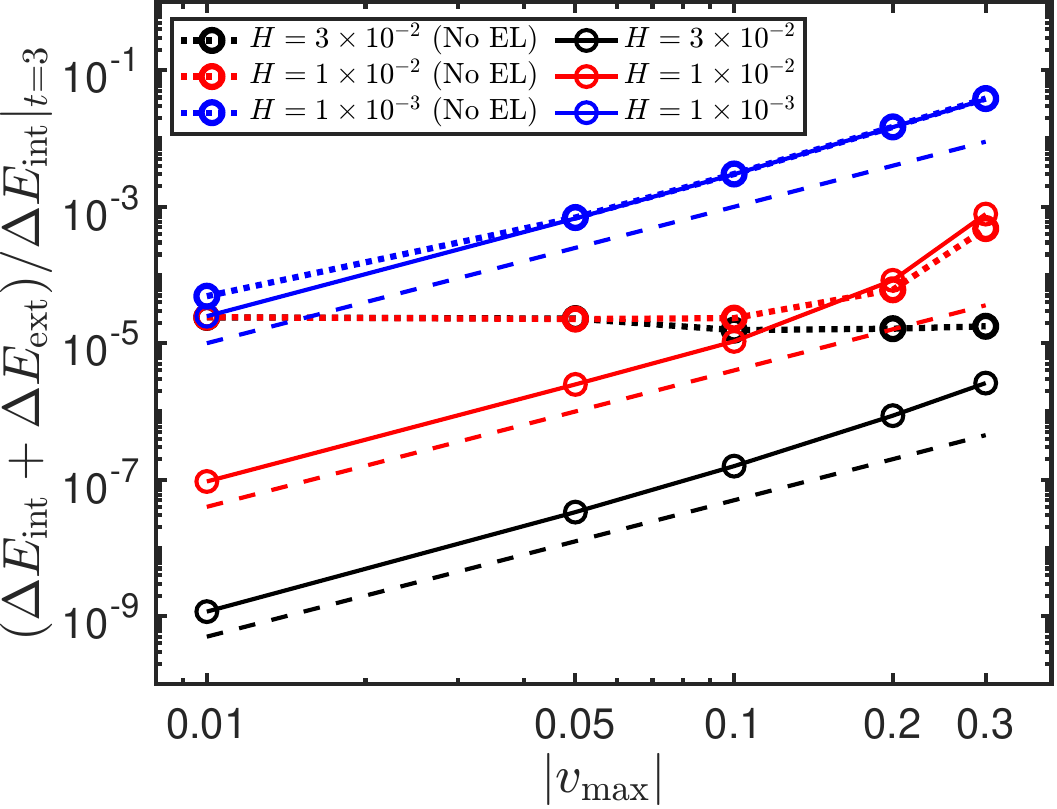} &
    \includegraphics[width=0.45\textwidth]{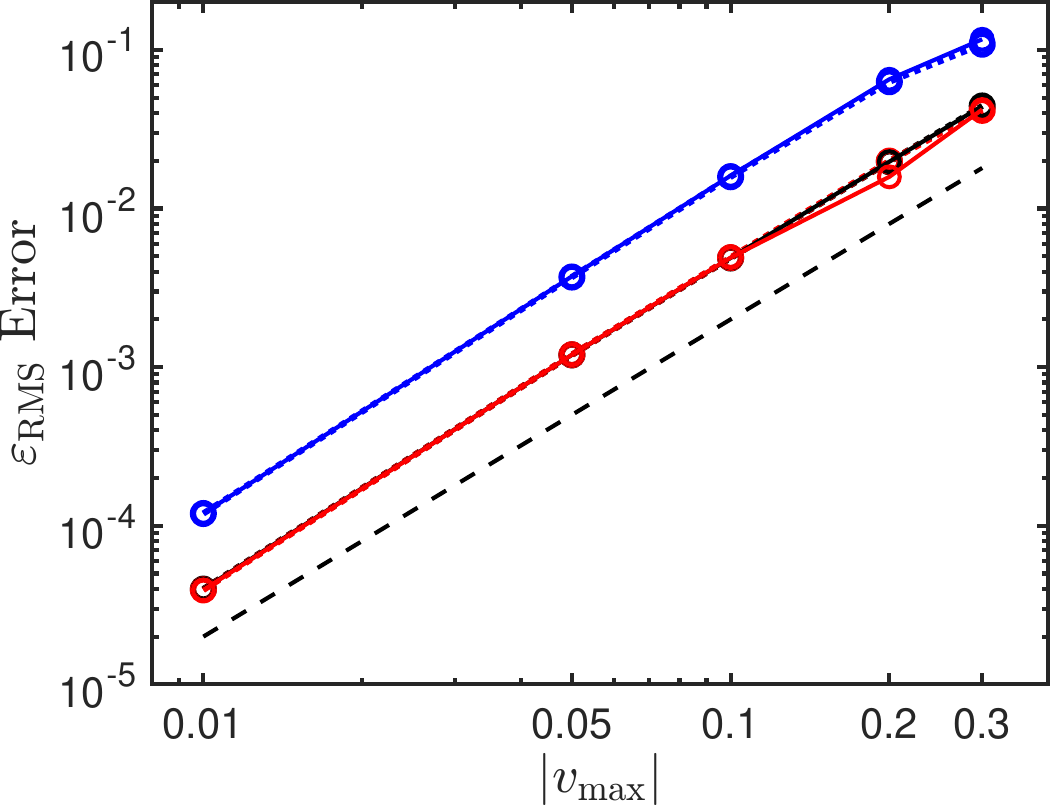}
  \end{tabular}
   \caption{Results for the transparent shock problem, plotted versus $|v_{\max}|$.  The left panel displays the relative change in the Eulerian-frame energy at $t=3$ for models where the energy limiter is on (solid lines) and off (dotted lines).  The right panel displays the absolute relative difference in $\varepsilon_{\rm RMS}$ with respect to the exact solution from special relativity at $x^{1}=2$.  In both panels, black, red, and blue curves correspond to $H=3\times10^{-2}$, $10^{-2}$, and $10^{-3}$, respectively, and the dashed references line are proportional to $|v_{\max}|^{2}$.  }
   \label{fig:TransparentShock_Energies}
\end{figure}

\subsection{Transparent Vortex}
\label{sec:transparentVortex}

The final test, inspired by the test in Section~4.2.3 of \cite{just_etal_2015}, considers evolution in a two-dimensional spatial domain $D_{x^{1}}\times D_{x^{2}}=[-5,5]\times[-5,5]$.  
We set the opacities $\chi=\sigma=0$, and the velocity field is given by $\vect{v}=[v^{1},v^{2},0]^{\intercal}$, where
\begin{subequations}
	\begin{align}
		v^{1}(x^{1},x^{2}) &= 
		- v_{\max} \, x^{2} \, \exp\big[\,(1-r^{2})/2\,\big], \\
		v^{2}(x^{1},x^{2}) &= \hspace{8pt}
		v_{\max} \, x^{1} \, \exp\big[\,(1-r^{2})/2\,\big],
	\end{align}
	\label{eq:vortexVelocityField}
\end{subequations}
and $r=\sqrt{(x^{1})^{2}+(x^{2})^{2}}$.  
The energy domain is $D_{\varepsilon}=[0,50]$, and we discretize the spatial and energy domains using $48\times48$ and $32$ elements, respectively.  
We use quadratic elements ($k=2$) and SSPRK3 time stepping.  
The upper left panel in Figure~\ref{fig:TransparentVortex} shows the velocity field for the case with $v_{\max}=0.1$.  
The main purpose of this test is to investigate the robustness of the method in configurations where the radiation field propagates through a spatially variable velocity field with various relative angles between the radiation flux and velocity vectors.  
The moments are initially set to $\cD=1\times 10^{-8}$ and $\cI^{1}=\cI^{2}=0$ for all $(x^{1},x^{2},\varepsilon)\in D_{x^{1}}\times D_{x^{2}}\times D_{\varepsilon}$.  
At the inner $x^{1}$ boundary, we impose an incoming, radiation field with a Fermi-Dirac spectrum:  We set $\cD(\varepsilon,x^{1}=-5,x^{2})=0.05/[\exp(\varepsilon/3-3)+1]$, $\cI^{1}(\varepsilon,x^{1}=-5,x^{2})=0.95\times\cD(\varepsilon,x^{1}=-5,x^{2})$, and $\cI^{2}(\varepsilon,x^{1}=-5,x^{2})=0$, so that the flux factor is $h=0.95$.  
With these initial and boundary conditions, the moment equations are integrated until a steady state is reached ($t=20$).  

\begin{figure}[H]
  \centering
  \begin{tabular}{cc}
    \includegraphics[width=0.45\textwidth]{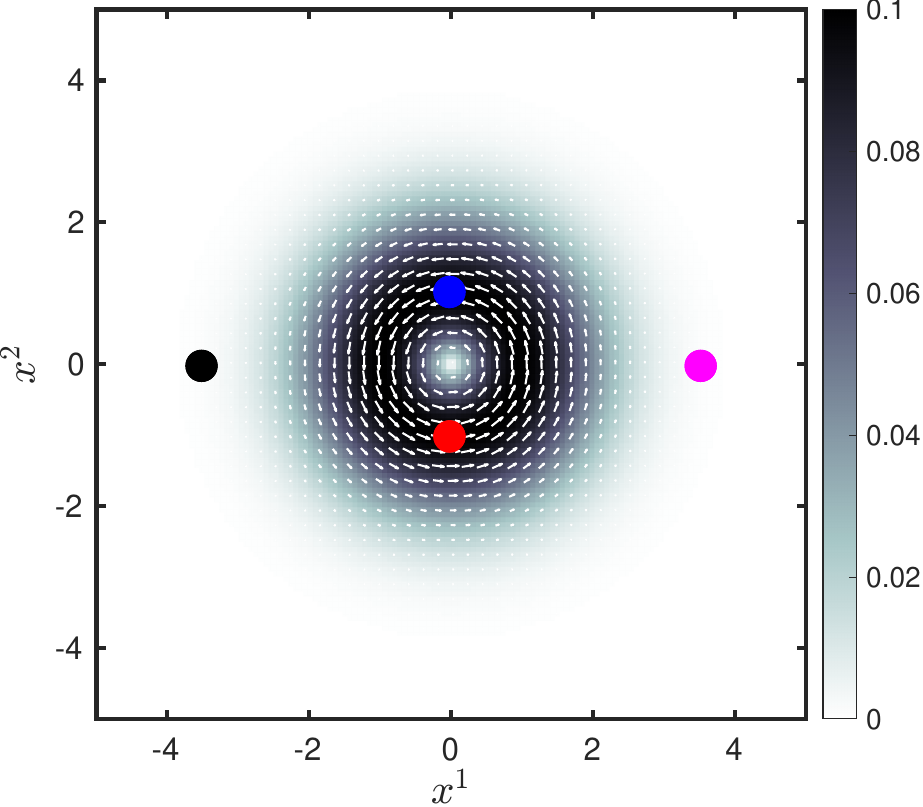} &
    \includegraphics[width=0.40\textwidth]{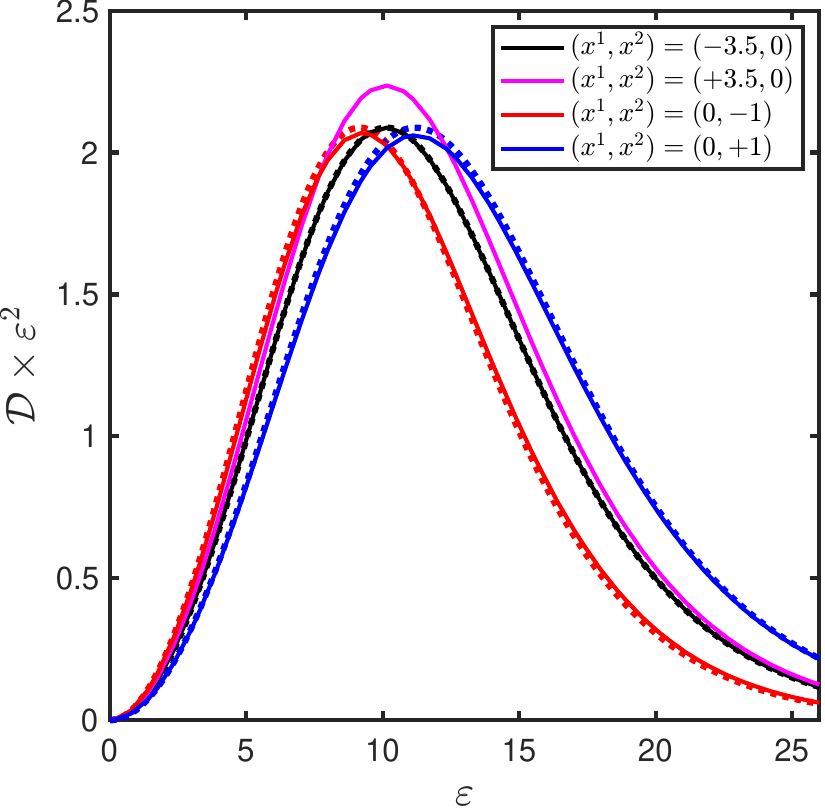} \\
    \includegraphics[width=0.425\textwidth]{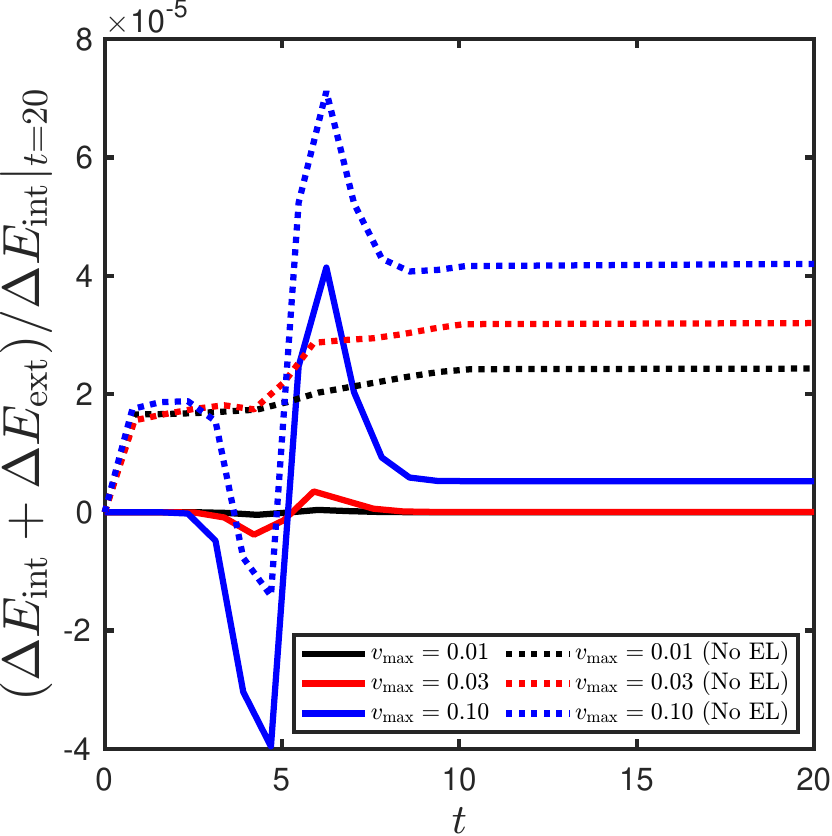} &
    \includegraphics[width=0.425\textwidth]{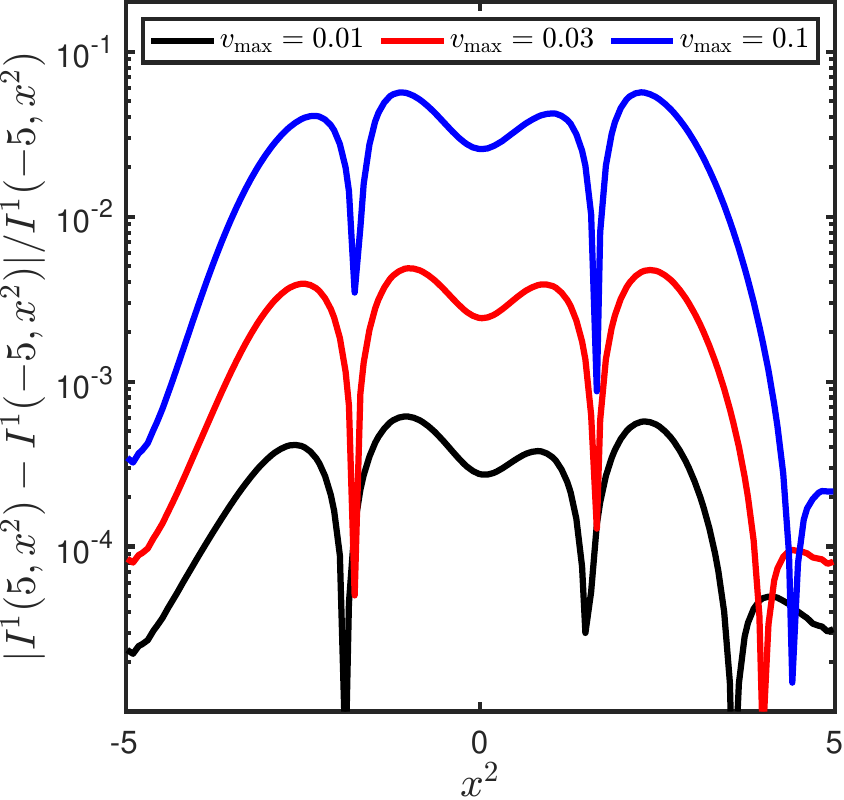}
  \end{tabular}
   \caption{Results for the transparent vortex problem.  In the upper left panel, the magnitude of the velocity, for the case with $v_{\max}=0.1$, is displayed in grayscale with velocity vectors overlaid.  The black, magenta, red, and blue markers indicate spatial positions for which we plot numerical energy spectra in solid lines in the upper right panel, with line colors corresponding to the marker colors in the upper left panel.  The dotted lines are analytic spectra obtained from special relativistic considerations using the local three-velocity.  In the lower left panel, the relative change in Eulerian-frame energy is plotted versus time for models with $v_{\max}$ in Eq.~\eqref{eq:vortexVelocityField} set to $0.01$ (black lines), $0.03$ (red lines), and $0.1$ (blue lines).  Results obtained with and without the energy limiter are plotted using solid and dotted lines, respectively.  The lower right panel plots the relative difference between the incoming and outgoing particle fluxes in the $x^{1}$-direction.}
   \label{fig:TransparentVortex}
\end{figure}

In the upper right panel in Figure \ref{fig:TransparentVortex}, the solid lines represent numerical energy spectra at spatial locations indicated with solid markers of matching color in upper left panel.  
At the location of the black marker the velocity is close to zero, and thus the black line represents the spectrum of the incoming radiation.  
The red and blue markers are located where $\vect{v}=(v_{\max},0,0)^{\intercal}$ and $\vect{v}=(-v_{\max},0,0)^{\intercal}$, respectively, and the spectra at these locations are, respectively, red- and blue-shifted relative to the spectrum sampled at the location of the black marker.  
Analytic spectra at the locations of the black, red, and blue markers are plotted with dotted lines, which indicate good agreement between numerical and analytical solutions across all energies.  
At the locations of the black, red, and blue markers, we find that $\varepsilon_{\rm RMS}$ is approximately $15.6$, $14.2$, and $17.2$, respectively.  
At the location of the magenta marker, which is placed on the opposite side of the vortex (relative to the black marker), the velocity is again close to zero, and it is expected that the spectrum at this location agrees with the spectrum at the location of the black marker.  
Comparing the solid black and magenta lines in the upper right panel, we observe that the spectral number density is consistently higher at the location of the magenta marker (by a constant factor of about $1.07$).  
Comparing $\varepsilon_{\rm RMS}$ at the two locations, we find that the relative difference is less than $10^{-4}$.  

The lower left panel in Figure~\ref{fig:TransparentVortex} plots the relative change in the Eulerian-frame energy versus time for models with $v_{\max}\in\{0.01,0.03,0.1\}$.  
Results obtained with the energy limiter on are plotted with solid lines, while dotted lines correspond to results with the energy limiter turned off.  
For all models, the relative change in the Eulerian-frame energy is less than $10^{-4}$.  
For the models with $v_{\max}=0.1$, the relative change reaches the largest amplitudes for $t\in[4,7]$, when a radiation front, driven by the boundary condition at $x^{1}=-5$, propagates through the vortex.  
The model with the energy limiter makes a better recovery than the corresponding model with the energy limiter turned off.  
For smaller $v_{\max}$, the relative change in the Eulerian-frame energy is clearly much smaller when the energy limiter is active.  
These results demonstrate the contribution to Eulerian-frame energy nonconservation caused by the realizability-enforcing limiter.  
For both suites of models (energy limiter on or off), the relative change in the Eulerian-frame number (not shown) is at the level of machine precision for all models.  

The lower right panel in Figure~\ref{fig:TransparentVortex}, similar to Figure~6~(b) in \cite{just_etal_2015}, shows, for $t=20$, the relative difference between the energy integrated $x^{1}$-component of the number flux densities evaluated at the inner and outer boundaries of $D_{x^{1}}$, defined as $|I^{1}(5,x^{2})-I^{1}(-5,x^{2})|/I^{1}(-5,x^{2})$.  
As discussed by Just et al.~\cite{just_etal_2015}, this quantity should vanish for exact calculations, while errors of $\cO(v^{2})$ are to be expected for the $\cO(v)$ two-moment model.  
Comparing with their results, the curves plotted in our figure share similar features.  
Moreover, for $v_{\max}=0.01$, the maximum relative difference is $6.15\times10^{-4}$, for $v_{\max}=0.03$ it is $4.87\times10^{-3}$, while it is $5.68\times10^{-2}$ for $v_{\max}=0.1$; i.e., the maximum error grows as $v_{\max}^{2}$.  

Despite the growing (with $v_{\max}$) relative difference between the number fluxes at the inner and outer boundaries in the $x^{1}$-direction, we point out that, due to number conservation, the integrated number fluxes through the inner and outer boundaries balance each other.  
That is, in the steady state at $t=20$, $\int_{D_{x^{2}}}I^{1}(-5,x^{2})\,dx^{2}=\int_{D_{x^{2}}}I^{1}(5,x^{2})\,dx^{2}$.  
However, the distribution of particles along the $x^{2}$-direction becomes nonuniform in the wake of the vortex, while a uniform distribution is expected as $|\vect{v}|\to0$.  
We illustrate this further in Figure~\ref{fig:TransparentVortexII}.  
The left panel shows that, within the vortex ($r\lesssim2$), the comoving-frame number density is higher than the reference value $D_{0}$ for $x^{2}>0$, and lower than $D_{0}$ for $x^{2}<0$, which is consistent with the Doppler shift of the spectra in the respective regions.  
In the wake of the vortex, the comoving-frame number density is relatively higher in the region centered around $x^{2}=0$, while it is lower further away (compare red and blue regions for $x^{1}\gtrsim2$ in the left panel in Figure~\ref{fig:TransparentVortexII}).  
The Eulerian-frame number density is relatively unaffected by the vortex for $x^{1}<0$, but exhibits a spatial distribution similar to the comoving-frame number density in the wake.  
In contrast, the spatial distribution of the RMS energy is more consistent with expectations:  Within the vortex, $\varepsilon_{\rm RMS}>\varepsilon_{{\rm RMS},0}$ for $x^{2}>0$, while  $\varepsilon_{\rm RMS}<\varepsilon_{{\rm RMS},0}$ for $x^{2}<0$.  
Moreover, the RMS energy returns to the reference value in the wake of the vortex, with almost uniform distribution along the $x^{2}$-direction.  
We do not have a complete theoretical explanation for the spatial distribution of the number densities in the wake of the vortex, but suspect that the two-moment approximation and the associated closure, which assumes that the radiation field is axisymmetric about a preferred direction in momentum space \cite{levermore_1984}, is insufficient for capturing relativistic aberration effects.  

\begin{figure}[H]
  \centering
  \begin{tabular}{ccc}
    \includegraphics[width=0.31\textwidth]{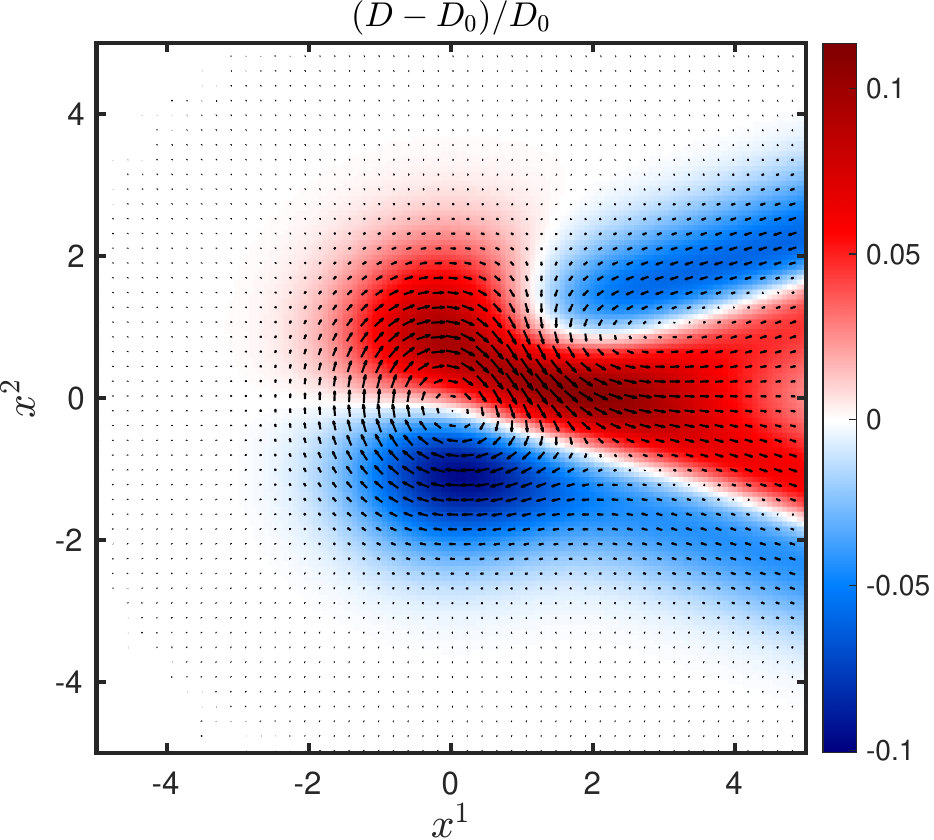} &
    \includegraphics[width=0.31\textwidth]{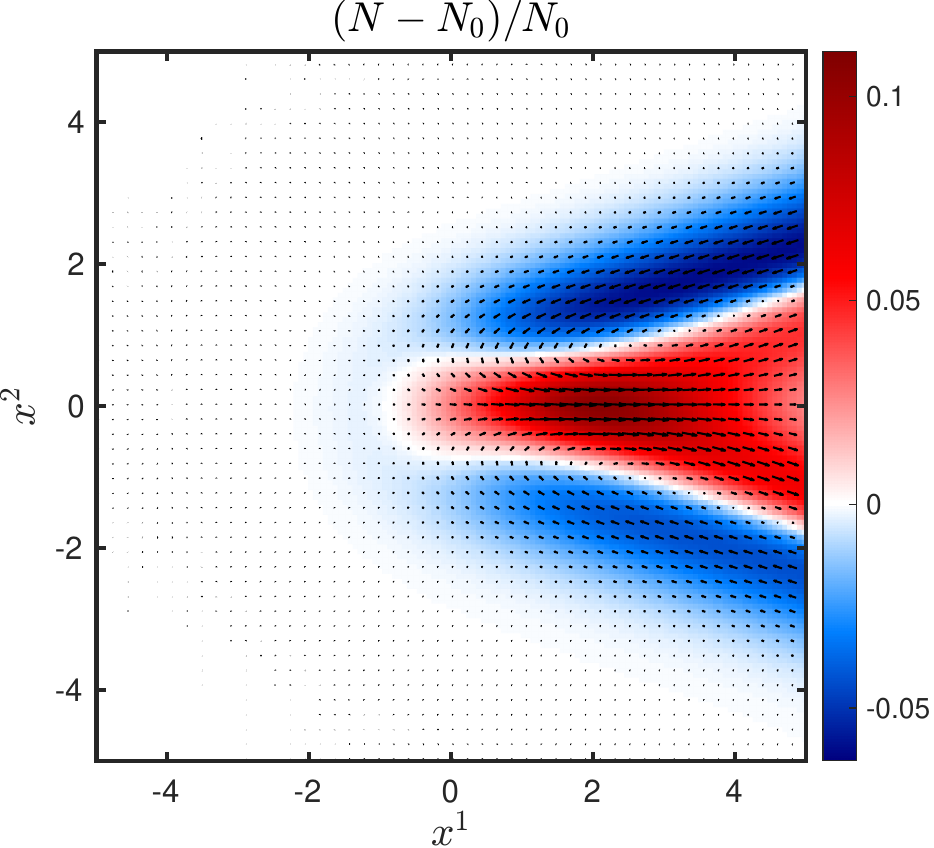} &
    \includegraphics[width=0.31\textwidth]{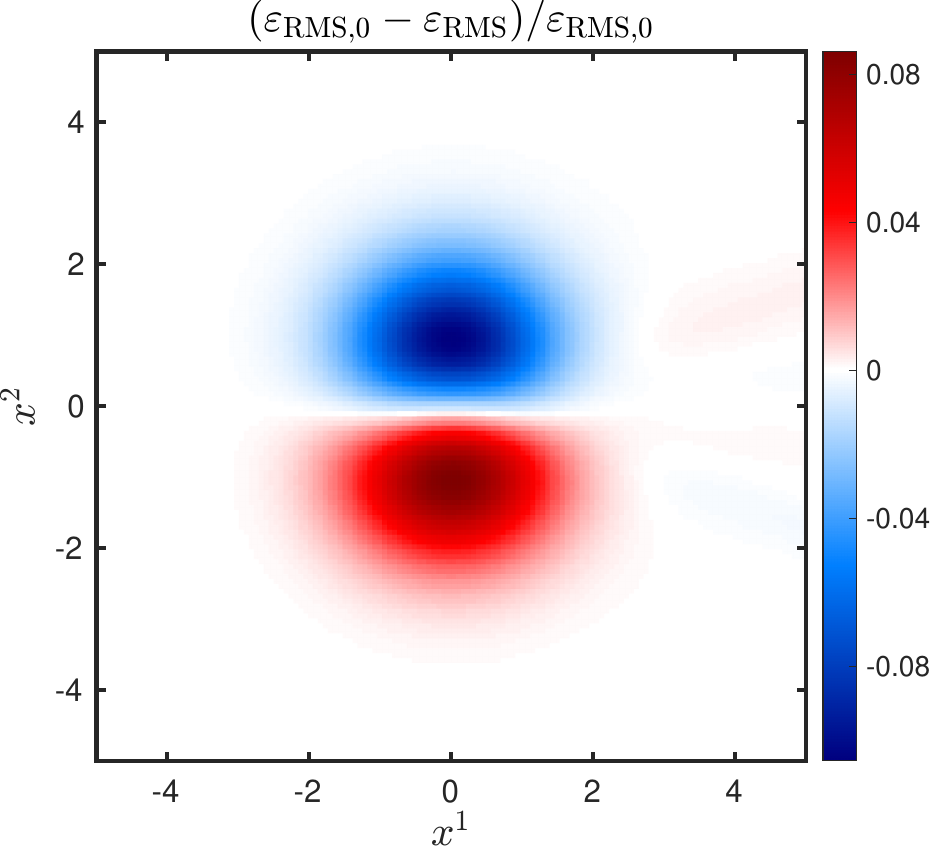}
  \end{tabular}
   \caption{Results for the transparent vortex problem at $t=20$ for a model with $v_{\max}=0.1$.  The left panel shows the relative deviation in comoving-frame number density from $D_{0}=D(x^{1}=-5,x^{2})$, $(D-D_{0})/D_{0}$, with vectors of the comoving-frame number flux $(I^{1}-I_{0}^{1},I^{2})^{\intercal}$ overlaid, where $I_{0}^{1}=I^{1}(x^{1}=-5,x^{2})$ is the first component of the comoving-frame number flux density at the inner boundary in the $x^{1}$-direction, which is subtracted to better illustrate the flow, since $|I^{2}|\ll|I^{1}|$ generally holds.  Similarly, the middle panel shows the corresponding relative deviation in the Eulerian-frame number density $(N-N_{0})/N_{0}$, with vectors of the Eulerian-frame number flux $(F_{N}^{1}-F_{N,0}^{1},F_{N}^{2})^{\intercal}$.  The right panel shows the relative deviation in the RMS energy, $(\varepsilon_{{\rm RMS},0}-\varepsilon_{\rm RMS})/\varepsilon_{{\rm RMS},0}$, where $\varepsilon_{{\rm RMS},0}=\varepsilon_{\rm RMS}(x^{1}=-5,x^{2})$.}
   \label{fig:TransparentVortexII}
\end{figure}

\subsection{Performance Evaluation}
\label{sec:performance}

To demonstrate the GPU functionality and performance characteristics of the DG-IMEX method as implemented in \thornado, we consider the Streaming Doppler Shift test, described in Section~\ref{sec:streamingDopplerShift}, with $v_{\max}=0.1$.
To more accurately capture a production workload, the tests are performed in three spatial dimensions, with the number of elements similar to what would be used for a single process invoking \thornado\ in a multiphysics simulation.  
The benchmark is run in two configurations, using tensor product polynomials of degree $k=1$ and $k=2$, respectively.  
The SSPRK2 time stepper is used for both configurations.  
For $k=1$, we use 16 energy elements and $96\times3\times3$ spatial elements, while 12 energy elements and $64\times2\times2$ spatial elements are used for $k=2$ --- thus keeping the total number of spatial degrees of freedom the same.  
Our goal is to provide a high-level demonstration of performance characteristics and the relative cost of main algorithmic components, while we defer a rigorous performance analysis to future work.  

The tests are performed on a single node of the Summit computer at the Oak Ridge Leadership Computing Facility (OLCF).  
Each Summit node has 2 IBM POWER9 CPUs and 6 NVIDIA V100 GPUs, but here we limit our benchmarks to a single CPU or GPU.  
For the CPU runs, we use seven cores with one thread per core as this is the number of cores that would be available to one process if we divide the resources equally with one GPU per process.  
All runs use version 22.5 of the NVIDIA \texttt{nvfortran} compiler with standard \texttt{-O2} optimizations.  
Optimized linear algebra libraries are provided by IBM ESSL (v6.3.0) on the CPU and NVIDIA cuBLAS (v11.0.3) on the GPU.  
For the GPU runs, all computations are done on the GPU using OpenACC and libraries; the CPU process is only used to launch kernels and manage data transfer.  
In both cases, the salient metric is wall-time per time step (lower is better).  

Figure~\ref{fig:SDS_walltime} shows a breakdown of the relative cost associated with evaluating the major components of the explicit phase-space advection operator.  
The polynomial degree has little effect on the absolute wall-time, especially for the GPU runs.
For the CPU runs, the relative cost of linear algebra (\texttt{MatMul}) is somewhat higher when $k=2$.
As can be seen comparing the right and left panels, the initial guess in the conserved-to-primitive calculation can have a non-trivial impact on the total wall-time by reducing the total number of solver iterations.
We measure a total speedup factor of 8--10 for the V100 relative to the multi-core CPU runs on the POWER9.  
Notably, the relative cost for linear algebra and limiters becomes negligible when using the GPU, and the majority of the computational cost is shifted to the iterative conserved-to-primitive calculations.
We speculate that one approach to further improve the performance would be to combine the calculation of all of the primitive moments on the quadrature set $\widetilde{S}_{\otimes}^{\bK}$, defined in Eq.~\eqref{eq:AllSetUnion}, into a single kernel, rather than to calculate them separately for each evaluation of $\vect{\mathcal{F}}^{i}$ and $\vect{\mathcal{F}}^{\varepsilon}$, defined in Eqs.~\eqref{eq:bilinearFormAdvectionPosition}--\eqref{eq:bilinearFormAdvectionEnergy}, which results in some duplicate evaluations.  
While these savings may be significant for the phase-space advection problem considered here, refactoring will be considered in the context of a more physics-complete implementation.  
With more realistic collision terms included, the relative cost of the explicit phase-space advection part is expected to be small (see, e.g., \cite{laiu_etal_2020,laiu_etal_2021}).  

\begin{figure}[H]
	\centering
	\begin{tabular}{cc}
		\includegraphics[width=0.45\textwidth]{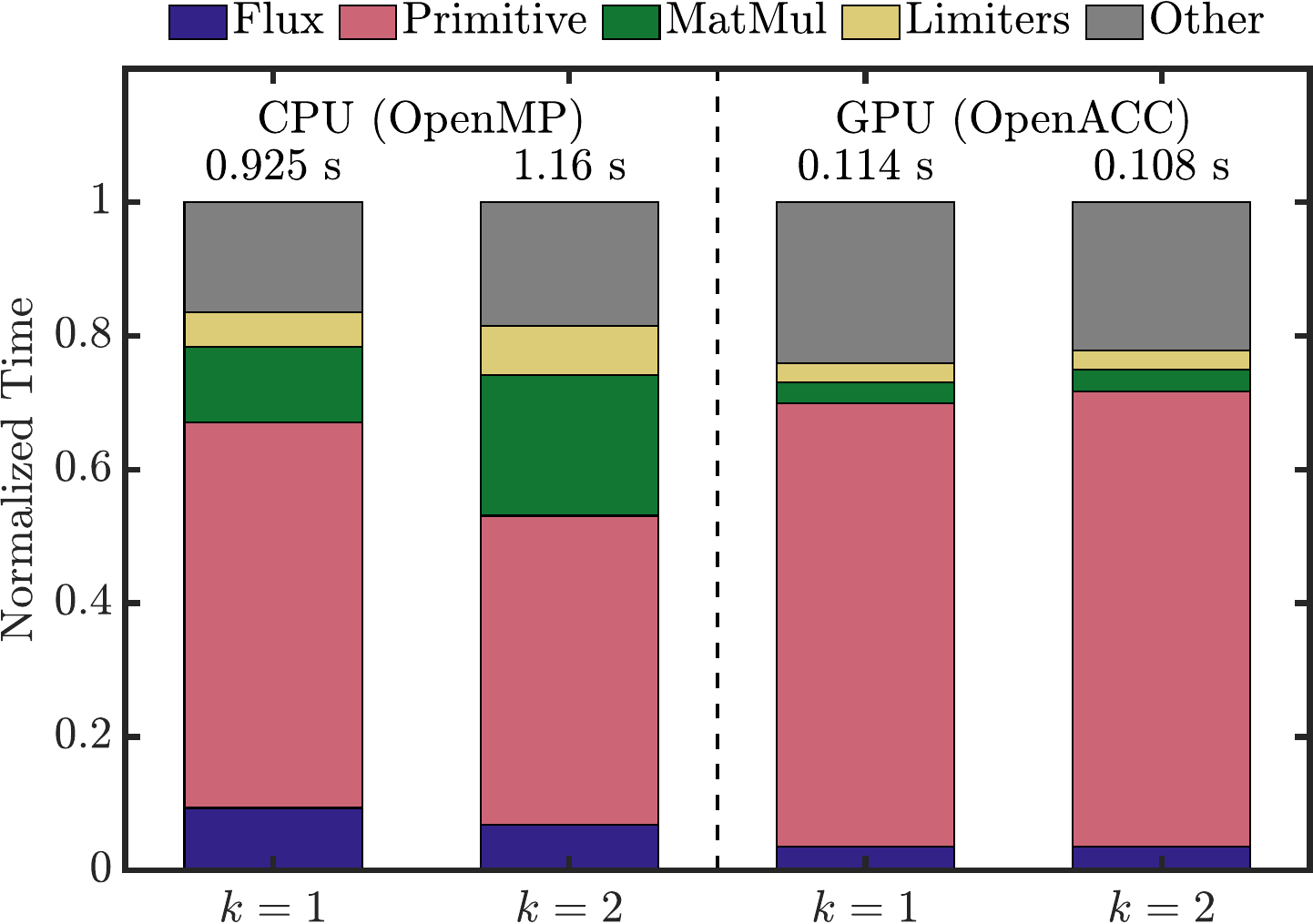} &
		\includegraphics[width=0.45\textwidth]{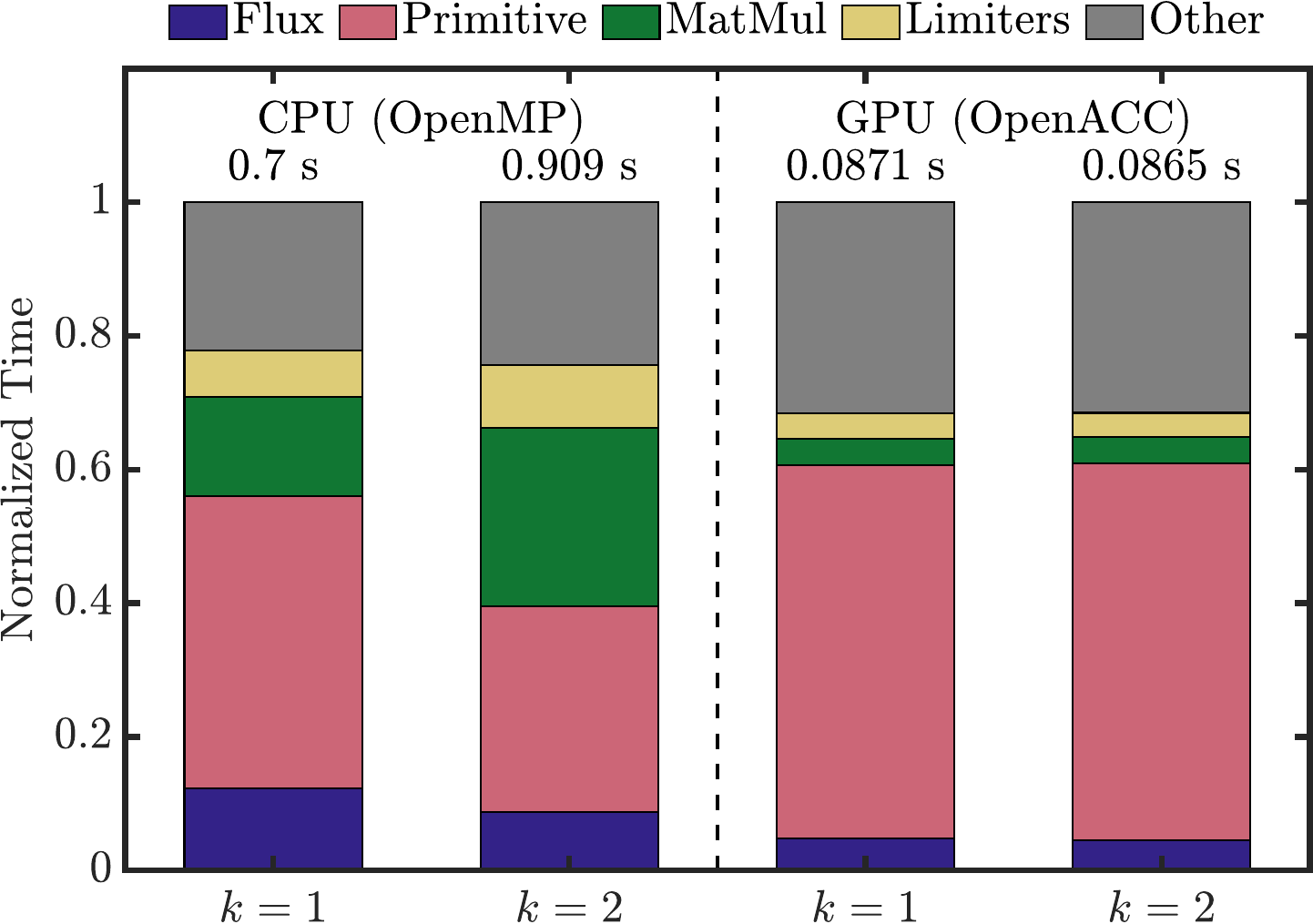}
	\end{tabular}
	\caption{
		Breakdown of normalized wall-time for components of the Streaming Doppler Shift test problem as implemented in \thornado.
		The left panel uses an initial guess of $\bcM^{[0]} = (\cN, \vect{0})^{\intercal}$ in the conserved-to-primitive calculation, and the right panel uses $\bcM^{[0]} = \bcU = (\cN, \bcG)^{\intercal}$.
		Absolute wall-clock times per time step are shown above each bar.
		\texttt{Flux} captures the calculation of fluxes $\vect{\mathcal{F}}^{i}$ and $\vect{\mathcal{F}}^{\varepsilon}$ in Eqs.~\eqref{eq:bilinearFormAdvectionPosition}--\eqref{eq:bilinearFormAdvectionEnergy}.
	    \texttt{MatMul} represents the matrix-matrix multiplications used throughout the explicit operator, e.g., to evaluate polynomials $\vect{\mathcal{U}}_{h}$ in quadrature points on all elements.  
	    \texttt{Primitive} captures the iterative conserved-to-primitive calculation described in Section~\ref{sec:moment_conversion}.
	    \texttt{Limiters} includes the application of the realizability-enforcing limiter described Section~\ref{sec:realizabilityLimiter} and the energy limiter described in Section~\ref{sec:EnergyLimiter}.
	    \texttt{Other} is used to capture all remaining wall-time spent in the explicit step.
    }
	\label{fig:SDS_walltime}
\end{figure}
\section{Summary and Conclusions}
\label{sec:summaryConclusions}

We have proposed and analyzed a realizability-preserving numerical method for evolving a spectral two-moment model for neutral particles interacting with a moving background fluid.  
This number-conservative moment model is based on comoving-frame momentum coordinates, includes special relativistic corrections to $\cO(v)$, and, as a result, contains velocity-dependent terms accounting for spatial advection, Doppler shift, and angular aberration.  
The nonlinear two-moment model solves for comoving-frame angular moments, representing number density and components of the number flux densitiy, and is closed by expressing higher-order moments (rank-two and rank-three tensors) in terms of the evolved moments using the maximum entropy closure (both exact and approximate) due to Minerbo \cite{minerbo_1978}.  
The two-moment model is closely related to that promoted in \cite{lowrie_etal_2001}, predicts wave speeds bounded by the speed of light (Proposition~\ref{prop:waveSpeed}), and is consistent, to $\cO(v)$, with Eulerian-frame energy and momentum conservation (Proposition~\ref{prop:EnergyandMomentumConservation}).  

The numerical method is based on the DG method to discretize phase-space, and IMEX time stepping, where the phase-space advection part is integrated with explicit methods, and the collision term is integrated with implicit methods.  
The discretized spatial and energy derivative terms in the moment equations have been equipped with tailored numerical fluxes, which in the case of exact moment closure (Assumption~\ref{assum:exact_closure}) allow us to derive explicit time-step restrictions that guarantee realizable cell-averaged moments due to these terms, and $\cN>0$ overall.  
Unfortunately, a corresponding time-step restriction was not found for the source terms associated with phase-space advection in the number flux equation to guarantee the second moment realizability condition, in the sense of cell averages, for the evolved moments (i.e., $|\bcG|\le\cN$) in the general multidimensional case.  
However, an analysis in the semi-discrete setting revealed that the moments evolve tangentially to the boundary of the realizable domain when $|\bcG|=\cN$, and we found a sufficient time-step restriction to guarantee realizable cell averages in the one-dimensional, planar geometry case.  
Given a positive cell-averaged number density, a realizability-enforcing limiter is proposed to recover pointwise moment realizability in each element.  
Specific properties of the IMEX scheme (i.e., convex-invariance, as defined in \cite{chu_etal_2019}) extend the applicability of our results beyond the forward-backward Euler sequence analyzed in detail.  

Retention of specific $\cO(v)$ terms in the time derivative of the two-moment system, motivated by the desire to maintain wave speeds bounded by the speed of light and consistency with Eulerian-frame energy and momentum conservation equations, results in increased computational complexity of the numerical scheme in two (related) ways.  
First, since the evolved moments are nonlinear functions of the primitive moments used to close the moment equations, a nonlinear system must be solved to recover primitive moments from evolved moments.  
Second, because the collision operators are formulated in terms of primitive moments, the implicit collision update requires the solution of a similar nonlinear system.  
For both cases, solution methods have been formulated as fixed-point problems, and we have proposed tailored fixed-point operators in Eqs.~\eqref{eq:richardson_fixed_pt} and \eqref{eq:collision_fixed_point1}, for the primitive recovery and implicit collision solve, respectively.  
The fixed-point operators are designed to preserve moment realizability in each iteration (subject to mild conditions on the step size), and we have proven convergence for cases with exact \emph{and} approximate moment closures, subject to the additional constraint $|\vect{v}|\le\sqrt{2}-1$, which is mild when considering the applicability of the $\cO(v)$ model.  
Numerically, we did not observe convergence failures for the primitive recovery problem, even when violating the condition on the velocity, or when combining the algorithm with Anderson acceleration, which our analysis here did not consider.  

The proposed algorithm has been implemented and tested against a series of benchmark problems.  
Using two problems with a constant background velocity --- in the streaming and diffusion regimes, respectively --- we demonstrate the expected rate of error convergence in the $L^{2}$ norm.  
Additional tests with spatially varying (smooth and discontinuous) background velocity fields --- the Streaming Doppler Shift, Transparent Shock, and Transparent Vortex tests --- were used to document the robustness of the proposed algorithm, and qualitative accuracy with respect to special relativistic considerations (e.g., correct Doppler shifts) for sufficiently small background velocities.  
In these tests, the moments evolve close to the boundary of the realizable domain, and the realizability-enforcing limiter is frequently triggered to recover pointwise realizability from (guaranteed) realizable cell averages.  
Without this recovery procedure, the algorithm fails invariably on these challenging problems.  

We have analyzed the simultaneous Eulerian-frame number and energy conservation properties of the proposed method.  
While the DG method provides flexibility in the approximation spaces to capture conservation properties beyond those inherent to the model formulation (i.e., number conservation in the present setting), the approximation of the background velocity by piecewise polynomials from the DG approximation space, which accommodates discontinuities, can result in Eulerian-frame energy conservation errors that exceed the $\cO(v^{2})$ scaling predicted by the continuum model.  
However, we found that the realizability-enforcing limiter is the main contributor to Eulerian-frame energy conservation violations when the background velocity field is smooth and its magnitude is within the range of applicability of an $\cO(v)$ model.  
For this reason, an energy limiter is proposed to recover conservation violations introduced by the realizability-enforcing limiter.  
This limiter trades local number conservation for number \emph{and} energy conservation after integration over the phase-space energy dimension, and has no observed negative impact on solution accuracy, while improving Eulerian-frame energy conservation properties of the method.  
With the energy limiter active, we observe that energy conservation violations scale as $\cO(v^{2})$, in accordance with the continuum model.  
We emphasize that the energy limiter introduces a rescaling of the moments, which does not impact moment realizability.  
However, the proposed strategy to promote Eulerian-frame energy conservation is not feasible without the realizability-preserving property.  

Our goal is to apply the proposed algorithm to model neutrino transport in core-collapse supernova simulations.  
Several extensions are needed to achieve this goal.  
First, the collision term must be extended to include a complete set of neutrino weak interactions, and the model extended to include coupling to dynamical equations for the background fluid.  
Second, because neutrinos are Fermions, for which the Pauli exclusion principle implies an upper bound on the phase-space density and associated bounds on the moments, the analysis should be extended to apply to moment closures based on Fermi-Dirac statistics.  
Third, because special \emph{and} general relativistic effects contribute to the dynamics in nontrivial ways, further development and analysis is required to design realizability-preserving methods for fully relativistic moment models.  
We believe the methodologies developed in this paper can be helpful in these endeavors, and hope to present progress on addressing these challenges in future work.

\appendix

\section{Technical Proofs}
\label{sec:appendix}

\subsection{Various Bounds for the Exact and Approximate Eddington Factors}
\label{sec:polynomial_bounds}

In the following lemma, we list several bounds on functions dependent on the exact or approximate Eddington factors ($\psi$ or $\psi_{\mathsf{a}}$). These bounds are used in the proofs of Lemmas~\ref{lemma:dD_term} and \ref{lemma:dI_term} in \ref{sec:proof_of_dD} and \ref{sec:proof_of_dI}, respectively, as well of the proof of Lemma~\ref{lemma:MtoU} in Section~\ref{sec:approxClosure}.

\begin{lemma}\label{lemma:polynomial_bounds}
	Let $\psi$ be the Eddington factor in the exact Minerbo closure as given in Eq.~\eqref{eq:psiZetaMinerbo} and 
	let 
	\begin{equation}\label{eq:phi_def}
		\phi_1:= 3\psi-1-3\psi^\prime h\quad \text{and}\quad \phi_2:=(3\psi-1)h^{-1}\:.
	\end{equation}
	Then, the following bounds hold when $h\in[0,1]$.
\begin{multicols}{2}
\begin{itemize}
	\item[\textup{(a)}] $-4 \leq \phi_1 \leq 0$,
	\item[\textup{(b)}] $\phi_2^2 - \psi^\prime \phi_2\geq0$, 
	\item[\textup{(c)}] $3(\psi^\prime)^2 - 3 \psi^\prime \phi_2 +\phi_2^2 \geq0$,
	\item[\textup{(d)}] $\partial_h(\phi_2^2 -  \psi^\prime \phi_2 +(\psi^\prime)^2) >0$.
\end{itemize}
\end{multicols}
\noindent
Moreover, Let $\psi_{\mathsf{a}}$ be the approximate Eddington factor defined in Eq.~\eqref{eq:psiApproximate} and 
let 
\begin{equation}
	\phi_{\mathsf{a},1}:= 3\psi_{\mathsf{a}}-1-3\psi_{\mathsf{a}}^\prime h\quad \text{and}\quad \phi_{\mathsf{a},2}:=(3\psi_{\mathsf{a}}-1)h^{-1}\:.
\end{equation}
Then the bounds \textup{(a)--(d)} hold when $(\psi,\phi_1,\phi_2)$ are replaced by $(\psi_{\mathsf{a}},\phi_{\mathsf{a},1},\phi_{\mathsf{a},2})$.
In addition, the following two bounds hold for the approximate Eddington factor when $h\in[0,1)$.
\begin{multicols}{2}
	\begin{itemize}
		\item[\textup{(e)}] $\psi_{\mathsf{a}} - h^2  - \frac{1}{4} (1-\psi_{\mathsf{a}})^2\geq0$, 
		\item[\textup{(f)}] $h^2\leq \psi_{\mathsf{a}} \leq 1$.
	\end{itemize}
\end{multicols}
\end{lemma}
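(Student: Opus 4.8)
The plan is to treat the two closures separately, since the approximate Eddington factor $\psi_{\mathsf{a}}$ is an explicit polynomial in $h$, whereas the exact factor $\psi$ is known only implicitly through the inverse Langevin function. For $\psi_{\mathsf{a}}$, every quantity appearing in (a)--(f) --- namely $\psi_{\mathsf{a}}$, $\psi_{\mathsf{a}}^{\prime}$, $\phi_{\mathsf{a},1}$, and $\phi_{\mathsf{a},2}$ --- is a polynomial in $h$, so each claim becomes a one-variable polynomial inequality on $[0,1]$ (or $[0,1)$). I would verify these directly: multiply through by whatever positive power of $h$ is needed to clear the $h^{-1}$ in $\phi_{\mathsf{a},2}$, reduce to a polynomial being nonnegative (or positive) on the unit interval, and confirm this by evaluating at the endpoints and locating the finitely many interior critical points, or equivalently by factoring out the evident root at $h=1$ in (e) and (f) and checking the cofactor's sign. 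This part is entirely mechanical.

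For the exact closure the key device is the exponential-family representation of the moments. Writing the maximum-entropy density as $p(\mu)\propto e^{\beta\mu}$ on $\mu\in[-1,1]$, the flux, Eddington, and heat-flux factors are the raw moments $h=\langle\mu\rangle$, $\psi=\langle\mu^{2}\rangle$, $\zeta=\langle\mu^{3}\rangle$, and differentiation in the natural parameter $\beta$ yields the cumulant identities $dh/d\beta=\psi-h^{2}$ and $d\psi/d\beta=\zeta-h\psi$. Since $\psi-h^{2}=\operatorname{Var}(\mu)>0$ for $h<1$, the chain rule gives the closed form $\psi^{\prime}=(\zeta-h\psi)/(\psi-h^{2})$, which I would substitute throughout. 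Clearing the positive denominator $\psi-h^{2}$ then converts (a) and (b) into polynomial inequalities in the moments $h,\psi,\zeta$, which I would establish from elementary constraints for a probability measure supported on $[-1,1]$ (for instance $\psi\ge\tfrac{1}{3}$ so that $\phi_{2}\ge0$, together with $\psi\ge h^{2}$ and support/Cauchy--Schwarz bounds such as $\langle\mu^{2}(1-\mu^{2})\rangle\ge0$).

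A useful simplification is that (c) requires no closure information whatsoever: the identity $3x^{2}-3xy+y^{2}=3\bigl(x-\tfrac{1}{2}y\bigr)^{2}+\tfrac{1}{4}y^{2}$ shows the expression is nonnegative for every real $x,y$, so (c) holds verbatim with $(x,y)=(\psi^{\prime},\phi_{2})$, and likewise for the approximate factor. The same completing-the-square identity shows that the quantity differentiated in (d) is nonnegative, so the real content of (d) is its strict \emph{monotonicity}. I would prove this by differentiating $\phi_{2}^{2}-\psi^{\prime}\phi_{2}+(\psi^{\prime})^{2}$ once more, re-expressing $\phi_{2}^{\prime}$ and $\psi^{\prime\prime}$ through the $\beta$-derivative identities above (so that the fourth moment $\langle\mu^{4}\rangle$ enters), and reducing the sign of the result to a single moment inequality.

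I expect the monotonicity claim (d) for the exact closure to be the main obstacle. Unlike (a)--(c), it involves a second derivative and hence the fourth moment, and the relevant expressions degenerate as $h\to1$ (equivalently $\beta\to\infty$), where $\psi-h^{2}\to0$ while $\psi,\phi_{2},\psi^{\prime}$ all tend to unity. I would therefore handle the two ends separately: a small-$\beta$ Taylor expansion of the hyperbolic expressions near $h=0$, and an asymptotic expansion in $\beta^{-1}$ near $h=1$, combined with a sign argument on a compact interior interval. The approximate-closure analogue of (d), being polynomial, avoids this difficulty entirely and serves as a consistency check on the expected sign.
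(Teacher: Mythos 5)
First, note that the paper does not actually prove this lemma: it declares the bounds ``straightforward but rather tedious'' and verifies them only graphically (Figure~\ref{fig:psi_bounds}). Your proposal therefore attempts strictly more than the published argument, and several of its ingredients are sound. The reduction of the $\psi_{\mathsf{a}}$ statements to one-variable polynomial inequalities on $[0,1]$ is exactly right and entirely mechanical (e.g.\ $\phi_{\mathsf{a},1}=\tfrac{2}{5}h^{2}\,(2h-3-9h^{2})$, whose bracketed quadratic has negative discriminant and negative leading coefficient, gives (a) for the approximate factor immediately). The observation that (c) is the unconditional algebraic identity $3x^{2}-3xy+y^{2}=3\big(x-\tfrac{1}{2}y\big)^{2}+\tfrac{1}{4}y^{2}\ge 0$, needing no closure information at all, is a genuine improvement over checking a plot. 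The cumulant identities $dh/d\beta=\psi-h^{2}$ and $d\psi/d\beta=\zeta-h\psi$, hence $\psi^{\prime}=(\zeta-h\psi)/(\psi-h^{2})$, are also correct and are the right way to obtain a closed form for $\psi^{\prime}$ in the exact case.

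The gap is in your treatment of (a) and (b) for the exact closure. After clearing the denominator $\psi-h^{2}$ you propose to establish the resulting polynomial inequalities in $(h,\psi,\zeta)$ ``from elementary constraints for a probability measure supported on $[-1,1]$.'' This cannot succeed, because the inequalities are not true for general realizable moment triples --- they are properties of the one-parameter exponential family only. Concretely, the measure $\tfrac{1}{2}\delta_{+1}+\tfrac{1}{2}\delta_{-1}$ on $[-1,1]$ has $h=0$, $\psi=1$, $\zeta=0$; it satisfies every universal moment inequality you list ($\psi\ge h^{2}$, $\langle\mu^{2}(1-\mu^{2})\rangle\ge0$) and even the family-specific bound $\psi\ge\tfrac{1}{3}$, yet the formula $\psi^{\prime}=(\zeta-h\psi)/(\psi-h^{2})$ gives $\psi^{\prime}=0$ and hence $\phi_{1}=3\psi-1=2>0$, violating (a). So any proof of (a)--(b) must exploit that $(h,\psi,\zeta)$ lies on the specific curve $\beta\mapsto\big(L(\beta),\,1-2L(\beta)/\beta,\,\coth\beta-3\psi/\beta\big)$ --- for instance via explicit estimates on $L$ and $\coth$, or by writing $\psi^{\prime}=\operatorname{Cov}(\mu,\mu^{2})/\operatorname{Var}(\mu)$ for the density proportional to $e^{\beta\mu}$ and using its special structure --- not merely generic moment bounds. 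You correctly flag (d) as the hard part, but as written (a) and (b) for the exact $\psi$ would already fail; the same family-specific machinery you sketch for (d) (the $\beta$-parametrization and the endpoint asymptotics near $h=0$ and $h=1$) is needed there as well.
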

Since both $\psi$ and $\psi_{\mathsf{a}}$ are one-dimensional functions defined between 0 and 1, the proof of the bounds are straightforward but are rather tedious. Instead of giving rigorous proofs for these bounds, we plot the functions of interest in Figure~\ref{fig:psi_bounds}, from which the bounds can be visually verified.
\begin{figure}[H]
	\centering
		\subfloat[]{\includegraphics[width=0.315\textwidth]{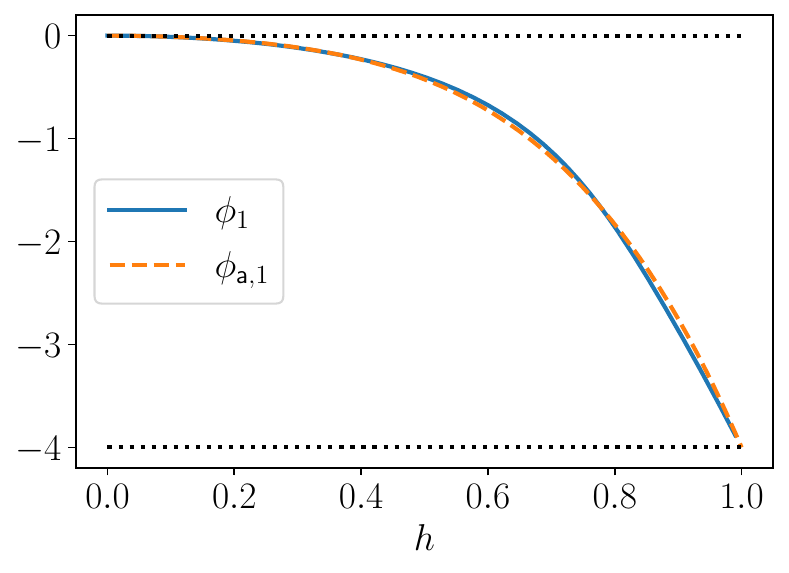}}%
		\subfloat[]{\includegraphics[width=0.32\textwidth]{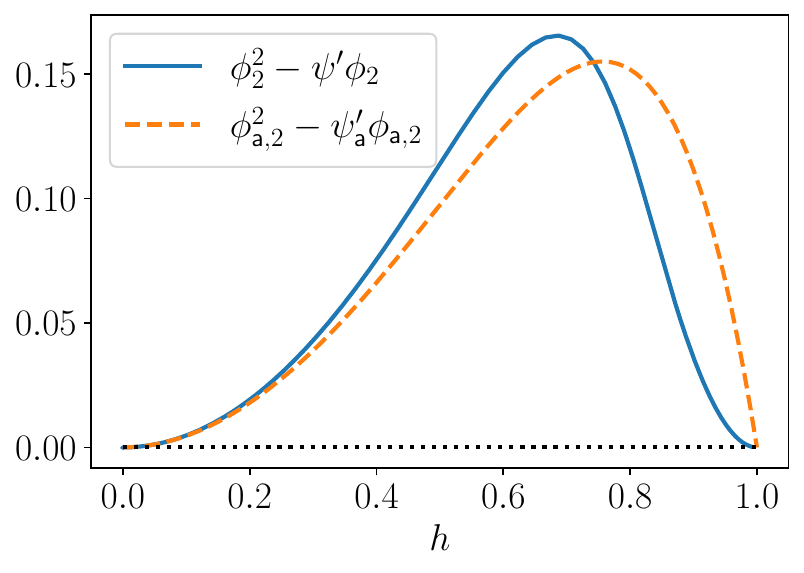}}%
		\subfloat[]{\includegraphics[width=0.302\textwidth]{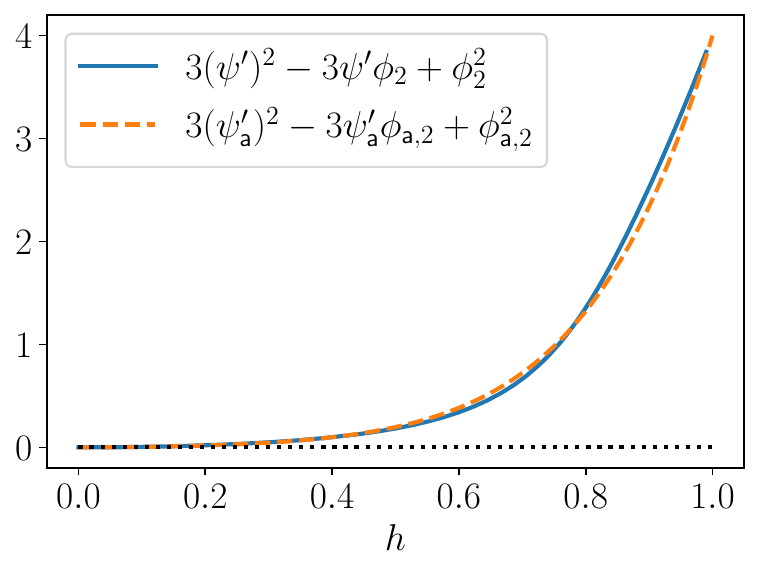}}\\
		\subfloat[]{\includegraphics[width=0.305\textwidth]{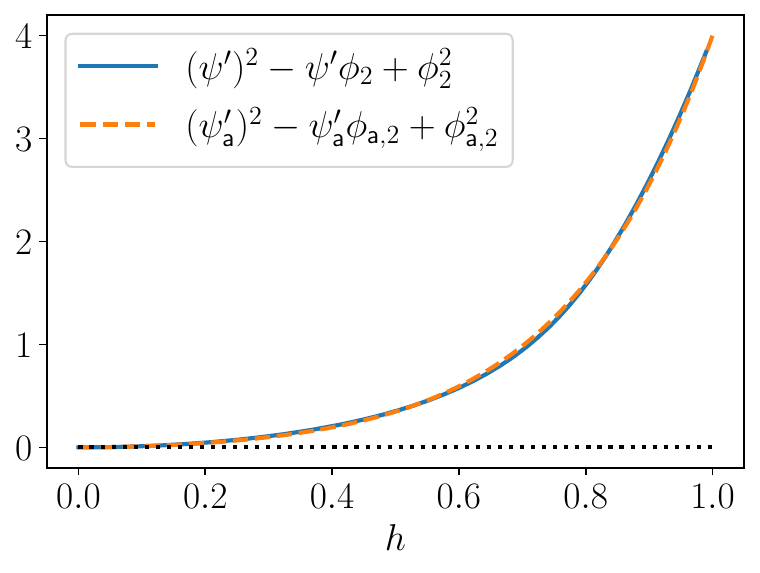}}%
		\subfloat[]{\includegraphics[width=0.32\textwidth]{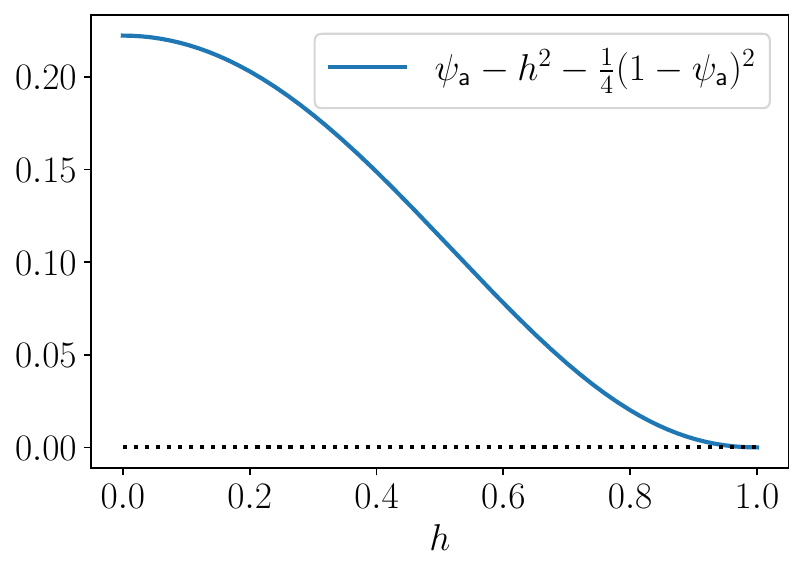}}%
		\subfloat[]{\includegraphics[width=0.31\textwidth]{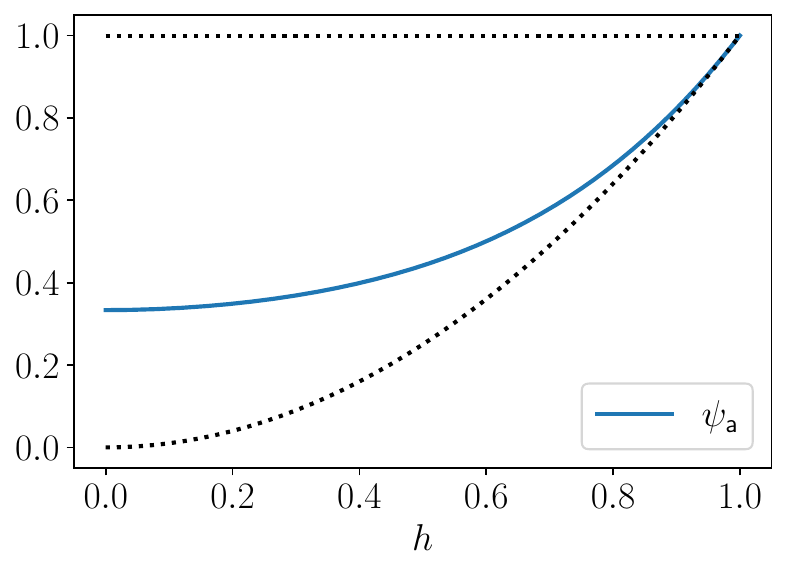} }
\caption{Verification of the bounds \textup{(a)--(f)} in Lemma~\ref{lemma:polynomial_bounds} in the cases when the exact Eddington factor $\psi$ and approximate $\psi_{\mathsf{a}}$ are considered.} 
\label{fig:psi_bounds}
\end{figure}

\subsection{Proof of Lemma~\ref{lemma:dD_term}}
\label{sec:proof_of_dD}
	
\begin{proof}[Proof of Lemma~\ref{lemma:dD_term}.]
Using the definition of the closure terms $\mathsf{k}_{ij}$ in Eq.~\eqref{eq:VariableEddingtonTensor}, we have from chain rule that
	\begin{align}
			v^{i} {\partial_{\mathcal{D}} (\mathsf{k}_{ij} \mathcal{D})}
			&= v^{i} \left( \frac{1}{2} \big[3 \hat{n}_i\hat{n}_j - \delta_{ij}\big] \frac{\p\psi}{\p h} \frac{\p h}{\p \mathcal{D}} \mathcal{D} + \frac{1}{2} \big[(1-\psi) \delta_{ij} + (3\psi -1)\hat{n}_i\hat{n}_j\big] \right)  \nonumber \\
			&= v^{i} \left( - \frac{1}{2} \big[3 \hat{n}_i\hat{n}_j - \delta_{ij}\big] \psi^\prime h + \frac{1}{2} \big[(1-\psi) \delta_{ij} + (3\psi -1)\hat{n}_i\hat{n}_j\big] \right)  \\
			&= \frac{1}{2} (3\psi -1 - 3\psi^\prime h ) \big(v^{i} \hat{n}_i\hat{n}_j - \frac{1}{3} v_{j}  \big) +  \frac{1}{3} v_{j}  
			= \frac{1}{2} \phi_1 \big(v^{i} \hat{n}_i\hat{n}_j - \frac{1}{3} v_{j}  \big) +  \frac{1}{3} v_{j} \:,\nonumber
	\end{align}
where $\phi_1:= (3\psi -1 - 3\psi^\prime h )$ as defined in Eq.~\eqref{eq:phi_def}.
Since $\|\partial_{\cD} (v^{i} \mathsf{k}_{ij} \mathcal{D}) \|^2 = \sum_j \left( v^{i} {\partial_{\mathcal{D}} (\mathsf{k}_{ij} \mathcal{D})}\right)^2$,
summing up the squares leads to
\begin{align}
		\|\partial_{\cD} (v^{i} \mathsf{k}_{ij} \mathcal{D}) \|^2
		&= \frac{1}{4} \phi_1^2 \sum_j \left(v^{i} \hat{n}_i\hat{n}_j - \frac{1}{3} v_{j}  \right)^2 +  \frac{1}{3} \phi_1 v^{j}\left(v^{i} \hat{n}_i\hat{n}_j - \frac{1}{3} v_{j}  \right) + \frac{1}{9} v^{j} v_{j}\nonumber \\
		&= \left(\frac{\phi_1^2}{12} + \frac{\phi_1}{3} \right) (v^{i} \hat{n}_i)^2 + \left(\frac{\phi_1^2}{36} - \frac{\phi_1}{9} + \frac{1}{9}\right) v^{i} v_{i}\:.
\end{align}
It follows from Lemma~\ref{lemma:polynomial_bounds} (a) that $\phi_1(h)\in[-4,0]$ for $h\in[0,1]$.
Therefore, $\left(\frac{\phi_1^2}{12} + \frac{\phi_1}{3} \right) = \frac{\phi_1}{12}\left(\phi_1 + 4 \right) \geq0$ and we have
\begin{equation}
	\|\partial_{\cD} (v^{i} \mathsf{k}_{ij} \mathcal{D}) \|^2 					\leq \left(\big(\frac{\phi_1^2}{12} + \frac{\phi_1}{3} \big) + \big(\frac{\phi_1^2}{36} - \frac{\phi_1}{9} + \frac{1}{9}\big)\right) v^{i} v_{i}
	= \frac{1}{9}\left( \phi_1 + 1\right)^2 v^{i} v_{i}\:.
\end{equation}
Since $\phi_1\in[-4,0]$, $\frac{1}{9}\left( \phi_1 + 1\right)^2\leq1$, which concludes the proof.
\end{proof}

\subsection{Proof of Lemma~\ref{lemma:dI_term}}
\label{sec:proof_of_dI}

\begin{proof}[Proof of Lemma~\ref{lemma:dI_term}.]
	Using the definition of the closure terms $\mathsf{k}_{ij}$ in Eq.~\eqref{eq:VariableEddingtonTensor}, we have from chain rule that
	\begin{equation}
		v^{i} {\partial_{\cI_{k}} (\mathsf{k}_{ij} \cD)}
		= \f{1}{2}\,{\psi^\prime}\,\Big[\,3\,s\,\hat{n}_{j}-v_{j}\,\Big]\,\hat{n}_{k}
		+\f{(3\,\psi-1)}{2h}\,\Big[\,v_{k}\,\hat{n}_{j}+s\delta_{jk}\,-2\,s\,\hat{n}_{j}\,\hat{n}_{k}\,\Big]\:.
	\end{equation}
	To show $\|\nabla_{\bcI}(v^{i}\mathsf{k}_{ij} \cD)\|\leq2v$, we prove in the following that
	\begin{equation}
		\|\nabla_{\bcI}(v^{i}\mathsf{k}_{ij} \cD)\vect{y}\|\leq2v y\:,\quad \forall\vect{y}=\big(y^1, y^2, y^3\big)^{\intercal},
	\end{equation}
	where $y:=\sqrt{y^i y_i}\,$.
	Let $\phi_2:=(3\psi-1)h^{-1}$ as defined in Eq.~\eqref{eq:phi_def}. Then,
	\begin{equation}
		v^{i} {\partial_{\cI_{k}} (\mathsf{k}_{ij} \cD)} y^{k}= \f{1}{2}\,{\psi^\prime}\,\Big[\,3\,s\,\hat{n}_{j}-v_{j}\,\Big]\,(y^{k}\hat{n}_{k})
		+\f{1}{2}\phi_2\,\Big[\,\hat{n}_{j}(y^{k} v_{k})\,+s y_{j}\,-2\,s\,\hat{n}_{j}\,(y^{k}\hat{n}_{k})\,\Big]\:.
	\end{equation}
Summing up the squares leads to
\begin{equation}
	\begin{alignedat}{2}
		\|\nabla_{\bcI}(v^{i}\mathsf{k}_{ij} \cD)\vect{y}\|^2 = &\sum_j \left(	v^{i} {\partial_{\cI_{k}} (\mathsf{k}_{ij} \cD)} y^{k}\right)^2
		= \f{1}{4} \phi_2^2 s^2 y^2 + \f{1}{4}\phi_2^2 \vy^2 +\f{1}{4}(\psi^\prime)^2 v^2 \ny^2 \\
		&\qquad+  \f{1}{4}\Big[3(\psi^\prime)^2 - 2 \psi^\prime \phi_2 \Big]s^2 \ny^2
		+ \f{1}{2}\Big[\psi^\prime \phi_2 -  \phi_2^2 \Big]s \vy \ny\:.
	\end{alignedat}
\end{equation}
	Since $\phi_2^2 - \psi^\prime \phi_2\geq0$ (Lemma~\ref{lemma:polynomial_bounds}~(b)), we apply the inequality $-2ab\leq a^2 + b^2$ and obtain
	\begin{equation}
		\f{1}{2}\Big[\psi^\prime \phi_2 -  \phi_2^2 \Big]s \vy \ny\leq 	\f{1}{4}\Big[\phi_2^2 - \psi^\prime \phi_2  \Big] \vy^2  + 	\f{1}{4}\Big[\phi_2^2 - \psi^\prime \phi_2  \Big]s^2 \ny^2\:.
	\end{equation}
	Therefore,
	\begin{equation}
			\begin{alignedat}{2}
		\|\nabla_{\bcI}(v^{i}\mathsf{k}_{ij} \cD)\vect{y}\|^2
		\leq& \,\f{1}{4} \phi_2^2 s^2 y^2 + \f{1}{4}\Big[2\phi_2^2 - \psi^\prime \phi_2 \Big]\vy^2 +\f{1}{4}(\psi^\prime)^2 v^2 \ny^2 \\
		&+  \f{1}{4}\Big[3(\psi^\prime)^2 - 3 \psi^\prime \phi_2 +\phi_2^2\Big]s^2 \ny^2\:.
			\end{alignedat}
	\end{equation}
	Further, since $\phi_2^2 \geq0$, $(\psi^\prime)^2 \geq0$, $2\phi_2^2 - \psi^\prime \phi_2  \geq 0$ (Lemma~\ref{lemma:polynomial_bounds}~(b)), and $3(\psi^\prime)^2 - 3 \psi^\prime \phi_2 +\phi_2^2 \geq0$ (Lemma~\ref{lemma:polynomial_bounds}~(c)), we can take the upper bounds $s^2\leq v^2$, $\vy^2\leq (vy)^2$, and $\ny^2\leq y^2$ to obtain
	\begin{equation}
		\|\nabla_{\bcI}(v^{i}\mathsf{k}_{ij} \cD)\vect{y}\|^2
		\leq \, \Big[\phi_2^2 -  \psi^\prime \phi_2 +(\psi^\prime)^2  \Big] {(vy)^2}\:.
	\end{equation}
	It follows from Lemma~\ref{lemma:polynomial_bounds}~(d) that $\partial_h(\phi_2^2 -  \psi^\prime \phi_2 +(\psi^\prime)^2) >0$, which implies $\max_{h\in[0,1]}\big[\phi_2^2 -  \psi^\prime \phi_2 +(\psi^\prime)^2\big] = \phi_2^2(1) -  \psi^\prime(1) \phi_2(1) +(\psi^\prime(1))^2  = 4$.
	Thus,
	\begin{equation}
		\|\nabla_{\bcI}(v^{i}\mathsf{k}_{ij} \cD)\vect{y}\|^2
		\leq \, 4 {(vy)^2}\:,\quad \forall\vect{y}=\big(y^1, y^2, y^3\big)^{\intercal},
	\end{equation}
	which proves the claim.
\end{proof}

\bibliography{references}

\end{document}